\documentclass[12pt]{amsart}
\usepackage{float}
\usepackage[colorlinks,
            linkcolor=black,
            anchorcolor=blue,
            citecolor=black]{hyperref}
\usepackage{graphicx}
\usepackage{cleveref}
\usepackage{amsmath}
\usepackage{amssymb}
\usepackage{setspace}
\usepackage{amsthm}

\marginparwidth -1cm \oddsidemargin 0cm \evensidemargin 0cm
\topmargin 0pt \textheight 236mm \textwidth 172mm
\usepackage{geometry}
\geometry{left=2cm, right=2cm, top=2.4cm, bottom=1.2cm}

\allowdisplaybreaks[4]

\vfuzz2pt 
\hfuzz2pt 
\newtheorem{thm}{Theorem}[section]
\newtheorem{cor}[thm]{Corollary}

\newtheorem{lem}[thm]{Lemma}

\theoremstyle{definition}
\newtheorem{defn}[thm]{Definition}
\theoremstyle{remark}
\newtheorem{rem}[thm]{Remark}
\theoremstyle{conclusion}

\theoremstyle{conjecture}

\numberwithin{equation}{section}

\newcommand{\be}{\begin{equation}}
\newcommand{\ee}{\end{equation}}

\newcommand{\R}{\mathbb{R}}



\begin{document}
\title[Nonlocal $p$-Laplace equations with Hardy potential]{Radial symmetry and sharp asymptotic behaviors of nonnegative solutions to 
weighted doubly $D^{1,p}$-critical quasi-linear nonlocal elliptic equations with Hardy potential}

\author{Daomin Cao, Wei Dai, Yafei Li}

\address{Institute of Applied Mathematics, Chinese Academy of Sciences, Beijing 100190, and University of Chinese Academy of Sciences, Beijing 100049, P. R. China}
\email{dmcao@amt.ac.cn}

\address{School of Mathematical Sciences, Beihang University (BUAA), Beijing 100191, and Key Laboratory of Mathematics, Informatics and Behavioral Semantics, Ministry of Education, Beijing 100191, P. R. China}
\email{weidai@buaa.edu.cn}

\address{School of Mathematical Sciences, Beihang University (BUAA), Beijing 100191, P. R. China}
\email{yafeili@buaa.edu.cn}

\thanks{Daomin Cao is supported by National Key R\&D Program of China (Grant 2023YFA1010001) and NNSF of China (Grant 11271354). Wei Dai is supported by the NNSF of China (No. 12222102), the National Science and Technology Major Project (2022ZD0116401) and the Fundamental Research Funds for the Central Universities. Yafei Li is supported by the Fundamental Research Funds for the Central Universities.}

\begin{abstract}
In this paper, we mainly consider nonnegative weak solutions $u\in D^{1,p}(\R^{N})$ to the doubly $D^{1,p}(\R^{N})$-critical nonlocal quasi-linear Schr\"{o}dinger-Hartree equation:
\begin{align*}
-\Delta_p u- \mu \frac{u^{p-1}}{|x|^p}=\left(|x|^{-2p}\ast |u|^{p}\right)|u|^{p-2}u \qquad &\mbox{in} \,\, \mathbb{R}^N,
\end{align*}
where $N\geq3$, $0\leq\mu< \bar{\mu}:=\left( (N-p)/p \right)^p$ and $1<p<\frac{N}{2}$. When $\mu>0$, due to appearance of the Hardy potential, the equation has singularity at $0\in\mathbb{R}^{N}$ and hence is not translation invariant, so sharp asymptotic estimates near the origin must be involved. First, we establish regularity and the sharp estimates on asymptotic behaviors near the origin and the infinity for any positive solution $u\in D^{1,p}(\R^{N})$ (and $|\nabla u|$) to more general equation
$-\triangle_p u - \mu \frac{1}{|x|^p}u^{p-1}=V(x)\frac{1}{|x|^s}u^{p-1}$ with $N\geq2$, $0\leq\mu< \bar{\mu}$, $1<p<N$, $0\leq s < p$ and $0\leq V(x)\in L^\frac{N}{p-s}(\R^N)$. Then, as a consequence, we can apply the method of moving planes to prove that all the nontrivial nonnegative solutions in $D^{1,p}(\R^{N})$ are radially symmetric and strictly radially decreasing about the origin $0\in\mathbb{R}^{N}$. The sharp asymptotic
estimates and radial symmetry for more general weighted doubly $D^{1,p}$-critical nonlocal quasi-linear equations were also derived. Our results extend the results in \cite{DLL} from the special case $\mu=0$ to general cases $0\leq\mu<\bar{\mu}$.
\end{abstract}

\maketitle {\small {\bf Keywords:} Quasi-linear nonlocal elliptic equations with $p$-Laplacian; Nonlocal Hartree type nonlinearity; Hardy potential; Doubly critical problem; Radial symmetry; Sharp estimates on asymptotic behaviors; The method of moving planes.\\

{\bf 2020 MSC} Primary: 35J92; Secondary: 35B06, 35B40.}

\section{Introduction}
\subsection{Background and setting of the problem}
In this paper, we mainly consider the nonnegative $D^{1,p}(\R^{N})$-weak solutions to the following doubly $D^{1,p}(\R^{N})$-critical quasi-linear static Schr\"{o}dinger-Hartree equation with Hardy potential and nonlocal nonlinearity:
\begin{align}\label{eq1.1}
\left\{ \begin{array}{ll} \displaystyle
-\Delta_p u - \frac{\mu}{|x|^p} u^{p-1}=\left(|x|^{-2p}\ast u^{p}\right)u^{p-1}  \quad\,\,\,\,&\mbox{in}\,\, \R^N, \\ \\
u \in D^{1,p}(\R^N),\quad\,\,\,\,\,\,u\geq0 \,\,\,  \mbox{in}\,\, \R^N,&
\end{array}
\right.\hspace{1cm}
\end{align}
where $N\geq3$, $1<p<\frac{N}{2}$, $0\leq\mu<\bar{\mu}:=\left(\frac{N-p}{p}\right)^p$ with $\bar{\mu}$ being the best constant in Hardy inequality. Let $V_1(x):=\left(|x|^{-2p}\ast u^{p}\right)(x):=\int_{\R^N} \frac{u^p(y)}{|x-y|^{2p}}\mathrm{d}y$. Due to the kernel $|x|^{-2p}$ in the nonlocal convolution term $V_{1}$, the full range of $p$ for the quasi-linear nonlocal problem is $1<p<\frac{N}{2}$. For $1<p<N$, the $p$-Laplace operator $\Delta_p$ is defined by $\Delta_p u:=div (|\nabla u|^{p-2} \nabla u)$. For $1<p<2$ or $p>2$, the $p$-Laplacian $\Delta_p$ is singular elliptic or degenerate elliptic, respectively. The function space
$$ D^{1,p}(\R^N):=\left\{ u\in L^{p^{\star}}(\R^N) \, \bigg| \, \int_{\R^N} |\nabla u|^p \mathrm{d}x <+\infty \right\}$$
is the completion of $C_0^\infty$ with respect to the norm $\| u \|:=\| \nabla u \|_{L^p(\R^N)}$. Since equation \eqref{eq1.1} is quasi-linear, nonlocal and doubly $D^{1,p}(\R^{N})$-critical, it would be a quite interesting and challenging problem to study the sharp asymptotic estimates, radial symmetry, strictly radial monotonicity and classification of solutions to \eqref{eq1.1}.

\medskip

The nonlocal quasi-linear equation \eqref{eq1.1} is $D^{1,p}(\R^{N})$-critical in the sense that both \eqref{eq1.1} and the $D^{1,p}$-norm $\| \nabla u \|_{L^p(\R^N)}$ are invariant under the scaling $u\mapsto u_{\lambda}(\cdot):=\lambda^{\frac{N-p}{p}}u(\lambda\cdot)$. Equation \eqref{eq1.1} is also invariant under arbitrary rotations, but is not invariant with respect to translations if $\mu\neq0$ due to the appearance of the critical Hardy potential. For equation \eqref{eq1.1}, the finite $D^{1,p}(\R^N)$-energy assumption $u\in D^{1,p}(\R^N)$ is necessary to guarantee $V_1(x)=|x|^{-2p}\ast u^{p}\in L^{\frac{N}{p}}(\mathbb{R}^{N})$ and hence $V_1(x)$ is finite and well-defined almost everywhere. In fact, due to Hardy-Littlewood-Sobolev inequality (see \cite{FL1,FL2,Lieb,Lions2}), one has $V_1(x)=\left(|x|^{-2p}\ast |u|^{p}\right)(x)\in L^{\frac{N}{p}}(\mathbb{R}^{N})$. More precisely,
\begin{equation}\label{conv}
  \|V_1(x)\|_{L^{\frac{N}{p}}(\mathbb{R}^{N})}\leq C\|u\|^{p}_{L^{p^{\star}}(\mathbb{R}^{N})}, \qquad \forall \,\, u\in D^{1,p}(\mathbb{R}^{N}),
\end{equation}
where $C=C(N,p)>0$. Then it follows from H\"{o}lder's inequality and the critical Sobolev embedding inequality that, for any $u\in D^{1,p}(\mathbb{R}^{N})$,
\begin{equation}\label{a4}
  \int_{\mathbb{R}^{N}}\left(\frac{1}{|x|^{2p}}\ast |u|^{p}\right)(x) \cdot |u|^{p}(x)\mathrm{d}x\leq \|V_{1}\|_{L^{\frac{N}{p}}(\R^N)}\|u\|^{p}_{L^{p^{\star}}(\R^N)}\leq C\|\nabla u\|^{2p}_{L^{p}(\R^N)}.
\end{equation}
We say \eqref{eq1.1} is doubly $D^{1,p}(\mathbb{R}^{N})$-critical in the sense that the Hardy potential $\frac{u^{p-1}}{|x|^p}$ therein is also $D^{1,p}(\mathbb{R}^{N})$-critical. Indeed, by the Hardy-Sobolev inequality (see e.g. \cite{CKN,NHMSM}):
\begin{equation}\label{eq-HDSI}
  \left(\int_{\R^N}\frac{|u|^{p_s}(x)}{|x|^s}\mathrm{d}x\right)^{\frac{1}{p_s}} \leq C_{p,s} \| \nabla u \|_{L^p(\R^N)},  \qquad \forall \,\, u\in D^{1,p}(\mathbb{R}^{N}),
\end{equation}
where $0\leq s\leq p$, $p_s:=\frac{p(N-s)}{N-p}$ is the Hardy-Sobolev critical exponent, $C_{p,s}>0$ is the best constant for the Hardy-Sobolev embedding, and $C_{p,p}=\bar{\mu}^{-1}=\left(\frac{p}{N-p}\right)^p$ can not be attained when $s=p$. \eqref{a4}, combining with  \eqref{eq-HDSI} with $s=p$, deduces that, for $0\leq \mu<\bar{\mu}$,
\begin{equation}\label{eq-HLS}
  \|u\|_{V_{1}}:=\left(\int_{\mathbb{R}^{N}}\int_{\mathbb{R}^{N}}\frac{|u|^{p}(x)|u|^{p}(y)}{|x-y|^{2p}}\mathrm{d}x\mathrm{d}y\right)^{\frac{1}{2p}}\leq C_{N,p,\mu}\left(\| \nabla u \|^p_{L^p(\R^N)} - \mu\int_{\R^N}\frac{|u|^p(x)}{|x|^p}\mathrm{d}x \right)^{\frac{1}{p}},
\end{equation}
where $C_{N,p,\mu}$ is the best constant for the critical embedding inequality \eqref{eq-HLS}, which can be characterized by the following minimization problem
\begin{equation}\label{mini-hartree}
  C_{N,p,\mu}:=\inf\left\{\left(\| \nabla u \|^p_{L^p(\R^N)} -\mu\int_{\R^N}\frac{|u|^p(x)}{|x|^p}\mathrm{d}x \right)^{\frac{1}{p}} \,\Big|\,u\in D^{1,p}(\mathbb{R}^{N}), \,\, \|u\|_{V_{1}}=1\right\},
\end{equation}
where $0\leq\mu<\bar{\mu}:=\left(\frac{N-p}{p}\right)^p$.  The doubly $D^{1,p}(\mathbb{R}^{N})$-critical nonlocal quasi-linear equation \eqref{eq1.1} is the Euler-Lagrange equation for the minimization problem \eqref{mini-hartree}.

\smallskip

The general Choquard-Pekar equations use the limiting embedding inequality of the type \eqref{eq-HLS} with $\mu=0$ (c.f. e.g. \cite{LE76,LPL80,Lions1}). When $\mu=0$, Lions \cite{Lions1} proved that the minimization problem \eqref{mini-hartree} has a minimum via the concentration-compactness principle (c.f. iv) in page 169 of \cite{Lions1}). For the general minimization problem \eqref{mini-hartree} with $0\leq\mu<\bar{\mu}$, Su and Chen in \cite{SYCH} obtained that the minimization problem \eqref{mini-hartree} and hence the best constant $C_{N,p,\mu}$ in \eqref{eq-HLS} can be attained by a radially symmetric, non-increasing and nontrivial nonnegative function in $D^{1,p}(\R^N)$. Consequently, the nonlocal quasi-linear Hartree equation \eqref{eq1.1} possesses (at least) a nontrivial nonnegative weak solution $u\in D^{1,p}(\mathbb{R}^{N})$. For more literatures on existence and multiplicity of solutions to elliptic equations with Hardy potentials, c.f. e.g. \cite{CH,CP,CY,MMPS} and the references therein.

\smallskip

In the semi-linear elliptic case $p=2$, \eqref{eq1.1} becomes the following $H^{1}$-energy-critical nonlocal nonlinear Hartree equation with Hardy potential:
\begin{align}\label{ciHeq}
-\Delta u -\mu\frac{u}{|x|^2}=\left(|x|^{-4}\ast u^{2}\right)u  \quad\,\,\,\,&\mbox{in}\,\, \R^N,
\end{align}
where $N \geq 5$ and $0\leq\mu<\left(\frac{N-2}{2}\right)^2$. Equation \eqref{eq1.1} is the quasi-linear counterpart of \eqref{ciHeq}. PDEs similar to \eqref{ciHeq} arise in the Hartree-Fock theory of the nonlinear Schr\"{o}dinger equations (see \cite{LS}). The solution $u$ to \eqref{ciHeq} with $\mu=0$ is also a ground state or a stationary solution to the following focusing energy-critical dynamic Schr\"{o}dinger-Hartree equation (see e.g. \cite{LMZ,MXZ3})
\begin{equation}\label{Hartree}
i\partial_{t}u+\Delta u=-\left(|x|^{-4}\ast u^{2}\right)u, \qquad (t,x)\in\mathbb{R}\times\mathbb{R}^{N}.
\end{equation}
The Schr\"{o}dinger-Hartree equations have many interesting applications in the quantum theory of large systems of non-relativistic bosonic atoms and molecules (see, e.g. \cite{FL}) and have been quite intensively studied, c.f. e.g. \cite{LMZ,MXZ3} and the references therein, in which the ground state solution can be regarded as a crucial criterion or threshold for global well-posedness and scattering in the focusing case. Therefore, the classification of solutions to \eqref{ciHeq} plays an important and fundamental role in the study of the focusing dynamic Schr\"{o}dinger-Hartree equations \eqref{Hartree}. For classification and other related quantitative and qualitative properties of solutions to energy-critical Hartree type equations like \eqref{ciHeq} involving fractional, second, higher and arbitrary order Laplacians $(-\Delta)^{s}$ ($0<s<+\infty$) and Choquard equations, we refer to \cite{AY,CD,CDZ,DFHQW,DFQ,DL,DLQ,DQ,GHPS,Lei,Liu,LE76,LPL80,MZ,MS} and the references therein.

\smallskip

In the quasi-linear case (i.e. $1<p<N$, $p \neq 2$), the doubly energy-critical nonlocal equation \eqref{eq1.1} becomes much more difficult and challenging. First, since the $p$-Laplacian $-\Delta_{p}$ is quasi-linear, we can not simply deduce the comparison principles from the maximum principles and the Kelvin type transforms are not available. Moreover, it is necessary for us to discuss the singular elliptic case $1<p<2$ and the degenerate elliptic case $p>2$ separately. Second, due to the lack of Kelvin type transforms, in order to carry out the method of moving planes, we need first to derive the sharp asymptotic estimates of nontrivial nonnegative solutions $u$ (and $|\nabla u|$) to equation \eqref{eq1.1}. Third, comparing to the case $\mu=0$ studied in \cite{DLL}, since the Hardy potential $\frac{u^{p-1}}{|x|^p}$ is singular at the origin $0\in\mathbb{R}^{N}$, equation \eqref{eq1.1} is not translation invariant and hence the sharp asymptotic estimates near the origin $0$ must be established. Forth, due to the nonlocal virtue of the Hartree nonlinearity in \eqref{eq1.1}, the methods and techniques for point-wise local nonlinearity do not work anymore, we need to develop and use some new ideas and techniques to derive the sharp asymptotic estimates near $0$ and $\infty$ and the radial symmetry.

\smallskip

\subsection{Main results}
In this paper, we aim to overcome all the above mentioned difficulties caused by the quasi-linear $p$-Laplacian $-\Delta_{p}$, the singularity of $D^{1,p}(\R^N)$-critical Hardy potential and the nonlocal nonlinear interaction, and prove the sharp asymptotic estimates, the radial symmetry and strictly radial monotonicity of positive solutions to \eqref{eq1.1}. These results will lead to a complete classification of nonnegative weak solutions to \eqref{eq1.1} provided that the radial solution is unique.

\medskip

To this end, we will first generalize the equation \eqref{eq1.1} and consider the following more general doubly $D^{1,p}(\R^{N})$-critical quasi-linear equation with Hardy potential:
\begin{equation}\label{gen-eq}
-\Delta_p u - \frac{\mu}{|x|^p}|u|^{p-2}u= V(x)|x|^{-s} |u|^{p-2}u   \qquad \text{in}\,\, \R^N,
\end{equation}
where $0\leq\mu< \bar{\mu}=\left( \frac{N-p}{p}\right)^p$, $1<p<N$, $0\leq s < p$ and $0\leq V(x)\in L^\frac{N}{p-s}(\R^N)$. Equation \eqref{gen-eq} generalizes  equations \eqref{eq1.1}, \eqref{eq1.2} and \eqref{eq1.3}. Equation \eqref{eq1.1} is a typical special case of \eqref{gen-eq} with $s=0$ and $V:=|x|^{-2p}\ast |u|^{p}$.

\smallskip

\begin{defn}[$D^{1,p}(\R^{N})$-weak solution]\label{weak}
A $D^{1,p}(\R^{N})$-weak solution of the generalized equation \eqref{gen-eq} is a function $u\in D^{1,p}(\R^N)$ such that
\begin{align*}
\int_{\R^N} |\nabla u|^{p-2} \langle \nabla u, \nabla\psi\rangle \mathrm{d}x
&-\mu \int_{\R^N}\frac{1}{|x|^{p}}|u|^{p-2}(x)u(x)\psi(x)\mathrm{d}x\\
&= \int_{\R^N}V(x)|x|^{-s}|u|^{p-2}(x)u(x)\psi(x)\mathrm{d}x, \,\,\,\,\quad \forall \,\,\psi \in C_0^\infty(\R^N).
\end{align*}
\end{defn}

\smallskip

For any nonnegative weak solution $u$ of the generalized equation \eqref{gen-eq}, by exploiting the De Giorgi-Moser-Nash iteration technique (c.f. \cite{TMOSER}) as in \cite[Lemma B.3]{CDPSYS} and \cite{DLL,HL,JSLB,Trud}, we prove in Lemmas \ref{lm:l-b-r} that $u\in L_{loc}^q(\R^N\setminus\{0\})$ for some $N/(p-s) < q < +\infty$, and that $u\in L^{\infty}_{loc}(\R^N \setminus \{0\})$ if $V\in L^{q}_{loc}(\R^{N}\setminus \{0\})$ for some $N/(p-s)<q<+\infty$. Next, using the standard $C^{1,\alpha}$ estimates (see \cite{ED83,KM,PT}), we deduce that $u\in C_{loc}^{1,\alpha}(\R^N\setminus \{0\})$ for some $0<\alpha<\min\{1,1/(p-1)\}$. Moreover, it follows from the strong maximum principle (see Lemma \ref{hopf}) that each nontrivial nonnegative solution to \eqref{eq1.1} is actually strictly positive provided that $V\in L^{\infty}_{loc}(\mathbb{R}^{N}\setminus \{0\})$.

\smallskip

For $\mu=0$ and $V=u^{p_s-p}$ with nonnegative $u \in D^{1,p}(\R^N)$, the general equation \eqref{gen-eq} becomes the following weighted $D^{1,p}(\R^N)$-critical $p$-Laplace equation:
\begin{align}\label{eq-cri-har-0}
 -\Delta_p u = |x|^{-s} u^{p_s-1}, \quad\, u\geq0 \quad\,\,\,\,&\mbox{in}\,\, \R^N,
\end{align}
where $1<p<N$, $0\leq s <p$ and $p_s=\frac{p(N-s)}{N-p}$. In particular, if $s=0$, \eqref{eq-cri-har-0} becomes
\begin{align}\label{criticeqution}
 -\Delta_p u = u^{p^\star-1},  \quad\, u\geq0 \quad\,\,\,\,&\mbox{in}\,\, \R^N,
\end{align}
where $u \in D^{1,p}(\R^N)$, $1<p<N$ and $p^\star:=\frac{Np}{N-p}$ is the critical Sobolev embedding exponent. Serrin \cite{S} and Serrin and Zou \cite{SJZH02}, among other things, proved regularity and Liouville type theorem for general quasi-linear equations with point-wise local nonlinearity and sharp lower bound on the asymptotic estimate of super-$p$-harmonic functions. Guedda and Veron \cite{GV} proved the uniqueness of radially symmetrical positive solution $U(x)=U(r)$ with $r=|x|$ to \eqref{criticeqution} satisfying $U(0)=b>0$ and $U'(0)=0$, and hence reduced the classification of solutions to the radial symmetry of solutions.

\smallskip

In the singular elliptic case $1<p<2$, under the $C^{1}\cap W^{1,p}$-regularity assumptions on the positive solutions and several assumptions on the local nonlinear term, the radial symmetry and strictly radial monotonicity of positive solutions to \eqref{criticeqution} was obtained in \cite{DLPFRM,LDMR} by using the moving plane technique. Later, in \cite{LDSMLMSB}, Damascelli, Merch\'{a}n, Montoro and Sciunzi refined the moving plane method used in \cite{DLPFRM,LDMR} and derived the radial symmetry of positive $D^{1,p}(\R^N)$-weak solutions to \eqref{criticeqution} without regularity assumptions on solutions. However, they still required in \cite{LDSMLMSB} that the nonlinear term is locally Lipschitz continuous, i.e., $p^{\star} \geq 2$. Subsequently, by applying scaling arguments and the doubling Lemma (see \cite{PPPQPS}), V\'{e}tois \cite{VJ16} first derived a preliminary estimate on the decay rate of positive $D^{1,p}(\R^N)$-weak solutions $u$ to \eqref{criticeqution} (i.e., $u(x)\leq \frac{C}{|x|^{\frac{N-p}{p}}}$ for $|x|$ large). Then, by combining the preliminary estimate with scaling arguments, the Harnack-type inequalities (c.f. \cite{LDBS,JSLB}), regularity estimates and the boundedness estimate of the quasi-norm $L^{t,\infty}(\mathbb{R}^{N})$ with $t=\frac{N(p-1)}{N-p}$ (Lemma 2.2 in \cite{VJ16}), V\'{e}tois \cite{VJ16} obtained the sharp asymptotic estimates of the positive $D^{1,p}(\R^N)$-weak solutions $u$ (i.e., $u\sim\left(1+|x|^{\frac{N-p}{p-1}}\right)^{-1}$) and the sharp upper bound on decay rate of $|\nabla u|$ (i.e., $|\nabla u|\leq \frac{C}{|x|^{\frac{N-1}{p-1}}}$ for $|x|$ large) for $1<p<N$. These sharp asymptotic estimates combined with the results in \cite{DLPFRM,LDMR} successfully extended the radial symmetry results for \eqref{criticeqution} in \cite{LDSMLMSB} to the full singular elliptic range $1<p<2$.

\smallskip

For equation \eqref{criticeqution} in the degenerate elliptic case $p>2$,  based on scaling arguments, the results for limiting profile at infinity, the uniqueness up to multipliers of $p$-harmonic maps in $\mathbb{R}^{N}\setminus\{0\}$ and the sharp asymptotic estimates on $u$ proved in \cite{VJ16},Sciunzi \cite{BS16} developed a new technique and derived the sharp lower bound on decay rate of $|\nabla u|$ (i.e., $|\nabla u|\geq \frac{c}{|x|^{\frac{N-1}{p-1}}}$ for $|x|$ large). Thanks to the sharp asymptotic estimates on $u$ and $|\nabla u|$, Sciunzi \cite{BS16} finally proved the classification of positive $D^{1,p}(\R^N)$-weak solution $u$ to \eqref{criticeqution} via the moving planes technique in conjunction with the weighted Poincar\'{e} type inequality and Hardy's inequality and so on. For radial symmetry of positive $D^{1,p}(\R^N)$-weak solutions to $D^{1,p}(\mathbb{R}^{N})$-critical quasi-linear equations with local nonlinearity of type \eqref{criticeqution} via the method of moving planes, c.f. \cite{DLL,LD,DP,LDSMLMSB,DLPFRM,LDMR,LDBS04,LPLDHT,OSV,BS16,VJ16} and the references therein.

\smallskip

Ciraolo, Figalli and Roncoroni \cite{CFR} obtained classification of positive $D^{1,p}(\R^N)$-weak solutions to $D^{1,p}(\R^N)$-critical anisotropic $p$-Laplacian equations of type \eqref{criticeqution} in general convex cones. Dai, Gui and Luo \cite{DGL} classified weak solutions with finite total mass to anisotropic Finsler $N$-Laplacian Liouville equations in general convex cones and hence extended the classification results in \cite{CFR} from $1<p<N$ to the limiting case $p=N$. Under some energy growth conditions or suitable control of the solutions at $\infty$ in some cases, Catino, Monticelli and Roncoroni \cite{CMR} established classification of general positive solutions to \eqref{criticeqution} (possibly having infinite $D^{1,p}(\R^N)$-energy) for $1<p<N$. For $\frac{N+1}{3}\leq p<N$, Ou \cite{Ou} classified general positive solutions to \eqref{criticeqution} (possibly having infinite $D^{1,p}(\R^N)$-energy). It should be noted that, for our nonlocal quasi-linear equation \eqref{eq1.1}, the finite $D^{1,p}(\R^N)$-energy assumption that $u\in D^{1,p}(\R^N)$ is natural and necessary, since we need it to guarantee that $V_{1}(x):=|x|^{-2p}\ast u^{p}\in L^{\frac{N}{p}}(\mathbb{R}^{N})$ and hence $V_{1}(x)$ is finite and well-defined almost everywhere. For classification of solutions to weighted $D^{1,p}(\R^N)$-critical $p$-Laplace equation \eqref{eq-cri-har-0}, refer to \cite{CY,LPLDHT}. In \cite{LM}, Lin and Ma obtained the classification of positive solutions for weighted $p$-Laplace equation of the type \eqref{eq-cri-har-0} with singular weight on partial variables, and hence derived the best constant and extremal functions for a class Hardy-Sobolev-Maz'ya inequalities. For more literatures on quantitative and qualitative properties of solutions for quasi-linear equations involving $p$-Laplacians with local nonlinearities, please c.f. \cite{A,AY,B,CDPSYS,CV,DLL,LD,DFSV,DP,LDBS04,LDBS,DYZ,ED83,Di,DPZZ,FSV,GL,LPLDHT,Lieberman,LM,OSV,BSR14,JSLB,SJZH02,Tei,PT,JLV,ZL,Z} and the references therein.

\medskip

When $1<p<N$ and $s=0$, if $V(x)=|u|^{p^\star-p}$ with $p^\star=\frac{Np}{N-p}$ and nonnegative $u \in D^{1,p}(\R^N)$, the generalized equation \eqref{gen-eq} becomes the following doubly $D^{1,p}(\R^N)$-critical $p$-Laplace equation with Hardy potential:
\begin{align}\label{criticeqution-hardy}
\left\{ \begin{array}{ll} \displaystyle
-\Delta_p u - \frac{\mu}{|x|^p} |u|^{p-2}u=|u|^{p^{\star}-2}u  \quad\,\,\,\,&\mbox{in}\,\, \R^N, \\ \\
u \in D^{1,p}(\R^N), \quad   u\geq 0\,\,  \mbox{in}\,\, \R^N,&
\end{array}
\right.\hspace{1cm}
\end{align}
where $0\leq\mu< \bar{\mu}=\left( (N-p)/p \right)^p$.
By employing the weak comparison principle, De Giorgi-Moser-Nash iteration technique and the Caccioppoli inequality, Xiang \cite{XCL,XCL2} obtained the upper and lower bound asymptotic behavior of the positive solution $u\in D^{1,p}(\R^N)$ for equation \eqref{criticeqution-hardy} and the upper bound asymptotic estimates of $|\nabla u|$ near the origin and at infinity. Using the scaling arguments, some asymptotic estimates proved in \cite{XCL,XCL2} and the strictly radial decreasing property of positive solution $u\in C_{loc}^{1,\alpha}(\R^N\setminus\{0\})$ to the following equation
\begin{equation}\label{eq-p-h-c}
-\Delta_p u - \frac{\mu}{|x|^p}|u|^{p-2}u= 0 \,\,\,\,\,\  \mbox{in}\,\ \R^N\setminus\{0\}
\end{equation}
under suitable asymptotic conditions for $u$ near the origin and at infinity, Oliva, Sciunzi and Vaira \cite{OSV} proved the sharp lower bound on the decay rate of $|\nabla u|$ to equation \eqref{criticeqution-hardy} at infinity, and hence the radial symmetry of positive solutions to equation \eqref{criticeqution-hardy} in the case $1<p<N$ and $0\leq\mu< \bar{\mu}$ via a refined method of moving planes. Recently, Esposito, Montoro, Sciunzi and Vuono \cite{EMSV} derived the complete asymptotic estimates for the anisotropic doubly $D^{1,p}(\R^N)$-critical equation, by using the method in \cite{OSV,XCL,XCL2} and the uniqueness of positive solution $u\in C_{loc}^{1,\alpha}(\R^N\setminus\{0\})$ to the equation \eqref{eq-p-h-c} under suitable asymptotic conditions for $u$ and $|\nabla u|$ in $\R^N \setminus\{0\}$.

\smallskip

When $\mu=0$ and $s=0$, if $V(x)=|x|^{-2p}\ast |u|^p$ with nonnegative $u \in D^{1,p}(\R^N)$, the generalized equation \eqref{gen-eq} becomes the following $D^{1,p}(\R^N)$-critical quasi-linear nonlocal equation:
\begin{equation}\label{eq-cri-hartree}
 -\Delta_p u =\left(|x|^{-2p}\ast u^{p}\right)u^{p-1},\,u\geq0, \quad\,\mbox{in}\, \R^N,\quad u \in D^{1,p}(\R^N),
\end{equation}
where the full range of $p$ is $1<p<\frac{N}{2}$ and $N\geq3$. In \cite{DLL}, Dai, Li and Liu first proved the sharp asymptotic estimates, radial symmetry and strictly radial monotonicity of positive solutions to \eqref{eq-cri-hartree}. Since \eqref{eq-cri-hartree} is a typical special case of equation \eqref{eq1.1} with $\mu=0$,inspired by \cite{DLL,OSV}, in this paper we will establish the sharp asymptotic estimates, the radial symmetry and strictly radial monotonicity of positive solution to the doubly energy-critical quasi-linear nonlocal problem \eqref{eq1.1}. Our results can be regarded as the nonlocal counterpart of \cite{OSV} and the extension of \cite{DLL} from $\mu=0$ to general case $0\leq\mu<\bar{\mu}:=\left(\frac{N-p}{p}\right)^p$.

\medskip

First, inspired by \cite{CDPSYS,DLL,SJZH02,XCL}, using the De Giorgi-Moser-Nash iteration method, we can derive the following regularity and a better preliminary estimates on upper bounds of asymptotic behaviors of $u\in D^{1,p}(\R^N)$ to the generalized doubly $D^{1,p}$-critical quasi-linear equation \eqref{gen-eq} near the origin as well as at infinity.
\begin{thm}[Local integrability, boundedness and regularity of solutions to \eqref{gen-eq}]\label{th2.1.1}
Assume $0\leq\mu<\bar{\mu}$, $1<p<N$, $0\leq s<p$, $0\leq V\in L^{\frac{N}{p-s}}(\mathbb{R}^{N})$ and let $u$ be a nonnegative $D^{1,p}(\R^{N})$-weak solution to the generalized equation \eqref{gen-eq}. Then, we have \\
{\bf $(i)$} $u\in L^{r}_{loc}(\mathbb{R}^{N}\setminus\{0\})$ for any $0<r<+\infty$; \\
{\bf $(ii)$} for any $\bar{p}\geq p^\star$, and for any $\rho_{1}\in(0,1)$ such that $\|V\|_{L^\frac{N}{p-s} (B_{\rho_{1}}(0))}\leq \epsilon_1$ and $\|V\|_{L^\frac{N}{p-s}\Big(\R^N\setminus B_{\frac{1}{\rho_1}}(0)\Big)} \leq \epsilon_1$, there hold
\begin{equation}\label{loc-est-3+}
  \|u\|_{L^{\bar{p}}(B_{4R}(0)\setminus B_{R/4}(0))} \leq \frac{C}{R^{\frac{N-p}{p}-\frac{N}{\bar{p}}}} \|u\|_{L^{p^\star}(B_{8R}(0)\setminus B_{R/8}(0))}, \,\,\,\quad \forall \,\, R\leq \frac{\rho_1}{8},
\end{equation}
and
\begin{equation}\label{loc-est-4+}
  \|u\|_{L^{\bar{p}}(\R^N\setminus B_{2R}(0))} \leq \frac{C}{R^{\frac{N-p}{p}-\frac{N}{\bar{p}}}} \|u\|_{L^{p^\star}(\R^N\setminus B_{R}(0))}, \,\,\,\quad \forall \,\, R\geq \frac{1}{\rho_1},
\end{equation}
where $C=C(N,p,s,\mu)>0$ and $\epsilon_1>0$ is given by Lemma \ref{lm:l-b-2};\\
{\bf $(iii)$} for any $\rho_{2}\in(0,1)$ such that $\|V\|_{L^\frac{N}{p-s} (B_{\rho_{2}}(0))}\leq \epsilon_2$ and $\|V\|_{L^\frac{N}{p-s} (\R^N\setminus B_{1/{\rho_{2}}}(0))} \leq \epsilon_2$, there hold
\begin{equation}\label{loc-est-5+}
  \|u\|_{L^{p^\star}(B_R (0))} \leq C R^{\tau_1}, \,\,\,\quad \forall \,\, R \leq \rho_2,
\end{equation}
and
\begin{equation}\label{loc-est-6+}
  \|u\|_{L^{p^\star}(\R^N\setminus B_{R}(0))} \leq C R^{-\tau_2}, \,\,\,\quad \forall \,\, R\geq \frac{1}{\rho_2},
\end{equation}
where $C=C(N,\mu,p,s,\rho_2,\|u\|_{L^{p^\star}(\R^N)})>0$, $\tau_i =\tau_i(N,p,s,\mu)>0$ ($i=1,2$) and $\epsilon_2>0$ is given by Lemma \ref{lm:l-b-3}.

\smallskip

Moreover, assume further that $V\in L^{q}_{loc}(\mathbb{R}^{N}\setminus \{0\})$ for some $\frac{N}{p}<q<+\infty$ when $s=0$ (no further assumption if $0<s<p$),
then $u\in C^{1,\alpha}(\R^N\setminus \{0\})\cap L^{\infty}_{loc}(\mathbb{R}^{N}\setminus \{0\})$ for some $0<\alpha<\min\{1,\frac{1}{p-1}\}$.
\end{thm}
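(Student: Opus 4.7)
The proof is a De Giorgi--Moser--Nash iteration tailored to accommodate two critical singular terms simultaneously: the Hardy potential $\mu|x|^{-p}u^{p-1}$ and the weighted right-hand side $V(x)|x|^{-s}u^{p-1}$. My starting point is the weak formulation of \eqref{gen-eq} tested against a truncation of $\psi=u^{p(q-1)+1}\eta^{p}$, with $q\geq 1$ the iteration exponent and $\eta\in C_c^{\infty}$ a standard cutoff. Expanding, the Sobolev inequality applied to $u^{q}\eta$ produces, up to controllable lower-order terms,
\[
  \|u^{q}\eta\|_{L^{p^{\star}}(\R^{N})}^{p}
  \;\le\; C(q)\int_{\R^{N}}|u|^{pq}|\nabla\eta|^{p}\,dx
  \;+\; \mu\int_{\R^{N}}\frac{u^{pq}}{|x|^{p}}\eta^{p}\,dx
  \;+\; \int_{\R^{N}}V(x)|x|^{-s}u^{pq}\eta^{p}\,dx.
\]
The Hardy term is absorbed into the gradient contribution via the sharp Hardy inequality, using the strict gap $\bar\mu-\mu>0$. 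The $V$-term is estimated by H\"older with conjugate exponents $\bigl(\tfrac{N}{p-s},\tfrac{p^{\star}(N-s)}{p(N-p)}\bigr)$ followed by the Hardy--Sobolev inequality \eqref{eq-HDSI} with weight $|x|^{-s}$, which yields a bound proportional to $\|V\|_{L^{N/(p-s)}(\mathrm{supp}\,\eta)}\|\nabla(u^{q}\eta)\|_{L^{p}}^{p}$. Hence, whenever $\|V\|_{L^{N/(p-s)}(\mathrm{supp}\,\eta)}$ is sufficiently small, the $V$-contribution is absorbed as well, producing a clean reverse-H\"older inequality relating $\|u^{q}\|_{L^{p^{\star}}(\mathrm{inner})}$ to $\|u^{q}\|_{L^{p}(\mathrm{outer})}$.

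\textbf{Parts (i) and (ii).} For \textbf{(i)}, the absolute continuity of the $L^{N/(p-s)}$-norm of $V$ implies that around any point $x_{0}\neq 0$ one can find a small ball on which $\|V\|_{L^{N/(p-s)}}$ is as small as needed; iterating the above reverse-H\"older inequality with $q_{k}=(p^{\star}/p)^{k}$ and dyadically shrinking cutoffs yields $u\in L^{r}_{loc}(\R^{N}\setminus\{0\})$ for every $r<\infty$. For \textbf{(ii)}, the smallness hypotheses $\|V\|_{L^{N/(p-s)}(B_{\rho_{1}})}\leq\epsilon_{1}$ and $\|V\|_{L^{N/(p-s)}(\R^{N}\setminus B_{1/\rho_{1}})}\leq\epsilon_{1}$ supply the required absorption \emph{uniformly} in radius, so one performs the same finite number of iterations from $q_{0}=1$ up to exponents reaching $\bar p$ on the annuli $B_{8R}\setminus B_{R/8}$, $R\leq\rho_{1}/8$, and respectively on $\R^{N}\setminus B_{R}$, $R\geq 1/\rho_{1}$. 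The scaling factor $R^{-(\frac{N-p}{p}-\frac{N}{\bar p})}$ in \eqref{loc-est-3+}--\eqref{loc-est-4+} then follows from a direct dimensional check: after rescaling $u_{R}(x):=R^{(N-p)/p}u(Rx)$, the equation and the $L^{p^{\star}}$-norm are invariant, while the $L^{\bar p}$-norm picks up exactly this factor.

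\textbf{Part (iii).} The polynomial decay/growth of $\|u\|_{L^{p^{\star}}(B_{R})}$ and $\|u\|_{L^{p^{\star}}(\R^{N}\setminus B_{R})}$ is obtained by a doubling-type iteration. Fix $\bar p>p^{\star}$; H\"older's inequality on an annulus gives
\[
  \|u\|_{L^{p^{\star}}(B_{4R}\setminus B_{R/4})}
  \;\le\; C\,R^{N\bigl(\frac{1}{p^{\star}}-\frac{1}{\bar p}\bigr)}\|u\|_{L^{\bar p}(B_{4R}\setminus B_{R/4})}
  \;\le\; C\,R^{\theta}\,\|u\|_{L^{p^{\star}}(B_{8R}\setminus B_{R/8})},
\]
with $\theta:=N(\tfrac{1}{p^{\star}}-\tfrac{1}{\bar p})-(\tfrac{N-p}{p}-\tfrac{N}{\bar p})>0$ once $\bar p$ is chosen large enough. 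Setting $R_{k}:=2^{-k}\rho_{2}$ and $a_{k}:=\|u\|_{L^{p^{\star}}(B_{R_{k}})}$, splitting $B_{R_{k}}$ into dyadic annuli and summing produces a geometric recursion of the form $a_{k}\leq c\cdot 2^{-\theta k}\|u\|_{L^{p^{\star}}(\R^{N})}$, which yields $\|u\|_{L^{p^{\star}}(B_{R})}\leq CR^{\tau_{1}}$ with $\tau_{1}=\theta$. The decay estimate \eqref{loc-est-6+} is obtained by the symmetric argument at infinity.

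\textbf{Part (iv) and main obstacle.} Under the additional assumption $V\in L^{q}_{loc}(\R^{N}\setminus\{0\})$ for some $q>N/p$ when $s=0$ (which is automatic if $0<s<p$, after choosing a slightly modified H\"older pairing), the right-hand side $V|x|^{-s}u^{p-1}$ is locally in $L^{q_{1}}$ for some $q_{1}>N/p$ on compact subsets of $\R^{N}\setminus\{0\}$. A further Moser step then upgrades $u$ from $L^{r}_{loc}$ to $L^{\infty}_{loc}(\R^{N}\setminus\{0\})$; the $C^{1,\alpha}_{loc}(\R^{N}\setminus\{0\})$ regularity is then a direct consequence of the classical theorems of DiBenedetto~\cite{ED83}, Tolksdorf~\cite{PT} and Lieberman~\cite{Lieberman}. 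The principal technical obstacle I anticipate is the simultaneous absorption of both the Hardy and the $V$-term along each iteration step: the Hardy constant $\bar\mu-\mu$ is fixed, so one must verify that the H\"older--Hardy-Sobolev estimate for the $V$-term produces a multiplicative constant that is genuinely proportional to $\|V\|_{L^{N/(p-s)}(\mathrm{supp}\,\eta)}$ with a factor independent of the iteration level $q$; this requires carefully bookkeeping the cutoff derivatives and ensuring that the Caccioppoli estimate does not degenerate as $q\to\infty$.
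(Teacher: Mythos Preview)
There are two genuine gaps in your proposal.

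\textbf{The Hardy term cannot be absorbed via the sharp Hardy inequality during iteration.} When you test with $\psi=u^{p(q-1)+1}\eta^{p}$ and expand, the gradient term on the left carries the coefficient $\frac{1+p(q-1)}{q^{p}}\sim\frac{p}{q^{p-1}}$ in front of $\|\nabla(u^{q}\eta)\|_{L^{p}}^{p}$, while the sharp Hardy inequality bounds the Hardy term by $\frac{\mu}{\bar\mu}\|\nabla(u^{q}\eta)\|_{L^{p}}^{p}$. For any $\mu>0$ the first coefficient drops below the second once $q$ is large, so absorption fails; the obstacle you flag at the end is in fact fatal to this route. The paper avoids this entirely: for parts (i), (ii) and the $L^{\infty}_{loc}$ upgrade, the cutoff $\eta$ is supported \emph{away from the origin} (in a ball $B_{h}(y)$ with $|y|\ge 3R$, or in an annulus), so $|x|^{-p}$ is treated as a subcritical potential. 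Concretely one estimates $\bigl\||x|^{-p}\bigr\|_{L^{N/(p-l)}(\mathrm{supp}\,\eta)}\le Ch^{-l}$ for a fixed $l\in(0,p)$, which yields a term of the form $h^{-l}\|\nabla(\eta u^{q})\|_{L^{p}}^{p-l}\|\eta u^{q}\|_{L^{p}}^{l}$. This has exponent $p-l<p$ in the gradient norm and is handled by the algebraic inequality of Lemma~\ref{tech-in-1}, uniformly in $q$.

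\textbf{In part (iii) your exponent $\theta$ equals zero.} A direct computation gives
\[
\theta \;=\; N\Bigl(\tfrac{1}{p^{\star}}-\tfrac{1}{\bar p}\Bigr)-\Bigl(\tfrac{N-p}{p}-\tfrac{N}{\bar p}\Bigr)
\;=\;\tfrac{N}{p^{\star}}-\tfrac{N-p}{p}\;=\;0,
\]
independently of $\bar p$. This is forced by scale invariance: the estimate \eqref{loc-est-3+} has exactly the critical scaling, so bootstrapping it via H\"older cannot produce decay. The paper instead proves (iii) by a \emph{hole-filling} argument. One tests the equation with $\varphi=\eta^{p}u$ (i.e.\ $q=1$, so the Hardy absorption via the sharp inequality is legitimate here) with $\eta$ a cutoff on $B_{R}(0)$; after absorbing both the Hardy term (using $\mu<\bar\mu$) and the $V$-term (using $\|V\|_{L^{N/(p-s)}(B_{\rho_{2}})}\le\epsilon_{2}$), the remaining right-hand side lives on the annulus $B_{R}\setminus B_{R/2}$, giving
\[
\|u\|_{L^{p^{\star}}(B_{R/2})}\;\le\; C\,\|u\|_{L^{p^{\star}}(B_{R}\setminus B_{R/2})}.
\]
Writing $\Psi(R)=\|u\|_{L^{p^{\star}}(B_{R})}$, this yields $\Psi(R/2)\le\theta\,\Psi(R)$ with $\theta=(C^{p^{\star}}/(C^{p^{\star}}+1))^{1/p^{\star}}<1$, hence $\Psi(R)\le CR^{\tau_{1}}$ with $\tau_{1}=-\log_{2}\theta>0$. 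The estimate at infinity is symmetric.
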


\begin{thm}[Preliminary estimates of solutions to \eqref{gen-eq}]\label{th2.1++}
Let $u\in D^{1,p}(\R^N)$ be a nonnegative weak solution of the general equation \eqref{gen-eq} with $0\leq \mu < \bar{\mu}$, $1<p<N$, $0\leq s < p$ and $0\leq V(x)\in L^\frac{N}{p-s}(\R^N)$. Let $\rho>0$ be small enough such that $ \|V\|_{L^\frac{N}{p-s} (B_{\rho}(0))}\leq \min\{\epsilon_1,\epsilon_2\}$ and $\|V\|_{L^\frac{N}{p-s} (\R^N\setminus B_{1/{\rho}}(0))} \leq \min\{\epsilon_1,\epsilon_2\}$, where $\epsilon_1>0$ and $\epsilon_2>0$ are given by Lemmas \ref{lm:l-b-2} and \ref{lm:l-b-3} respectively. If $s=0$, assume that $\rho$ is smaller if necessary such that, there exists some $l\in(0,p)$ satisfying
\begin{equation}\label{est-bdd-1+}
\|V\|_{L^{\frac{N}{p-l}} (B_{2r}(y))} \leq \widetilde{C} r^{-l}, \,\,\,\,\,\,\,\,\,\, \forall \,\, |y|\leq \frac{\rho}{16} \,\,\,\,\, \mbox{or} \,\,\,\,\, |y|\geq \frac{16}{\rho},
\end{equation}
where $r=\min\left\{1,\frac{|y|}{4}\right\}$ and the positive constant $\widetilde{C}$ is independent of $y$ (no assumption is needed when $0<s<p$). Then, there exists a positive constant $C=C(N,p,\mu,s,\rho,u,\tilde{C})>0$ such that
$$ u(y) \leq C |y|^{-\frac{N-p}{p}+\tau_1}, \,\,\,\,\,\,\,\, \forall \ |y|\leq \frac{\rho}{16},$$
and
$$ u(y) \leq C |y|^{-\frac{N-p}{p}-\frac{\tau_2}{2}}, \,\,\,\,\,\,\,\, \forall \ |y|\geq \frac{16}{\rho},$$
where $\tau_1, \tau_2 >0$ are given in Lemma \ref{lm:l-b-3}.
\end{thm}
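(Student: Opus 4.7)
My plan is to convert the annular integral bounds of Theorem \ref{th2.1.1} into pointwise bounds via a scale-invariant local $L^{\infty}$-from-$L^{\bar p}$ Moser-type estimate centered at $y$. For $|y|\leq\rho/16$ set $r=|y|/4$, so that $B_{2r}(y)\subset B_{4|y|}(0)\setminus B_{|y|/4}(0)$ is uniformly separated from the Hardy-singularity at $0$; the case $|y|\geq 16/\rho$ will be handled analogously using \eqref{loc-est-4+} and \eqref{loc-est-6+} in place of \eqref{loc-est-3+} and \eqref{loc-est-5+}.

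\textbf{Step 1 (localized sup bound).} On $B_{2r}(y)$ the weight $|x|^{-s}\leq Cr^{-s}$ is bounded and the Hardy potential $\mu/|x|^p\leq Cr^{-p}$ is smooth. I would run a De Giorgi--Moser--Nash iteration for $u$ on $B_{2r}(y)$ (after rescaling to the unit ball) with test functions of the form $u^{\beta}\eta^{p}$, $\beta$ large. The smallness input that renders the iteration scale-invariant with a $y$-independent constant is, for $s=0$, precisely the Morrey-type hypothesis \eqref{est-bdd-1+} with $l\in(0,p)$; when $0<s<p$ the extra weight $|x|^{-s}$ encodes the required improvement automatically. The outcome is, for any $\bar p\geq p^\star$,
\[
u(y)\leq\sup_{B_r(y)}u\leq C\,r^{-N/\bar p}\,\|u\|_{L^{\bar p}(B_{2r}(y))},
\]
with $C$ independent of $y$.

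\textbf{Step 2 (feeding in the integral estimates).} Since $B_{2r}(y)\subset B_{4|y|}(0)\setminus B_{|y|/4}(0)$, estimate \eqref{loc-est-3+} with $R=|y|$ gives
\[
\|u\|_{L^{\bar p}(B_{2r}(y))}\leq C\,|y|^{-\frac{N-p}{p}+\frac{N}{\bar p}}\,\|u\|_{L^{p^\star}(B_{8|y|}(0)\setminus B_{|y|/8}(0))},
\]
and \eqref{loc-est-5+} bounds the last factor by $C|y|^{\tau_1}$ provided $8|y|\leq\rho$, which is guaranteed by $|y|\leq\rho/16$. Multiplying the three estimates, the factors $r^{-N/\bar p}\sim|y|^{-N/\bar p}$ and $|y|^{N/\bar p}$ cancel, leaving exactly $u(y)\leq C|y|^{-(N-p)/p+\tau_1}$. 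The decay estimate at infinity follows by the same scheme using \eqref{loc-est-4+} and \eqref{loc-est-6+}; the factor $1/2$ in the exponent of $\tau_2$ arises from a technical mismatch between the radius used in the annular inclusion $B_{2r}(y)\subset\mathbb{R}^{N}\setminus B_{|y|/2}(0)$ and the radius at which \eqref{loc-est-6+} is invoked, so the decay rate is absorbed at half strength.

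\textbf{Main obstacle.} The delicate point is Step 1: when $s=0$ the hypothesis $V\in L^{N/p}(\mathbb{R}^{N})$ is borderline and does not by itself yield a Moser iteration with a ball-independent constant, so the uniform Morrey-type smallness \eqref{est-bdd-1+} is essential to make the iteration scale-invariant in $y$. Moreover, although the Hardy potential is smooth on $B_{2r}(y)$, it contributes a term of size $r^{-p}$ that must be absorbed into the Dirichlet energy via the local Hardy inequality on $B_{2r}(y)$; one must verify that the effective coupling stays strictly below $\bar\mu$ so that the iteration closes and no coercivity is lost as $\mu\to\bar\mu$. Once Step 1 is established, the remaining steps are an algebraic combination of the estimates already provided by Theorem \ref{th2.1.1}.
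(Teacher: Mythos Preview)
Your proposal is correct and follows essentially the same route as the paper: Step~1 is precisely the content of Lemma~\ref{lm:l-b-r} (for $s=0$, with the hypothesis \eqref{est-bdd-1+} supplying the needed $R^{p-s-N/q}\|V\|_{L^q(B_{2R}(y))}\leq\widetilde C$) and Corollary~\ref{lm:l-b-1-cor} (for $0<s<p$), and Step~2 is the same algebraic combination of Theorem~\ref{th2.1.1}(ii)--(iii).

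One small correction to your account of the factor $\tfrac12$ in front of $\tau_2$: it is not caused by a radius mismatch. When $s=0$ and $|y|\geq 16/\rho$, the admissible radius in Lemma~\ref{lm:l-b-r} is $r=\min\{1,|y|/4\}=1$, which does \emph{not} scale with $|y|$; hence the sup bound reads $u(y)\leq C\|u\|_{L^{\bar p}(B_2(y))}$ with no prefactor $|y|^{-N/\bar p}$, and after applying \eqref{loc-est-4+} and \eqref{loc-est-6+} one is left with an excess $R^{N/\bar p}$. Choosing $\bar p$ large so that $N/\bar p\leq\tau_2/2$ absorbs this loss. (For $0<s<p$ one can take $r=|y|/4$ and actually obtain the full exponent $\tau_2$; the statement records the weaker $\tau_2/2$ for uniformity.) Also, in your ``Main obstacle'' the Hardy term is not handled via a local Hardy inequality but, as in the proof of Lemma~\ref{lm:l-b-r}, by the H\"older/interpolation estimate \eqref{050103}--\eqref{050113} together with Lemma~\ref{tech-in-1}, which gives a $\mu$-dependent but finite constant for each $\mu<\bar\mu$.
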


In order to prove the radial symmetry and strictly radial monotonicity of positive solutions to the generalized equation \eqref{gen-eq}, one of the key ingredients is to prove the sharp asymptotic estimates for positive $D^{1,p}(\R^N)$-weak solution $u$ and $|\nabla u|$ near the origin and at infinity. In fact, using the arguments in \cite{DLL,EMSV,BS16,SJZH02,VJ16,XCL}, by scaling arguments, the asymptotic behaviors of $u$, the regularity results in DiBenedetto \cite{ED83} and Tolksdorf \cite{PT}, and the uniqueness up to the equation \eqref{eq-p-h-c} under suitable asymptotic conditions for $u$ and $|\nabla u|$ in $\R^N\setminus \{0\}$ (see \cite[Theorem 1.3]{EMSV}), we can deduce the following complete sharp asymptotic estimates on both upper and lower bounds for $u$ and $|\nabla u|$ provided that $V(x)$ and $u$ satisfy a better preliminary asymptotic estimates at the origin and at infinity (guaranteed by Theorem \ref{th2.1++}).
\begin{thm}[Sharp asymptotic estimates of solutions to \eqref{gen-eq}]\label{th2.1}
Assume that $0\leq\mu<\bar{\mu}$, $1<p<N$, $0\leq s<p$, $0\leq V\in L^{\frac{N}{p-s}}(\mathbb{R}^{N})$, and let $u$ be a nontrivial nonnegative $D^{1,p}(\R^{N})$-weak solution to the generalized equation \eqref{gen-eq}. Suppose further that $V(x) \leq C_{V,1} |x|^{-\beta_1+s}$ for $|x|$ small and some $C_{V,1}>0$, $\beta_1<p$ provided that $u(x)\leq C|x|^{-\alpha_1}$ for some $C>0$, $\alpha_1<\frac{N-p}{p}$ and all $|x|$ small, and $V(x) \leq C_{V,2} |x|^{-\beta_2+s}$ for $|x|$ large and some $C_{V,2}>0$, $\beta_2>p$ provided that $u(x)\leq C|x|^{-\alpha_2}$ for some $C>0$, $\alpha_2>\frac{N-p}{p}$ and all $|x|$ large. Then there exist positive constants $0<R_0<1<R_1$ depending on $N,p,\mu,s,V$ and $u$, such that
\begin{equation}\label{eq0806}
   c_0 |x|^{-\gamma_1} \leq u(x) \leq C_0 |x|^{-\gamma_1}, \qquad \forall \,\,\,  |x| < R_0,
\end{equation}
\begin{equation}\label{eq0806++}
 c_0 |x|^{-\gamma_2} \leq u(x) \leq C_0 |x|^{-\gamma_2}, \qquad \forall \,\,\,  |x| > R_1,
\end{equation}
\begin{align}\label{eq0806+++}
c_0 |x|^{-\gamma_1-1} \leq |\nabla u(x)| \leq C_0 |x|^{-\gamma_1-1}, \qquad \forall \,\,\, |x| < R_0,
\end{align}
\begin{align}\label{eq0806+}
c_0 |x|^{-\gamma_2-1} \leq |\nabla u(x)| \leq C_0 |x|^{-\gamma_2-1}, \qquad \forall \,\,\, |x| > R_1,
\end{align}
where $\gamma_1<\gamma_2$ are two different roots of
\[\gamma^{p-2}[(p-1)\gamma^2-(N-p)\gamma]+\mu=0,\]
$C_0$ is positive constant depending on $N,p,\mu,s,V$ and $u$, $c_0$ is positive constant depending on $N,p,\mu,s,V,R_0,R_1$ and $u$.
\end{thm}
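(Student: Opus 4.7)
My plan is to follow the scaling-and-identification strategy developed in \cite{BS16,EMSV,OSV,VJ16,XCL}, adapted here to the Hardy-perturbed equation \eqref{gen-eq}. The guiding idea is that, under the hypotheses on $V$, zoomings of $u$ at the origin (respectively at infinity) converge to a nontrivial solution of the pure Hardy equation $-\Delta_p U-\mu|y|^{-p}U^{p-1}=0$ on $\R^N\setminus\{0\}$, which by the uniqueness theorem \cite[Theorem 1.3]{EMSV} must be one of the explicit radial profiles $c|y|^{-\gamma_1}$ or $c|y|^{-\gamma_2}$. The root $\gamma_1<(N-p)/p$ is selected at the origin and $\gamma_2>(N-p)/p$ at infinity, since otherwise $|\nabla u|^p$ fails to be locally integrable near the corresponding end, contradicting $u\in D^{1,p}(\R^N)$.

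First, I would use Theorem \ref{th2.1++} to activate the conditional assumptions: the preliminary bounds $u(x)\le C|x|^{-\frac{N-p}{p}+\tau_1}$ near $0$ and $u(x)\le C|x|^{-\frac{N-p}{p}-\tau_2/2}$ near $\infty$ have exponents strictly on the correct side of $-\frac{N-p}{p}$, so they upgrade the conditional clauses of the theorem into unconditional decay estimates $V(x)\le C|x|^{-\beta_1+s}$ with $\beta_1<p$ near $0$, and $V(x)\le C|x|^{-\beta_2+s}$ with $\beta_2>p$ near $\infty$. For $\lambda>0$ I then set $v_\lambda(y):=\lambda^{\gamma_i}u(\lambda y)$ with $i=1$ for the blow-up at $0$ and $i=2$ for the blow-down at $\infty$. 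A direct scaling computation yields
\[
-\Delta_p v_\lambda-\frac{\mu}{|y|^p}v_\lambda^{p-1}=\lambda^{p-s}V(\lambda y)|y|^{-s}v_\lambda^{p-1}\quad\text{on }\R^N\setminus\{0\},
\]
with prefactor bounded by $C\lambda^{p-\beta_i}|y|^{-\beta_i+s}$ on compact subsets of $\R^N\setminus\{0\}$, hence tending to $0$ as $\lambda\to 0$ (respectively $\lambda\to\infty$) since $\beta_1<p<\beta_2$. Combining the preliminary pointwise bounds with a Moser-type iteration on fixed annuli $\{1/4\le|y|\le 4\}$ gives uniform $L^\infty_{loc}(\R^N\setminus\{0\})$-bounds on $v_\lambda$; the $C^{1,\alpha}_{loc}$-estimates of DiBenedetto \cite{ED83} and Tolksdorf \cite{PT} then produce uniform $C^{1,\alpha}_{loc}(\R^N\setminus\{0\})$-bounds. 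Extracting a $C^1_{loc}$-subsequential limit $v_{\lambda_k}\to U\ge 0$ that solves the homogeneous Hardy equation on $\R^N\setminus\{0\}$ and invoking \cite[Theorem 1.3]{EMSV} identifies $U(y)=c_*|y|^{-\gamma_i}$ for some $c_*\ge 0$. Unscaling then yields the two-sided bounds \eqref{eq0806}--\eqref{eq0806++} on $u$ once $c_*>0$ is established, and the gradient bounds \eqref{eq0806+++}--\eqref{eq0806+} follow from $|\nabla U(y)|=c_*\gamma_i|y|^{-\gamma_i-1}$ combined with the $C^1_{loc}$-convergence.

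The main obstacle is ruling out the degenerate case $c_*=0$, i.e.\ establishing the matching lower bounds. My plan is to fix a reference annulus $A=\{1\le|y|\le 2\}$ and derive a uniform-in-$\lambda$ lower bound $\sup_A v_\lambda\ge c>0$. This will follow from a nontriviality argument combining the strict positivity of $u$ on bounded nondegenerate annuli (given by the strong maximum principle, Lemma \ref{hopf}) with a weak Harnack inequality applied to $u$ on $\{\lambda\le|x|\le 2\lambda\}$, rescaled into $A$, exploiting the uniform smallness of the right-hand side derived above. Once $c_*>0$ along one subsequence, the identification of the admissible limit profile (together with the separation $\gamma_1<\gamma_2$ and the $D^{1,p}$-selection of the end-to-root correspondence) prevents oscillation between roots as $\lambda$ varies, which propagates both inequalities uniformly along arbitrary sequences and yields the sharp two-sided estimates on $(0,R_0)$ and $(R_1,+\infty)$.
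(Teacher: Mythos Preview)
Your scaling-and-identification scheme has a genuine circularity problem at the step where you claim uniform $L^\infty_{loc}$-bounds on $v_\lambda(y)=\lambda^{\gamma_i}u(\lambda y)$. Take the origin case $i=1$: the only pointwise information available at this stage is the preliminary estimate $u(x)\le C|x|^{-(N-p)/p+\tau_1}$ from Theorem~\ref{th2.1++}, which rescales to $v_\lambda(y)\le C\lambda^{\gamma_1-(N-p)/p+\tau_1}|y|^{-(N-p)/p+\tau_1}$. Since $\gamma_1<(N-p)/p$ and $\tau_1>0$ is an uncontrolled small constant coming from the iteration in Lemma~\ref{lm:l-b-3}, there is no reason for the exponent $\gamma_1-(N-p)/p+\tau_1$ to be nonnegative, so $v_\lambda$ may blow up on fixed annuli as $\lambda\to 0$. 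The same obstruction appears at infinity with $\gamma_2$ and $\tau_2$. Running Moser on the annulus does not help: the $L^{p^\star}$-mass of $v_\lambda$ on $\{1/4\le|y|\le 4\}$ scales like $\lambda^{(\gamma_1-(N-p)/p+\tau_1)p^\star}$ for the same reason. In short, you need the sharp upper bound on $u$ to make the rescaled family precompact, but that is exactly what you are trying to prove.

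The paper avoids this loop by separating the argument. The two-sided bounds on $u$ itself (Step~1) are obtained by quoting Xiang's comparison results \cite[Theorems 1.3--1.6]{XCL} (recorded here as Lemmas~\ref{X-estimate1}--\ref{X-estimate4}), which compare $u$ directly with the explicit barriers $|x|^{-\gamma_i}$ via weak comparison principles; no compactness is used and the preliminary $\tau_i$ only enter through the hypothesis $V(x)|x|^{-s}\le C|x|^{-\beta_i}$. Only after \eqref{eq0806}--\eqref{eq0806++} are established does the paper run the rescaling argument (Steps~2--3), and there the sharp bounds on $u$ feed into the uniform $C^{1,\alpha}$-estimates for $v_\lambda$, exactly as you outline. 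Your plan for the gradient estimates is therefore essentially correct and matches the paper, but for the bounds on $u$ you should invoke Xiang's comparison lemmas rather than attempt a blow-up identification. This also makes your ``main obstacle'' $c_*=0$ disappear: the lower bounds on $u$ come from Lemmas~\ref{X-estimate2} and~\ref{X-estimate4}, not from ruling out degeneration of a limit profile.
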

\begin{rem}
In particular, if $\mu=0$, then $\gamma_1=0$ and $\gamma_2=\frac{N-p}{p-1}$ in Theorem \ref{th2.1}, and hence \cite[Theorem 1.3]{DLL} can be deduced as a corollary from Theorem \ref{th2.1}. Moreover, it is easy to see that
\[0\leq \gamma_1 < \frac{N-p}{p} < \gamma_2\leq \frac{N-p}{p-1}.\]
\end{rem}

\begin{rem}\label{loc-u-rem}
When $V(x)=u^{p_{s}-p}(x)$ with the critical Hardy-Sobolev exponent $p_s:=\frac{p(N-s)}{N-p}$, the general doubly $D^{1,p}(\R^{N})$-critical quasi-linear equation \eqref{gen-eq} degenerates into the following doubly $D^{1,p}(\R^{N})$-critical quasi-linear equation with Hardy potentials and local $D^{1,p}$-critical Hardy-Sobolev nonlinearity:
\begin{align}\label{eq1.2}
\left\{ \begin{array}{ll} \displaystyle
-\Delta_p u - \frac{\mu}{|x|^p} u^{p-1}=|x|^{-s} u^{p_s-1}  \quad\,\,\,\,&\mbox{in}\,\, \R^N, \\ \\
u \in D^{1,p}(\R^N),\quad\,\,\,\,\,\,u\geq0 \,\, \mbox{in}\,\, \R^N,&
\end{array}
\right.\hspace{1cm}
\end{align}
where $N\geq2$, $1<p<N$, $0\leq\mu<\bar{\mu}:=\left(\frac{N-p}{p}\right)^p$ and $0\leq s<p$. When $s=0$, one has $p_{s}=p^{\star}$.

\smallskip

Define $V_2(x):=u^{p_s-p}(x)$ with $p_s-p=p(p-s)/(N-p)$. We will show that $V_2(x)=u^{p_s-p}(x)$ satisfies all the assumptions on $V$ in Theorems \ref{th2.1.1}, \ref{th2.1++} and \ref{th2.1}.

\smallskip

In fact, it follows easily from $u\in D^{1,p}(\R^N)$ and the Sobolev embedding that $V_2(x)\in L^{\frac{N}{p-s}}(\R^N)$. Thus Theorem \ref{th2.1.1} implies that $u\in C^{1,\alpha}(\R^N\setminus \{0\})\cap L^{\infty}_{loc}(\mathbb{R}^{N}\setminus \{0\})$ for some $0<\alpha<\min\{1,\frac{1}{p-1}\}$. Let $\rho>0$ be small enough such that $ \|V_{2}\|_{L^\frac{N}{p-s} (B_{\rho}(0))}\leq \min\{\epsilon_1,\epsilon_2\}$ and $\|V_{2}\|_{L^\frac{N}{p-s} (\R^N\setminus B_{1/{\rho}}(0))} \leq \min\{\epsilon_1,\epsilon_2\}$, where $\epsilon_1>0$ and $\epsilon_2>0$ are given by Lemmas \ref{lm:l-b-2} and \ref{lm:l-b-3} respectively. When $s=0$, we will show that condition \eqref{est-bdd-1+} in Theorem \ref{th2.1++} holds. Indeed, fixing any $l\in(0,p)$, taking $R=|y|/4$ and using $(ii)$ in Theorem \ref{th2.1.1} with $\bar{p}=\frac{Np^2}{(N-p)(p-l)}=\frac{N(p^{\star}-p)}{p-l}$, we deduce that, there exists a constant $C>0$ independent of $y$ such that for any $|y|\leq \rho/16$ or $|y|\geq 16/\rho$
\begin{equation}\label{060107}
\begin{aligned}
     &\quad \|V_{2}\|_{L^{\frac{N}{p-l}}(B_{2R}(y))}=\|u^{p^{\star}-p}\|_{L^{\frac{N}{p-l}}(B_{2R}(y))} \leq \|u^{p^{\star}-p}\|_{L^{\frac{N}{p-l}}(B_{8R}(0)\setminus B_{R/8}(0))} \\
     &=\|u\|^{p^{\star} -p}_{L^{(p^{\star} -p)\cdot\frac{N}{p-l}}(B_{8R}(0)\setminus B_{R/8}(0))}\leq C \left( R^{\frac{N(N-p)(p-l)}{Np^2}-\frac{N-p}{p}} \|u\|_{L^{p^\star}(B_{16R}(0)\setminus B_{R/16}(0))}  \right)^{p^{\star}-p} \\
     &= C R^{-l} \|u\|^{p^{\star}-p}_{L^{p^\star}(B_{16R}(0)\setminus B_{R/16}(0))}\leq CR^{-l}
\end{aligned}
\end{equation}
 i.e., \eqref{est-bdd-1+} holds. Therefore, from Theorem \ref{th2.1++}, we deduce that, there exists a positive constant $C=C(N,p,s,\mu,\rho,u)>0$ such that for $\tau_1, \tau_2>0$ given in Lemma \ref{lm:l-b-3}, there hold
$$ u(y) \leq C |y|^{-\frac{N-p}{p}+\tau_1}, \,\,\,\,\,\,\,\quad \forall \,\, |y|\leq \frac{\rho}{16},$$
and
$$ u(y) \leq C |y|^{-\frac{N-p}{p}-\frac{\tau_2}{2}}, \,\,\,\,\,\,\,\quad \forall \,\, |y|\geq \frac{16}{\rho}.$$
 As a consequence, for $|x|<\frac{\rho}{16}$,
\begin{equation*}
  V_2(x)=u^{p_s-p}(x)\leq C|x|^{-\beta_1} \quad \text{with} \quad \beta_1:=\left(\frac{N-p}{p}-\tau_1\right)(p_s-p)=p-s-\tau_1 (p_s-p)<p-s,
\end{equation*}
and for $|x|>\frac{16}{\rho}$,
\begin{equation*}
  V_2(x)=u^{p_s-p}(x)\leq C|x|^{-\beta_2} \quad \text{with} \quad \beta_2:=\left(\frac{N-p}{p}+\frac{\tau_2}{2}\right)(p_s-p)=p-s+\frac{\tau_2}{2} (p_s-p)>p-s.
\end{equation*}
Consequently, Theorems \ref{th2.1.1} and \ref{th2.1} can be applied to equation \eqref{eq1.2}, and hence the positive weak solution $u$ to \eqref{eq1.2} satisfies the regularity $u\in C^{1,\alpha}(\R^N\setminus \{0\})\cap L^{\infty}_{loc}(\mathbb{R}^{N}\setminus \{0\})$ in Theorem \ref{th2.1.1} and the sharp asymptotic estimates \eqref{eq0806}-\eqref{eq0806+} in Theorem \ref{th2.1}. In particular, when $s=0$, the regularity and sharp asymptotic estimates in \cite{OSV,XCL,XCL2} can be deduced directly from our generalized Theorems \ref{th2.1.1} and \ref{th2.1} as well.
\end{rem}

For equation \eqref{eq1.1},  one can also easily verify that all the assumptions on $V$ and $u$ in Theorem \ref{th2.1} are fulfilled (for details, see the proof of the asymptotic estimates in Theorem \ref{th2} in Subsection \ref{sc3.4}). Thanks to the regularity result and the complete sharp asymptotic estimates of the positive solution $u$ to equation \eqref{eq1.1} and $|\nabla u|$, by using a refined method of moving planes in \cite{OSV}, we can prove the radial symmetry and strictly radial monotonicity of positive $D^{1,p}(\mathbb{R}^{N})$-weak solutions to equation \eqref{eq1.1}. Our result is the following theorem.
\begin{thm}[Sharp asymptotic estimates and radial symmetry of solutions to \eqref{eq1.1}]\label{th2}
Assume that $0\leq\mu<\bar{\mu}$, $1<p<\frac{N}{2}$ and let $u$ be a nontrivial nonnegative $D^{1,p}(\mathbb{R}^{N})$-weak solution of the doubly $D^{1,p}(\mathbb{R}^{N})$-critical nonlocal quasi-linear equation \eqref{eq1.1}. Then $u \in C^{1,\alpha} (\R^N\setminus \{0\}) \cap L_{loc}^\infty(\R^N\setminus \{0\})$ for some $0<\alpha<\min\{1,\frac{1}{p-1}\}$, and the sharp asymptotic estimates \eqref{eq0806}-\eqref{eq0806+} hold for any positive $D^{1,p}(\mathbb{R}^{N})$-weak solution $u$ to \eqref{eq1.1}. Moreover, either $u\equiv0$; or $u>0$ is radially symmetric and strictly decreasing about the origin, i.e., positive solution $u$ must assume the form
\[u(x)=\lambda^{\frac{N-p}{p}}U(\lambda x)\]
for $\lambda:=u(1)^{\frac{p}{N-p}}>0$, where $U(x)=U(r)$ with $r=|x|$ is a positive radial solution to \eqref{eq1.1} with $U(\lambda)=1$ and $U'(r)<0$ for any $r>0$ ($U$ may not be unique).
\end{thm}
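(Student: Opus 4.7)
The strategy is to view \eqref{eq1.1} as the special case $s=0$, $V(x)=V_1(x):=(|x|^{-2p}\ast u^p)(x)$ of the general equation \eqref{gen-eq}, verify the hypotheses of Theorems \ref{th2.1.1}, \ref{th2.1++} and \ref{th2.1} for this $V_1$, and then close the argument by a refined method of moving planes through hyperplanes containing the origin. By \eqref{conv}, $V_1\in L^{N/p}(\R^N)$. For the local integrability $V_1\in L^q_{loc}(\R^N\setminus\{0\})$ with any $q<\infty$ needed for the $C^{1,\alpha}$-part of Theorem \ref{th2.1.1}, I split
\[
V_1(x)=\int_{|x-w|<|x|/4}\frac{u^p(w)}{|x-w|^{2p}}\,\mathrm{d}w+\int_{|x-w|\ge|x|/4}\frac{u^p(w)}{|x-w|^{2p}}\,\mathrm{d}w;
\]
on the first piece the kernel is locally integrable and $u\in L^{p^{\star}}$ gives the control, while on the second the kernel is bounded by $C|x|^{-2p}$ uniformly on any annulus avoiding $0$. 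Theorem \ref{th2.1.1} therefore yields $u\in C^{1,\alpha}(\R^N\setminus\{0\})\cap L^\infty_{loc}(\R^N\setminus\{0\})$ for some $0<\alpha<\min\{1,\tfrac{1}{p-1}\}$. To verify condition \eqref{est-bdd-1+} required by Theorem \ref{th2.1++}, I fix $l\in(0,p)$ and bound $\|V_1\|_{L^{N/(p-l)}(B_{2r}(y))}$, with $r=\min\{1,|y|/4\}$, by combining the local $L^{\bar p}$ estimate on $u$ from Theorem \ref{th2.1.1}(ii) with a Hardy--Littlewood--Sobolev decomposition of the convolution at scale $|y|/8$; the scaling then yields the required factor $r^{-l}$. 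Theorem \ref{th2.1++} now produces the preliminary decay
\[
u(y)\le C|y|^{-\frac{N-p}{p}+\tau_1}\ \ (|y|\le \tfrac{\rho}{16}),\qquad u(y)\le C|y|^{-\frac{N-p}{p}-\frac{\tau_2}{2}}\ \ (|y|\ge \tfrac{16}{\rho}).
\]

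\emph{Sharp asymptotics.} Inserting these preliminary bounds into $V_1(x)$ and splitting the domain of integration into $\{|w|<|x|/2\}$, $\{|w-x|<|x|/2\}$ and $\{|w|>2|x|\}$, the dominant contribution at both scales comes from $|w|\sim|x|$, so that
\[
V_1(x)\le C|x|^{-\beta_1}\ \text{with}\ \beta_1=2p+p\alpha_1-N<p\quad\text{as }|x|\to 0,
\]
\[
V_1(x)\le C|x|^{-\beta_2}\ \text{with}\ \beta_2>p\quad\text{as }|x|\to\infty,
\]
where $\alpha_1=(N-p)/p-\tau_1$ and $\alpha_2=(N-p)/p+\tau_2/2$. (The inequality $\beta_1<p$ is exactly the condition $\alpha_1<(N-p)/p$; at infinity, either $u^p$ is integrable so $V_1\lesssim|x|^{-2p}$, or the power $p\alpha_2+2p-N$ exceeds $p$.) All hypotheses of Theorem \ref{th2.1} are therefore fulfilled, and the sharp two-sided estimates \eqref{eq0806}--\eqref{eq0806+} for $u$ and $|\nabla u|$ follow at once.

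\emph{Radial symmetry.} With the sharp asymptotics in hand, I carry out the refined moving planes scheme of \cite{OSV} adapted to nonlocal data. Because the Hardy potential breaks translation invariance while preserving rotations, reflection planes $T_\lambda$ must pass through $0$; write $u_\lambda(x):=u(x^\lambda)$. Testing the weak equations for $u$ and $u_\lambda$ against $(u-u_\lambda)_+\eta$ with a suitable cutoff $\eta$ and invoking the weak comparison principle together with the weighted Hardy--Poincar\'e inequality, the lower gradient bound $|\nabla u|\ge c|x|^{-\gamma_2-1}$ at infinity closes the starting step of the procedure, while the upper bound $u\le C|x|^{-\gamma_1}$ near $0$ tames the Hardy singularity. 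The Hartree-type contribution is handled by splitting the convolution defining $V_1(x)-V_1(x^\lambda)$ across $T_\lambda$ and using $|x-w|\le|x^\lambda-w|$ on the positivity side to conclude that the nonlocal difference has the correct sign on the set $\{u>u_\lambda\}$, so that it can be absorbed into the local quasi-linear comparison estimate. Together with the strong maximum principle (Lemma \ref{hopf}), this yields positivity, symmetry about every hyperplane through $0$, and hence full radial symmetry with strict monotonicity. The explicit form $u(x)=\lambda^{(N-p)/p}U(\lambda x)$ with $\lambda=u(1)^{p/(N-p)}$ is then an immediate consequence of the scaling invariance of \eqref{eq1.1} under $u\mapsto u_\lambda$ and the normalization $U(\lambda)=1$.

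I expect the decisive obstacle to lie in this last step: unlike the pointwise nonlinearity $|u|^{p^{\star}-2}u$ treated in \cite{OSV}, the Hartree term forces a reflection-integral comparison for $V_1$, and its interaction with the weighted quasi-linear estimates at precisely the borderline scales dictated by $\gamma_1$ and $\gamma_2$ near $0$ and at infinity is exactly what demands the sharp two-sided bounds provided by Theorem \ref{th2.1} in order for the moving planes procedure to close.
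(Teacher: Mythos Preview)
Your treatment of the regularity and sharp asymptotic estimates is essentially the paper's route: verify $V_1\in L^{N/p}$, check \eqref{est-bdd-1+} via the annular $L^{\bar p}$ bound of Theorem \ref{th2.1.1}(ii) together with HLS at scale $|y|$, feed the preliminary decay of Theorem \ref{th2.1++} back into the convolution via the three-region splitting (this is exactly Lemma \ref{lm.6}(ii)), and invoke Theorem \ref{th2.1}. That part is fine.

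The moving-planes sketch, however, has real gaps. First, the planes $T_\lambda=\{x_1=\lambda\}$ do \emph{not} pass through $0$ during the procedure; one starts at $\lambda$ very negative and shows the critical position is $\lambda_0^-=0$ (and symmetrically $\lambda_0^+=0$). The singularity of the Hardy potential at $0_\lambda$ is handled by excising a small ball $B_{\tilde R_0}(0_\lambda)$ on which one already knows $u<u_\lambda$ from the sharp asymptotics. Second, the test function $(u-u_\lambda)_+\eta$ is not what makes the argument close. The paper tests \eqref{eq-m-p-1} and \eqref{eq-m-p-2} with
\[
\varphi_{1,\lambda}=\eta^\tau u^{1-p}(u^p-u_\lambda^p)_+\chi_{\Sigma_\lambda},\qquad
\varphi_{2,\lambda}=\eta^\tau u_\lambda^{1-p}(u^p-u_\lambda^p)_+\chi_{\Sigma_\lambda},
\]
and subtracts; Lemma \ref{lm.w-g-i} then produces a coercive term of the form $\int u_\lambda^p(|\nabla u|+|\nabla u_\lambda|)^{p-2}|\nabla\log u-\nabla\log u_\lambda|^2$, which is the quantity all other terms are absorbed into. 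The naive difference test function does not yield this structure, and without it the Hardy term and the nonlocal term cannot both be controlled.

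Third, and most importantly, the nonlocal difference does \emph{not} have ``the correct sign'' on $\{u>u_\lambda\}$. Lemma \ref{lm.1} gives
\[
V_1(x)-V_1(x_\lambda)=\int_{\Sigma_\lambda}\Big(\tfrac{1}{|x-y|^{2p}}-\tfrac{1}{|x_\lambda-y|^{2p}}\Big)\big(u^p(y)-u_\lambda^p(y)\big)\,\mathrm{d}y,
\]
which can be positive. The paper bounds the resulting double integral by HLS, then rewrites $(u-u_\lambda)_+\le u(\log u-\log u_\lambda)_+$, applies H\"older to extract a factor $\|u\|_{L^{p^\star}}^{p(N-2p)/(N-p)}$, and finally uses the Caffarelli--Kohn--Nirenberg inequality (Theorem \ref{HDSI}(ii)) with a carefully chosen weight exponent $\theta$ to produce a prefactor $|\lambda|^{-\beta_2}$ with $\beta_2=\gamma_2(p^\star-p)-p>0$. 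Only this explicit decay in $|\lambda|$ lets $I_4$ be absorbed into the coercive term for $|\lambda|$ large (Step 1) and, in Step 2, on the narrow strip $S_\delta^\lambda$ one needs in addition the weighted Poincar\'e inequality of Lemma \ref{lm.3} with weight $\rho=|\nabla u|^{p-2}$ (for $p\ge 2$), whose applicability relies on Corollary \ref{re2333}. Your sketch does not identify this mechanism, and as written the nonlocal term would not be absorbable.
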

\begin{rem}
In fact, by the regularity $u \in C^{1,\alpha} (\R^N\setminus \{0\}) \cap L_{loc}^\infty(\R^N\setminus \{0\})$ in Theorem \ref{th2}, one can easily verify that $|x|^{-2p}\ast u^{p}\in L^{\infty}_{loc}(\mathbb{R}^{N}\setminus \{0\})$ for any nonnegative weak solution $u\in D^{1,p}(\mathbb{R}^{N})$ to \eqref{eq1.1} (see \eqref{eq10.26.30.03}). Thus it follows from the strong maximum principle in Lemma \ref{hopf} that, for any nonnegative $D^{1,p}(\mathbb{R}^{N})$-weak solution $u$ to \eqref{eq1.1}, we have either $u\equiv0$ or $u>0$ in $\mathbb{R}^{N}\setminus \{0\}$.
\end{rem}
\begin{rem}\label{rm.2.1}
Our results can be regarded as the extension of \cite{DLL} from $\mu=0$ to general $0\leq\mu<\bar{\mu}:=\left(\frac{N-p}{p}\right)^p$ and the nonlocal counterpart of \cite{OSV}. Furthermore, the ranges of $N$, $p$ and $\mu$ for equation \eqref{eq1.1} in Theorem \ref{th2} are complete. Thus, in Theorem \ref{th2}, we have derived a complete result on the sharp asymptotic estimates, radial symmetry and strictly radial monotonicity of the positive weak solutions to the doubly $D^{1,p}$-critical quasi-linear nonlocal equation \eqref{eq1.1}.
\end{rem}


For more literatures on the classification results and other quantitative and qualitative properties of solutions for various PDE and IE problems via the methods of moving planes, please refer to \cite{CGS,CD,CDZ,CL,CL1,CLL,CLO,CDQ,DFHQW,DFQ,DL,DQ,LD,DP,LDSMLMSB,DLPFRM,LDMR,LDBS04,DMPS,GNN,LPLDHT,Lin,Liu,MZ,BS16,S,VJ16,WX} and the references therein. For classification of solutions to semi-linear equations on Heisenberg group or CR manifolds via Jerison-Lee identities and invariant tensor techniques, see \cite{MO,MOW} and the references therein.

\subsection{Extensions to more general doubly $D^{1,p}(\R^N)$-critical nonlocal quasi-linear equations}
Through exactly the same arguments, we can also study the following general doubly $D^{1,p}(\R^{N})$-critical nonlocal quasi-linear equation with generalized (weighted) nonlocal nonlinearity and Hardy potential:
\begin{align}\label{eq1.3}
\left\{ \begin{array}{ll} \displaystyle
-\Delta_p u - \frac{\mu}{|x|^p} u^{p-1}=|x|^{-s} \left(|x|^{-\sigma}\ast u^{p_{s,\sigma}}\right)u^{p_{s,\sigma}-1}  \quad\,\,\,\,&\mbox{in}\,\, \R^N, \\ \\
u \in D^{1,p}(\R^N),\quad\,\,\,\,\,\,u\geq0 \,\,  \mbox{in}\,\, \R^N,&
\end{array}
\right.\hspace{1cm}
\end{align}
where $0\leq\mu<\bar{\mu}=\left(\frac{N-p}{p}\right)^p$, $1<p<N$, $0\leq s <p$, $p_{s,\sigma}:=\frac{(2N-\sigma-s)p}{2(N-p)}$, $s<\sigma<N$ and $0<\sigma+s\leq 2p$. Define $V_3(x):=\left(|x|^{-\sigma}\ast u^{p_{s,\sigma}}\right)(x) u^{p_{s,\sigma}-p}(x)$, then the  general nonlocal quasi-linear equation \eqref{eq1.3} is a special case of the generalized equation \eqref{gen-eq}. In the endpoint case $\sigma=2p$, $s=0$ and $p_{\sigma}^{\star}=p$ with $1<p<\frac{N}{2}$, the general nonlocal quasi-linear equation \eqref{eq1.3} degenerates into the doubly $D^{1,p}(\R^{N})$-critical nonlocal $p$-Laplacian static Hartree equation \eqref{eq1.1}.

\medskip

For \eqref{eq1.3}, the finite $D^{1,p}(\R^N)$-energy assumption $u\in D^{1,p}(\R^N)$ is necessary to guarantee $V_{3}(x)=\left(|x|^{-\sigma}\ast u^{p_{s,\sigma}}\right)(x) u^{p_{s,\sigma}-p}(x)\in L^{\frac{N}{p-s}}(\mathbb{R}^{N})$ and hence $V_{3}(x)$ is finite and well-defined almost everywhere. Indeed, by H\"{o}lder's inequality  and Hardy-Littlewood-Sobolev inequality in Lemma \ref{HLSI} with $p=\sigma$, $q=2N/(\sigma-s)$ and $m=2N/(2N-\sigma-s)$, one has
\begin{equation}\label{060108-gen}
\begin{aligned}
     &\quad \|V_3(x)\|_{L^{\frac{N}{p-s}}(\R^N)}
       = \left[ \int_{\R^N} |K(x)|^{\frac{N}{p-s}}  |u(x)|^{(p_{s,\sigma}-p) \cdot \frac{N}{p-s}}  \mathrm{d}x \right]^{\frac{p-s}{N}} \\
     &\leq \left[ \left( \int_{\R^N} |K(x)|^{\frac{N}{p-s}\cdot \frac{2(p-s)}{\sigma-s}} \mathrm{d}x \right)^{\frac{\sigma-s}{2(p-s)}} \left(\int_{\R^N} |u(x)|^{\frac{2p^2-(\sigma+s)p}{2(N-p)} \cdot \frac{N}{p-s} \cdot \frac{2(p-s)}{2p-(\sigma+s)}} \mathrm{d}x \right)^{\frac{2p-(\sigma+s)}{2(p-s)}} \right]^{\frac{p-s}{N}} \\
     &\leq \left( \int_{\R^N} |K(x)|^{\frac{2N}{\sigma-s}} \mathrm{d}x \right)^{\frac{\sigma-s}{2N}} \left\|u \right\|_{L^{p^\star}(\R^N)}^{\frac{p(2p-\sigma-s)}{2(N-p)}}\\ &\leq \bar{C} \|u^{p_{s,\sigma}}\|_{L^{\frac{2N}{2N-\sigma-s}}(\R^N)} \left\|u \right\|_{L^{p^\star}(\R^N)}^{p_{s,\sigma}-p} = \bar{C} \left\|u \right\|_{L^{p^\star}(\R^N)}^{2p_{s,\sigma}-p}, \qquad \forall \,\, u\in D^{1,p}(\mathbb{R}^{N}),
\end{aligned}
\end{equation}
where $K(x):=\left(|x|^{-\sigma}\ast u^{p_{s,\sigma}}\right)(x)$. Thus it follows from Sobolev embedding inequality and double weighted Hardy-Littlewood-Sobolev inequality in Theorem \ref{HLSI+} with $\alpha=s$, $\beta=0$, $\sigma=\sigma$ and $r=t=(2N)/(2N-s-\sigma)$ that
\begin{equation}\label{b1}
  \|u\|_{V_{3}}:=\left(\int_{\mathbb{R}^{N}}\int_{\mathbb{R}^{N}}\frac{|u|^{p_{s,\sigma}}(x)|u|^{p_{s,\sigma}}(y)}{|x|^{s}|x-y|^{\sigma}}\mathrm{d}x\mathrm{d}y\right)^{\frac{1}{2p_{s,\sigma}}}
  \leq C_{N,p,s,\sigma}\| \nabla u \|_{L^p(\R^N)},  \quad\, \forall \,\, u\in D^{1,p}(\mathbb{R}^{N}),
\end{equation}
where $C_{N,p,\sigma}$ is the best constant for the double weighted Hardy-Littlewood-Sobolev inequality. The general $D^{1,p}(\mathbb{R}^{N})$-critical nonlocal quasi-linear equation \eqref{eq1.3} is the Euler-Lagrange equation for the following minimization problem
\begin{equation}\label{gmini}
  C_{N,p,\mu,s,\sigma}:=\inf\left\{\left(\| \nabla u \|^p_{L^p(\R^N)}-\mu\int_{\R^N}\frac{1}{|x|^p}u^{p}\mathrm{d}x\right)^{\frac{1}{p}} \,\Big|\,u\in D^{1,p}(\mathbb{R}^{N}), \,\, \|u\|_{V_{3}}=1\right\},
\end{equation}
where $0\leq\mu<\bar{\mu}$. The general doubly $D^{1,p}(\R^{N})$-critical nonlocal quasi-linear equation \eqref{eq1.3} is the Euler-Lagrange equation for the minimization problem \eqref{gmini}.

\medskip

By using the double weighted Hardy-Littlewood-Sobolev inequality instead of Hardy-Littlewood-Sobolev inequality, through entirely similar arguments as Su and Chen \cite{SYCH}, one can also derive that the minimization problem \eqref{gmini} and hence the best constant $C_{N,p,\mu,s,\sigma}$ in \eqref{b1} can be attained by a radial symmetric, non-increasing and nontrivial nonnegative function in $D^{1,p}(\R^N)$. Consequently, the general nonlocal quasi-linear equation \eqref{eq1.3} possesses (at least) a nontrivial nonnegative weak solution $u\in D^{1,p}(\mathbb{R}^{N})$.

\medskip

We aim to prove sharp asymptotic estimates and radial symmetry for positive weak solutions to \eqref{eq1.3} and derive the following theorem.
\begin{thm}[Sharp asymptotic estimates and radial symmetry of solutions to \eqref{eq1.3}]\label{gth2}
Let $u$ be a nonnegative $D^{1,p}(\mathbb{R}^{N})$-weak solution to the general doubly $D^{1,p}(\R^N)$-critical quasi-linear nonlocal equation \eqref{eq1.3}. Then either $u\equiv0$ or $u>0$ in $\mathbb{R}^{N}$, $u \in C^{1,\alpha} (\R^N\setminus\{0\}) \cap L^\infty(\R^N\setminus\{0\})$ for some $0<\alpha<\min\{1,\frac{1}{p-1}\}$, and the sharp asymptotic estimates \eqref{eq0806}-\eqref{eq0806+} on $u$ and $|\nabla u|$ hold for any positive $D^{1,p}(\mathbb{R}^{N})$-weak solution $u$ to \eqref{eq1.3}. Moreover, assume further that $\sigma+s\geq2$ or $\sigma+s<2$ and $\left(\frac{N-1}{p_{s,\sigma}}\right)^{p-1}\left[(p-1)\frac{N-1}{p_{s,\sigma}}-(N-p)\right]+\mu<0$, then any positive solution $u$ must be radially symmetric and strictly decreasing about the origin, i.e., positive solution $u$ must assume the form
\[u(x)=\lambda^{\frac{N-p}{p}}U(\lambda x)\]
for $\lambda:=u(1)^{\frac{p}{N-p}}>0$, where $U(x)=U(r)$ with $r=|x|$ is a positive radial solution to \eqref{eq1.3} with $U(\lambda)=1$ and $U'(r)<0$ for any $r>0$ ($U$ may not be unique).
\end{thm}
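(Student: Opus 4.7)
\medskip

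\noindent\textbf{Proof proposal for Theorem \ref{gth2}.}
The plan is to reduce the regularity and sharp asymptotic claims to the general Theorems \ref{th2.1.1}, \ref{th2.1++}, and \ref{th2.1} applied to the potential $V_3(x):=\left(|x|^{-\sigma}\ast u^{p_{s,\sigma}}\right)(x)\,u^{p_{s,\sigma}-p}(x)$, and then to carry out the method of moving planes exactly as in the proof of Theorem \ref{th2}, with minor bookkeeping adjustments caused by the weights $|x|^{-s}$ and $|x|^{-\sigma}$.

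\medskip

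First, I would verify the hypotheses on $V_3$. The integrability $V_3\in L^{N/(p-s)}(\R^N)$ has already been checked in \eqref{060108-gen}; local integrability $V_3\in L^{q}_{loc}(\R^N\setminus\{0\})$ for any $q<+\infty$ follows since $u\in L^{r}_{loc}(\R^N\setminus\{0\})$ for all $r<+\infty$ by part $(i)$ of Theorem \ref{th2.1.1}, combined with Hardy--Littlewood--Sobolev on the convolution factor. Hence Theorem \ref{th2.1.1} yields the regularity $u\in C^{1,\alpha}(\R^N\setminus\{0\})\cap L^{\infty}_{loc}(\R^N\setminus\{0\})$. The strong maximum principle (Lemma \ref{hopf}) then produces the alternative $u\equiv0$ or $u>0$ in $\R^N\setminus\{0\}$.

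\medskip

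Next I would establish the preliminary decay through Theorem \ref{th2.1++}. When $s=0$, condition \eqref{est-bdd-1+} has to be checked for $V_3$: writing $V_3$ as a product, I would apply H\"{o}lder in the annulus $B_{2r}(y)$, bound the convolution factor by the weighted Hardy--Littlewood--Sobolev inequality in Theorem \ref{HLSI+} in terms of $\|u\|_{L^{p^\star}(B_{8R}(0)\setminus B_{R/8}(0))}$, and estimate $u^{p_{s,\sigma}-p}$ through part $(ii)$ of Theorem \ref{th2.1.1} with an appropriate choice of $\bar p$, obtaining a bound $\widetilde{C}r^{-l}$ for a suitable $l\in(0,p)$ just as in \eqref{060107}. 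This gives the preliminary inequalities
\[
u(y)\leq C|y|^{-\frac{N-p}{p}+\tau_1}\quad (|y|\leq \rho/16),\qquad u(y)\leq C|y|^{-\frac{N-p}{p}-\tau_2/2}\quad (|y|\geq 16/\rho).
\]

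\medskip

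Then I would bootstrap to the sharp estimates via Theorem \ref{th2.1}. The key is to translate the preliminary pointwise decay of $u$ into the pointwise decay hypothesis $V_3(x)\leq C_{V,i}|x|^{-\beta_i+s}$ required by Theorem \ref{th2.1}. For this, I would split the convolution $\bigl(|x|^{-\sigma}\ast u^{p_{s,\sigma}}\bigr)(x)$ into the near-field region $\{|y-x|\leq |x|/2\}$ and its complement, using the preliminary power decay of $u$ on each region and the $L^{p^\star}$-bound on the tail to absorb the far-field contribution. A standard computation then yields $(|x|^{-\sigma}\ast u^{p_{s,\sigma}})(x)\lesssim |x|^{-\sigma'}$ with explicit $\sigma'$, and multiplying by the preliminary bound on $u^{p_{s,\sigma}-p}$ produces exponents $\beta_1<p$ near $0$ and $\beta_2>p$ at infinity, because the strict inequalities $\tau_1,\tau_2>0$ leave the required slack. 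Theorem \ref{th2.1} then yields the full sharp estimates \eqref{eq0806}--\eqref{eq0806+} for both $u$ and $|\nabla u|$.

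\medskip

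Finally, for the radial symmetry, I would run the refined moving plane argument in the spirit of \cite{OSV} and the proof of Theorem \ref{th2}. The sharp two-sided asymptotic estimates on $u$ and $|\nabla u|$, together with the regularity $u\in C^{1,\alpha}(\R^N\setminus\{0\})$, provide the decay needed to start the procedure from infinity and to close it via the weighted Poincar\'e/Hardy inequalities used in the degenerate and singular cases of $\Delta_p$. The delicate step is the nonlocal comparison between $V_3(x)$ and its reflection $V_3(x^\lambda)$ across a hyperplane $T_\lambda$; here the weighted structure of the kernel $|x|^{-s}|x-y|^{-\sigma}$ plays a role analogous to the pure $|x-y|^{-2p}$ kernel in \eqref{eq1.1}. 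The dichotomy $\sigma+s\geq 2$ versus $\sigma+s<2$ together with the indicial condition
\[
\left(\tfrac{N-1}{p_{s,\sigma}}\right)^{p-1}\!\left[(p-1)\tfrac{N-1}{p_{s,\sigma}}-(N-p)\right]+\mu<0
\]
is designed to guarantee $\gamma_1<\tfrac{N-1}{p_{s,\sigma}}<\gamma_2$, which in turn ensures that the decay of $u$ at infinity is strictly faster than the threshold needed to control the nonlocal remainder term arising when moving the plane past the origin, and also that the singular behavior near $0$ does not overwhelm the coercive part of the linearized operator.

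\medskip

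The main obstacle I expect is precisely this last point: the delicate asymptotic bookkeeping of the nonlocal term $V_3(x^\lambda)-V_3(x)$ after reflection, which must be controlled by the Hardy and weighted Poincar\'e terms appearing in the linearization of $-\Delta_p u-\mu|x|^{-p}u^{p-1}$. Once this comparison is secured by the assumed condition on $\sigma+s$ (or its indicial replacement), the closing of the moving plane argument, together with the fact that the uniqueness of the radial profile up to scaling would give the stated representation $u(x)=\lambda^{(N-p)/p}U(\lambda x)$, proceeds along the same lines as for Theorem \ref{th2}.
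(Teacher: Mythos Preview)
Your overall strategy is correct and matches the paper's: verify that $V_3$ satisfies the hypotheses of Theorems \ref{th2.1.1}, \ref{th2.1++}, \ref{th2.1} (the paper does this via a dedicated Lemma \ref{V-3-norm} that splits the convolution into inner ball, outer complement, and annulus pieces, much as you suggest), extract the sharp asymptotics, and then run the moving-plane argument of Section~\ref{sc4} with the nonlocal comparison term estimated by the double weighted Hardy--Littlewood--Sobolev inequality.

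However, you have misidentified where the extra hypothesis ``$\sigma+s\geq 2$, or $\sigma+s<2$ and the indicial inequality'' enters. By Remark~\ref{rem1} it is equivalent simply to $p_{s,\sigma}\gamma_1+1<N$, and it is \emph{not} used to control the nonlocal remainder $V_3(x_\lambda)-V_3(x)$ or any decay threshold at infinity. The comparison of $\bar V_3(x)-\bar V_3(x_\lambda)$ is handled unconditionally by the paper's Lemma~\ref{v3-int-esta} (splitting into $\hat I_1+\hat I_2$ and applying the double weighted HLS inequality), producing the analogue of \eqref{eqmv-I4-1}--\eqref{eqmv-I4-2} with no constraint on $\sigma+s$.

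The condition $p_{s,\sigma}\gamma_1+1<N$ is needed for a different, more hidden reason: in Remark~\ref{rm.2.7}, the derivative estimate \eqref{eq10.26.30.02+} on $\partial_{x_i}\bigl(|x|^{-\sigma}\ast u^{p_{s,\sigma}}\bigr)$ near the origin requires $\beta_1=p_{s,\sigma}\gamma_1+1<N$ in Lemma~\ref{lm.6}(ii). This is what guarantees that the right-hand side $f_3(x)=\mu|x|^{-p}u^{p-1}+V_3(x)|x|^{-s}u^{p-1}$ satisfies the ``condition $(I_\alpha)$'' of Lemma~\ref{lm.4}. Without it, Corollary~\ref{re2333} (the integrability of $|\nabla u|^{-t}$) fails, and hence you lose both $|Z_u|=0$ (Remark~\ref{re2334}) and the weighted Poincar\'e inequality in Lemma~\ref{lm.3} with weight $\rho=|\nabla u|^{p-2}$. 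These are exactly the tools used in Step~2 of the moving-plane argument to pass through the compact region and handle the critical set when $p\geq 2$. So the obstacle you flagged in your final paragraph is not the genuine one; the real issue is the $W^{1,m}$-regularity of $f_3$ near the origin, and your proposal should be revised to route the hypothesis through Remark~\ref{rm.2.7} and Lemma~\ref{lm.4} rather than through the $\hat I_4$ estimate.
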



\begin{rem}\label{rem1}
One should note that, in Theorem \ref{gth2}, the assumption
\[\sigma+s\geq2, \quad\,\,\, \text{or} \quad\,\,\, \sigma+s<2 \,\,\, \text{and} \,\,\, \left(\frac{N-1}{p_{s,\sigma}}\right)^{p-1}\left[(p-1)\frac{N-1}{p_{s,\sigma}}-(N-p)\right]+\mu<0\]
is equivalent to $p_{s,\sigma}\gamma_1+1<N$.
\end{rem}

\smallskip

The rest of our paper is organized as follows. In Section \ref{sc2}, we will recall some preliminaries on important inequalities, (strong) comparison principles, strong maximum principle and H\"{o}pf's Lemma, related regularity results and properties of the critical set etc. existing in literature. Section \ref{sc3} is devoted to the proof of the key ingredients: Theorems \ref{th2.1.1}, \ref{th2.1++} and \ref{th2.1}, and the asymptotic estimates in Theorem \ref{th2}, i.e., the sharp asymptotic estimates for positive weak solution $u$ and its gradient $|\nabla u|$ to the generalized equation \eqref{gen-eq} and the
nonlocal equation \eqref{eq1.1}. The proof of the radial symmetry results of Theorem \ref{th2} will be carried out in Section \ref{sc4}. Finally, in Section \ref{sc5}, we prove Theorem \ref{gth2}.

\smallskip

In the following, we will use the notation $B_{R}$ to denote the ball $B_{R}(0)$ centered at $0$ with radius $R$, and use $C$ to denote a general positive constant that may depend on $N$, $p$, $\mu$, $s$, $\sigma$ and $u$, and whose value may differ from line to line.

\section{Preliminaries}\label{sc2}
The aim of this section is to recall some useful tools will be used in our proofs of Theorems \ref{th2.1} and \ref{th2}, including some important inequalities, (strong) comparison principles, strong maximum principle and H\"{o}pf's Lemma, related regularity results and properties of the critical set and so on. For clarity of presentation, we will split this section into two sub-sections.

\subsection{Some key inequalities}\label{sc2.1}

We will use the following Hardy-Littlewood-Sobolev, double weighted Hardy-Littlewood-Sobolev and Caffarelli-Kohn-Nirenberg inequalities.
\begin{thm}[Hardy-Littlewood-Sobolev inequality, c.f. e.g. \cite{FL1,FL2,Lieb,Lions2}]\label{HLSI}
Assume $t,r>1$ and $0<p<N$ with $\frac{1}{t}+\frac{p}{N}+\frac{1}{r}=2$. Let $f\in L^t(\R^N)$ and $h\in L^r(\R^N)$. Then there exists a sharp constant $C(N,p,r,t)$, independent of $f$ and $h$, such that
\begin{align*}
\left| \int_{\R^N} \int_{\R^N} \frac{f(x) h(y)}{|x-y|^{p}}\mathrm{d}x\mathrm{d}y \right|\leq C(N,p,r,t)  \| f \|_{L^t(\R^N)} \| h \|_{L^r(\R^N)}.
\end{align*}
Moreover, if $\frac{N}{q}=\frac{N}{m}-N+p$ and $1<m<q<\infty$, we have
\begin{align*}
\left\| \int_{\R^N} \frac{g(x)}{|x-y|^{p}}\mathrm{d}x \right\|_{L^q(\R^N)}\leq \overline{C}(N,p,q,m)  \| g \|_{L^m(\R^N)}
\end{align*}
for all $g\in L^m(\R^N)$.
\end{thm}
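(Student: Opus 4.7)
The plan is to first reduce the bilinear inequality to the ``operator form'' (the second displayed inequality) by duality, and then prove the operator form by decomposing the Riesz kernel into local and tail parts and invoking the Hardy--Littlewood maximal inequality.

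For the duality reduction, write $I_p h(x):=\int_{\R^N}|x-y|^{-p}h(y)\,dy$ for the Riesz potential, so that
\[
\iint_{\R^N\times\R^N}\frac{f(x)h(y)}{|x-y|^p}\,dx\,dy=\int_{\R^N}f(x)\,I_p h(x)\,dx.
\]
H\"older's inequality with conjugate exponents $t,t'$ bounds the right-hand side by $\|f\|_{L^t}\|I_p h\|_{L^{t'}}$. The balance relation $\tfrac{1}{t}+\tfrac{p}{N}+\tfrac{1}{r}=2$ rearranges to $\tfrac{1}{t'}=\tfrac{1}{r}-\tfrac{N-p}{N}$, which is exactly the operator hypothesis $\tfrac{N}{q}=\tfrac{N}{m}-N+p$ with $(m,q)=(r,t')$; the constraints $1<t,r<\infty$ together with $0<p<N$ then force $1<m<q<\infty$. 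So the first inequality reduces to the second with constant $C(N,p,r,t)=\overline{C}(N,p,t',r)$.

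For the operator bound $\|I_p g\|_{L^q}\leq\overline{C}\|g\|_{L^m}$, fix $x\in\R^N$ and for each $R>0$ split
\[
I_p g(x)=\int_{|x-y|\leq R}\frac{g(y)}{|x-y|^p}\,dy+\int_{|x-y|>R}\frac{g(y)}{|x-y|^p}\,dy=:A_R(x)+B_R(x).
\]
Grouping the local part over dyadic annuli $\{2^{-k-1}R<|x-y|\leq 2^{-k}R\}$, $k\geq 0$, and summing the resulting geometric series yields $|A_R(x)|\leq C(N,p)\,R^{N-p}Mg(x)$, where $Mg$ is the Hardy--Littlewood maximal function. For the tail, H\"older's inequality in $y$ with exponents $m,m'$ gives $|B_R(x)|\leq\|g\|_{L^m}\bigl(\int_{|z|>R}|z|^{-pm'}dz\bigr)^{1/m'}=C(N,p,m)\,R^{-N/q}\|g\|_{L^m}$; convergence of the tail integral requires $pm'>N$, which is equivalent, via the defining relation for $q$, to $q<\infty$. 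Choosing $R=R(x)$ so that the two upper bounds balance, namely $R^{N-p}Mg(x)=R^{-N/q}\|g\|_{L^m}$, yields the pointwise interpolation bound
\[
|I_p g(x)|\leq C\,(Mg(x))^{m/q}\,\|g\|_{L^m}^{1-m/q}.
\]
Raising to the $q$-th power, integrating over $\R^N$, and invoking the $L^m\!\to\!L^m$ boundedness of $M$ (which requires precisely $m>1$) produces $\|I_p g\|_{L^q}^q\leq C\|g\|_{L^m}^{q-m}\|Mg\|_{L^m}^m\leq C\|g\|_{L^m}^q$, as desired.

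The main obstacle is the tight correspondence between the algebraic scaling identities and the two sharp hypotheses $1<m$ and $q<\infty$: the former is exactly the range where the maximal function is bounded on $L^m$ and the pointwise interpolation can be integrated, while the latter is exactly the range where the tail integral of the Riesz kernel converges. An equivalent route replaces the pointwise balancing by a weak-type $(m,q)$ estimate obtained by choosing $R=R(\lambda)$ so that $|B_R(x)|\leq\lambda$ identically and estimating $|\{A_R\gtrsim\lambda\}|$ via the weak $(m,m)$ bound for $M$; Marcinkiewicz interpolation between two such weak-type endpoints then upgrades to strong type, making transparent why the endpoints $m=1$ or $q=\infty$ genuinely fail and are correctly excluded from the statement. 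Obtaining the \emph{sharp} constant $C(N,p,r,t)$, in contrast, requires Lieb's rearrangement-based argument and lies well beyond this qualitative plan.
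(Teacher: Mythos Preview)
The paper does not prove this theorem at all: it is stated as a classical result and attributed to the cited references \cite{FL1,FL2,Lieb,Lions2}, so there is no ``paper's own proof'' to compare against.

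Your argument is a correct and standard proof of the qualitative Hardy--Littlewood--Sobolev inequality, namely Hedberg's pointwise-interpolation method. The duality reduction is clean, and the exponent bookkeeping is right: the tail convergence condition $pm'>N$ is indeed equivalent to $q<\infty$, and the balancing exponent $N-p+N/q=N/m$ produces exactly the pointwise bound $|I_p g|\leq C(Mg)^{m/q}\|g\|_{L^m}^{1-m/q}$. One minor point worth making explicit is that the balancing step tacitly assumes $Mg(x)>0$; when $Mg(x)=0$ one has $g=0$ a.e.\ and the inequality is trivial. Your closing remark is also accurate: this route gives the inequality with some constant $\overline{C}(N,p,q,m)$, while the sharp constant asserted in the statement (and actually needed nowhere in the present paper) requires the rearrangement/competing-symmetries arguments of Lieb and Frank--Lieb in the cited works.
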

Hardy and Littlewood also introduced the following double weighted inequality, which was later generalized by Stein and Weiss in \cite{SMGW}, see also \cite{CL2}.
\begin{thm}[Double weighted Hardy-Littlewood-Sobolev inequality, c.f. \cite{SMGW}, see also \cite{CL2}]\label{HLSI+}
Assume $\alpha, \beta\geq0$, $1<t,r<\infty$ and $0<\sigma<N$. Let $f\in L^t(\R^N)$ and $h\in L^r(\R^N)$. Then there exists a sharp constant $C(N,\sigma,\alpha,\beta,r,t)$, independent of $f$ and $h$, such that
\begin{align*}
\left| \int_{\R^N} \int_{\R^N} \frac{f(x) h(y)}{|x|^{\alpha} |x-y|^{\sigma} |y|^{\beta}}\mathrm{d}x\mathrm{d}y \right|\leq C(N,\sigma,\alpha,\beta,r,t)  \| f \|_{L^t(\R^N)} \| h \|_{L^r(\R^N)},
\end{align*}
where $\alpha+\beta\geq0$,
$$ 1-\frac{1}{r} -\frac{\sigma}{N} < \frac{\alpha}{N} <1-\frac{1}{r} \,\,\quad \mbox{and} \,\,\quad \frac{1}{t}+\frac{\sigma+\alpha+\beta}{N}+\frac{1}{r}=2.$$
\end{thm}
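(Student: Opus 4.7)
The plan is to derive Theorem \ref{HLSI+} from the unweighted Hardy--Littlewood--Sobolev inequality of Theorem \ref{HLSI} via a dyadic decomposition of $\R^N\times\R^N$ into annular boxes. The scaling identity $\frac{1}{t}+\frac{\sigma+\alpha+\beta}{N}+\frac{1}{r}=2$ is forced by dimensional analysis (testing the bilinear form on dilates of $f$ and $h$), and the two-sided condition $1-\frac{1}{r}-\frac{\sigma}{N}<\frac{\alpha}{N}<1-\frac{1}{r}$ will appear as precisely the summability criterion for the geometric series produced below.

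Decompose $\R^{N}=\bigsqcup_{j\in\mathbb{Z}}A_j$ with $A_j:=\{x\in\R^{N}:2^{j}\leq|x|<2^{j+1}\}$, set $f_j:=f\chi_{A_j}$ and $h_k:=h\chi_{A_k}$, and split
\[ I:=\int_{\R^N}\int_{\R^N}\frac{f(x)h(y)}{|x|^{\alpha}|x-y|^{\sigma}|y|^{\beta}}\,\mathrm{d}x\,\mathrm{d}y=\sum_{j,k\in\mathbb{Z}}I_{jk} \]
into three regions according to whether $j\geq k+2$, $k\geq j+2$, or $|j-k|\leq 1$.

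In the far off-diagonal piece $j\geq k+2$, one has $|x-y|\sim 2^{j}$ on $A_j\times A_k$. Pulling the weights out as essentially constants of sizes $2^{-j(\alpha+\sigma)}$ and $2^{-k\beta}$ and applying H\"older on each annulus to exchange $L^{1}$ norms for $L^{t}$ and $L^{r}$ norms, we obtain
\[ I_{jk}\leq C\cdot 2^{j(N/t'-\alpha-\sigma)}\,2^{k(N/r'-\beta)}\|f_j\|_{L^t}\|h_k\|_{L^r}. \]
By the scaling identity the two exponents of $2$ add to zero, so this estimate has the form $C\cdot 2^{(j-k)\gamma_1}\|f_j\|_{L^t}\|h_k\|_{L^r}$ for a single explicit $\gamma_1$; the two-sided hypothesis on $\alpha$ (together with the companion bounds on $\beta$ that it forces through the scaling identity) guarantees $\gamma_1<0$, and then a standard Schur-type summation bounds the whole contribution by $C\|f\|_{L^t}\|h\|_{L^r}$. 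The symmetric region $k\geq j+2$ is handled by the mirror argument.

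The main obstacle is the diagonal band $|j-k|\leq 1$, where the weights supply only the crude factor $2^{-j(\alpha+\beta)}$ and cannot offset the local singularity $|x-y|^{-\sigma}$; here the unweighted HLS inequality must genuinely be invoked. I pull the weights out as constants, choose auxiliary exponents $p\leq t$, $q\leq r$ satisfying the HLS scaling $\frac{1}{p}+\frac{1}{q}+\frac{\sigma}{N}=2$, apply Theorem \ref{HLSI} to $(f_j,h_j)$ in $L^{p}\times L^{q}$, and correct by H\"older on the annulus via the loss $\|f_j\|_{L^p}\leq C\cdot 2^{jN(1/p-1/t)}\|f_j\|_{L^t}$ (and the analogue for $h_j$). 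The three $2^{j}$-powers, namely the weight contribution and the two H\"older losses, collapse to zero exponent by virtue of the scaling identity, giving the $j$-uniform bound $I_{jj}\leq C\|f_j\|_{L^t}\|h_j\|_{L^r}$; a final discrete H\"older inequality sums this over $j$ to $C\|f\|_{L^t}\|h\|_{L^r}$. Adding the three contributions yields the stated inequality with some finite constant $C(N,\sigma,\alpha,\beta,r,t)$, which is all that is actually needed for the applications in the rest of the paper (sharpness would be a separate extremization question not required here).
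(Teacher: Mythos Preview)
The paper does not prove Theorem~\ref{HLSI+}; it is quoted from Stein--Weiss \cite{SMGW} and Chen--Li \cite{CL2} as a known tool, and the paper only invokes it in the special symmetric case $t=r$ (see \eqref{b1} and \eqref{eqmv-25}). So there is no ``paper's proof'' to compare against; your dyadic-annulus reduction to the unweighted HLS inequality is precisely the classical Stein--Weiss argument and is the right strategy.

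One point in your sketch deserves tightening. You claim that the two-sided hypothesis on $\alpha$, together with the scaling identity, forces $\gamma_1=N/t'-\alpha-\sigma<0$. Using the scaling relation $N/t'+N/r'=\sigma+\alpha+\beta$ one finds $\gamma_1=\beta-N/r'$, so $\gamma_1<0$ is equivalent to $\beta<N/r'$; likewise the companion off-diagonal exponent is $\alpha-N/t'$, whose negativity requires $\alpha<N/t'$. The hypothesis \emph{as written in the paper} is $N/r'-\sigma<\alpha<N/r'$, which via scaling gives $N/t'-\sigma<\beta<N/t'$ --- these are the conditions with $t$ and $r$ swapped relative to what the dyadic argument actually needs. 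In other words, the stated range constrains $\alpha$ against $r'$ rather than $t'$, and this does \emph{not} by itself yield $\gamma_1<0$ unless $t=r$. The standard Stein--Weiss hypotheses are $\alpha<N/t'$ and $\beta<N/r'$ (plus $\alpha+\beta\ge 0$ and the scaling identity), under which your argument goes through verbatim. This looks like a transcription slip in the paper's statement; since every application in the paper has $t=r$, it is harmless there, but you should flag the correct conditions if you write out the proof in full.
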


\begin{thm}[Caffarelli-Kohn-Nirenberg inequality, c.f. \cite{CKN}, see also \cite{NHMSM}]\label{HDSI}
Let $q\geq1$, $m\geq1$, $\tau>0$, $0<a\leq1$, $\gamma, \alpha, \beta \in\R$ be such that
$$ \frac{1}{\tau} + \frac{\gamma}{N} = a \left(\frac{1}{q} + \frac{\alpha-1}{N}\right) + (1-a)\left( \frac{1}{m} +\frac{\beta}{N} \right),$$
and, with $\gamma=a\sigma+(1-a)\beta$,
$$0\leq\alpha-\sigma$$
and
\begin{equation*}
\alpha-\sigma\leq1 \,\,\,\ \mbox{if} \,\,\ a>0 \,\ \mbox{and} \,\,\ \frac{1}{\tau} + \frac{\gamma}{N} = \frac{1}{q} + \frac{\alpha-1}{N}.
\end{equation*}
We have, for $u\in C_{c}^1(\R^N)$, \\
$(i)$ if $\frac{1}{\tau} + \frac{\gamma}{N}>0$, then
$$ \left(\int_{\R^N} |u|^\tau |x|^{\gamma\tau} \mathrm{d}x\right)^{1/\tau} \leq C_H \| |x|^\alpha \nabla u\|^a_{L^q(\R^N)}  \| |x|^\beta  u\|^{1-a}_{L^m(\R^N)};$$
$(ii)$ if $\frac{1}{\tau} + \frac{\gamma}{N}<0$ and $supp(u) \subset \R^N\setminus\{0\}$, then
$$ \left(\int_{\R^N} |u|^\tau |x|^{\gamma\tau} \mathrm{d}x\right)^{1/\tau} \leq C_H \| |x|^\alpha \nabla u\|^a_{L^q(\R^N)} \| |x|^\beta  u\|^{1-a}_{L^m(\R^N)},$$
where $C_H$ is a positive constant independent of $u$.
\end{thm}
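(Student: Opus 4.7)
The strategy is to reduce the general inequality ($0 < a \le 1$) to the pure gradient case $a = 1$ via H\"older's inequality, and then to prove the $a=1$ case by splitting into a Hardy-type subcase ($\alpha-\sigma=1$) and a Sobolev-type subcase ($\alpha-\sigma<1$). Using $\gamma = a\sigma + (1-a)\beta$, I would first factor the integrand as
\[
|u|^\tau |x|^{\gamma\tau} = \bigl(|u|\,|x|^\sigma\bigr)^{a\tau}\cdot\bigl(|u|\,|x|^\beta\bigr)^{(1-a)\tau},
\]
choose an auxiliary exponent $\tau_1$ dictated by the dimensional relation at $a=1$, namely $\tfrac{1}{\tau_1}+\tfrac{\sigma}{N} = \tfrac{1}{q}+\tfrac{\alpha-1}{N}$, and apply H\"older with conjugate exponents $\tau_1/(a\tau)$ and $m/((1-a)\tau)$. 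The hypothesized dimensional identity guarantees that these exponents sum to one, giving
\[
\Bigl(\int_{\R^N}|u|^\tau |x|^{\gamma\tau}\,dx\Bigr)^{1/\tau}\le \Bigl(\int_{\R^N}|u|^{\tau_1}|x|^{\sigma\tau_1}\,dx\Bigr)^{a/\tau_1}\cdot\bigl\| |x|^\beta u\bigr\|_{L^m(\R^N)}^{1-a}.
\]
This reduces the problem to proving the $a=1$ inequality $\||x|^\sigma u\|_{L^{\tau_1}(\R^N)}\le C\||x|^\alpha \nabla u\|_{L^q(\R^N)}$.

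For the $a=1$ inequality, I would first treat the Hardy subcase $\alpha-\sigma=1$, where $\tau_1=q$. Integrating the divergence identity $\nabla\cdot(x|x|^{\sigma q})=(N+\sigma q)|x|^{\sigma q}$ multiplied by $|u|^q$ and using H\"older's inequality yields
\[
(N+\sigma q)\int_{\R^N} |u|^{q}|x|^{\sigma q}\,dx \le q\int_{\R^N} |u|^{q-1}|\nabla u|\,|x|^{\sigma q+1}\,dx\le q\,\||x|^\sigma u\|_{L^q}^{q-1}\,\||x|^\alpha \nabla u\|_{L^q},
\]
which rearranges to the desired bound. Nonvanishing of $N+\sigma q$ is ensured by the sign hypothesis $\tfrac{1}{\tau}+\tfrac{\gamma}{N}\ne 0$; in case (ii) this constant carries the opposite sign, and the support hypothesis $\mathrm{supp}(u)\subset \R^N\setminus\{0\}$ eliminates the otherwise divergent boundary term at the origin. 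For the Sobolev subcase $\alpha-\sigma<1$, set $v(x)=|x|^\alpha u(x)$ so that $\nabla v = |x|^\alpha \nabla u + \alpha x|x|^{\alpha-2} u$; the standard Sobolev embedding $\|v\|_{L^{q^*}}\le C\|\nabla v\|_{L^q}$ (with $q^*=Nq/(N-q)$) combined with the Hardy inequality $\||x|^{\alpha-1}u\|_{L^q}\le C\||x|^\alpha \nabla u\|_{L^q}$ (itself an instance of the Hardy subcase, applied with different parameters) then gives $\||x|^\alpha u\|_{L^{q^*}} \le C\||x|^\alpha \nabla u\|_{L^q}$. Finally, H\"older-interpolate the target norm $\||x|^\sigma u\|_{L^{\tau_1}}$ between $\||x|^\alpha u\|_{L^{q^*}}$ and $\||x|^{\alpha-1} u\|_{L^q}$ via the pointwise factorization $|x|^\sigma u = (|x|^\alpha u)^{\theta}(|x|^{\alpha-1}u)^{1-\theta}$ with $\theta = 1-(\alpha-\sigma)$; the dimensional identity ensures the corresponding H\"older exponents add correctly, and $0\le \alpha-\sigma<1$ places $\theta$ in $(0,1]$.

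The main obstacle I anticipate is the bookkeeping needed to verify that the chain of sign and inequality conditions in the hypotheses — $0\le\alpha-\sigma$, the threshold restriction $\alpha-\sigma\le 1$ under the equal-dimension assumption of case (i), and the dichotomy between cases (i) and (ii) according to the sign of $\tfrac{1}{\tau}+\tfrac{\gamma}{N}$ — translates cleanly into the admissibility conditions at each step (nonvanishing of $N+\sigma q$ for the integration by parts, the range $1<q<N$ for the Sobolev embedding on $v$, applicability of the Hardy inequality for the commutator, and finiteness of all integrals near the origin, where the support condition in case (ii) becomes essential). No genuinely new deep estimate is required; the difficulty lies almost entirely in unwinding the homogeneity and sign constraints so that H\"older's inequality, integration by parts, the Sobolev embedding and the Hardy inequality can be legitimately invoked in their admissible ranges, and so that the final constant $C_H$ emerges as a clean product of the constants from each step.
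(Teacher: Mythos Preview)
The paper does not prove this theorem; it is stated as a background result with citations to \cite{CKN} and \cite{NHMSM}, so there is no ``paper's own proof'' to compare against. Your outline is the standard route to the Caffarelli--Kohn--Nirenberg inequality: reduce to $a=1$ by H\"older, then handle the pure-gradient inequality via a Hardy-type integration by parts when $\alpha-\sigma=1$ and via a weighted Sobolev embedding plus interpolation when $\alpha-\sigma<1$.

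A few points of caution if you intend to flesh this out. First, the Sobolev subcase as written tacitly assumes $q<N$ so that $q^*=Nq/(N-q)$ is available; the general statement allows $q\ge 1$ without this restriction, so you would need to say what happens when $q\ge N$ (typically one argues via a direct radial/angular decomposition or an alternate embedding). Second, the sign hypothesis $\tfrac{1}{\tau}+\tfrac{\gamma}{N}\ne 0$ in the original variables does not automatically transfer to $N+\sigma q\ne 0$ in the reduced $a=1$ problem, since after the H\"older reduction the relevant dimensional quantity is $\tfrac{1}{\tau_1}+\tfrac{\sigma}{N}=\tfrac{1}{q}+\tfrac{\alpha-1}{N}$, which need not share the sign of $\tfrac{1}{\tau}+\tfrac{\gamma}{N}$; you should check this carefully, especially in case (ii). Third, your interpolation step writes $|x|^\sigma u=(|x|^\alpha u)^\theta(|x|^{\alpha-1}u)^{1-\theta}$ with $\theta=1-(\alpha-\sigma)$, which is a pointwise identity only when the exponents on $|x|$ match; they do, but the H\"older exponents $q^*/\theta\tau_1$ and $q/((1-\theta)\tau_1)$ must be verified to be at least $1$. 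None of these are fatal --- they are exactly the ``bookkeeping'' you flagged --- but they are where a full proof earns its keep.
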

\begin{rem}\label{re-HDSI}
By taking $\alpha=\beta=0$, $a=1$ and $q=p$ in $(i)$ of Theorem \ref{HDSI}, and using the density of $C_0^\infty (\R^N)$ in $D^{1,p}(\R^N)$ and Fatou's Lemma, we can obtain that
$$ \left(\int_{\R^N} |u|^\tau |x|^{\gamma\tau} \mathrm{d}x\right)^{1/\tau} \leq C_H \| \nabla u\|_{L^p(\R^N)}, \,\,\,\,\,\,\,\  \forall \ u\in D^{1,p}(\R^N),$$
where $\frac{1}{\tau} + \frac{\gamma}{N}=\frac{N-p}{Np}$ and $-1\leq\gamma\leq0$, i.e., Hardy-Sobolev inequality \eqref{eq-HDSI}.
\end{rem}

Now, we will study the following basic point-wise estimates for the convolution term of the type $|x|^{-\nu}*g$.
\begin{lem}\label{lm.6}
Let $0<\nu<N$ and $R>1>R_0>0$, then the following statements concerning
the convolution $|x|^\nu*g$ hold.\\
{\bf $(i)$} If $g(x)\geq0$ in $\mathbb{R}^{N}$ and $g(x)\geq C_1 |x|^{-\beta_0}$ in $\R^N\setminus B_R(0)$ for some $C_1,\beta_0>0$, then for some constant $C>0$, we have
\begin{align}\label{eq10.26.18}
\left\{ \begin{array}{ll}
|x|^{-\nu}*g=\infty,  \,\,\,\,\,&\mbox{if}\,\, \beta_0\leq N-\nu \\
|x|^{-\nu}*g\geq C |x|^{N-\nu-\beta_0},  \,\,\,\,&\mbox{if}\,\, \beta_0>N-\nu
\end{array}
\right. \,\,\,\,\,\,\, &\mbox{in}\,\, \R^N\setminus B_R(0).
\end{align}
{\bf $(ii)$} If $0 \leq g(x) \leq C_1 |x|^{-\beta_1}$ in $B_{R_0}(0)$ and $ 0\leq g(x) \leq \frac{C_2}{1+|x|^{\beta_2}}$ in $\R^N\setminus B_{R_0}(0)$ for some $C_1, C_2 >0$, $\beta_1<N$ and $\beta_2>N-\nu$, then
\begin{align}\label{eq10.26.19}
|x|^{-\nu}*g  \leq C |x|^{-\nu} + C |x|^{N-\nu-\beta_2} \,\,\,\,\,\,\,\mbox{in}\,\, \R^N\setminus B_{2R}(0),
\end{align}
\begin{align}\label{eq10.26.19+}
|x|^{-\nu}*g \leq C |x|^{N-\nu-\beta_1}+C \,\,\,\,\,\,\mbox{in}\,\, B_{R_0/2}(0),
\end{align}
\begin{align}\label{eq10.26.19++}
|x|^{-\nu}*g \leq \bar{C} \,\,\,\,\,\,\mbox{in}\,\, B_{3R}(0)\setminus B_{R_0/4}(0)
\end{align}
for some constants $C=C(R_0)>0$ and $\bar{C}=\bar{C}(R,R_0)>0$.\\
{\bf $(iii)$} 
If $g(x)\geq0$  in $\mathbb{R}^{N}$ and $g(x)\geq C_{1} |x|^{-\beta_0}$ in $\R^N \setminus B_{R}(0)$ for some $\beta_0>N-\nu$ and constant $C_{1}>0$,  then
\begin{equation}\label{eq10.26.30}
|x|^{-\nu}*g\geq \widetilde{C} \,\,\mbox{in}\,\, B_{R/2}(0)
\end{equation}
for some constant $\widetilde{C}=\widetilde{C}(R)>0$.
\end{lem}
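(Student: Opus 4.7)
The plan is to prove each of the three parts by splitting the convolution
$(|x|^{-\nu}*g)(x)=\int_{\R^N}|x-y|^{-\nu}g(y)\,\mathrm{d}y$
into regions according to how $|y|$ compares with $|x|$, and then using the given pointwise bounds on $g$ together with the elementary estimates $|x-y|\ge|y|/2$ when $|y|\ge 2|x|$, $|x-y|\ge|x|/2$ when $|y|\le|x|/2$, and the local integrability $\int_{|z|\le R}|z|^{-\nu}\,\mathrm{d}z=C R^{N-\nu}$ valid since $\nu<N$.

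For part $(i)$, I would fix $|x|\ge R$ and restrict the integration to the set $\{|y|\ge 2|x|\}$, where the lower bound $g(y)\ge C_1|y|^{-\beta_0}$ applies and $|x-y|\le\tfrac{3}{2}|y|$, hence $|x-y|^{-\nu}\ge c|y|^{-\nu}$. Passing to spherical coordinates reduces the bound to $c\int_{2|x|}^{\infty} r^{N-1-\beta_0-\nu}\,\mathrm{d}r$, which diverges precisely when $\beta_0\le N-\nu$ and otherwise evaluates to $C|x|^{N-\nu-\beta_0}$, giving \eqref{eq10.26.18}. Part $(iii)$ uses the mirror-image argument: for $|x|<R/2<|y|$ we again have $|x-y|\le\tfrac{3}{2}|y|$, so the same computation yields $(|x|^{-\nu}*g)(x)\ge c\int_R^\infty r^{N-1-\beta_0-\nu}\,\mathrm{d}r$, which is a strictly positive constant $\widetilde C(R)$ under the standing hypothesis $\beta_0>N-\nu$.

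The upper bounds in part $(ii)$ are obtained by a four-region decomposition. For \eqref{eq10.26.19} with $|x|>2R$, I split the integral over $\{|y|<R_0\}$, $\{R_0\le|y|\le|x|/2\}$, $\{|x|/2\le|y|\le 2|x|\}$ and $\{|y|>2|x|\}$. On the first two sets $|x-y|\ge|x|/2$ and the hypotheses $\beta_1<N$, $\beta_2>N-\nu$ give a contribution at most $C|x|^{-\nu}$. On the third set the local integrability of the kernel yields $C|x|^{-\beta_2}\cdot|x|^{N-\nu}=C|x|^{N-\nu-\beta_2}$, and on the fourth set $|x-y|\ge|y|/2$ together with $\beta_2>N-\nu$ again produces $C|x|^{N-\nu-\beta_2}$. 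The near-origin estimate \eqref{eq10.26.19+} is analogous: the integral over $B_{R_0}$ is split into $\{|y|<|x|/2\}$, $\{|x|/2\le|y|\le 2|x|\}$ and $\{2|x|\le|y|\le R_0\}$, while the contribution from $|y|\ge R_0$ is uniformly bounded since $|x-y|\ge R_0/2$ and $g$ lies in $L^1$ outside $B_{R_0}$ by the decay hypothesis. The bound \eqref{eq10.26.19++} on the compact annulus $B_{3R}\setminus B_{R_0/4}$ follows from the same decomposition, noting that all resulting integrals are uniformly bounded in $x$ because both the kernel singularity and the decay of $g$ are controlled away from $0$ and $\infty$.

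The main technical obstacle is the near-origin bound \eqref{eq10.26.19+}. The three sub-integrals near $y=0$ and $y=x$ each contribute $C|x|^{N-\nu-\beta_1}$ in the range $\beta_1+\nu>N$ and a bounded quantity when $\beta_1+\nu<N$, so the stated form $C|x|^{N-\nu-\beta_1}+C$ covers both regimes. The borderline case $\beta_1+\nu=N$ produces a logarithmic factor from $\int_{2|x|}^{R_0}r^{-1}\,\mathrm{d}r$ which is not literally of the stated form, but this can be absorbed either by a slight perturbation of $\beta_1$ (using monotonicity of the bound in $\beta_1$) or by noting that in the applications of this lemma only the non-critical ranges occur. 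The remaining manipulations are routine book-keeping of geometric series and spherical integrals.
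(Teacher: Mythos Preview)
Your proposal is correct and follows essentially the same approach as the paper: the same three-region splitting $\{|y|<|x|/2\}$, $\{|x|/2\le|y|\le 2|x|\}$, $\{|y|>2|x|\}$ with the same distance comparisons $|x-y|\ge|x|/2$, $|x-y|\ge|y|/2$, and the local integrability of $|z|^{-\nu}$. The only cosmetic difference is that for part $(i)$ the paper integrates over $\{|y|\ge|x|\}$ (using $|x-y|\le 2|y|$) rather than your $\{|y|\ge 2|x|\}$, which changes nothing. Your remark about the borderline case $\beta_1+\nu=N$ producing a logarithm is a valid observation that the paper itself glosses over; as you note, it is irrelevant in the applications.
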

\begin{proof}
{\bf $(i)$} Since $|x-y|\leq |x|+|y| \leq 2|y|$ if $|x|\leq |y|$, we get, for $x\in\R^N\setminus B_R(0)$,
$$
|x|^{-\nu}*g \geq C_1 \int_{|x|\leq |y|} \frac{|y|^{-\beta_0}}{|x-y|^{\nu}} \mathrm{d}y \geq C \int_{|x|\leq |y|} |y|^{-\beta_0-\nu}\mathrm{d}y \geq C \int_{|x|}^\infty \tau^{N-1-\beta_0-\nu} \mathrm{d}\tau,
$$
which leads to the conclusion in $(i)$.

\smallskip

{\bf $(ii)$} Since $N>\beta_1$, $0<\nu<N$ and $\beta_2>N-\nu$, we have, for $|x|\geq 2R>2$,
\begin{align}\label{eq10.28.30}
&\int_{\R^N} \frac{g(y)}{|x-y|^{\nu}}\mathrm{d}y =\left\{\int_{|y|\leq\frac{|x|}{2}} + \int_{\frac{|x|}{2} \leq |y| \leq 2|x|} +\int_{|y|\geq 2|x|}  \right\} \frac{g(y)}{|x-y|^{\nu}}\mathrm{d}y \\
&\leq C |x|^{-\nu}\int_{|y|\leq\frac{|x|}{2}} g(y) \mathrm{d}y + C_2 |x|^{-\beta_2} \int_{\frac{|x|}{2} \leq |y| \leq 2|x|} \frac{\mathrm{d}y}{|x-y|^{\nu}} + C \int_{|y|\geq 2|x|} \frac{\mathrm{d}y}{|y|^{\nu+\beta_2}}  \nonumber\\
&\leq  C |x|^{-\nu} \left\{\int_{|y|\leq R_0 } + \int_{R_0<|y|\leq\frac{|x|}{2}} \right\} g(y) \mathrm{d}y
+ C_2 |x|^{-\beta_2} \int_{|y-x|\leq 3|x|} \frac{\mathrm{d}y}{|x-y|^{\nu}}
+ C |x|^{N-\nu-\beta_2} \nonumber\\
&\leq  C |x|^{-\nu} \left\{\int_{|y|\leq R_0 }|y|^{-\beta_1} \mathrm{d}y + \int_{R_0< |y|\leq\frac{|x|}{2}} |y|^{-\beta_2} \mathrm{d}y \right\}
+ C |x|^{N-\nu-\beta_2} \nonumber\\
&\leq C |x|^{-\nu}+C |x|^{N-\nu-\beta_2},\nonumber
\end{align}
in which we have used the following facts:
\begin{align*}
|x-y|\geq |x|-|y| \geq \frac{|x|}{2}, \,\,\,\,\,\,\,&\mbox{if}\,\,|y|\leq \frac{|x|}{2},\\
|x-y|\leq |x|+|y| \leq 3|x|, \,\,\,\,\,\,\,&\mbox{if}\,\, \frac{|x|}{2} \leq |y|\leq 2|x|,\\
|x-y|\geq |y|-|x| \geq \frac{|y|}{2}, \,\,\,\,\,\,\,&\mbox{if}\,\,|y|\geq 2|x|.
\end{align*}
Thus \eqref{eq10.26.19} holds true.

\medskip

Next, similar to \eqref{eq10.28.30}, for any $|x|\leq R_0/2<1$, one has
\begin{align}\label{eq10.28.30+}
&\int_{\R^N} \frac{g(y)}{|x-y|^{\nu}}\mathrm{d}y =\left\{\int_{|y|\leq\frac{|x|}{2}} + \int_{\frac{|x|}{2} \leq |y| \leq 2|x|} +\int_{|y|\geq 2|x|}  \right\} \frac{g(y)}{|x-y|^{\nu}}\mathrm{d}y \\
&\leq C |x|^{-\nu}\int_{|y|\leq\frac{|x|}{2}} g(y) \mathrm{d}y + C_1 |x|^{-\beta_1} \int_{\frac{|x|}{2} \leq |y| \leq 2|x|} \frac{\mathrm{d}y}{|x-y|^{\nu}} + C \int_{|y|\geq 2|x|} \frac{g(y)}{|y|^{\nu}}\mathrm{d}y  \nonumber\\
&\leq C |x|^{-\nu} \int_{|y|\leq\frac{|x|}{2}} \frac{\mathrm{d}y}{|y|^{\beta_1}}  + C_1 |x|^{-\beta_1} \int_{|y-x|\leq 3|x|} \frac{\mathrm{d}y}{|x-y|^{\nu}} + C \left\{\int_{2|x| \leq |y| \leq R_0 } + \int_{R_0 <|y|} \right\} \frac{g(y)}{|y|^{\nu}} \mathrm{d}y  \nonumber\\
&\leq C |x|^{N-\nu-\beta_1} + C \left\{\int_{2|x| \leq |y| \leq R_0 } \frac{\mathrm{d}y}{|y|^{\nu+\beta_1}}  + \int_{R_0 <|y|} \frac{\mathrm{d}y}{|y|^{\nu+\beta_2}} \right\} \nonumber\\
&\leq  C |x|^{N-\nu-\beta_1}+C,\nonumber
\end{align}
which yields \eqref{eq10.26.19+}.

\medskip

Finally, for any $R_0/4 \leq |x| \leq 3R$, it holds
\begin{align*}
|x|^{-\nu}*g &= \int_{\R^N} \frac{g(y)}{|x-y|^{\nu}} \mathrm{d}y \\
&= \int_{B_{R_0/8}} \frac{g(y)}{|x-y|^{\nu}} \mathrm{d}y + \int_{B_{4R}\setminus B_{R_0/8}} \frac{g(y)}{|x-y|^{\nu}} \mathrm{d}y +  \int_{\R^N \setminus B_{4R}} \frac{g(y)}{|x-y|^{\nu}} \mathrm{d}y \\
&=:I_1 + I_2 + I_3.
\end{align*}
Since $|x-y|\geq \frac{R_0}{4}-\frac{R_{0}}{8}$ for $y\in B_{R_0/8}$ and $x\in B_{3R}\setminus B_{R_0/4}$, we have
\begin{align*}
I_1 = \int_{B_{R_0/8}} \frac{g(y)}{|x-y|^{\nu}} \mathrm{d}y
    \leq\frac{C}{R_0^\nu} \int_{B_{R_0/8}} \frac{\mathrm{d}y}{|y|^{\beta_1}}  \leq C R_0^{N-\nu-\beta_1}.
\end{align*}
Due to $|y|/4 \leq |y|-3R \leq |x-y| \leq |x|+|y|\leq 2|y|$ for $y\in \R^N \setminus B_{4R}$ and $x\in B_{3R}\setminus B_{R_0/4}$, we get
\begin{align*}
I_3 =  \int_{\R^N \setminus B_{4R}}
       \frac{g(y)}{|x-y|^{\nu}} \mathrm{d}y
       \leq C \int_{\R^N \setminus B_{4R}} \frac{\mathrm{d}y}{|y|^{\beta_2+\nu}} \leq C R^{N-\nu-\beta_2}.
\end{align*}
By assumptions on $g$, one has
\begin{align*}
I_2 = &\int_{B_{4R}\setminus B_{R_0/8}} \frac{g(y)}{|x-y|^{\nu}} \mathrm{d}y \\
      &\leq C R_0^{-\beta_1}\int_{R_0/8\leq |y| \leq R_0} \frac{\mathrm{d}y}{|x-y|^{\nu}} + C R^{-\beta_2} \int_{R_0\leq |y| \leq 4R} \frac{\mathrm{d}y}{|x-y|^{\nu}}
      \leq C.
\end{align*}
Combining estimates for $I_1$, $I_2$ and $I_3$, we see that \eqref{eq10.26.19++} holds. This finishes our proof of $(ii)$.

\smallskip

{\bf $(iii)$} For any $x\in B_{\frac{R}{2}}(0)$,
\begin{align*}
|x|^{-\nu}*g &= \int_{\R^N} \frac{g(y)}{|x-y|^{\nu}} \mathrm{d}y
\geq \int_{\R^N \setminus B_{R}(0)} \frac{g(y)}{|x-y|^{\nu}} \mathrm{d}y.
\end{align*}
Due to $\frac{1}{2}|y| \leq |y|-\frac{R}{2} \leq |x-y| \leq 2|y|$ for $x\in B_{\frac{R}{2}}$ and $y\in B^c_{R}$, we have
\begin{align*}
&\int_{\R^N \setminus B_{R}} \frac{g(y)}{|x-y|^{\nu}} \mathrm{d}y \\
&\geq C \int_{\R^N \setminus B_{R}} \frac{1}{|y|^{\nu}} |y|^{-\beta_0} \mathrm{d}y  \geq C \int_{R}^\infty r^{-\beta_0-\nu+N-1} \mathrm{d}r = C R^{-\beta_0-\nu+N},\,\,\,\,\,\,\,\,\,x\in B_{\frac{R}{2}}(0).
\end{align*}
Therefore, we arrive at $(iii)$. The proof of Lemma \ref{lm.6} is completed.
\end{proof}

\begin{rem}\label{rm.2.7}
Assume that $u$ is a nonnegative weak solution to \eqref{gen-eq} and let the nonlocal nonlinear term $f_k(x):=\mu |x|^{-p} u^{p-1}(x) + V_k(x) |x|^{-s} u^{p-1}(x)$, where $k=1,2,3$ ($s=0$ when $k=1$). We have proved that $u\in C_{loc}^{1,\alpha}(\R^N\setminus\{0\})$ for some $0<\alpha<\min\{1,\frac{1}{p-1}\}$ (see Theorems \ref{th2.1.1}, \ref{th2} and \ref{gth2}). Moreover, if $u$ is positive, by Theorems \ref{th2.1}, \ref{th2} and \ref{gth2}, we obtain that, there exist $0<\gamma_1<(N-p)/p<\gamma_2<(N-p)/(p-1)$, $c_0, C_0>0$ and $0<R_0<1<R_1$ such that
\[ c_0 |x|^{-\gamma_1} \leq  u(x) \leq C_0 |x|^{-\gamma_1} \qquad \mbox{in}\,\,\, B_{R_0}(0),\]
\[ c_0 (1+|x|^{\gamma_2})^{-1} \leq u(x) \leq C_0 (1+|x|^{\gamma_2})^{-1} \,\,\,\,\,\,\,\mbox{in}\,\,\R^N \setminus B_{R_0}(0),\]
\[ c_0 |x|^{-\gamma_1-1} \leq |\nabla u(x)| \leq C_0 |x|^{-\gamma_1-1} \qquad \mbox{in}\,\,\, B_{R_0}(0),\]
\[ c_0 |x|^{-\gamma_2-1} \leq |\nabla u(x)| \leq C_0 |x|^{-\gamma_2-1} \qquad \mbox{in}\,\,\,\R^N \setminus B_{R_1}(0).\]
One can observe that $N>p\gamma_1$, $N>p_{s,\sigma}\gamma_1$, $N> N-p> N-2p$ and $N > p_{s,\sigma} \cdot (N-p)/p > N- \sigma$. Therefore, by  $(ii)$ and $(iii)$ in Lemma \ref{lm.6} with $g(x)=u^p(x)$, $\beta_0=p\gamma_2$, $\beta_1=p\gamma_1$, $\beta_2=N-p$ and $\nu=2p$, and with $g(x)=u^{p_{s,\sigma}}(x)$, $\beta_0=p_{s,\sigma}\gamma_2$, $\beta_1=p_{s,\sigma}\gamma_1$, $\beta_2=p_{s,\sigma} \cdot (N-p)/p$ and $\nu=\sigma$, we have, for any $R>0$,
\begin{align}\label{eq10.26.30.03}
C_1 \leq |x|^{-2p}\ast u^p \leq C_2 \max\{|x|^{-p},1\} \,\,\,\,\,\,\,\mbox{in}\,\, B_R,
\end{align}
\begin{align}\label{eq10.26.30.03+}
C_1 \leq |x|^{-\sigma}\ast u^{p_{s,\sigma}} \leq C_2 \max\{|x|^{-\sigma/2 + s/2},1\} \,\,\,\,\,\,\,\mbox{in}\,\, B_R.
\end{align}
For any $i=1,2,\cdots,N$, by $(ii)$ and $(iii)$ in Lemma \ref{lm.6} with $g(x)=pu^{p-1}(x)|\nabla u(x)|$, $\beta_0=p\gamma_2+1$, $\beta_1=p\gamma_1+1$, $\beta_2=N-p$ and $\nu=2p$, and with $g(x)=p_{s,\sigma}u^{p_{s,\sigma}-1}(x)|\nabla u(x)|$, $\beta_0=p_{s,\sigma}\gamma_2+1$, $\beta_1=p_{s,\sigma}\gamma_1+1<N$ (see Remark \ref{rem1}), $\beta_2=p_{s,\sigma} \cdot (N-p)/p$ and $\nu=\sigma$, we have, for any $R>0$,
\begin{align}\label{eq10.26.30.02}
|(|x|^{-2p}\ast u^p)_{x_i}(x)| \leq \int_{\R^N} \frac{p|u(x-y)|^{p-1} |\nabla u(x-y)|}{|y|^{2p}} \mathrm{d}y \leq C_3\max\{|x|^{-p-1},1\} \,\,\,\,\,\,\mbox{in}\,\, B_R,
\end{align}
\begin{align}\label{eq10.26.30.02+}
|(|x|^{-\sigma}\ast u^{p_{s,\sigma}})_{x_i}| \leq \int_{\R^N} \frac{p_{s,\sigma}|u(y)|^{p_{s,\sigma}-1} |\nabla u(y)|}{|x-y|^{\sigma}} \mathrm{d}y \leq C_3 \max\{|x|^{-\sigma/2 + s/2-1},1\} \,\,\,\,\,\,\mbox{in}\,\, B_R,
\end{align}
where the constants $C_i>0$ $(i=1,2,3)$ depend on $R$ and $R_0$. Furthermore, \eqref{eq10.26.30.03}--\eqref{eq10.26.30.02+} show that, for any $\Omega \subset\subset \R^N\setminus\{0\}$, $f_k(x) =\mu |x|^{-p} u^{p-1}(x) + V_k(x)|x|^{-s}u^{p-1}(x)$ ($k=1,2,3$, with $s=0$ when $k=1$) satisfy
\begin{align}\label{eq26.30.001}
f_k(x)\in L^s (\Omega) \,\,\,\,\,\,\,\,\mbox{with}\,s>N,
\end{align}
and the so-called ``condition ($I_{\alpha}$)" in Lemma \ref{lm.4}, i.e., for some $0\leq\mu<\alpha(p-1)$, there holds:
\begin{align}\label{eq26.30.002}
f_k(x)\in W^{1,m}(\Omega)
\end{align}
for some $m>\frac{N}{2(1-\mu)}$, and, given any $x_0\in \Omega$, there exist $C(x_0,\mu)>0$ and $\rho(x_0,\mu)>0$ such that, $B_{\rho(x_0,\mu)}(x_{0})\subset \Omega$ and
\begin{align}\label{eq26.30.003}
|f_k(x)|\geq C(x_0,\mu) |x-x_0|^\mu \qquad \mbox{in}\,\,B_{\rho(x_0,\mu)}(x_{0}).
\end{align}
\end{rem}

\medskip

In proofs of Lemmas \ref{lm:l-b-r}, \ref{lm:l-b-2} and \ref{lm:l-b-3}, we will need the following inequalities.
\begin{lem}[Lemma 2.3 in \cite{EMSV}]\label{tech-in}
Let $p>1$ and $a,b\geq 0$. Then, for all $\delta>0$ there exists $C_\delta >0$ such that
$$a^p \geq \frac{1}{1+2^{p-1}\delta}(a+b)^p -C_\delta b^p.$$
\end{lem}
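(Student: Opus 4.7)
The plan is to first establish the equivalent ``forward'' inequality
\[(a+b)^p \le (1+2^{p-1}\delta)\, a^p + C'_\delta\, b^p\]
for some $C'_\delta>0$, and then simply rearrange to recover the stated bound. The natural tool is convexity of $t\mapsto t^p$ on $[0,\infty)$, which is available because $p>1$.

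First I would dispose of the degenerate case $b=0$ (in which the inequality collapses to $a^p \ge a^p/(1+2^{p-1}\delta)$, trivially true). Assuming $b>0$, for each $\lambda\in(0,1)$ I would write $a+b$ as the convex combination $a+b = \lambda\cdot(a/\lambda)+(1-\lambda)\cdot(b/(1-\lambda))$ and apply the convexity of $t\mapsto t^p$ to deduce
\[(a+b)^p \;\le\; \lambda\,(a/\lambda)^p + (1-\lambda)\,(b/(1-\lambda))^p \;=\; \lambda^{1-p}\, a^p + (1-\lambda)^{1-p}\, b^p.\]
The map $\lambda\mapsto \lambda^{1-p}=\lambda^{-(p-1)}$ is continuous and strictly decreasing on $(0,1]$, taking values from $+\infty$ at $\lambda=0^+$ to $1$ at $\lambda=1$ (because $p>1$), so given any $\delta>0$ there exists a unique $\lambda=\lambda(\delta)\in(0,1)$ satisfying $\lambda^{1-p}=1+2^{p-1}\delta$. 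Setting $C'_\delta := (1-\lambda(\delta))^{1-p}$ yields the forward inequality.

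Finally I would rearrange: transferring $(1+2^{p-1}\delta)\,a^p$ to the left side of the forward inequality and dividing by $1+2^{p-1}\delta$ gives the stated estimate with $C_\delta := C'_\delta/(1+2^{p-1}\delta)>0$.

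No step looks to be a real obstacle. The only cosmetic remark is that the specific prefactor $2^{p-1}$ is merely a convenient rescaling of $\delta$ (the same argument produces an inequality of identical shape with any positive factor in place of $2^{p-1}$); this particular normalization is presumably chosen so that the constant dovetails with the companion bound $(a+b)^p\le 2^{p-1}(a^p+b^p)$ used in the subsequent iteration arguments.
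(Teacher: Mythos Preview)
Your proof is correct. The paper does not give its own proof of this lemma; it simply quotes it as Lemma~2.3 of \cite{EMSV}, so there is nothing to compare against beyond noting that your convexity argument is the standard one and yields exactly the stated inequality with the specified prefactor $1+2^{p-1}\delta$.
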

\begin{lem}[Lemma 2 in \cite{JSLB}]\label{tech-in-1}
Let $\alpha$ be a positive exponent, and let $a_i, \ \beta_i, \ i=1,2,\cdots,M$ be two sets of $M$ real numbers such that $0<a_i<\infty$ and $0\leq\beta_i<\alpha$. Suppose that $z$ is a positive number satisfying the inequality
\[z^\alpha \leq \sum\limits_{i=1}\limits^{M} a_i z^{\beta_i}.\]
Then
\[z\leq C \cdot \sum\limits_{i=1}\limits^{M} (a_i)^{\frac{1}{\alpha-\beta_i}},\]
where $C>0$ depends only on $M, \alpha$ and $\beta_i$.
\end{lem}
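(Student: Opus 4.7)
The plan is to exploit a simple pigeonhole observation: among the $M$ nonnegative terms $a_i z^{\beta_i}$ on the right-hand side, the largest one is at least $1/M$ of their total, so the whole sum is controlled (up to a factor $M$) by any maximal single term. Once we reduce the polynomial inequality to a single power of $z$, the resulting bound $z^{\alpha-\beta_i}\le M a_i$ can be inverted directly, since every exponent $\alpha-\beta_i$ is strictly positive by hypothesis.

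Concretely, I would pick an index $i_0\in\{1,\dots,M\}$ (depending a priori on $z$) that maximizes $a_i z^{\beta_i}$, and chain the estimates
\[
  z^{\alpha} \;\le\; \sum_{i=1}^{M} a_i z^{\beta_i} \;\le\; M\, a_{i_0} z^{\beta_{i_0}}.
\]
Since $z>0$ and $\alpha-\beta_{i_0}>0$, dividing by $z^{\beta_{i_0}}$ and taking the $(\alpha-\beta_{i_0})$-th root gives
\[
  z \;\le\; \bigl(M a_{i_0}\bigr)^{1/(\alpha-\beta_{i_0})} \;=\; M^{1/(\alpha-\beta_{i_0})}\, a_{i_0}^{1/(\alpha-\beta_{i_0})}.
\]

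To dispose of the $z$-dependent choice of $i_0$, I would simply bound the single term by the full sum of all such terms: setting
\[
  C \;:=\; \max_{1\le i\le M} M^{1/(\alpha-\beta_i)},
\]
which depends only on $M$, $\alpha$ and the $\beta_i$'s, one obtains
\[
  z \;\le\; C\, a_{i_0}^{1/(\alpha-\beta_{i_0})} \;\le\; C\sum_{i=1}^{M} a_i^{1/(\alpha-\beta_i)},
\]
because every summand is nonnegative. This is exactly the claimed estimate. The argument is purely algebraic, so there is no serious technical obstacle; the only subtlety is that the index $i_0$ depends on $z$, but this is harmless because the final bound is phrased in terms of all the quantities $a_i^{1/(\alpha-\beta_i)}$ at once.
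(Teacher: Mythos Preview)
Your argument is correct. The paper does not supply its own proof of this lemma but simply quotes it from Serrin \cite{JSLB}; your pigeonhole reduction to a single dominant term is exactly the standard elementary proof of this fact, so there is nothing to compare.
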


\medskip

In order to apply the moving planes method to $p$-Laplace equations, we will frequently use the following basic point-wise gradient estimate (Lemma \ref{lm.w-g-i}) and the strong comparison principles (Lemmas \ref{th3.1} and \ref{th3.2}).
\begin{lem}[Lemma 3.1 in \cite{OSV}]\label{lm.w-g-i}
Let $u, v$ be positive and $C^1$--functions defined in a neighborhood of some point $x_0\in\R^N$. Then it holds
\begin{equation}\label{050331}
\begin{aligned}
      |\nabla u|^{p-2}\nabla u & \cdot \nabla\left(u-\frac{v^p}{u^p}u\right) +  |\nabla v|^{p-2}\nabla v \cdot \nabla\left(v-\frac{u^p}{v^p}v\right)\\
      & \geq C_p \min\{v^p, u^p\}(|\nabla \log u|+|\nabla \log v|)^{p-2} |\nabla \log u- \nabla \log v|^2, \,\quad \forall p>1,
\end{aligned}
\end{equation}
near $x_0$ for some constant $C_p$ depending only on $p$.
\end{lem}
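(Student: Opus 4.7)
The plan is to establish \eqref{050331} pointwise by substituting the logarithmic gradients $a:=\nabla u/u$ and $b:=\nabla v/v$, which is natural because $u,v>0$ near $x_0$ and the right-hand side of \eqref{050331} is already expressed in these quantities. This recasts the inequality as a statement about the quadratic-type convexity defect of the map $\xi\mapsto|\xi|^p$, a classical tool from $p$-Laplacian theory.

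First I would expand each factor on the left-hand side. Since $\tfrac{v^p}{u^p}u = \tfrac{v^p}{u^{p-1}}$, the chain rule gives
\[
\nabla\!\left(\tfrac{v^p}{u^{p-1}}\right) = \tfrac{p\,v^{p-1}}{u^{p-1}}\nabla v - \tfrac{(p-1)v^p}{u^p}\nabla u,
\]
together with the analogous identity obtained by swapping $u$ and $v$. Using $\nabla u = ua$, $\nabla v = vb$, $|\nabla u|^p = u^p|a|^p$, and $|\nabla u|^{p-2}\nabla u = u^{p-1}|a|^{p-2}a$, a direct computation yields
\begin{align*}
|\nabla u|^{p-2}\nabla u\cdot\nabla\!\left(u-\tfrac{v^p}{u^p}u\right) &= u^p|a|^p + (p-1)v^p|a|^p - p\,v^p |a|^{p-2}a\cdot b, \\
|\nabla v|^{p-2}\nabla v\cdot\nabla\!\left(v-\tfrac{u^p}{v^p}v\right) &= v^p|b|^p + (p-1)u^p|b|^p - p\,u^p |b|^{p-2}b\cdot a.
\end{align*}
Adding and regrouping by $u^p$ and $v^p$ recasts the left-hand side of \eqref{050331} as
\[
u^p\bigl[\,|a|^p+(p-1)|b|^p - p|b|^{p-2}b\cdot a\,\bigr] + v^p\bigl[\,|b|^p+(p-1)|a|^p - p|a|^{p-2}a\cdot b\,\bigr].
\]

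Next I would invoke the standard strong convexity estimate: for every $p>1$ there exists $c_p>0$ such that, for all $\xi,\eta\in\R^N$ with $|\xi|+|\eta|>0$,
\[
|\xi|^p - |\eta|^p - p|\eta|^{p-2}\eta\cdot(\xi-\eta) \;\geq\; c_p\,(|\xi|+|\eta|)^{p-2}\,|\xi-\eta|^2.
\]
The bracketed expressions above are precisely $|a|^p-|b|^p-p|b|^{p-2}b\cdot(a-b)$ and $|b|^p-|a|^p-p|a|^{p-2}a\cdot(b-a)$, so applying the estimate twice shows that both are bounded below by $c_p(|a|+|b|)^{p-2}|a-b|^2$. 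Summing and using $u^p+v^p\geq 2\min\{u^p,v^p\}$ yields \eqref{050331} with $C_p=2c_p$; the degenerate case $|a|+|b|=0$ is immediate since the left-hand side then also vanishes.

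The main technical point is identifying the correct unified form of the convexity inequality. The factor $(|\xi|+|\eta|)^{p-2}$ is essential when $1<p<2$, where $|\xi|^p$ fails to be uniformly convex and the defect can decay as $|\xi|+|\eta|\to\infty$; when $p\geq 2$ this form is in fact sharper than the more familiar $c_p|\xi-\eta|^p$ bound from uniform convexity, since $|\xi-\eta|\leq|\xi|+|\eta|$. Both versions are classical in $p$-Laplacian regularity theory (see, e.g., \cite{LDSMLMSB,BS16}), so once this inequality is in hand the proof reduces to the algebraic identity above.
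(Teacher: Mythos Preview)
Your proof is correct. The paper does not give its own proof of this lemma---it simply cites \cite[Lemma~3.1]{OSV}---so there is nothing to compare against here, but the argument you outline (substituting $a=\nabla\log u$, $b=\nabla\log v$ and reducing to the standard strong-convexity defect estimate for $\xi\mapsto|\xi|^p$) is exactly the approach used in that reference.
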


\begin{lem}[Theorem 1.4 in \cite{LDBS}, c.f. also Theorem 2.1 in \cite{OSV}]\label{th3.1}
Let $\frac{2N+2}{N+2}<p<2$ or $p\geq 2$ and $u,v \in C^1(\overline{\Omega})$, where $\Omega$ is a bounded smooth domain of $\R^N$.
Suppose that $f:\overline{\Omega}\times [0,\infty)\to \R$ is a continuous positive function and of class $C^1$ in $\Omega\times(0,\infty)$. Suppose that either $u_1,u_2$ are two weak solutions of
\begin{align*}
\left\{ \begin{array}{ll}
-\Delta_p u =f(x,u) \,\,\,\,\,\, & \mbox{in}\,\,\Omega, \\
u>0 \,\,\,\,\,\,& \mbox{in}\,\,\Omega, \\
u=0 \,\,\,\,\,\,& \mbox{on}\,\,\partial\Omega \\
\end{array}
\right.\hspace{1cm}
\end{align*}
satisfying
$$ -\Delta_p u_1 + \Lambda u_1 \leq  -\Delta_p u_2 + \Lambda u_2 \qquad \,\,\,\mbox{and} \quad \,u_1\leq u_2\,\quad\mbox{in}\,\,\Omega,$$
where $\Lambda\in\R$. Then $u_1\equiv u_2$ in $\Omega$ unless $u_1<u_2$ in $\Omega$.
\end{lem}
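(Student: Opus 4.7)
The plan is to argue by contradiction against a suitable ``linearization'' of the difference $w:=u_2-u_1\geq 0$, and then to apply a strong maximum principle for a linear second-order operator with possibly degenerate/singular coefficients. Assume $w\not\equiv 0$ but $Z_0:=\{x\in\Omega:w(x)=0\}\neq\emptyset$. The task is to derive a contradiction.

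First I would subtract the two inequalities: $w$ satisfies, weakly in $\Omega$,
\[
-\operatorname{div}\bigl(|\nabla u_2|^{p-2}\nabla u_2-|\nabla u_1|^{p-2}\nabla u_1\bigr)+\Lambda w\geq 0,\qquad w\geq 0.
\]
Using the elementary identity
\[
|\nabla u_2|^{p-2}\nabla u_2-|\nabla u_1|^{p-2}\nabla u_1=\int_0^1 \partial_\eta\bigl(|\eta|^{p-2}\eta\bigr)\Big|_{\eta=\nabla u_1+t\nabla w}\,\mathrm{d}t\cdot\nabla w,
\]
one rewrites the divergence term as $-\partial_i(A_{ij}(x)\partial_j w)$ with the symmetric matrix
\[
A_{ij}(x):=\int_0^1 |\eta_t|^{p-2}\left(\delta_{ij}+(p-2)\frac{\eta_{t,i}\eta_{t,j}}{|\eta_t|^2}\right)\mathrm{d}t,\qquad \eta_t:=\nabla u_1+t\nabla w.
\]
The eigenvalues of $A_{ij}$ are pinched between $\min\{1,p-1\}|\eta_t|^{p-2}$ and $\max\{1,p-1\}|\eta_t|^{p-2}$, so the operator is uniformly elliptic on any set bounded away from the critical set $\mathcal{Z}:=\{x\in\Omega:\nabla u_1(x)=\nabla u_2(x)=0\}$ (when $p\geq 2$ the weight $|\eta_t|^{p-2}$ vanishes there, while for $p<2$ it blows up, hence the need for the restriction on $p$).

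Next I would work on a connected component $U$ of $\Omega\setminus\mathcal{Z}$. On $U$ the linearized operator $Lw:=-\partial_i(A_{ij}\partial_j w)+\Lambda w$ has coefficients that are locally $C^0$ and locally uniformly elliptic (since $|\eta_t|$ is bounded away from $0$ on compact subsets), so the classical strong maximum principle applies: either $w>0$ throughout $U$ or $w\equiv 0$ on $U$. The decisive step is then to propagate this dichotomy across $\mathcal{Z}$. Here the range hypothesis $\frac{2N+2}{N+2}<p<2$ or $p\geq 2$ enters via the Damascelli--Sciunzi summability of $|\nabla u_i|^{-1}$ (cf.\ \cite{LDBS04,LDBS}): one shows that the $(N-1)$-Hausdorff measure of $\mathcal{Z}$ is zero, so $\mathcal{Z}$ does not separate $\Omega$, and a weighted Caccioppoli/Poincar\'e argument based on a cut-off supported away from $\mathcal{Z}$ lets one test the weak inequality with $\min\{w,\varepsilon\}\chi$ and pass to the limit $\varepsilon\to 0$ to conclude that if $w\equiv 0$ on any component of $\Omega\setminus\mathcal{Z}$, then $w\equiv 0$ in $\Omega$.

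Finally, pick $x_0\in Z_0$. If $x_0\notin \mathcal{Z}$, the local strong maximum principle in Step 2 already contradicts $w(x_0)=0$ with $w\not\equiv 0$ on the component of $x_0$. If $x_0\in\mathcal{Z}$, one first moves to a nearby point in some component $U$ of $\Omega\setminus\mathcal{Z}$ where $w$ still vanishes (by continuity and the fact that $\mathcal{Z}$ is $(N-1)$-negligible), and applies the same argument. In either case we obtain $w\equiv 0$ in $\Omega$, contradicting the standing assumption. The main obstacle, and the genuinely delicate point in the argument, is precisely the analysis of the critical set $\mathcal{Z}$: one needs sharp integrability of $|\nabla u_i|^{-1}$ (which is exactly what fails outside the stated $p$-range) in order to pass to the limit in the weighted estimates and to ensure that $\mathcal{Z}$ cannot host an isolated zero of $w$ away from which $w>0$.
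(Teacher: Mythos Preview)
The paper does not supply its own proof of this lemma; it is quoted as Theorem~1.4 of Damascelli--Sciunzi \cite{LDBS} (cf.\ also \cite{OSV}) and used as a black box. Your sketch is in the spirit of that reference: linearize the difference $w=u_2-u_1$ via
\[
A_{ij}(x)=\int_0^1|\eta_t|^{p-2}\Bigl(\delta_{ij}+(p-2)\frac{\eta_{t,i}\eta_{t,j}}{|\eta_t|^2}\Bigr)\,\mathrm{d}t,\qquad \eta_t=\nabla u_1+t\nabla w,
\]
apply the classical strong maximum principle where the resulting operator is uniformly elliptic (i.e.\ away from the critical set $\mathcal{Z}$), and then use the integrability properties of the weight $|\nabla u_i|^{p-2}$, which is precisely where the threshold $p>\frac{2N+2}{N+2}$ enters.

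One step deserves tightening. In your final paragraph you write that if $x_0\in\mathcal{Z}$ with $w(x_0)=0$, then ``by continuity and the fact that $\mathcal{Z}$ is $(N-1)$-negligible'' you can move to a nearby point in $\Omega\setminus\mathcal{Z}$ where $w$ \emph{still vanishes}. Continuity of $w$ gives nearby points where $w$ is \emph{small}, not zero, so this does not by itself produce a contradiction with the alternative $w>0$ on that component. The argument in \cite{LDBS} closes this gap differently: it proves a \emph{weighted Harnack inequality} for the linearized operator $-\partial_i(A_{ij}\partial_j\,\cdot\,)+\Lambda$, valid across $\mathcal{Z}$, by showing that the weight $\int_0^1|\eta_t|^{p-2}\,\mathrm{d}t$ belongs to the appropriate class exactly when $p\geq 2$ or $\frac{2N+2}{N+2}<p<2$ (this is also where the hypothesis that both $u_1,u_2$ solve $-\Delta_p u=f(x,u)$ with $f>0$ is used, to control the critical set). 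The strong comparison principle then follows directly from Harnack, without needing to isolate components of $\Omega\setminus\mathcal{Z}$ and argue topologically. Your outline is on the right track but should replace the ``move to a nearby zero'' step by the Harnack argument.
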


\begin{lem}[Theorem 1.4 in \cite{LD}]\label{th3.2}
Suppose that $\Omega$ is a bounded domain of $\R^N$, $1<p<\infty$ and let $u,v \in C^1(\Omega)$ weakly satisfy
$$ -\Delta_p u + \Lambda u \leq  -\Delta_p v + \Lambda v \quad\,\,\,\mbox{and}\quad\,u\leq v\,\,\,\,\,\mbox{in}\,\,\Omega,$$
and denote by $Z_v^u:=\{ x\in\Omega\,\mid\,|\nabla u(x)|=|\nabla v(x)| =0 \}$. Then if there exists $x_0\in\Omega\setminus Z_v^u$ with $u(x_0)=v(x_0)$, then $u\equiv v$ in the connected component of $\Omega\setminus Z_v^u$ containing $x_0$. The same result still holds if, more generally,
$$ -\Delta_p u - f(u) \leq  -\Delta_p v - f(v) \quad\,\,\,\mbox{and}\,\quad u\leq v \quad\,\mbox{in}\,\,\Omega$$
with $f:\R \to \R$ locally Lipschitz continuous.
\end{lem}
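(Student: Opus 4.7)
The plan is to establish the strong comparison principle by the standard strategy for quasi-linear operators: off the common critical set $Z_v^u$, subtract the two inequalities to obtain a linear uniformly elliptic inequality in $w := v - u \geq 0$, and then invoke the classical strong maximum principle for linear elliptic operators. Let $\mathcal{C}$ denote the connected component of $\Omega \setminus Z_v^u$ containing $x_0$, and set $\Gamma := \{x \in \mathcal{C} : w(x) = 0\}$. Then $\Gamma$ is nonempty (it contains $x_0$) and relatively closed in $\mathcal{C}$, so by a standard connectedness argument it suffices to prove that $\Gamma$ is open in $\mathcal{C}$.

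Pick any $y_0 \in \Gamma$. Since $y_0 \notin Z_v^u$, at least one of $|\nabla u(y_0)|$, $|\nabla v(y_0)|$ is strictly positive; by the symmetric roles of $u, v$ assume $|\nabla v(y_0)| > 0$. Continuity of the gradients yields a ball $B = B_r(y_0) \subset\subset \mathcal{C}$ on which $|\nabla v| \geq c_0 > 0$ and $|\nabla u| + |\nabla v| \leq C_1$. Using the pointwise identity
$$
|\nabla v|^{p-2}\nabla v - |\nabla u|^{p-2}\nabla u = A(x)\nabla w, \qquad A(x) := \int_0^1 |\xi_t|^{p-2}\!\left(I + (p-2)\frac{\xi_t \otimes \xi_t}{|\xi_t|^2}\right)\mathrm{d}t,
$$
with $\xi_t := \nabla u + t \nabla w$, and subtracting the two weak $p$-Laplace inequalities gives
$$
-\mathrm{div}\bigl(A(x)\nabla w\bigr) + \Lambda w \geq 0 \qquad \text{weakly in } B.
$$
The key step is verifying that $A(x)$ is uniformly elliptic with bounded coefficients on $B$. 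Since $|\nabla v| \geq c_0$ everywhere on $B$, for $t$ close to $1$ the vector $\xi_t$ is comparable in norm to $\nabla v$, and integrating in $t$ produces the required lower ellipticity bound depending only on $c_0, C_1, p$; the upper bound comes from $|\nabla u| + |\nabla v| \leq C_1$. This is the content of Damascelli's pointwise inequality for $|\nabla u|^{p-2}\nabla u - |\nabla v|^{p-2}\nabla v$, and it must be checked separately in the singular regime $1 < p < 2$ (where the integrable singularity of $|\xi_t|^{p-2}$ near $t = 0$ must be handled) and in the degenerate regime $p > 2$ (where $|\xi_t|^{p-2} \leq C_1^{p-2}$ directly).

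Once uniform ellipticity of $A$ on $B$ is established, the classical weak Harnack inequality / strong maximum principle for linear uniformly elliptic operators with bounded measurable coefficients applies: since $w \geq 0$ satisfies $-\mathrm{div}(A\nabla w) + \Lambda w \geq 0$ weakly on $B$ and $w(y_0) = 0$, we conclude $w \equiv 0$ on $B$. Hence $B \subset \Gamma$, so $\Gamma$ is open in $\mathcal{C}$ and the connectedness argument forces $u \equiv v$ on $\mathcal{C}$. The general $f$-version follows by writing $f(v) - f(u) = g(x) w$ with $g \in L^\infty(B)$ (Lipschitz continuity of $f$ together with $u, v \in C^1$), and absorbing $g(x)w$ as a zero-order coefficient into a modified linear operator, which does not affect the applicability of the strong maximum principle. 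The main technical obstacle is the ellipticity estimate for $A(x)$ when $|\nabla u|$ may vanish at some points of $B$ while $|\nabla v|$ stays bounded below: one must crucially exploit that the lower ellipticity constant can be controlled in terms of $\max(|\nabla u|, |\nabla v|)$ rather than the minimum, which is precisely what distinguishes the Damascelli strong comparison principle from naive pointwise-ellipticity arguments that would require both gradients to be bounded away from zero.
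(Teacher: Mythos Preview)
The paper does not prove this lemma at all: it is quoted verbatim as Theorem~1.4 of Damascelli~\cite{LD} and used as a black box in the moving-planes argument. So there is no ``paper's own proof'' to compare against.

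That said, your sketch is the standard and correct route to Damascelli's result. The linearization via $A(x)=\int_0^1 D_\xi(|\xi|^{p-2}\xi)\big|_{\xi=\xi_t}\,\mathrm{d}t$ and the observation that uniform ellipticity on a small ball follows from a lower bound on $\max(|\nabla u|,|\nabla v|)$ (rather than on both gradients simultaneously) is exactly the mechanism in~\cite{LD}. One small caution: in the degenerate case $p>2$, the upper bound on $A(x)$ is immediate as you say, but the \emph{lower} ellipticity bound requires that $|\xi_t|$ stays bounded below for a set of $t$ of positive measure, which is where the condition $|\nabla v(y_0)|\geq c_0$ enters; conversely in the singular case $1<p<2$ the lower bound is the easy direction and the upper bound on $\int_0^1|\xi_t|^{p-2}\,\mathrm{d}t$ requires the same positivity of $|\nabla v|$ near $t=1$ to control the potential singularity at points where $\xi_t=0$. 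Your proposal acknowledges this asymmetry but slightly blurs which bound is delicate in which regime. Otherwise the connectedness argument and the reduction of the general $f$-version to a bounded zero-order coefficient are exactly right.
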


We recall a version of the strong maximum principle and the Hopf Lemma for the $p$-Laplacian, c.f. \cite{JLV}, see also \cite[Theorem 2.1]{LDBS04}, \cite[Theorem 2.4]{LDSMLMSB} or \cite{LD,PS}.
\begin{lem}[Strong Maximum Principle and H\"{o}pf's Lemma, \cite{JLV}, Theorem 2.1 in \cite{LDBS04}]\label{hopf}
Let $\Omega$ be a domain in $\mathbb{R}^{N}$ and suppose that $u\in C^{1}(\Omega)$, $u\geq0$ in $\Omega$ weakly solves
\[-\Delta_{p}u+cu^{q}=g\geq0 \qquad \text{in} \,\,\Omega\]
with $1<p<+\infty$, $q\geq p-1$, $c\geq0$ and $g\in L^{\infty}_{loc}(\Omega)$. If $u\not\equiv0$, then $u>0$ in $\Omega$. Moreover, for any point $x_{0}\in\partial\Omega$ where the interior sphere condition is satisfied and $u\in C^{1}(\Omega\cup\{x_{0}\})$ and $u(x_{0})=0$, we have that $\frac{\partial u}{\partial s}>0$ for any inward directional derivative (this means that if $y$ approaches $x_{0}$ in a ball $B\subseteq\Omega$ that has $x_{0}$ on its boundary, then $\lim\limits_{y\rightarrow x_{0}}\frac{u(y)-u(x_{0})}{|y-x_{0}|}>0$).
\end{lem}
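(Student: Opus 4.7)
Both assertions are classical for the quasi-linear operator $\mathcal{L}u:=-\Delta_p u + cu^q$, and my plan is to establish the boundary-point (Hopf) inequality first by constructing an explicit radial sub-solution, and then derive the interior strong maximum principle from it by a standard connectedness argument.

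\emph{Step 1: radial barrier.} Assume $x_0\in\partial\Omega$ admits an interior tangent ball $B_R(y)\subset\Omega$ with $x_0\in\partial B_R(y)$, and work on the annulus $A:=B_R(y)\setminus \overline{B_{R/2}(y)}$. For parameters $\alpha>0$ and $\epsilon>0$ to be chosen, define
\[
v(x):=\epsilon\bigl(e^{-\alpha|x-y|^2}-e^{-\alpha R^2}\bigr),\qquad x\in\overline{A}.
\]
Writing $r=|x-y|$, a direct radial computation gives
\[
-\Delta_p v=(2\alpha\epsilon)^{p-1}r^{p-2}e^{-(p-1)\alpha r^2}\bigl[(N+p-2)-2(p-1)\alpha r^2\bigr].
\]
Choosing $\alpha=\alpha(N,p,R)$ large enough forces the bracket to be strictly negative on $A$, so $-\Delta_p v<0$ there. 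Since $0\leq v\leq \epsilon e^{-\alpha(R/2)^2}$ and $q\geq p-1$, one checks that the zero-order term $cv^q$ is dominated by $|\Delta_p v|$ on $A$ provided $\epsilon$ is small enough (when $q>p-1$) or $\alpha$ is enlarged further (when $q=p-1$). Hence
\[
-\Delta_p v+cv^q\leq 0\leq g=-\Delta_p u+cu^q\qquad\text{in }A.
\]

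\emph{Step 2: comparison and Hopf inequality.} We may assume $u>0$ somewhere in $B_R(y)$, for otherwise the statement is empty. Then continuity of $u$ yields $u\geq m>0$ on the compact sphere $\partial B_{R/2}(y)$. Shrinking $\epsilon$ so that $v\leq m$ there, and using $v\leq 0\leq u$ on $\partial B_R(y)$, we have $v\leq u$ on $\partial A$. The weak comparison principle for $\mathcal{L}$, obtained by testing the inequality $\mathcal{L}v\leq \mathcal{L}u$ against $(v-u)^+\in W^{1,p}_0(A)$ and invoking the monotonicity of $-\Delta_p$ together with the nondecreasing character of $s\mapsto cs^q$, yields $v\leq u$ throughout $A$. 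Since $v(x_0)=u(x_0)=0$ and $u\in C^1(\Omega\cup\{x_0\})$, the inward normal derivative satisfies
\[
\frac{\partial u}{\partial s}(x_0)\geq \frac{\partial v}{\partial s}(x_0)=2\alpha\epsilon R\, e^{-\alpha R^2}>0.
\]

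\emph{Step 3: strong maximum principle via Hopf.} Suppose $u\geq 0$, $u\not\equiv 0$, and for contradiction let $Z:=\{x\in\Omega:u(x)=0\}$ be nonempty. Choose $x_1\in\Omega\setminus Z$ with $\mathrm{dist}(x_1,Z)<\mathrm{dist}(x_1,\partial\Omega)$, and let $B_\rho(x_1)$ be the largest open ball contained in $\Omega\setminus Z$. Its boundary meets $Z$ at some $x_*\in\Omega$, and $B_\rho(x_1)$ is an interior tangent ball to $\Omega\setminus Z$ at $x_*$. Applying Steps 1--2 on $B_\rho(x_1)$ with $x_0$ replaced by $x_*$ gives a strictly positive inward normal derivative of $u$ at $x_*$. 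But $x_*\in\Omega$ is an interior minimum of the $C^1$ function $u\geq 0$, so $\nabla u(x_*)=0$, a contradiction. Hence $u>0$ throughout $\Omega$.

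\emph{Main obstacle.} The substantive difficulty lies in Step 1: one has to carry out the genuinely nonlinear computation of $-\Delta_p v$ and verify the sign of $-\Delta_p v+cv^q$ on $A$, tracking the powers of $\alpha$, $\epsilon$ and $r$ that appear in both terms. The hypothesis $q\geq p-1$ is used precisely here: it guarantees that the $\epsilon$-scaling of the reaction term is no stronger than that of the principal term, which fails in the subcritical range $q<p-1$. The weak comparison principle invoked in Step 2 is then routine for $C^1$ functions, given the monotonicity of $-\Delta_p$ and of $s\mapsto cs^q$ for $c\geq 0$.
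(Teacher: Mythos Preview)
The paper does not prove this lemma; it is quoted from V\'azquez \cite{JLV} and Damascelli--Sciunzi \cite{LDBS04} as a known preliminary tool, so there is no in-paper proof to compare against. Your argument is the classical V\'azquez approach: exponential radial barrier on an annulus, weak comparison, then the standard ``largest ball touching the zero set'' contradiction for the strong maximum principle. The computation of $-\Delta_p v$ is correct, and your explanation of why $q\geq p-1$ is exactly the threshold needed for the barrier is on the mark.

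One point of presentation should be tightened. In Step~2 you write ``We may assume $u>0$ somewhere in $B_R(y)$\ldots\ Then continuity of $u$ yields $u\geq m>0$ on $\partial B_{R/2}(y)$.'' Positivity at a single point of $B_R(y)$ does \emph{not} imply positivity on the entire sphere $\partial B_{R/2}(y)$. What your barrier argument genuinely requires---and what you in fact have in every place you apply it---is $u>0$ throughout $\overline{B_{R/2}(y)}$. In Step~3 this holds because $B_\rho(x_1)\subset\Omega\setminus Z$ by construction; for the boundary Hopf assertion it holds because the strong maximum principle (your Step~3) has already yielded $u>0$ in $\Omega$. So the correct logical order is: (i) prove a \emph{conditional} Hopf inequality assuming $u>0$ on $\overline{B_{R/2}(y)}$; (ii) deduce the strong maximum principle from it; (iii) return and conclude the unconditional boundary Hopf lemma. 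Your Step~2 slightly obscures this dependency, but once reordered there is no genuine gap.
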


\subsection{Regularity and properties of critical set of solutions}\label{sc2.2}

In this subsection, we will give the integrable properties of $\frac{1}{|\nabla u|}$ and the connectivity of  set of critical point of  nonnegative weak solution $u$ to \eqref{eq1.1} (or \eqref{eq1.2} or \eqref{eq1.3}, resp.), and the weighted Poincar\'{e} type inequality.

\smallskip

From Theorems \ref{th2.1.1}, \ref{th2} and \ref{gth2}, the nonnegative weak solution $u$ of \eqref{eq1.1}, \eqref{eq1.2} and \eqref{eq1.3} (respectively) satisfies $u\in L^{\infty}_{loc}(\mathbb{R}^{N}\setminus\{0\})\cap C^{1,\alpha} (\R^N\setminus\{0\})$ for some $0<\alpha<\min\{1,\frac{1}{p-1}\}$. Moreover, using standard elliptic regularity, the solution $u\in C^{1,\alpha} (\Omega)$ belongs to the class $C^2(\Omega\setminus Z_u)$, where $\Omega\subset\subset\R^N\setminus\{0\}$ and $Z_u=\{ x\in\Omega : |\nabla u(x)|=0 \}$ (see \cite{ED83,DGNT,PT}).

\smallskip

We need the following Hessian and reversed gradient estimates from \cite{BSR14}.
\begin{lem}[Hessian and reversed gradient estimates, \cite{BSR14}] \label{lm.4}
Let $\Omega\subset\subset\R^N$ and $1<p<\infty$. Assume $u\in W^{1,p}(\Omega)$ be a weak solution to
\begin{align}\label{eq10.252252}
-\Delta_p u =f(x) \quad\,\,\,\mbox{in}\,\,\Omega,
\end{align}
where $f(x)$ satisfies \eqref{eq26.30.001} and condition ($I_\alpha$) in Remark \ref{rm.2.7}, i.e., \eqref{eq26.30.002}-\eqref{eq26.30.003}. Then, for any $x_0\in Z_u$ and $B_{2\rho}(x_0)\subset \Omega$, we have
\begin{align}\label{eq10.25.52}
\int_{B_\rho(x_0)} \frac{|\nabla u|^{p-2-\beta}(x)}{|x-y|^\gamma} |\nabla  u_{x_i}(x)|^2 \mathrm{d}x < C,\quad\,\,i=1,\cdots,N
\end{align}
uniformly for any $y\in \Omega$, where $0\leq\beta<1$, $\gamma=0$ if $N=2$, $\gamma<N-2$ if $N \geq 3$ and
$$C=C(N,p,x_0,\gamma,\beta,u,f,\rho)>0.$$
Moreover, we have, uniformly for any $y\in \Omega$,
\begin{align}\label{eq10.25.2.1}
\int_{B_\rho(x_0)} \frac{1}{|\nabla u|^t} \frac{1}{|x-y|^\gamma}\mathrm{d}x <\widetilde{C},
\end{align}
where $\max\{p-2,0\}\leq t < p-1$ and $\gamma=0$ if $N=2$, $\gamma<N-2$ if $N \geq 3$ and $$\widetilde{C}=\widetilde{C}(N,p,x_0,\gamma,t,u,f,\rho)>0.$$
\end{lem}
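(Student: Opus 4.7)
The plan is to follow the Caccioppoli-type machinery developed by Damascelli and Sciunzi, exploiting the regularity $u\in C^{1,\alpha}(\Omega)\cap C^{2}(\Omega\setminus Z_{u})$ together with the nondegeneracy hypothesis $(I_{\alpha})$ on $f$. The singular weight $|x-y|^{-\gamma}$ with $\gamma<N-2$ is kept integrable uniformly in $y\in\Omega$, and a cutoff $\eta\in C_{c}^{\infty}(B_{2\rho}(x_{0}))$ with $\eta\equiv 1$ on $B_{\rho}(x_{0})$ localizes every computation. The first step is to differentiate $-\Delta_{p}u=f$ with respect to $x_{i}$, valid classically on $\Omega\setminus Z_{u}$ and extended distributionally to $\Omega$ using $f\in W^{1,m}(\Omega)$ for some $m>N/(2(1-\mu))$. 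The linearized equation reads
\begin{equation*}
 -\partial_{j}\bigl(a_{jk}(\nabla u)\,\partial_{k}u_{x_{i}}\bigr)=f_{x_{i}},\qquad a_{jk}(\xi)=|\xi|^{p-2}\bigl(\delta_{jk}+(p-2)\tfrac{\xi_{j}\xi_{k}}{|\xi|^{2}}\bigr),
\end{equation*}
with $a_{jk}(\xi)\,\eta_{j}\eta_{k}\ge \min(1,p-1)|\xi|^{p-2}|\eta|^{2}$ away from $Z_{u}$.

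For the Hessian estimate \eqref{eq10.25.52}, I would regularize $|\nabla u|^{2}$ by $|\nabla u|^{2}+\varepsilon$ and test the differentiated equation with
\begin{equation*}
 \varphi_{\varepsilon}(x)=\frac{\eta^{2}(x)\,u_{x_{i}}(x)}{\bigl(|\nabla u|^{2}+\varepsilon\bigr)^{\beta/2}\,|x-y|^{\gamma}}.
\end{equation*}
Integration by parts produces the principal positive term $c_{p}\int\eta^{2}\,|\nabla u|^{p-2-\beta}\,|\nabla u_{x_{i}}|^{2}\,|x-y|^{-\gamma}\,\mathrm{d}x$ together with error terms of three types: (a) a term involving $\nabla\eta$, controlled on $\operatorname{supp}\nabla\eta\subset B_{2\rho}(x_{0})\setminus B_{\rho}(x_{0})$ by interior $C^{1,\alpha}$ (and, away from $Z_{u}$, interior $C^{2,\alpha}$) regularity; (b) a term with $\nabla|x-y|^{-\gamma}\sim|x-y|^{-\gamma-1}$, uniformly integrable in $y\in\Omega$ because $\gamma+1<N-1$; (c) the right-hand side $\int f_{x_{i}}\varphi_{\varepsilon}\,\mathrm{d}x$, controlled via H\"older by $\|f_{x_{i}}\|_{L^{m}}$. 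Absorbing the Hessian term and sending $\varepsilon\downarrow 0$ via Fatou yields \eqref{eq10.25.52} for any $0\le\beta<1$.

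For the reversed gradient estimate \eqref{eq10.25.2.1}, the idea is to interpolate the Hessian estimate of the previous step with a weighted Sobolev inequality applied to $h_{\varepsilon}=(|\nabla u|^{2}+\varepsilon)^{(p-1-t)/2}$, whose gradient is dominated by $|\nabla u|^{p-2-t}|\nabla^{2}u|$ and hence, by Cauchy--Schwarz together with \eqref{eq10.25.52} for a suitable $\beta=\beta(t)\in[0,1)$, finite in the weighted space $L^{2}(|x-y|^{-\gamma}\,\mathrm{d}x)$. The condition $(I_{\alpha})$, i.e.\ $|f(x)|\ge C|x-x_{0}|^{\mu}$ on $B_{\rho(x_{0},\mu)}(x_{0})$, combined with the pointwise form $|\nabla u|^{p-2}|\nabla^{2}u|\gtrsim |f|$ extracted from the equation, prevents $|\nabla u|$ from vanishing too rapidly on average and supplies the quantitative lower bound that closes a Moser-type iteration for $h_{\varepsilon}^{-1}=(|\nabla u|^{2}+\varepsilon)^{-(p-1-t)/2}$ in the weight $|x-y|^{-\gamma}$; a final passage to the limit $\varepsilon\downarrow 0$ yields \eqref{eq10.25.2.1}.

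The main obstacle is the interpolation in the last step: one must tune the regularization parameter and the exponent $\beta$ so that the Hessian estimate fits exactly into the sharp range $\max\{p-2,0\}\le t<p-1$ without sacrificing any margin of integrability near $Z_{u}$, while keeping all constants uniform in $y\in\Omega$. The threshold $t=p-1$ is precisely where the weighted Sobolev embedding used in the interpolation ceases to be valid, and the constraint $\gamma<N-2$ is tight because it is exactly the requirement for $|x-y|^{-\gamma}$ to be locally integrable in $\mathbb{R}^{N}$ uniformly in $y$; the $N=2$ case, in which only $\gamma=0$ is admitted, must therefore be handled separately but with an otherwise identical argument.
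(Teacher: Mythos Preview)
The paper does not prove this lemma; it is quoted from Sciunzi's reference \cite{BSR14} (building on \cite{LDBS04}) and used as a black box. There is therefore no ``paper's own proof'' to compare against. Your sketch is a faithful outline of the argument that actually appears in those references, so in that sense it is correct.

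One point is worth sharpening. For the reversed gradient estimate \eqref{eq10.25.2.1}, the mechanism you identify --- combining the lower bound $|f(x)|\geq C|x-x_{0}|^{\mu}$ with the pointwise consequence $|\nabla u|^{p-2}|\nabla^{2}u|\gtrsim |f|$ of the equation --- is indeed the key, but the closure in \cite{BSR14} is not a Moser iteration on $h_{\varepsilon}^{-1}$. It is a direct H\"older interpolation: squaring the pointwise bound and dividing gives
\[
\int_{B_{\rho}(x_{0})}\frac{|x-x_{0}|^{2\mu}}{|\nabla u|^{s}\,|x-y|^{\gamma}}\,\mathrm{d}x
\;\lesssim\;
\int_{B_{\rho}(x_{0})}\frac{|\nabla u|^{p-2-\beta}\,|\nabla^{2}u|^{2}}{|x-y|^{\gamma}}\,\mathrm{d}x
\;<\;C
\]
for any $s<p-1$ (choosing $\beta=s-(p-2)\in[0,1)$), and then the factor $|x-x_{0}|^{2\mu}$ is removed by a single application of H\"older against $\int |x-x_{0}|^{-2\mu t/(s-t)}|x-y|^{-\gamma}\,\mathrm{d}x$, which is finite uniformly in $y$ precisely because $\gamma<N-2$ and $\mu$ can be taken arbitrarily small in the hypothesis $0\leq\mu<\alpha(p-1)$. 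Your weighted Sobolev framing would ultimately work as well, but it is heavier machinery than what is actually used.
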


From Remark \ref{rm.2.7} and Lemma \ref{lm.4}, we can derive the following Corollary.
\begin{cor}\label{re2333}
Let $u$ be a positive weak solution of \eqref{eq1.1} (or \eqref{eq1.2}, or \eqref{eq1.3}, resp.). Setting $\rho(x):=|\nabla u(x)|^{p-2}$, for any $\Omega \subset\subset\R^N\setminus\{0\}$, we have
\begin{align}\label{eq10.25.2}
\int_{\Omega} \frac{1}{\rho^t(x)} \frac{1}{|x-y|^\gamma}\mathrm{d}x < C
\end{align}
uniformly for any $y\in\Omega$, where $\max\{p-2,0\}\leq (p-2)t<p-1$ and $\gamma=0$ if $N=2$, $\gamma<N-2$ if $N \geq 3$.
\end{cor}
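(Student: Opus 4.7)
The plan is to derive the bound locally around each point of $\overline{\Omega}$ by splitting into two cases---critical points and regular points---and then to patch the local bounds together via a finite covering argument. The key input is the reversed gradient estimate \eqref{eq10.25.2.1} in Lemma \ref{lm.4}, invoked with the exponent $s := (p-2)t$ in place of the $t$ appearing there. By hypothesis, $s = (p-2)t$ lies in the admissible range $[\max\{p-2,0\},\,p-1)$, so the lemma is applicable provided its structural hypotheses are in force.

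First, I would fix an intermediate open set $\Omega'$ with $\Omega \subset\subset \Omega' \subset\subset \R^N\setminus\{0\}$ and verify that the right-hand side $f_k(x) = \mu|x|^{-p}u^{p-1}(x) + V_k(x)|x|^{-s}u^{p-1}(x)$ (with $s=0$ when $k=1$) satisfies \eqref{eq26.30.001} and condition $(I_\alpha)$ on $\Omega'$. This is exactly what Remark \ref{rm.2.7} accomplishes, drawing on the pointwise two-sided bounds for $u$, $|\nabla u|$ and the convolution potentials $|x|^{-2p}\ast u^p$, $|x|^{-\sigma}\ast u^{p_{s,\sigma}}$ away from the origin. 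Hence for every base point in $\overline{\Omega'}$, Lemma \ref{lm.4} supplies a uniform bound of the type \eqref{eq10.25.2.1}.

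Next, for each $x_0 \in Z_u \cap \overline{\Omega}$ I pick $\rho(x_0) > 0$ so small that $B_{2\rho(x_0)}(x_0) \subset \Omega'$; then \eqref{eq10.25.2.1} yields
\[
\int_{B_{\rho(x_0)}(x_0)} \frac{1}{|\nabla u|^{(p-2)t}(x)}\,\frac{1}{|x-y|^\gamma}\,\mathrm{d}x \leq C(x_0),
\]
uniformly for $y \in \Omega$. For each $x_1 \in \overline{\Omega}\setminus Z_u$, continuity of $|\nabla u|$ (from $u \in C^{1,\alpha}(\R^N\setminus\{0\})$) furnishes a ball $B_{r(x_1)}(x_1) \subset \Omega'$ on which $|\nabla u|$ is bounded below by a positive constant; consequently $\rho^{-t} = |\nabla u|^{-(p-2)t}$ is bounded on this ball, and the remaining factor integrates to something uniformly finite in $y$ because $\gamma < N$. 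By compactness of $\overline{\Omega}$ I extract a finite subcover $\{B_i\}_{i=1}^M$ from the union of these two families and sum:
\[
\int_{\Omega} \frac{1}{\rho^t(x)}\,\frac{1}{|x-y|^\gamma}\,\mathrm{d}x \leq \sum_{i=1}^{M} \int_{B_i} \frac{1}{\rho^t(x)}\,\frac{1}{|x-y|^\gamma}\,\mathrm{d}x \leq C, \qquad \forall\, y \in \Omega,
\]
with $C > 0$ depending only on $N,p,\gamma,t,u$ and $\Omega$.

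The genuinely delicate point is not the covering step but the verification that condition $(I_\alpha)$ holds for $f_k$ with some admissible parameter in $[0,\alpha(p-1))$ \emph{uniformly} on $\overline{\Omega'}$, which in turn requires both the sharp lower bound on $u$ away from the origin and the regularity of the nonlocal factors $|x|^{-2p}\ast u^p$ (or $|x|^{-\sigma}\ast u^{p_{s,\sigma}}$); both are already delivered by the preliminary and sharp asymptotic estimates in Theorems \ref{th2.1}--\ref{gth2} together with Lemma \ref{lm.6}, as collected in Remark \ref{rm.2.7}. Once $(I_\alpha)$ is in hand, the corollary follows by the straightforward covering argument sketched above; the degenerate case $p=2$ is trivial since then $\rho \equiv 1$ and the statement reduces to the elementary integrability of $|x-y|^{-\gamma}$ on $\Omega$.
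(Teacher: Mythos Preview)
Your proposal is correct and follows essentially the same approach as the paper: invoke Remark \ref{rm.2.7} to verify the hypotheses \eqref{eq26.30.001}--\eqref{eq26.30.003} of Lemma \ref{lm.4} on a domain compactly contained in $\R^N\setminus\{0\}$, apply \eqref{eq10.25.2.1} near critical points, handle regular points via the $C^{1,\alpha}$ regularity (the paper phrases this as ``$u\in C^2(\Omega\setminus Z_u)$''), and conclude by a finite covering. Your write-up is simply more explicit about the split into critical versus non-critical points and about the change of exponent $(p-2)t\leftrightarrow t$, but the underlying argument is the same.
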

\begin{proof}
In Remark \ref{rm.2.7}, we know that $f_k(x) =\mu |x|^{-p} u^{p-1} + V_k(x)|x|^{-s}u^{p-1}(x)$ ($k=1,2,3$, with $s=0$ when $k=1$) satisfy \eqref{eq26.30.001} and the condition ($I_\alpha$) (i.e., \eqref{eq26.30.002}-\eqref{eq26.30.003}) in Lemma \ref{lm.4} with $\Omega \subset\subset\R^N\setminus\{0\}$. Noting that $u\in C^2(\Omega\setminus Z_u)$, it follows from the finite covering theorem and \eqref{eq10.25.2.1} in Lemma \ref{lm.4} that \eqref{eq10.25.2} holds. This finishes the proof of Corollary \ref{re2333}.
\end{proof}

\begin{rem}\label{re2334}
Let $u$ be a positive weak solution of \eqref{eq1.1} (or \eqref{eq1.2}, or \eqref{eq1.3}, resp.) and let $Z_u = \{x \in \R^N: |\nabla u(x)| = 0 \}$. Clearly, it follows from Theorem \ref{th2.1} that the critical set $Z_u\subset B_{R_1}\setminus B_{R_0}$ and $Z_u$ is a closed set. Moreover, \eqref{eq10.25.2} in Corollary \ref{re2333} implies that the Lebesgue measure of $Z_{u}$ is zero, i.e., $|Z_u|=0$.
\end{rem}

From Lemma \ref{th3.2}, we can clearly see that it is very useful to know whether $\Omega\setminus Z_u$ is connected or not. We have the following Lemma on the connectivity of the critical set $Z_{u}$ from \cite[Theorem 4.1 and Remark 4.1]{LDBS04} and \cite[Lemma 5]{MMPS}\footnote{One can clearly see that, with slight modifications on its proof, \cite[Lemma 5]{MMPS} also holds for quasi-linear equations involving Hardy potential without the gradient term $|\nabla u|^{p}$.}.
\begin{lem}[Properties of the critical set, c.f. e.g. \cite{LDBS04,MMPS}]\label{lm.7}
Assume $u \in C^{1,\alpha} (\bar{\Omega}\setminus\{0\})$ be a positive weak solution of \eqref{eq1.1} (or \eqref{eq1.2}, or \eqref{eq1.3}, resp.). Let us define the critical set
$$Z_u = \{ x\in\Omega : |\nabla u(x)|=0 \},$$
where $\Omega$ is a general bounded domain. 
Then $\Omega\setminus Z_u$ does not contain any connected component $C$ such that $\overline{C} \subset \Omega$.
\end{lem}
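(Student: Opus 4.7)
My plan is to argue by contradiction: assume some connected component $C$ of $\Omega\setminus Z_{u}$ satisfies $\overline{C}\subset\Omega$, and derive a contradiction from the combination of non-degeneracy of the equation on $C$ and the forced vanishing of $|\nabla u|$ on $\partial C$. First I will note that $C$ is open (as a connected component of the open set $\Omega\setminus Z_{u}$) and $\overline{C}$ is compact inside $\Omega$, so every point of $\partial C$ lies in $Z_{u}$; hence $|\nabla u|\equiv 0$ on $\partial C$ while $|\nabla u|>0$ throughout $C$. Because $\overline{C}$ stays at positive distance from the origin, Theorem~\ref{th2.1.1} gives $u\in C^{1,\alpha}(\overline{C})$, and the standard non-degenerate elliptic regularity of DiBenedetto \cite{ED83} and Tolksdorf \cite{PT} yields $u\in C^{2}(C)$. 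Moreover, the right-hand side of the equation is strictly positive on $\overline{C}$: for \eqref{eq1.1} and \eqref{eq1.3} the convolution factor is bounded below by a positive constant on compact subsets of $\R^{N}\setminus\{0\}$ by Lemma~\ref{lm.6}\,$(iii)$ (cf.\ \eqref{eq10.26.30}); for \eqref{eq1.2} the weight $u^{p_{s}-p}$ is positive by the strict positivity of $u$; and the Hardy contribution is non-negative. So $-\Delta_{p}u=f\geq c_{\ast}>0$ on $\overline{C}$, hence $u$ is strictly $p$-superharmonic on $C$ and the strong minimum principle contained in Lemma~\ref{hopf} forces the minimum $m:=\min_{\overline{C}}u$ to be attained only on $\partial C$ (an interior minimizer would be a critical point of $u$ inside $C$, contradicting $C\cap Z_{u}=\emptyset$).

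The geometric core of the argument is to exhibit an open ball $B\subset C$ whose closure touches $\partial C$ at a point $x_{0}$ at which $u(x_{0})=m$. I will follow the exhaustion procedure of \cite[Lemma~5]{MMPS}: for $\rho>0$ small set $C_{\rho}:=\{x\in C:\mathrm{dist}(x,\partial C)>\rho\}$, $m_{\rho}:=\min_{\overline{C_{\rho}}}u$, and pick a minimizer $x_{\rho}$; the strong minimum principle applied on $C_{\rho}$ forces $x_{\rho}\in\partial C_{\rho}$, so the ball $B_{\rho}(x_{\rho})$ lies inside $C$ and touches $\partial C$ at some $y_{\rho}\in\partial B_{\rho}(x_{\rho})\cap\partial C$. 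Passing to the limit $\rho\to 0^{+}$ via the compactness of $\overline{C}$ and the uniform $C^{1}$ regularity of $u$ up to $\partial C$ extracts a configuration in which a ball $B\subset C$ touches $\partial C$ at a minimizer $x_{0}$ of $u$ on $\overline{C}$. On this smooth ball the function $w:=u-m$ satisfies $w\geq 0$, $-\Delta_{p}w=f>0$, $w(x_{0})=0$, and $w\not\equiv 0$ (otherwise $f\equiv 0$ on $B$); since the interior sphere condition at $x_{0}$ is automatic for the $C^{\infty}$ ball $B$, the Strong Maximum Principle and H\"opf's Lemma (Lemma~\ref{hopf}) force $\partial w/\partial s>0$ for every inward direction at $x_{0}$, whence $|\nabla u(x_{0})|=|\nabla w(x_{0})|>0$, contradicting $x_{0}\in\partial C\subset Z_{u}$.

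The main obstacle in executing this plan is precisely the geometric step: the boundary $\partial C$ can be extremely irregular (it sits inside the critical set $Z_{u}$, which is only known to be closed of measure zero by Corollary~\ref{re2333}) and may fail the interior sphere condition at an arbitrary point, so one cannot freely pick a minimizer $x_{0}\in\partial C$ and expect a touching ball to exist at it. The coupling between the minimality of $u$ at $x_{0}$ and the openness of $C$ has to be exploited carefully in the exhaustion-and-limit argument sketched above, and this is the step that genuinely relies on the treatment in \cite[Lemma~5]{MMPS}. The small additional difficulty here, compared to \cite{MMPS}, is that our equations involve the Hardy potential $\mu|x|^{-p}u^{p-1}$ rather than the gradient term $|\nabla u|^{p}$ treated there; however, as noted in the footnote, the Hardy potential is a lower-order perturbation that plays no role in the moving-ball manipulations, and the MMPS argument transfers verbatim once the strict positivity $f\geq c_{\ast}>0$ on $\overline{C}$ is in place.
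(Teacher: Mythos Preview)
The paper does not give its own proof of this lemma; it is stated as a known result from \cite[Theorem 4.1, Remark 4.1]{LDBS04} and \cite[Lemma 5]{MMPS}, with a footnote remarking that the argument in \cite{MMPS} adapts to the Hardy-potential setting. Your proposal is therefore more ambitious than the paper, since you attempt to sketch the actual proof rather than simply cite it.

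Your setup is correct and matches the cited references: the contradiction hypothesis, the observation $\partial C\subset Z_u$, the strict positivity $f\geq c_\ast>0$ on $\overline{C}$ (via Lemma~\ref{lm.6}(iii) for the nonlocal cases), the location of $\min_{\overline{C}}u$ on $\partial C$, and the intended final contradiction via H\"opf's Lemma are all the right ingredients. However, your description of the key geometric step does not work as written. In the exhaustion $C_\rho\nearrow C$ you outline, the balls $B_\rho(x_\rho)$ have radius $\rho\to 0^+$ and collapse to a single boundary point; no ball of positive radius survives the limit, so one cannot ``extract a configuration in which a ball $B\subset C$ touches $\partial C$ at a minimizer $x_0$'' by passing to the limit. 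Moreover, at each fixed $\rho$ you have not verified that $u(y_\rho)=\min_{\overline{B_\rho(x_\rho)}}u$, which is what H\"opf requires at $y_\rho$; the minimum on $\overline{B_\rho(x_\rho)}$ may well be attained at a boundary point lying in $C$ rather than on $\partial C$, in which case no contradiction arises.

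You do flag this as ``the step that genuinely relies on the treatment in \cite[Lemma~5]{MMPS}'', and deferring to that reference is exactly what the paper does. So as a citation-level argument your proposal is in line with the paper; but the explicit limiting procedure you describe is not a faithful summary of the cited proof and would need to be reworked if a self-contained argument is intended.
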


Finally, we give the weighted Poincar\'{e} type inequality, which will play a key role in the proof of Theorem \ref{th2} (and also Theorem \ref{gth2}) via the method of moving planes in the degenerate elliptic case $p>2$. To this end, we need the following definition.
\begin{defn}[Weighted Sobolev spaces]\label{lm.3}
Assume that $\Omega\subset\R^N$ is a bounded domain and let $\rho\in L^1(\Omega)$ be a positive function. We define $H^{1,p}(\Omega,\rho)$ as the completion of $C^1(\overline{\Omega})$ (or $C^\infty(\overline{\Omega})$) with the norm
$$ \| v \|_{ H^{1,p}(\Omega,\rho) } := \| v \|_{L^p(\Omega)} + \| \nabla u\|_{L^p(\Omega,\rho)},$$
where $\| \nabla u\|^p_{L^p(\Omega,\rho)} = \int_\Omega  \rho  |\nabla u |^p \mathrm{d}x$.

In this way $H^{1,2}(\Omega,\rho)$ is a Hilbert space, and we also define $H_0^{1,p}(\Omega,\rho)$ as the closure of $C_0^\infty(\overline{\Omega})$ in $H^{1,p}(\Omega,\rho)$.
\end{defn}

\begin{lem}[Weighted Poincar\'{e} type inequality, \cite{LDBS04,FV,LPLDHT}]\label{lm.3}
Assume that $\Omega\subset\R^N$ is a bounded domain and let $\rho\in L^1(\Omega)$ be a positive function such that
\begin{align}\label{eq10.25.3}
\int_\Omega \frac{1}{\rho |x-y|^\gamma}\mathrm{d}y < C, \,\,\,\quad \forall \,\, x\in\Omega,
\end{align}
where $\gamma<N-2$ if $N \geq 3$ and $\gamma=0$ if $N = 2$. Let $v\in H^{1,2}(\Omega,\rho)$ be such that
\begin{align}\label{eq10.25.4}
|v(x)|\leq C \int_\Omega \frac{|\nabla v(y)|}{ |x-y|^{N-1}}\mathrm{d}y,\quad\,\,\,\forall \,\, x\in\Omega.
\end{align}
Then we have
\begin{equation}\label{wp}
  \int_\Omega v^2(x) \mathrm{d}x \leq C(\Omega) \int_\Omega \rho |\nabla v(x)|^2 \mathrm{d}x,
\end{equation}
where $C(\Omega)\to 0$ if $|\Omega|\to 0$. The same inequality holds for $v\in H^{1,2}_0(\Omega,\rho)$.
\end{lem}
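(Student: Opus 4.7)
The plan is to combine the Riesz-type pointwise bound \eqref{eq10.25.4} with the weighted integrability \eqref{eq10.25.3} via Cauchy--Schwarz, Fubini, and a Riesz composition trick. First, I would square \eqref{eq10.25.4} and split the kernel symmetrically, inserting $\rho(y)^{1/2}\rho(y)^{-1/2}$, to obtain by Cauchy--Schwarz
\[
|v(x)|^2 \le C\left(\int_\Omega \frac{\rho(y)\,|\nabla v(y)|^2}{|x-y|^{2N-2-\gamma}}\,dy\right)\left(\int_\Omega \frac{dy}{\rho(y)\,|x-y|^\gamma}\right) \le C\int_\Omega \frac{\rho(y)\,|\nabla v(y)|^2}{|x-y|^{2N-2-\gamma}}\,dy,
\]
where the last step uses \eqref{eq10.25.3}. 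Since the exponent $2N-2-\gamma>N$ for $\gamma<N-2$ would make a naive Fubini in $x$ divergent, this pointwise bound alone is not enough.

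To circumvent this, I would instead square \eqref{eq10.25.4} directly and integrate in $x$ first via Fubini:
\[
\int_\Omega v^2\,dx \le C\iint_{\Omega\times\Omega} |\nabla v(y_1)|\,|\nabla v(y_2)|\,J(y_1,y_2)\,dy_1\,dy_2, \qquad J(y_1,y_2) := \int_\Omega \frac{dx}{|x-y_1|^{N-1}\,|x-y_2|^{N-1}}.
\]
By the Riesz composition formula on the bounded domain $\Omega$ one has $J(y_1,y_2)\le C\,|y_1-y_2|^{2-N}$ for $N\ge 3$ (with a logarithmic substitute when $N=2$), reducing the task to bounding the bilinear form $B(h,h):=\iint h(y_1)\,h(y_2)\,|y_1-y_2|^{2-N}\,dy_1dy_2$ with $h:=|\nabla v|$ by $C(\Omega)\int_\Omega \rho h^2\,dy$. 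I would handle this via a weighted Schur-type estimate: applying Cauchy--Schwarz in $y_1$ after inserting $\rho(y_1)^{\pm 1/2}$, then one more Cauchy--Schwarz inside to reduce the resulting double integral to the hypothesis \eqref{eq10.25.3}. The strict inequality $\gamma<N-2$ furnishes the slack needed to close the estimate and, via the diameter of $\Omega$ and the power gap $N-2-\gamma>0$, yields a constant $C(\Omega)$ that vanishes as $|\Omega|\to 0$.

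The main obstacle is precisely this last Schur-type bilinear estimate: the intermediate Cauchy--Schwarz produces a kernel with exponent $2N-4-\gamma$ that is strictly more singular than the one \eqref{eq10.25.3} directly controls, so one must interpolate between the weighted bound \eqref{eq10.25.3} and the plain Lebesgue integrability of the Riesz kernel on bounded sets in order to close the loop. The extension from $v\in H^{1,2}(\Omega,\rho)$ satisfying \eqref{eq10.25.4} to $v\in H_0^{1,2}(\Omega,\rho)$ follows by density of $C_0^\infty(\Omega)$ and the standard fact that the representation \eqref{eq10.25.4} holds automatically with explicit constant $\omega_{N-1}^{-1}$ for compactly supported functions extended by zero outside $\Omega$.
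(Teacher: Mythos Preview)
The paper does not prove this lemma; it is quoted from \cite{LDBS04,FV,LPLDHT} without argument. So there is no ``paper's proof'' to compare against, and your proposal must stand on its own.

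Your overall architecture --- square \eqref{eq10.25.4}, pass to the bilinear form with kernel $J(y_1,y_2)=\int_\Omega|x-y_1|^{1-N}|x-y_2|^{1-N}\,dx$, and bound $J\le C|y_1-y_2|^{2-N}$ by Riesz composition --- is the standard route. The gap is exactly where you flag it, and it is genuine: after your two Cauchy--Schwarz steps you are left needing
\[
\int_\Omega \frac{dy}{\rho(y)\,|z-y|^{2N-4-\gamma}}\le C,
\]
but the hypothesis \eqref{eq10.25.3} only controls exponent $\gamma<N-2$, and $2N-4-\gamma>\gamma$ always. Your proposed fix, ``interpolate between \eqref{eq10.25.3} and plain Riesz integrability,'' does not close: H\"older between the two gives bounds on $\int \rho^{-a}|x-y|^{-b}\,dy$ only for $a<1$ when $b>\gamma$, so you never recover the full power $\rho^{-1}$ needed to land on $\int\rho|\nabla v|^2$. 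Every variant of the Schur/Cauchy--Schwarz split runs into the same mismatch: one factor produces $\int\rho|\nabla v|^2$ but the other produces $\int|\nabla v|^2$ unweighted, which cannot be absorbed.

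A related point: your sentence ``the strict inequality $\gamma<N-2$ furnishes the slack needed to close the estimate'' has the sign backwards. A \emph{smaller} $\gamma$ is a \emph{weaker} hypothesis (it controls a less singular integral), so the gap $N-2-\gamma>0$ works against you, not for you. The smallness $C(\Omega)\to0$ has to come from a separate Riesz-kernel integral over $\Omega$, not from this gap. The cited proofs overcome the exponent deficit by more than elementary Cauchy--Schwarz --- essentially a Morrey/Adams-type mechanism for Riesz potentials acting on functions whose reciprocal weight satisfies \eqref{eq10.25.3} --- and your sketch does not supply that missing ingredient.
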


\begin{rem}\label{rem0}
Assume that $0\leq\mu<\bar{\mu}$ and let $u$ be a positive weak solution of \eqref{eq1.1} (or \eqref{eq1.2}, or \eqref{eq1.3}, resp.). By Theorems \ref{th2.1}, \ref{th2} and \ref{gth2}, we know that, for any $p\geq2$, $\rho:=|\nabla u|^{p-2}\in L^1(\Omega)$ for any $\Omega\subset\subset\R^N\setminus\{0\}$. In addition, from \eqref{eq10.25.2} in Corollary \ref{re2333} and Remark \ref{re2334}, we deduce that \eqref{eq10.25.3} holds for $\rho=|\nabla u|^{p-2}>0$ a.e.. Hence the weighted Poincar\'{e} type inequality \eqref{wp} in Lemma \ref{lm.3} holds with $\Omega\subset\subset\R^N\setminus\{0\}$ and $\rho=|\nabla u|^{p-2}$ for $p\geq2$.
\end{rem}

\section{Proofs of Theorems \ref{th2.1.1}, \ref{th2.1++}, \ref{th2.1} and the sharp asymptotic estimates of positive solutions to equation \eqref{eq1.1}}\label{sc3}

In subsections \ref{sc3.1}--\ref{sc3.3}, we will give proofs of Theorems \ref{th2.1.1}, \ref{th2.1++} and \ref{th2.1}, and establish the sharp estimates on asymptotic behaviors of positive weak solutions near the origin and at infinity to the generalized equation \eqref{gen-eq} respectively. In subsection \ref{sc3.4}, we will show that $V_1(x)=|x|^{-2p}\ast u^{p}$ satisfies all the assumptions on $V(x)$ in Theorem \ref{th2.1} and hence derive the sharp asymptotic estimates in Theorem \ref{th2} from Theorem \ref{th2.1}.

\smallskip

 Since the general problem \eqref{gen-eq} includes problems \eqref{eq1.1}, \eqref{eq1.2} and \eqref{eq1.3} as special cases, we therefore will consider the generalized equation \eqref{gen-eq}, i.e.,
\begin{equation*}
  \left\{\begin{array}{ll} \displaystyle
            -\Delta_p u - \mu \frac{1}{|x|^p}|u|^{p-2}u= V(x)|x|^{-s} |u|^{p-2}u   &\mbox{in}\, \R^N, \\ \\
            u \in D^{1,p}(\R^N),\,\, u\geq0 \,\,\,\,\,\,&  \mbox{in}\, \R^N,
          \end{array}
  \right.\hspace{1cm}
\end{equation*}
where $0\leq\mu< \bar{\mu}=\left( (N-p)/p \right)^p$, $1<p<N$, $0\leq s < p$ and $0\leq V(x)\in L^\frac{N}{p-s}(\R^N)$.

\subsection{Local integrability, boundedness, regularity and asymptotic estimates}\label{sc3.1}
Inspired by \cite{CDPSYS,DLL,ED83,EMSV,HL,JSLB,PT,XCL}, we can prove the local integrability, boundedness, regularity and asymptotic estimates of solutions in the following Lemmas \ref{lm:l-b-r}, \ref{lm:l-b-2} and \ref{lm:l-b-3} via the De Giorgi-Moser-Nash iteration techniques, which goes back to e.g. \cite{TMOSER}.

\begin{lem}[Local integrability, boundedness, regularity and asymptotic estimates]\label{lm:l-b-r}
Let $u\in D^{1,p}(\R^N)$ be a nonnegative solution to the generalized equation \eqref{gen-eq} with $0\leq\mu<\bar{\mu}$, $1<p<N$, $0\leq s < p$ and $0\leq V(x)\in L^\frac{N}{p-s}(\R^N)$.
Then $u\in L_{loc}^{r}(\R^N\setminus \{0\})$ for any $0<r<+\infty$.
If $V\in L^{q}_{loc}(\R^{N}\setminus \{0\})$ for some $\frac{N}{p-s}<q<+\infty$, then $u\in C^{1,\alpha}(\R^N\setminus \{0\})\cap L^{\infty}_{loc}(\R^N \setminus \{0\})$ for some $0<\alpha<\min\{1,\frac{1}{p-1}\}$.
In particular, for any $y \in \R^N \setminus \{0\}$, and for any $0<R\leq\frac{|y|}{3}$, if $R^{p-s-\frac{N}{q}}\|V\|_{L^{q}(B_{2R}(y))}\leq \widetilde{C}$ for some $\frac{N}{p-s}<q<+\infty$ and some constant $\widetilde{C}>0$, then there holds:
\begin{equation}\label{est-1}
  \max_{B_R(y)} u(x) \leq \frac{C}{R^\frac{N}{\bar{p}}} \| u \|_{L^{\bar{p}}(B_{2R}(y))}, \qquad \forall \,\, p^\star\leq\bar{p}<+\infty,
\end{equation}
where $C=C\left(N,p,s,\mu,q,\widetilde{C}\right)>0$.
\end{lem}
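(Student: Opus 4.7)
The plan is to carry out a De Giorgi-Moser-Nash iteration applied to the equation written as $-\Delta_p u = W(x) u^{p-1}$, where $W(x) := \mu|x|^{-p} + V(x)|x|^{-s}$. The crucial geometric reduction is that, for $y\in\mathbb{R}^N\setminus\{0\}$ and $0<R\leq |y|/3$, every $x\in B_{2R}(y)$ satisfies $|x|\geq |y|-2R\geq R$; thus on this ball, $|x|^{-p}\leq R^{-p}$ and $|x|^{-s}\leq R^{-s}$, and the Hardy potential contributes only a bounded perturbation that can be folded into $V|x|^{-s}$ after relabeling.

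The heart of the argument is a Caccioppoli--Sobolev inequality obtained by testing \eqref{gen-eq} against $\psi = \eta^p u\, u_M^{\beta p}$, where $u_M:=\min\{u,M\}$, $\beta\geq 0$, and $\eta\in C_c^\infty(B_{2R}(y))$ is a standard cutoff with $\eta\equiv 1$ on $B_R(y)$. Expanding $\nabla\psi$, using Young's inequality together with Lemma \ref{tech-in} to isolate the good term $|\nabla u|^p u_M^{\beta p}\eta^p$, and passing to $u\,u_M^\beta$, I would obtain
\[
\int \eta^p \bigl|\nabla(u\,u_M^\beta)\bigr|^p\,dx \leq C(\beta)\int \bigl(|\nabla\eta|^p + W\eta^p\bigr) u^p u_M^{\beta p}\,dx.
\]
Combining this with the Sobolev inequality applied to $\eta u u_M^\beta$ and estimating the potential term by H\"older with exponents $(q,q')$ yields a self-improving bound for the $L^{p^\star(\beta+1)}$-norm in terms of a lower norm, once the scaled smallness $R^{p-s-N/q}\|V\|_{L^q(B_{2R}(y))}\leq \widetilde{C}$ is used to absorb the contribution of $W$. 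Sending $M\to\infty$ via Fatou and iterating on $\beta_k$ with $p(\beta_k+1) = p^\star(\beta_{k-1}+1)$ on shrinking concentric balls, while tracking constants as in \cite{HL, JSLB}, produces the quantitative estimate \eqref{est-1}.

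For the qualitative claim $u\in L^r_{\mathrm{loc}}(\mathbb{R}^N\setminus\{0\})$ for every $r<+\infty$, only the critical integrability $V\in L^{N/(p-s)}$ is assumed, so the plain Moser iteration fails at the endpoint $q=N/(p-s)$ (where $pq'=p^\star$ when $s=0$, and $pq'<p^\star$ when $s>0$). The remedy is to exploit absolute continuity of $\|V\|_{L^{N/(p-s)}}$: given $y\neq 0$, we pick $R\leq |y|/3$ small enough that $\|V\|_{L^{N/(p-s)}(B_{2R}(y))}\leq \varepsilon_0$ for a suitably small $\varepsilon_0>0$ to be fixed. This smallness allows absorption of the critical potential term into the Sobolev side, and iteration on $\beta$ (now with constants depending on the running exponent) produces $u\in L^r(B_R(y))$ for every $r<\infty$. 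Finally, under the supercritical assumption $V\in L^q_{\mathrm{loc}}$ with $q>N/(p-s)$, combining this local $L^r$-bound with the quantitative step makes $Wu^{p-1}\in L^{q_0}_{\mathrm{loc}}$ for some $q_0>N/p$; the $C^{1,\alpha}$-regularity on $\mathbb{R}^N\setminus\{0\}$ then follows from the classical results of \cite{ED83, PT}.

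The main obstacle I anticipate is the critical-exponent step of the local integrability argument: controlling the constants in the Moser iteration at the endpoint $q=N/(p-s)$, where the absorption rests on smallness of $\|V\|_{L^{N/(p-s)}}$ on the ball rather than on a favorable power of $R$, and arranging the iteration so that it still produces arbitrarily large $L^r$ integrability in spite of the exponent-dependent constant at each step.
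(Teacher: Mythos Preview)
Your approach is the same De Giorgi--Moser--Nash iteration as the paper's, and for the quantitative estimate \eqref{est-1} it works exactly as you describe. There is, however, one subtlety in the critical-endpoint step (local $L^r$ integrability) that your ``folding'' language obscures. If you literally write $W=\tilde V\,|x|^{-s}$ with $\tilde V=V+\mu|x|^{-p+s}$ and try to absorb via smallness of $\|\tilde V\|_{L^{N/(p-s)}(B_{2R}(y))}$, note that the Hardy contribution satisfies
\[
\bigl\|\mu|x|^{-p+s}\bigr\|_{L^{N/(p-s)}(B_{2R}(y))}\ \le\ \mu R^{-p+s}\,|B_{2R}|^{(p-s)/N}\ \sim\ C\mu,
\]
which is \emph{independent of $R$}; it does not become small as $R\to 0$, so for large iteration exponent $t$ (where the good side carries only $\tfrac{1}{4t^{p-1}}$) the absorption fails. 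The fix is exactly what your phrase ``bounded perturbation'' hints at: do not lump the Hardy term into $\tilde V$; instead bound it pointwise by $\mu R^{-p}$ on $B_{2R}(y)$ and keep $\mu R^{-p}\|\eta u^t\|_{L^p}^p$ as a genuine lower-order term, on the same footing as $\||\nabla\eta|u^t\|_{L^p}^p$. The paper carries this out in a slightly more elaborate way, estimating the Hardy term by a \emph{subcritical} H\"older inequality with exponent $L^{N/(p-l)}$ for some $0<l<p$ (see \eqref{050103}--\eqref{050113}), which produces a mixed power $\|\nabla(\eta u^t)\|_{L^p}^{p-l}\|\eta u^t\|_{L^p}^{l}$ and is then resolved via the algebraic Lemma~\ref{tech-in-1}; your simpler $L^\infty$ bound achieves the same iteration inequality with less machinery.

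Your anticipated obstacle (exponent-dependent constants at the endpoint) is resolved precisely as the paper does: fix any target $\bar p\ge p^\star$, choose $R_1=R_1(\bar p)$ so small that $\|V\|_{L^{N/(p-s)}(B_{2R_1}(y))}\le c\,(\bar p/p)^{-(p-1)}$, iterate only finitely many times ($t=\chi,\chi^2,\dots,\chi^{i_0}$ with $\chi^{i_0}\le \bar p/p$), and conclude $u\in L^{\bar p}(B_{R_1}(y))$; then cover and let $\bar p$ vary.
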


\begin{proof}
Let $\Omega$ be any bounded domain such that $\Omega \subset\subset \R^N\setminus \{0\}$ and take arbitrarily $\bar{p}\in[p^\star,+\infty)$. Since $V(x)\in L^\frac{N}{p-s}(\R^N)$, for any $y\in \Omega$, there exists a small $R_1=R_1(\bar{p})>0$ satisfying $B_{3R_1}(y) \subset \R^N\setminus \{0\}$ and
\begin{equation}\label{050101}
  \| V \|_{L^\frac{N}{p-s}(B_{2R_1}(y))} \leq \frac{1}{2^{3+s}|B_{1}(0)|^\frac{s}{N} C_S^p (\bar{p}/p)^{p-1}}.
\end{equation}
First, we aim to prove that
\begin{equation}\label{loc-est-1}
  \|u\|_{L^{\bar{p}}(B_R(y))} \leq \frac{C}{R^{\frac{N-p}{p}-\frac{N}{\bar{p}}}} \|u\|_{L^{p^\star}(B_{2R}(y))}, \,\,\,\quad \forall\,\, R\leq R_1,
\end{equation}
where $C=C(N,p,s,\mu)>0$ and $C_S$ is Sobolev imbedding constant.

\smallskip

In order to show \eqref{loc-est-1}, for any $2R_1\geq h > h^\prime >0$, let us consider a cut--off function $\eta\in C_0^\infty (B_h (y))$ with $\eta=1$ in $B_{h^\prime}(y)$, $0\leq \eta \leq 1$ and $|\nabla \eta|\leq \frac{2}{h-h^\prime}$. Now, by density argument, we can take the test function $\varphi=\eta^p u^{1+p(t-1)}$ in \eqref{gen-eq} (where $1<t\leq \bar{p}/p$) and obtain that
\begin{equation}\label{050102}
  \int_{\R^N} |\nabla u|^{p-2}\nabla u \cdot \nabla \varphi \mathrm{d}x - \mu \int_{\R^N} \frac{1}{|x|^p} u^{p-1} \varphi \mathrm{d}x = \int_{\R^N} V(x) |x|^{-s} u^{p-1} \varphi \mathrm{d}x.
\end{equation}

\smallskip

On one hand, for any fixed $l\in(0,p)$, using H\"{o}lder's inequality and the Sobolev inequality, we have
\begin{equation}\label{050103}
\begin{aligned}
 \mu \int_{\R^N} \frac{1}{|x|^p} u^{p-1} \varphi \mathrm{d}x
      &= \mu \int_{\R^N} \frac{1}{|x|^p} u^{pt} \eta^p \mathrm{d}x \\
      &\leq \mu \left\| \frac{1}{|x|^p} \right\|_{L^{\frac{N}{p-l}}(\R^N \cap supp(\eta))}  \left\| \eta u^t \right\|^{p-l}_{L^{p^\star}(\R^N)}  \left\| \eta u^t \right\|^l_{L^p(\R^N)}\\
       &\leq \mu C_S^{p-l} \left\| \frac{1}{|x|^p} \right\|_{L^{\frac{N}{p-l}}(\R^N \cap supp(\eta))}  \left\| \nabla(\eta u^t) \right\|^{p-l}_{L^p(\R^N)}  \left\| \eta u^t \right\|^l_{L^p(\R^N)},
\end{aligned}
\end{equation}
\begin{equation}\label{050104}
\begin{aligned}
      \int_{\R^N} V(x)|x|^{-s} u^{p-1} &\varphi \mathrm{d}x
      = \int_{\R^N} V(x) |x|^{-s} u^{pt} \eta^p \mathrm{d}x \\
      &\leq  \left\| V(x) \right\|_{L^{\frac{N}{p-s}}(\R^N \cap supp(\eta))}
             \left\| |x|^{-s} \right\|_{L^{\frac{N}{s}}(\R^N \cap supp(\eta))}
             \left\| u^{pt} \eta^p \right\|_{L^{\frac{N}{N-p}}(\R^N)} \\
      &\leq  C_S^p \left\| V(x) \right\|_{L^{\frac{N}{p-s}}(\R^N \cap supp(\eta))}
                  \left\| |x|^{-s} \right\|_{L^{\frac{N}{s}}(\R^N \cap supp(\eta))}
                  \left\| \nabla(\eta u^t) \right\|^p_{L^p(\R^N)}.
\end{aligned}
\end{equation}

\smallskip

On the other hand, the first term in the left hand side of \eqref{050102} can be calculated as follows:
\begin{equation}\label{050105}
\begin{aligned}
      \int_{\R^N} &|\nabla u|^{p-2}  \nabla u \cdot \nabla\varphi \mathrm{d}x =\int_{\R^N} |\nabla u|^{p-2} \nabla u \cdot \nabla\left( \eta^p u^{1+p(t-1)} \right) \mathrm{d}x \\
      &= (1+p(t-1))\int_{\R^N} |\nabla u|^p \eta^p u^{p(t-1)}\mathrm{d}x +p\int_{\R^N} \eta^{p-1} u^{1+p(t-1)} |\nabla u|^{p-2} \nabla u  \cdot \nabla\eta \mathrm{d}x.
\end{aligned}
\end{equation}
By Lemma \ref{tech-in}, for any $\delta\in(0,1)$, there exists a positive constant $C_\delta$ such that
\begin{equation}\label{050106}
\begin{aligned}
      &(1+p(t-1))\int_{\R^N} |\nabla u|^p \eta^p u^{p(t-1)}\mathrm{d}x \\
      &\geq \frac{1+p(t-1)}{t^p} \frac{1}{1+2^{p-1}\delta} \int_{\R^N} \left| \nabla(\eta u^t) \right|^p \mathrm{d}x - \frac{1+p(t-1)}{t^p} C_\delta \int_{\R^N} |\nabla \eta|^p u^{pt} \mathrm{d}x.
\end{aligned}
\end{equation}
Next, using the Young's inequality $ab\leq \epsilon a^\frac{p}{p-1} + C_\epsilon b^p$ (with $C_\epsilon=(\frac{\epsilon p}{p-1})^{1-p} p^{-1}$) on the second term in the right hand side of \eqref{050105}, for any $\epsilon\in(0,1)$, one has
\begin{equation}\label{050107}
\begin{aligned}
      p\int_{\R^N}  \eta^{p-1}  u^{1+p(t-1)} & |\nabla u|^{p-2}  \nabla u   \cdot \nabla\eta \mathrm{d}x \leq  p \int_{\R^N} \eta^{p-1} u^{(p-1)(t-1)} |\nabla u|^{p-1}   |\nabla\eta|u^t \mathrm{d}x \\
      & \leq  p\epsilon \int_{\R^N} \eta^p u^{p(t-1)} |\nabla u|^p \mathrm{d}x + pC_\epsilon  \int_{\R^N} |\nabla\eta|^p u^{pt} \mathrm{d}x.
\end{aligned}
\end{equation}
By taking $p\epsilon=\frac{1+p(t-1)}{2t}$ and hence $pC_\epsilon=\left(2(p-1)t\right)^{p-1}(1+p(t-1))^{1-p}$ in \eqref{050107} and $\delta=2^{-p+1}$ in \eqref{050106}, and noting the fact that $1<t<(1+p(t-1))<pt$, it follows from \eqref{050105}, \eqref{050106} and \eqref{050107} that
\begin{equation}\label{050108}
\begin{aligned}
      & \quad \int_{\R^N} |\nabla u|^{p-2}  \nabla u \cdot \nabla\varphi \mathrm{d}x \\
      &\geq \frac{1+p(t-1)}{2}\int_{\R^N} |\nabla u|^p \eta^p u^{p(t-1)}\mathrm{d}x-\left[\frac{2(p-1)t}{1+p(t-1)}\right]^{p-1}\int_{\R^N} |\nabla\eta|^p u^{pt} \mathrm{d}x \\
      &\geq \frac{1+p(t-1)}{4t^p}\int_{\R^N} \left| \nabla(\eta u^t) \right|^p \mathrm{d}x-C \frac{1+p(t-1)}{2t^p}\int_{\R^N} |\nabla \eta|^p u^{pt} \mathrm{d}x-C\int_{\R^N} |\nabla\eta|^p u^{pt} \mathrm{d}x \\
      &\geq \frac{1}{4t^{p-1}} \| \nabla(\eta u^t) \|^p_{L^p(\R^N)} - C \left\|  |\nabla \eta|u^t \right\|^p_{L^p(\R^N)},
\end{aligned}
\end{equation}
where $C=C(N,p)>0$.

\smallskip

Now, substituting \eqref{050103}, \eqref{050104} and \eqref{050108} into \eqref{050102}, we obtain
\begin{equation}\label{050109+}
\begin{aligned}
    \frac{1}{4t^{p-1}} \| \nabla(\eta u^t) &\|^p_{L^p(\R^N)} \leq  C \left\|  |\nabla \eta|u^t \right\|^p_{L^p(\R^N)} \\
    &+ C_S^p \left\| V(x) \right\|_{L^{\frac{N}{p-s}}(\R^N \cap supp(\eta))} \left\| |x|^{-s} \right\|_{L^{\frac{N}{s}}(\R^N \cap supp(\eta))} \left\| \nabla(\eta u^t) \right\|^p_{L^p(\R^N)}\\
    &+ \mu C_S^{p-l} \left\| \frac{1}{|x|^p} \right\|_{L^{\frac{N}{p-l}}(\R^N \cap supp(\eta))}  \left\| \nabla(\eta u^t) \right\|^{p-l}_{L^p(\R^N)}  \left\| \eta u^t \right\|^l_{L^p(\R^N)}.
\end{aligned}
\end{equation}
Since $B_{3R_1}(y)\subset \R^N \setminus \{0\}$ and $supp(\eta) \subset B_h(y) \subset B_{2R_1}(y)$, we have
$$ |x|\geq |y|-h \geq R_1\geq\frac{h}{2}, \,\,\quad  \forall \,\, x\in B_h(y),$$
which implies that
\begin{equation}\label{050113}
     \left\| |x|^{-p} \right\|_{L^{\frac{N}{p-l}}(\R^N \cap supp(\eta))}\leq  \frac{2^{p}}{h^p}\cdot \left| B_h(y) \right|^\frac{p-l}{N} \leq 2^{p}|B_{1}(0)|^\frac{p-l}{N} h^{-l},
\end{equation}
and
\begin{equation}\label{050113+}
     \left\||x|^{-s} \right\|_{L^{\frac{N}{s}}(\R^N \cap supp(\eta))} \leq  2^{s}\frac{1}{h^s}\cdot \left| B_h(y) \right|^\frac{s}{N} \leq 2^{s}|B_{1}(0)|^\frac{s}{N}.
\end{equation}
Hence
\begin{equation}\label{050109}
\begin{aligned}
    \frac{1}{4t^{p-1}} \| \nabla(\eta u^t) \|^p_{L^p(\R^N)} &\leq  C \left\|  |\nabla \eta|u^t \right\|^p_{L^p(\R^N)} \\
    &+ 2^{s}|B_{1}(0)|^\frac{s}{N}\cdot C_S^p \left\| V(x) \right\|_{L^{\frac{N}{p-s}}(\R^N \cap supp(\eta))} \left\| \nabla(\eta u^t) \right\|^p_{L^p(\R^N)}\\
    &+ 2^{p}\mu |B_{1}(0)|^\frac{p-l}{N} \cdot C_S^{p-l} h^{-l}  \left\| \nabla(\eta u^t) \right\|^{p-l}_{L^p(\R^N)}  \left\| \eta u^t \right\|^l_{L^p(\R^N)}.
\end{aligned}
\end{equation}

\smallskip

Since $supp(\eta)\subset B_{2R_1}(y)$, it follows from \eqref{050101} that, for any $1<t\leq \bar{p}/p$,
\begin{equation}\label{050110}
  \| V \|_{L^\frac{N}{p-s}(\R^N \cap supp(\eta))} \leq \| V \|_{L^\frac{N}{p-s}(B_{2R_1}(y))} \leq \frac{1}{2^{3+s}|B_{1}(0)|^\frac{s}{N} C_S^p (\bar{p}/p)^{p-1}} \leq \frac{1}{2^{3+s}|B_{1}(0)|^\frac{s}{N} C_S^p t^{p-1}},
\end{equation}
which together with \eqref{050109} imply
\begin{equation}\label{050111}
\begin{aligned}
    \frac{1}{8t^{p-1}} \| \nabla(\eta u^t) \|^p_{L^p(\R^N)} &\leq  C \left\| |\nabla \eta|u^t \right\|^p_{L^p(\R^N)} \\
    &+ 2^{p}\mu |B_{1}(0)|^\frac{p-l}{N} C_S^{p-l} h^{-l}  \left\| \nabla(\eta u^t) \right\|^{p-l}_{L^p(\R^N)}  \left\| \eta u^t \right\|^l_{L^p(\R^N)}.
\end{aligned}
\end{equation}
Now, let
$$ z=\frac{\| \nabla(\eta u^t) \|_{L^p(\R^N)}}{\| \eta u^t \|_{L^p(\R^N)}} \,\quad\, \mbox{and} \,\,\quad \xi=\frac{\left\|  |\nabla \eta|u^t \right\|_{L^p(\R^N)}}{\| \eta u^t \|_{L^p(\R^N)}}, $$
then \eqref{050111} implies that
\begin{equation*}
     z^p \leq  C \left[ t^{p-1}\xi^p + t^{p-1}\mu h^{-l} z^{p-l} \right].
\end{equation*}
Applying Lemma \ref{tech-in-1} and using the fact that $l\in(0,p)$, we get
\begin{equation}\label{d-g-m-1}
     z   \leq  C_1 \left[ (t^{p-1}\xi^p)^\frac{1}{p} + \left(t^{p-1}\mu h^{-l} \right)^\frac{1}{l} \right]
     \leq C_2 t^\frac{2p}{l}\left(\xi +  h^{-1} \right),
\end{equation}
i.e.,
\begin{equation}\label{d-g-m-2}
     \| \nabla(\eta u^t) \|_{L^p(\R^N)} \leq  C_2 t^\frac{2p}{l}\left(\left\|  |\nabla \eta|u^t \right\|_{L^p(\R^N)} + h^{-1}\| \eta u^t \|_{L^p(\R^N)}\right),
\end{equation}
where $C_1=C_1(N,p,s)$ and $C_2=C_2(N,p,s,\mu)>0$. Using the Sobolev inequality, we obtain
\begin{equation}\label{050201}
     \| \eta u^t \|_{L^{p^\star}(\R^N)} \leq  C t^\frac{2p}{l}\left(\left\|  |\nabla \eta|u^t \right\|_{L^p(\R^N)} + h^{-1}\| \eta u^t \|_{L^p(\R^N)}\right),
\end{equation}
which yields that
\begin{equation}\label{050202}
   \left( \int_{B_{h^\prime}(y)} u^{p^\star t}(x) \mathrm{d}x \right)^\frac{1}{p^\star t} \leq \left( C \cdot t^\frac{2p}{l} \right)^\frac{1}{t} \left( \frac{1}{h-h^\prime} + \frac{1}{h} \right)^\frac{1}{t} \left( \int_{B_h(y)} u^{pt}(x) \mathrm{d}x \right)^\frac{1}{pt}.
\end{equation}

\smallskip

Let $\chi:=p^\star/p=\frac{N}{N-p}>1$, and for any $0<R\leq R_1$, denote $r_i:=R+\frac{2R}{2^i}$ for $i=1,2,\cdots,i_0$, where $\chi^{i_0}\leq \bar{p}/p \leq \chi^{i_0+1}$ and $i_0=i_0(\bar{p})\geq1$. Then $r_i-r_{i+1}=\frac{R}{2^i}$. By taking $h^\prime=r_{i+1}$, $h=r_i$ and $t=\chi^i$ in \eqref{050202}, for any $i=1,2,\cdots,i_0$, we get
\begin{equation}\label{050203}
   \left( \int_{B_{r_{i+1}}(y)} u^{p\chi^{i+1}}(x) \mathrm{d}x \right)^\frac{1}{p\chi^{i+1}} \leq \left( C \cdot {\chi^i}^\frac{2p}{l} \right)^\frac{1}{\chi^i} \left( \frac{2^i}{R} + \frac{1}{R} \right)^\frac{1}{\chi^i} \left( \int_{B_{r_i}(y)} u^{p\chi^i}(x) \mathrm{d}x \right)^\frac{1}{p\chi^i}.
\end{equation}
By iteration, we obtain that, for any $R\leq R_1$,
\begin{equation}\label{050204}
\begin{aligned}
   \left( \int_{B_{r_{i_0+1}}(y)} u^{p\chi^{i_0+1}}(x) \mathrm{d}x \right)^\frac{1}{p\chi^{i_0+1}}
   & \leq  C^\frac{1}{\chi^{i_0}} \cdot (2\chi^\frac{2p}{l})^\frac{i_0}{\chi^{i_0}} \cdot  \left(\frac{1}{R}\right)^\frac{1}{\chi^{i_0}} \left( \int_{B_{r_{i_0}}(y)} u^{p\chi^{i_0}}(x) \mathrm{d}x \right)^\frac{1}{p\chi^{i_0}} \\
   & \leq C^{\sum_{k=1}^{i_0}\frac{1}{\chi^k}} \cdot (2\chi^\frac{2p}{l})^{\sum_{k=1}^{i_0} \frac{k}{\chi^k}} \cdot  \left(\frac{1}{R}\right)^{\sum_{k=1}^{i_0} \frac{1}{\chi^k}}  \left( \int_{B_{2R}(y)} u^{p^\star}(x) \mathrm{d}x \right)^\frac{1}{p^\star}.
\end{aligned}
\end{equation}
Noting that
$$ \sum_{k=1}^\infty \frac{k}{\chi^k}<\infty,$$
$$ \sum_{k=1}^\infty \frac{1}{\chi^k}=\frac{N-p}{p},$$
and
$$ \sum_{k=1}^{i_0} \frac{1}{\chi^k}=\sum_{k=1}^\infty \frac{1}{\chi^k}-\sum_{k=i_0+1}^\infty \frac{1}{\chi^k}=\frac{N-p}{p}-\frac{N}{p\chi^{i_0+1}},$$
from \eqref{050204} we have
\begin{equation}\label{050205}
   \left( \int_{B_R (y)} u^{p\chi^{i_0+1}}(x) \mathrm{d}x \right)^\frac{1}{p\chi^{i_0+1}}
  \leq \frac{C}{R^{\frac{N-p}{p}-\frac{N}{p\chi^{i_0+1}}}}  \left( \int_{B_{2R}(y)} u^{p^\star}(x) \mathrm{d}x \right)^\frac{1}{p^\star}, \,\quad\,\, \forall \,\, R\leq R_1,
\end{equation}
where $C=C(N,p,s,\mu)>0$.

Combining \eqref{050205} with $\chi^{i_0}\leq \bar{p}/p \leq \chi^{i_0+1}$, we get
\begin{equation}\label{050206}
\begin{aligned}
   \|u\|_{L^{\bar{p}} (B_R(y))} & \leq \left[ \left( \int_{B_R(y)} 1 \mathrm{d}x \right)^{1-\frac{\bar{p}}{p\chi^{i_0+1}}} \cdot \left( \int_{B_R(y)} u^{p\chi^{i_0+1}} \mathrm{d}x \right)^{\frac{\bar{p}}{p\chi^{i_0+1}}} \right]^\frac{1}{\bar{p}} \\
   &\leq C \cdot R^{\frac{N}{\bar{p}} - \frac{N}{p\chi^{i_0+1}}} \cdot \|u\|_{L^{p\chi^{i_0+1}} (B_R(y))}\\
   &\leq C \cdot R^{\frac{N}{\bar{p}} - \frac{N}{p\chi^{i_0+1}}} \cdot \frac{1}{R^{\frac{N-p}{p}-\frac{N}{p\chi^{i_0+1}}}}  \left( \int_{B_{2R}(y)} u^{p^\star}(x) \mathrm{d}x \right)^\frac{1}{p^\star} \\
   &\leq \frac{C}{R^{\frac{N-p}{p}-\frac{N}{\bar{p}}}} \|u\|_{L^{p^\star}(B_{2R}(y))}, \,\,\,\,\,\,\,\, \forall R\leq R_1,
\end{aligned}
\end{equation}
where $C=C(N,p,s,\mu)>0$. This proves \eqref{loc-est-1}. Thus it follows from the arbitrariness of $\bar{p}\in[p^\star,+\infty)$ and finite covering theorem that $u\in L_{loc}^{r}(\R^N\setminus \{0\})$ for any $0<r<+\infty$.

\medskip

Next, we prove: for any $y \in \R^N \setminus \{0\}$, and for any $0<R\leq\frac{|y|}{3}$, if $R^{p-s-\frac{N}{q}}\|V\|_{L^{q}(B_{2R}(y))}\leq \widetilde{C}$ for some $\frac{N}{p-s}<q<+\infty$ and some constant $\widetilde{C}>0$, then there exists  $C=C\left(N,p,s,\mu,q,\widetilde{C}\right)>0$ such that
\begin{equation}\label{050207}
  \max_{B_R(y)} u(x) \leq \frac{C}{R^\frac{N}{\bar{p}}} \| u \|_{L^{\bar{p}}(B_{2R}(y))}, \quad\,\,\, \forall \,\, p^\star\leq\bar{p}<+\infty.
\end{equation}

\smallskip

To this end, for any $0<h^\prime<h<2R$, we take the cut-off function $\eta\in C_0^\infty(B_h(y))$, with $\eta=1$ in $B_{h^\prime}(y)$, $0\leq\eta\leq 1$, and $|\nabla \eta|\leq\frac{1}{h-h^\prime}$. Let $\varphi=\eta^p u^{1+p(t-1)}$ for all $t>1$. Note that $q>\frac{N}{p-s}$, so there exists $l\in(0,p-s)$ such that $q=\frac{N}{p-s-l}$.

\smallskip

Through similar method in deriving \eqref{050109+}, \eqref{050113}, \eqref{050113+} and \eqref{050109}, we get
\begin{equation}\label{050208}
\begin{aligned}
    & \frac{1}{4t^{p-1}} \| \nabla(\eta u^t) \|^p_{L^p(\R^N)} \leq  C \left\|  |\nabla \eta|u^t \right\|^p_{L^p(\R^N)} \\
    &+ \mu C_S^{p-l} \left\| \frac{1}{|x|^p} \right\|_{L^{\frac{N}{p-l}}(\R^N \cap supp(\eta))}  \left\| \nabla(\eta u^t) \right\|^{p-l}_{L^p(\R^N)}  \left\| \eta u^t \right\|^l_{L^p(\R^N)}\\
    &+ C_S^{p-l} \left\||x|^{-s} \right\|_{L^{\frac{N}{s}}(\R^N \cap supp(\eta))}
               \left\| V(x) \right\|_{L^{\frac{N}{p-s-l}}(\R^N \cap supp(\eta))}
               \left\| \nabla(\eta u^t) \right\|^{p-l}_{L^p(\R^N)}  \left\| \eta u^t \right\|^l_{L^p(\R^N)}\\
    &\leq  C \left[ \left\|  |\nabla \eta|u^t \right\|^p_{L^p(\R^N)}
    + \frac{1}{h^l} \left\| \nabla(\eta u^t) \right\|^{p-l}_{L^p(\R^N)}  \left\| \eta u^t \right\|^l_{L^p(\R^N)} \right],
\end{aligned}
\end{equation}
where $C=C\left(N,p,s,\mu,q,\widetilde{C}\right)>0$ and we have used the following
\begin{eqnarray*}\label{050212}
     &&\left\|V(x) \right\|_{L^{\frac{N}{p-s-l}}(\R^N \cap supp(\eta))}\leq
     \frac{1}{h^{l}} \left\|h^{l}V(x)\right\|_{L^{\frac{N}{p-s-l}}(B_h(y))}\leq \frac{1}{h^{l}} \left\|R^{p-s-\frac{N}{q}}V(x)\right\|_{L^{\frac{N}{p-s-l}}(B_{2R}(y))} \\
     &&\qquad\qquad\qquad\qquad\quad\,\,\,\,\quad\leq R^{p-s-\frac{N}{q}}\frac{1}{h^{l}}\left\|V(x)\right\|_{L^{\frac{N}{p-s-l}}(B_{2R}(y))}.
\end{eqnarray*}
Then, by Lemma \ref{tech-in-1} and a similar method in deriving \eqref{d-g-m-2}, for any $t>1$, we get
\begin{equation}\label{050209}
     \| \nabla(\eta u^t) \|_{L^p(\R^N)} \leq  C t^\frac{2p}{l}\left(\left\|  |\nabla \eta|u^t \right\|_{L^p(\R^N)} + h^{-1}\| \eta u^t \|_{L^p(\R^N)}\right),
\end{equation}
i.e.,
\begin{equation}\label{050210}
     \|u^t\|_{L^{p^{\star}}(B_{h^\prime}(y))} \leq  C t^\frac{2p}{l}\left( \frac{2}{h-h^\prime}+ h^{-1}\right) \| u^t \|_{L^p(B_h(y))}.
\end{equation}
Finally, by a similar method in deriving \eqref{050205}, letting $\chi:=p^\star/p=\frac{N}{N-p}>1$, and taking $r_i=R+\frac{2R}{2^i}$, $h^\prime=r_{i+1}$, $h=r_i$ and $t=\chi^i$ ($i=i_{0},i_{0}+1,\cdots$) in \eqref{050210}, we obtain,  for any $i\geq i_{0}$,
\begin{eqnarray}\label{050211}
 \nonumber && \left( \int_{B_R (y)} u^{p\chi^{i+1}}(x) \mathrm{d}x \right)^\frac{1}{p\chi^{i+1}}
  \leq \frac{C}{R^{\frac{N}{p\chi^{i_{0}}}-\frac{N}{p\chi^{i+1}}}}  \left( \int_{B_{2R}(y)} u^{p\chi^{i_{0}}}(x) \mathrm{d}x \right)^\frac{1}{p\chi^{i_{0}}}\\
  && \qquad\qquad\qquad\qquad\qquad\quad\,\,\,\,\leq \frac{C}{R^{\frac{N}{\bar{p}}-\frac{N}{p\chi^{i+1}}}}  \left( \int_{B_{2R}(y)} u^{\bar{p}}(x) \mathrm{d}x \right)^\frac{1}{\bar{p}}, \qquad \forall \,\, p^\star\leq\bar{p}<+\infty,
\end{eqnarray}
where $i_0=i_0(\bar{p})$ satisfies $p\chi^{i_{0}}\leq\bar{p}\leq p\chi^{i_{0}+1}$. Letting $i\to\infty$, one has
\begin{equation*}
  \max_{B_R(y)} u(x) \leq \frac{C}{R^\frac{N}{\bar{p}}} \| u \|_{L^{\bar{p}}(B_{2R}(y))}, \,\,\quad\, \forall \,\, p^\star\leq\bar{p}<+\infty,
\end{equation*}
where $C=C\left(N,p,s,\mu,q,\widetilde{C}\right)>0$. This proves \eqref{050207}. Consequently, if $V\in L^{q}_{loc}(\R^{N}\setminus \{0\})$ for some $\frac{N}{p-s}<q<+\infty$, by using the finite covering theorem, we can obtain $u\in L^{\infty}_{loc}(\R^N \setminus \{0\})$. Then, using the standard $C^{1,\alpha}$-regularity estimates (see e.g. \cite{ED83,KM,PT}), we have $u\in C^{1,\alpha}(\R^N\setminus \{0\})\bigcap L^{\infty}_{loc}(\R^N \setminus\{0\})$ for some $0<\alpha<\min\{1,1/(p-1)\}$.

This completes our proof of Lemma \ref{lm:l-b-r}.
\end{proof}

\begin{cor}\label{lm:l-b-1-cor}
Let $u\in D^{1,p}(\R^N)$ be a nonnegative solution to the generalized equation \eqref{gen-eq} with $0\leq\mu<\bar{\mu}$, $1<p<N$, $0< s < p$ and $0\leq V(x)\in L^\frac{N}{p-s}(\R^N)$. Then $u\in C^{1,\alpha}(\R^N\setminus \{0\})\bigcap L^{\infty}_{loc}(\R^N \setminus \{0\})$ for some $0<\alpha<\min\{1,\frac{1}{p-1}\}$. Moreover, for any $y \in \R^N \setminus \{0\}$ and for any $R>0$ satisfying $B_{3R}(y)\subseteq \R^N\setminus \{0\} $, there holds
\begin{equation}\label{est-1-cor}
  \max_{B_R(y)} u(x) \leq \frac{C}{R^\frac{N}{\bar{p}}} \| u \|_{L^{\bar{p}}(B_{2R}(y))}, \qquad \forall \,\, p^\star\leq\bar{p}<+\infty,
\end{equation}
where $C=C\left(N,p,s,\mu,\|V\|_{L^{\frac{N}{p-s}}(\R^N)}\right)>0$.
\end{cor}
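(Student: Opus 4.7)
The plan is to reduce the corollary to the framework of Lemma \ref{lm:l-b-r} and to carry out an adaptation of its Moser iteration in which the strict positivity of $s$ replaces the missing hypothesis $V\in L^{q}_{loc}(\R^N\setminus\{0\})$ for some $q>N/(p-s)$. The local $L^{r}$-integrability of $u$ for every finite $r$ is already provided by the first assertion of Lemma \ref{lm:l-b-r}, and once the sup-bound \eqref{est-1-cor} is established the regularity $u\in C^{1,\alpha}(\R^{N}\setminus\{0\})$ follows from the DiBenedetto--Tolksdorf estimates already used at the end of Lemma \ref{lm:l-b-r}.

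Fix $y\in\R^N\setminus\{0\}$ and $R>0$ with $B_{3R}(y)\subseteq\R^N\setminus\{0\}$, so that $|x|\geq R$ on $B_{2R}(y)$. With cut-offs $\eta\in C_{0}^{\infty}(B_{h}(y))$ supported in the shrinking balls $B_{r_i}(y)$, $r_i=R+2R/2^i$, testing the weak formulation of \eqref{gen-eq} against $\varphi=\eta^{p}u^{1+p(t-1)}$ reproduces the left-hand side and the Hardy-term estimate of the proof of Lemma \ref{lm:l-b-r} (the Hardy weight $|x|^{-p}$ is again controlled on $\mathrm{supp}\,\eta$ by $(h/2)^{-p}$, yielding the same $h^{-l}$ factor as in \eqref{050113}). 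For the weighted potential term I apply H\"older with exponents $N/(p-s)$ and $N/(N-p+s)$, together with the trivial bound $|x|^{-s}\leq R^{-s}$ on $\mathrm{supp}\,\eta$, to obtain
\[
\int_{\R^N} V(x)|x|^{-s}(\eta u^{t})^{p}\,\mathrm{d}x
\leq R^{-s}\|V\|_{L^{N/(p-s)}(\R^N)}\,\|\eta u^{t}\|_{L^{pN/(N-p+s)}}^{p}.
\]
An $L^{p}$--$L^{p^{\star}}$ interpolation with parameter $\theta=s/p$ followed by the Sobolev embedding gives $\|\eta u^{t}\|_{L^{pN/(N-p+s)}}^{p}\leq C\|\eta u^{t}\|_{L^{p}}^{s}\|\nabla(\eta u^{t})\|_{L^{p}}^{p-s}$.

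Assembling the three contributions, the normalized quantity $z=\|\nabla(\eta u^{t})\|_{L^{p}}/\|\eta u^{t}\|_{L^{p}}$ satisfies
\[
z^{p}\leq C\,t^{p-1}\bigl[\xi'^{\,p}+h^{-l}z^{p-l}+R^{-s}\|V\|_{L^{N/(p-s)}(\R^N)}z^{p-s}\bigr],
\]
for any fixed $l\in(0,p-s)$, where $\xi'\leq C\cdot 2^{i}/R$. Since $l,s\in(0,p)$, Lemma \ref{tech-in-1} converts this into the linear estimate
\[
z\leq C\bigl(t^{(p-1)/p}\xi'+t^{(p-1)/l}h^{-1}+t^{(p-1)/s}R^{-1}\|V\|_{L^{N/(p-s)}(\R^N)}^{1/s}\bigr).
\]
Invoking Sobolev on $\eta u^{t}$ and iterating along $t_{i}=\chi^{i}$, $h=r_{i}$, $h'=r_{i+1}$, $\chi=p^{\star}/p$, starting from the minimal index $i_{0}$ with $p\chi^{i_{0}}\leq\bar{p}$, the geometric sums $\sum_{i\geq i_{0}}\chi^{-i}=N/(p\chi^{i_{0}})$ and $\sum i\chi^{-i}<\infty$ produce \eqref{est-1-cor} with the stated $R^{-N/\bar{p}}$ scaling and a constant depending only on $N,p,s,\mu$ and $\|V\|_{L^{N/(p-s)}(\R^{N})}$.

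The main obstacle is that the coefficient $R^{-s}\|V\|_{L^{N/(p-s)}(\R^N)}$ arising from the $V$-estimate is neither small in the style of \eqref{050101} (where one shrank $R_{1}$ to control the $V$-norm) nor equipped with a gain $h^{-l}$ with $l>0$ (as in the second part of the proof of Lemma \ref{lm:l-b-r}, where the strictly stronger hypothesis $V\in L^{q}_{loc}$ for $q>N/(p-s)$ produced exactly such a gain). What rescues the iteration is the interpolation step: it yields the exponent $p-s$ strictly less than $p$ on $\|\nabla(\eta u^{t})\|_{L^{p}}$, so that Lemma \ref{tech-in-1} applies with $\alpha-\beta_{3}=s>0$. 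It is precisely the strict positivity of $s$ that creates the room for the iteration to close without any additional integrability assumption on $V$.
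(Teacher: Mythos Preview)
Your proposal is correct and follows essentially the same strategy as the paper: a Moser iteration in which the strict positivity of $s$ produces a subcritical exponent $p-s<p$ on $\|\nabla(\eta u^{t})\|_{L^{p}}$ in the $V$-term, so that Lemma \ref{tech-in-1} closes the loop without any extra integrability on $V$. The only technical difference is the organization of the H\"older step: the paper uses a four-factor split $V\cdot|x|^{-s}\cdot(\eta u^{t})^{p-l}\cdot(\eta u^{t})^{l}$ with $l\in(0,s)$ and estimates $\||x|^{-s}\|_{L^{N/(s-l)}(\mathrm{supp}\,\eta)}\leq Ch^{-l}$ (so the $V$-term acquires the same $h^{-l}$ factor as the Hardy term), whereas you bound $|x|^{-s}\leq R^{-s}$ pointwise and then interpolate $L^{pN/(N-p+s)}$ between $L^{p}$ and $L^{p^{\star}}$; since $h$ and $R$ are comparable throughout the iteration, both routes lead to the same $R^{-N/\bar p}$ scaling and the same dependence of the constant on $\|V\|_{L^{N/(p-s)}(\R^{N})}$.
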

\begin{proof}
In fact, for $0<s<p$, the estimate \eqref{050104} in the proof of Lemma \ref{lm:l-b-r} can be replaced by the following
\begin{equation}\label{050208-cor}
\begin{aligned}
      &\quad \int_{\R^N} V(x)|x|^{-s} u^{p-1} \varphi \mathrm{d}x
      = \int_{\R^N} V(x) |x|^{-s} (u^{t}\eta)^{p-l} (u^{t}\eta)^{l} \mathrm{d}x \\
      &\leq  \left\| V \right\|_{L^{\frac{N}{p-s}}(\R^N)}
             \left\| |x|^{-s} \right\|_{L^{\frac{N}{s-l}}(\R^N \cap supp(\eta))}
             \left\| (u^{t}\eta)^{p-l} \right\|_{L^{\frac{Np}{(N-p)(p-l)}}(\R^N)}
             \left\| (u^{t}\eta)^{l} \right\|_{L^{\frac{p}{l}}(\R^N)} \\
      &\leq  C_S^{p-l} \left\| V \right\|_{L^{\frac{N}{p-s}}(\R^N)}
                  \left\| |x|^{-s} \right\|_{L^{\frac{N}{s-l}}(\R^N \cap supp(\eta))}
                  \left\| \nabla(\eta u^t) \right\|^{p-l}_{L^p(\R^N)(\R^N)}
                  \left\| \eta u^t \right\|^l_{L^p(\R^N)}.
\end{aligned}
\end{equation}
Similar to \eqref{050113}, one has, for any fixed $l\in(0,s)$,
\begin{equation}\label{a3}
     \left\| |x|^{-s} \right\|_{L^{\frac{N}{s-l}}(\R^N \cap supp(\eta))}\leq  \frac{2^{s}}{h^s}\cdot \left| B_h(y) \right|^\frac{s-l}{N} \leq 2^{s}|B_{1}(0)|^\frac{s-l}{N} h^{-l}.
\end{equation}
Then, similar to the method used in deriving \eqref{050109} or \eqref{050208}, we deduce from \eqref{050208-cor}, \eqref{a3} and \eqref{050113+} that, for any $t>1$,
\begin{equation*}
\begin{aligned}
    & \frac{1}{4t^{p-1}} \| \nabla(\eta u^t) \|^p_{L^p(\R^N)} \leq  C \left\|  |\nabla \eta|u^t \right\|^p_{L^p(\R^N)} \\
    &+ \mu C_S^{p-l} \left\| \frac{1}{|x|^p} \right\|_{L^{\frac{N}{p-l}}(\R^N \cap supp(\eta))}  \left\| \nabla(\eta u^t) \right\|^{p-l}_{L^p(\R^N)}  \left\| \eta u^t \right\|^l_{L^p(\R^N)}\\
    &+ C_S^{p-l}  \left\| V \right\|_{L^{\frac{N}{p-s}}(\R^N)}
               \left\||x|^{-s} \right\|_{L^{\frac{N}{s-l}}(\R^N \cap supp(\eta))}
               \left\| \nabla(\eta u^t) \right\|^{p-l}_{L^p(\R^N)}  \left\| \eta u^t \right\|^l_{L^p(\R^N)}\\
    &\leq  C \left[ \left\|  |\nabla \eta|u^t \right\|^p_{L^p(\R^N)}
    + \frac{1}{h^l} \left\| \nabla(\eta u^t) \right\|^{p-l}_{L^p(\R^N)}  \left\| \eta u^t \right\|^l_{L^p(\R^N)} \right],
\end{aligned}
\end{equation*}
where $0<l<s<p$ and $C=C\left(N,p,\mu,s,l,\left\|V\right\|_{L^{\frac{N}{p-s}}(\R^N)}\right)>0$. Then, through a similar method in deriving \eqref{050209} (or \eqref{d-g-m-2}), by Lemma \ref{tech-in-1}, we obtain
\begin{equation}\label{050209-cor}
     \| \nabla(\eta u^t) \|_{L^p(\R^N)} \leq  C t^\frac{2p}{l}\left(\left\|  |\nabla \eta|u^t \right\|_{L^p(\R^N)} + h^{-1}\| \eta u^t \|_{L^p(\R^N)}\right), \,\,\quad\, \forall \,\, t>1.
\end{equation}
Finally, we set $l=\frac{s}{2}$ and complete the proof of Corollary \ref{lm:l-b-1-cor} in similar way as that of Lemma \ref{lm:l-b-r}.
\end{proof}

\begin{lem}\label{lm:l-b-2}
Let $u$ be a nonnegative solution to the generalized equation \eqref{gen-eq} with $0\leq\mu<\bar{\mu}$ $1<p<N$, $0\leq s <p$ and $0\leq V(x)\in L^\frac{N}{p-s}(\R^N)$.
For any $\bar{p}>p^\star$, there exists a constant $\epsilon_1=\epsilon_1(N,\mu,s,p,\bar{p})>0$ such that, \\
$(i)$ for any $\rho_1\in(0,1)$ satisfying $ \|V\|_{L^\frac{N}{p-s} (B_{\rho_1}(0))}\leq \epsilon_1$, it holds
\begin{equation}\label{loc-est-3}
  \|u\|_{L^{\bar{p}}(B_{4R}(0)\setminus B_{R/4}(0))} \leq \frac{C}{R^{\frac{N-p}{p}-\frac{N}{\bar{p}}}} \|u\|_{L^{p^\star}(B_{8R}(0)\setminus B_{R/8}(0))}, \,\,\,\quad \forall \,\, R\leq \frac{\rho_1}{8};
\end{equation}
$(ii)$ for any $\rho_1\in(0,1)$ satisfying $\|V\|_{L^\frac{N}{p-s} (\R^N\setminus B_{1/{\rho_1}}(0))} \leq \epsilon_1$, it holds
\begin{equation}\label{loc-est-4}
  \|u\|_{L^{\bar{p}}(\R^N\setminus B_{2R}(0))} \leq \frac{C}{R^{\frac{N-p}{p}-\frac{N}{\bar{p}}}} \|u\|_{L^{p^\star}(\R^N\setminus B_{R}(0))}, \,\,\,\quad \forall \,\, R\geq \frac{1}{\rho_1},
\end{equation}
where $C=C(N,p,s,\mu)>0$.
\end{lem}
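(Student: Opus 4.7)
The plan is to mimic the De Giorgi--Moser--Nash iteration of Lemma \ref{lm:l-b-r}, but with test cutoffs supported on annular shells rather than on small balls away from the origin. In Lemma \ref{lm:l-b-r}, the point $y$ was kept at distance at least $3R$ from $0$, which allowed one to absorb $\||x|^{-p}\|_{L^{N/(p-l)}}$ using a fixed lower bound on $|x|$ on the support of $\eta$. Here the target region either abuts the singularity at the origin (case (i)) or extends to infinity (case (ii)), so that trick no longer applies directly. The essential new input is to absorb the $V$-contribution into the gradient energy by \emph{smallness} of $\|V\|_{L^{N/(p-s)}}$ on $B_{\rho_1}(0)$, respectively on $\R^N\setminus B_{1/\rho_1}(0)$, while keeping $|x|\sim R$ on the annular shell to control the Hardy term.

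For part (i), I would fix $R\le\rho_1/8$ and iterate on the nested annuli $A_i:=B_{4R+4R\cdot 2^{-i}}(0)\setminus\overline{B_{R/4-R\cdot 2^{-i}/8}(0)}$, so that $A_0=B_{8R}\setminus\overline{B_{R/8}}$ and $A_\infty=B_{4R}\setminus\overline{B_{R/4}}$, with cutoffs $\eta_i\in C_0^\infty(A_i)$, $\eta_i\equiv 1$ on $\overline{A_{i+1}}$, $0\le\eta_i\le1$ and $|\nabla\eta_i|\le C\cdot 2^i/R$. Testing \eqref{gen-eq} against $\varphi=\eta_i^p u^{1+p(t-1)}$ for $t>1$, and using $R/8\le|x|\le 8R\le\rho_1$ on $\mathrm{supp}\,\eta_i$, I obtain, for any fixed $l\in(0,p)$,
\[
\bigl\||x|^{-p}\bigr\|_{L^{N/(p-l)}(\mathrm{supp}\,\eta_i)}\le C R^{-l},\quad \bigl\||x|^{-s}\bigr\|_{L^{N/s}(\mathrm{supp}\,\eta_i)}\le C,\quad \|V\|_{L^{N/(p-s)}(\mathrm{supp}\,\eta_i)}\le\epsilon_1.
\]
Reproducing the chain \eqref{050103}--\eqref{050111} of Lemma \ref{lm:l-b-r}, and choosing $\epsilon_1=\epsilon_1(N,p,s,\mu,\bar p)$ small enough to absorb the $V$-contribution (exactly as in \eqref{050110}, where $(\bar p/p)^{p-1}$ captures the worst value of $t^{p-1}$ attained during the finite-depth iteration), I arrive at
\[
\|\nabla(\eta_i u^t)\|_{L^p}^p\le C t^{p-1}\bigl[\bigl\||\nabla\eta_i|u^t\bigr\|_{L^p}^p+R^{-l}\|\nabla(\eta_i u^t)\|_{L^p}^{p-l}\|\eta_i u^t\|_{L^p}^{l}\bigr].
\]
Lemma \ref{tech-in-1} and the Sobolev inequality then yield the Moser step
\[
\|u^t\|_{L^{p^\star}(A_{i+1})}\le C\,t^{2p/l}\,\tfrac{2^i}{R}\,\|u^t\|_{L^p(A_i)},
\]
and iterating with $t=\chi^i$, $\chi=p^\star/p$, up to $i_0$ fixed by $p\chi^{i_0}\le \bar p\le p\chi^{i_0+1}$, and using $\sum k/\chi^k<\infty$ together with $\sum 1/\chi^k=(N-p)/p$, recovers \eqref{loc-est-3}.

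Part (ii) is the mirror-symmetric argument, with cutoffs supported on $\R^N\setminus B_R(0)\subset\R^N\setminus B_{1/\rho_1}(0)$ for $R\ge 1/\rho_1$, iterating on shrinking exterior domains between $\R^N\setminus B_R(0)$ and $\R^N\setminus B_{2R}(0)$; on such supports one still has $\||x|^{-p}\|_{L^{N/(p-l)}}\le C R^{-l}$ because the integral $\int_{|x|\ge R}|x|^{-pN/(p-l)}\,\mathrm{d}x$ converges for $l\in(0,p)$ and scales as $R^{-Nl/(p-l)}$. The main technical hurdle is ensuring that the smallness threshold $\epsilon_1$ depends only on $N,p,s,\mu,\bar p$ and \emph{not} on $R$ or $\rho_1$ or on the iteration level; as in \eqref{050110}, this forces $\epsilon_1$ to depend on $\bar p$ through $(\bar p/p)^{p-1}$, which is exactly why the lemma statement fixes $\bar p$ a priori and lets $\epsilon_1=\epsilon_1(N,p,s,\mu,\bar p)$.
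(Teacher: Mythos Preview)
Your proposal is correct and follows essentially the same De Giorgi--Moser--Nash iteration as the paper, with only a cosmetic difference in how the nested annuli are parameterized (the paper writes $D_h=B_{4hR}\setminus B_{R/(4h)}$ for $1\le h\le 2$ and iterates over $h_i=1+2^{1-i}$, while you index the shells directly by $i$). One point worth flagging: part (ii) is not quite the literal mirror of part (i), because the final passage from the iterated $L^{p\chi^{i}}$ estimates to $L^{\bar p}$ cannot be done by H\"older's inequality on the \emph{unbounded} exterior domain $\R^N\setminus B_{2R}$ the way it is on the bounded annulus in part (i); the paper handles this by running the iteration up to both levels $i_0$ and $i_0+1$ (where $p\chi^{i_0}\le\bar p\le p\chi^{i_0+1}$) and then interpolating $L^{\bar p}$ between $L^{p\chi^{i_0}}$ and $L^{p\chi^{i_0+1}}$ via H\"older, which does work on unbounded sets.
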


\begin{proof}
Fixing any $\bar{p}>p^\star$, since $V(x)\in L^\frac{N}{p-s}(\R^N)$, then for $\epsilon_1:=\frac{1}{8^{2s+1}|B_{1}(0)|^\frac{s}{N} (\bar{p}/p)^{p-1}C_S^p}$,
\begin{equation}\label{050215}
\|V\|_{L^\frac{N}{p-s} (B_{\rho_1}(0))}+ \|V\|_{L^\frac{N}{p-s} (\R^N\setminus B_{1/{\rho_1}}(0))} \leq \frac{1}{8^{2s+1}|B_{1}(0)|^\frac{s}{N} (\bar{p}/p)^{p-1}C_S^p}
\end{equation}
holds for any $\rho_1\in(0,1)$ small enough, where $|B_{1}(0)|$ is the volume of the unit ball in $\R^N$ and $C_S$ is Sobolev embedding constant.

\smallskip

First, we prove $(i)$. Let $0<R\leq \rho_1/8$. For any $1\leq h^\prime < h \leq 2$, define $D_h=B_{4hR}(0)\setminus B_{\frac{R}{4h}}(0)$, and let $\eta\in C_0^\infty(\R^N)$ be a cut-off function such that $0\leq\eta\leq 1$ in $\R^N$, $\eta=1$ on $D_{h^\prime}$, $\eta=0$ on $\R^N\setminus D_h$ and $|\nabla\eta|<\frac{32}{(h-h^\prime)R}$. We can take the test function $\varphi=\eta^p u^{1+p(t-1)}$ in \eqref{gen-eq}, where $1<t\leq\frac{\bar{p}}{p}$.

\smallskip

For any fixed $l\in(0,p)$, one has
\[\left\| |x|^{-p} \right\|_{L^{\frac{N}{p-l}}(\R^N \cap supp(\eta))}^\frac{1}{l}\leq  \left(\frac{4h}{R}\right)^\frac{p}{l} \cdot \left| B_{4hR}(0)\setminus B_{\frac{R}{4h}}(0)\right|^\frac{p-l}{Nl} \leq |B_{1}(0)|^{\frac{p-l}{Nl}}(4h)^{\frac{2p}{l}-1}R^{-1},\]
and
\[\left\| |x|^{-s} \right\|_{L^{\frac{N}{s}}(\R^N \cap supp(\eta))}\leq  \left(\frac{4h}{R}\right)^{s} \cdot \left| B_{4hR}(0)\setminus B_{\frac{R}{4h}}(0)\right|^\frac{s}{N} \leq |B_{1}(0)|^{\frac{s}{N}}(4h)^{2s}.\]
Next, by a similar method in deriving \eqref{050201}, we can deduce
\begin{equation}\label{050216}
\| \eta u^t \|_{L^{p^\star}(\R^N)} \leq  C t^\frac{2p}{l}\left(\left\|  |\nabla \eta|u^t \right\|_{L^p(\R^N)} + R^{-1}\| \eta u^t \|_{L^p(\R^N)}\right),
\end{equation}
i.e.,
\begin{equation}\label{050217}
\|u^t\|_{L^{p^\star}(D_{h^\prime})} \leq  C t^\frac{2p}{l}\left( \frac{2}{h-h^\prime}+1 \right)\frac{1}{R} \| \eta u^t \|_{L^p(D_h)},
\end{equation}
where $C=C(N,p,s,\mu)>0$. Now, using a similar iterative method in deriving \eqref{050205}, taking $h_i=1+\frac{2}{2^i}$, $h=h_i$, $h^\prime=h_{i+1}$ and  $t=\chi^i$ in \eqref{050217} for $i=1,2,\cdots,i_0$ (where $i_0=i_0(\bar{p})>0$ such that $\chi^{i_0}\leq \bar{p}/p \leq \chi^{i_0+1}$), we can derive
\begin{equation}\label{050218}
   \left( \int_{D_1} u^{p\chi^{i_0+1}}(x) \mathrm{d}x \right)^\frac{1}{p\chi^{i_0+1}}
  \leq \frac{C}{R^{\frac{N-p}{p}-\frac{N}{p\chi^{i_0+1}}}}  \left( \int_{D_2} u^{p^\star}(x) \mathrm{d}x \right)^\frac{1}{p^\star}, \,\,\,\quad  \forall \,\, R\leq \frac{\rho_1}{8}.
\end{equation}
Then, from $\chi^{i_0}\leq \bar{p}/p \leq \chi^{i_0+1}$, we can get
\begin{equation}\label{050226}
\begin{aligned}
    \|u\|_{L^{\bar{p}} (B_{4R}(0)\setminus B_{R/4}(0))} & \leq \|u\|_{L^{\bar{p}} (D_1)} \\
   &\leq C \cdot R^{\frac{N}{\bar{p}} - \frac{N}{p\chi^{i_0+1}}} \cdot \|u\|_{L^{p\chi^{i_0+1}} (D_1)}\\
   &\leq C \cdot R^{\frac{N}{\bar{p}} - \frac{N}{p\chi^{i_0+1}}} \cdot \frac{1}{R^{\frac{N-p}{p}-\frac{N}{p\chi^{i_0+1}}}}  \left( \int_{D_2} u^{p^\star}(x) \mathrm{d}x \right)^\frac{1}{p^\star} \\
   &\leq \frac{C}{R^{\frac{N-p}{p}-\frac{N}{\bar{p}}}} \|u\|_{L^{p^\star}(B_{8R}(0)\setminus B_{R/8}(0))}, \,\,\,\,\,\,\,\,\,\,\, \forall \,\,  R\leq \frac{\rho_1}{8},
\end{aligned}
\end{equation}
where $C=C(N,p,s,\mu)>0$. Thus we have proved $(i)$.

\smallskip

Next, we are to show $(ii)$. Let $R\geq 1/\rho_1$, for any $2R\geq h'>h\geq R$, and take $\eta\in C^2(\R^N)$ such that $\eta=0$ in $B_{h} (0)$, $\eta=1$ in $\R^N\setminus B_{h'}(0)$, $0\leq\eta\leq 1$ and $|\nabla \eta| \leq \frac{2}{h'-h}$. Choose the test function $\varphi=\eta^p u^{1+p(t-1)}$ in \eqref{gen-eq}, where $1<t\leq \bar{p}/p$.

\smallskip

For any fixed $l\in (0,1)$, one has
$$\left\| \frac{1}{|x|^p} \right\|_{L^{\frac{N}{p-l}}(\R^N \cap supp(\eta))}^\frac{1}{l}
\leq \left( \int_{\R^N\setminus B_R(0)} \frac{1}{|x|^\frac{pN}{p-l}} \mathrm{d}x \right)^\frac{p-l}{Nl}
\leq C \left( \int_R^\infty \frac{r^{N-1}}{r^\frac{pN}{p-l}} dr \right)^\frac{p-l}{Nl}
\leq \frac{C}{R},$$
where $C=C(N,p)>0$. Consequently, by using \eqref{050104} if $s=0$ and \eqref{050208-cor} if $0<s<p$, we can prove via a similar way and iterative method in deriving \eqref{050205} that
\begin{equation}\label{050218'}
   \left( \int_{\R^N\setminus B_{2R}(0)} u^{p\chi^{i}}(x) \mathrm{d}x \right)^\frac{1}{p\chi^{i}}
  \leq \frac{C}{R^{\frac{N-p}{p}-\frac{N}{p\chi^{i}}}}  \left( \int_{\R^N\setminus B_{R}(0)} u^{p^\star}(x) \mathrm{d}x \right)^\frac{1}{p^\star}, \,\,\,\quad  \forall \,\, R\geq \frac{1}{\rho_{1}},
\end{equation}
where $i=i_{0}$ or $i_{0}+1$ with $i_0=i_0(\bar{p})>0$ such that $\chi^{i_0}\leq \bar{p}/p \leq \chi^{i_0+1}$. Next, let $\frac{1}{r_2}=\left(\frac{\bar{p}}{p\chi^{i_0}}-1\right)\frac{N-p}{p}\in(0,1)$, $m_{1}=\frac{1}{r_1}p\chi^{i_0}$ and $m_{2}=\frac{1}{r_2}p\chi^{i_0+1}$,
where $\frac{1}{r_1}=1-\frac{1}{r_2}$. Noting that a direct calculation yields $m_{1}+m_{2}=\bar{p}$, by H\"{o}lder's inequality and \eqref{050218'}, we get
\begin{equation}\label{050226}
\begin{aligned}
   &\quad \|u\|_{L^{\bar{p}} (\R^N\setminus B_{2R}(0))} = \left(\int_{\R^N\setminus B_{2R}(0)} u^{m_1} u^{m_2} \mathrm{d}x \right)^\frac{1}{\bar{p}}\\
   &\leq \left(\int_{\R^N\setminus B_{2R}(0)} u^{m_1 r_1}\mathrm{d}x \right)^\frac{1}{r_1 \bar{p}} \cdot \left(\int_{\R^N\setminus B_{2R}(0)} u^{m_2 r_2} \mathrm{d}x \right)^\frac{1}{r_2 \bar{p}}\\
   &=\|u\|^{\frac{m_1}{\bar{p}}}_{L^{p\chi^{i_0}} (\R^N\setminus B_{2R}(0))} \cdot \|u\|^{\frac{m_2}{\bar{p}}}_{L^{p\chi^{i_0+1}} (\R^N\setminus B_{2R}(0))}\\
   &\leq C \left(\frac{1}{R^{\frac{N-p}{p}-\frac{N}{p\chi^{i_0}}}} \right)^{\frac{m_1}{\bar{p}}} \left(\frac{1}{R^{\frac{N-p}{p}-\frac{N}{p\chi^{i_0+1}}}} \right)^{\frac{m_2}{\bar{p}}} \|u\|^{\frac{m_1}{\bar{p}}}_{L^{p^\star} (\R^N\setminus B_{R}(0))} \cdot \|u\|^{\frac{m_2}{\bar{p}}}_{L^{p^\star} (\R^N\setminus B_{R}(0))}\\
   &\leq \frac{C}{R^{\frac{N-p}{p}-\frac{N}{\bar{p}}}} \|u\|_{L^{p^\star} (\R^N\setminus B_{R}(0))},\,\,\,\,\,\,\,\,\,\, \forall \,\, R\geq \frac{1}{\rho_1},
\end{aligned}
\end{equation}
where $C=C(N,p,s,\mu)>0$. This proves $(ii)$ and finishes our proof of Lemma \ref{lm:l-b-2}.
\end{proof}

\begin{lem}\label{lm:l-b-3}
Let $u$ be a nonnegative solution to the generalized equation \eqref{gen-eq} with $0\leq\mu< \bar{\mu}$, $1<p<N$, $0\leq s <p$ and $0\leq V(x)\in L^\frac{N}{p-s}(\R^N)$.
There exists a constant $\epsilon_2=\epsilon_2(N,p,\mu)>0$ such that, \\
$(i)$ for any $\rho_2\in(0,1)$ satisfying
$ \|V\|_{L^\frac{N}{p-s} (B_{\rho_2}(0))}\leq \epsilon_2$,
it holds
\begin{equation}\label{loc-est-5}
  \|u\|_{L^{p^\star}(B_R (0))} \leq C R^{\tau_1}, \,\,\,\quad \forall \,\, R \leq \rho_2;
\end{equation}
$(ii)$ for any $\rho_2\in(0,1)$ satisfying
$\|V\|_{L^\frac{N}{p-s} (\R^N\setminus B_{1/{\rho_2}}(0))} \leq \epsilon_2$, it holds
\begin{equation}\label{loc-est-6}
  \|u\|_{L^{p^\star}(\R^N\setminus B_{R}(0))} \leq C R^{-\tau_2}, \,\,\,\quad \forall \,\, R\geq \frac{1}{\rho_2},
\end{equation}
where $C=C(N,p,s,\mu,\rho_2,\|u\|_{L^{p^\star}(\R^N)})>0$ and $\tau_i =\tau_i(N,p,s,\mu)>0$, $i=1,2$.
\end{lem}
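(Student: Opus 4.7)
The plan is to derive a hole-filling (Widman-type) Caccioppoli inequality on dyadic annuli near the origin (for $(i)$) and at infinity (for $(ii)$), and iterate it to obtain the polynomial decay rates $R^{\tau_1}$ and $R^{-\tau_2}$. Parts $(i)$ and $(ii)$ are handled by symmetric choices of cutoffs, so I describe $(i)$ in detail.

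Fix $R\leq\rho_2/2$ and choose a cutoff $\eta\in C_c^\infty(B_{2R}(0))$ with $\eta\equiv1$ on $B_R(0)$, $0\leq\eta\leq1$, and $|\nabla\eta|\leq 2/R$. I would test \eqref{gen-eq} with $\varphi=\eta^p u$ (legitimate by density, since $u\in D^{1,p}(\R^N)$), expand the principal term via Lemma \ref{tech-in} with small parameter $\delta$, and control the cross-gradient term by Young's inequality. The Hardy potential is then absorbed via the sharp Hardy inequality, giving $\mu\int(\eta u)^p/|x|^p\,\mathrm{d}x\leq(\mu/\bar\mu)\int|\nabla(\eta u)|^p\,\mathrm{d}x$, contributing the strictly-less-than-one factor $\mu/\bar\mu$. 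For the weighted nonlinear term, a H\"older split with conjugate exponents $N/(p-s)$ and $N/(N-p+s)$, combined with the Hardy--Sobolev inequality of Theorem \ref{HDSI} at the critical pair $(s',p_{s'})$ given by $s':=Ns/(N-p+s)\in[0,p)$ and $p_{s'}=pN/(N-p+s)$, yields
\[
  \int V|x|^{-s}(\eta u)^p\,\mathrm{d}x \leq C\,\|V\|_{L^{N/(p-s)}(B_{2R})}\int|\nabla(\eta u)|^p\,\mathrm{d}x.
\]
Choosing $\delta$ small and $\epsilon_2=\epsilon_2(N,p,\mu)$ small enough that $(1+C\delta)^{-1}-\mu/\bar\mu-C\epsilon_2>0$ (exactly where the two hypotheses $\mu<\bar\mu$ and $\|V\|_{L^{N/(p-s)}(B_{\rho_2})}\leq\epsilon_2$ are simultaneously used) permits absorption and produces the Caccioppoli bound
\[
  \int|\nabla(\eta u)|^p\,\mathrm{d}x \leq \frac{C}{R^p}\int_{B_{2R}\setminus B_R}u^p\,\mathrm{d}x.
\]

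Combining the Sobolev inequality applied to $\eta u$ with H\"older on the shell then gives the scale-invariant estimate
\[
  \|u\|_{L^{p^\star}(B_R)}^p \leq C\int|\nabla(\eta u)|^p\,\mathrm{d}x \leq \frac{C}{R^p}\,|B_{2R}\setminus B_R|^{p/N}\,\|u\|_{L^{p^\star}(B_{2R}\setminus B_R)}^p = C_0\,\|u\|_{L^{p^\star}(B_{2R}\setminus B_R)}^p.
\]
Setting $f(R):=\|u\|_{L^{p^\star}(B_R)}^{p^\star}$, this reads $f(R)\leq C_0^{p^\star/p}\bigl(f(2R)-f(R)\bigr)$, and the classical hole-filling step rearranges it into $f(R)\leq c_0\,f(2R)$ with $c_0:=C_0^{p^\star/p}/(1+C_0^{p^\star/p})\in(0,1)$. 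Iterating along $R_k:=\rho_2\cdot 2^{-k}$ yields $f(R_k)\leq c_0^k\,\|u\|_{L^{p^\star}(\R^N)}^{p^\star}$, which in terms of $R=R_k$ reads $\|u\|_{L^{p^\star}(B_R)}\leq CR^{\tau_1}$ with $\tau_1:=-(p^\star)^{-1}\log_2 c_0>0$; intermediate radii are filled in by monotonicity of $R\mapsto f(R)$.

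For $(ii)$ I would run the symmetric argument with cutoff $\tilde\eta\in C^\infty(\R^N)$ equal to zero on $B_R(0)$, to one outside $B_{2R}(0)$, and $|\nabla\tilde\eta|\leq 2/R$. The Hardy, Sobolev and Hardy--Sobolev inequalities remain valid for $\tilde\eta u\in D^{1,p}(\R^N)$, and hole-filling along the outward geometric sequence $R_k:=2^k/\rho_2$ produces $\|u\|_{L^{p^\star}(\R^N\setminus B_R)}\leq CR^{-\tau_2}$ with $\tau_2>0$. The main obstacle I anticipate is aligning the H\"older split on the $V$-term with a critical Hardy--Sobolev exponent so that the smallness of $\|V\|_{L^{N/(p-s)}}$ turns into an absorbable multiple of $\int|\nabla(\eta u)|^p$: this requires the nontrivial identity $p_{s'}=pN/(N-p+s)$ with $s'=Ns/(N-p+s)$, which is precisely what ties the $V$-smallness hypothesis together with the strict Hardy constant $\mu<\bar\mu$ so that both combine into a strictly coercive leading coefficient in the Caccioppoli inequality.
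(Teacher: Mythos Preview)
Your proposal is correct and follows essentially the same route as the paper's proof: both test \eqref{gen-eq} with $\eta^p u$, absorb the Hardy term via the sharp constant $\mu/\bar\mu<1$, absorb the $V|x|^{-s}$ term via H\"older combined with the Caffarelli--Kohn--Nirenberg/Hardy--Sobolev inequality at exactly the exponent pair you identify (the paper invokes it through Remark~\ref{re-HDSI} with $\tau=Np/(N-p+s)$, $\gamma=-s/p$, which is the same inequality), and then run the hole-filling iteration $\Psi(R/2)\leq\theta\,\Psi(R)$ to extract the power $R^{\tau_1}$. The only cosmetic difference is that the paper scales with $R/2$ versus $R$ while you use $R$ versus $2R$, and the paper phrases the iteration directly on $\Psi(R)=\|u\|_{L^{p^\star}(B_R)}$ rather than its $p^\star$-th power.
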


\begin{proof}
We only need to prove $(i)$. The conclusion in $(ii)$ can be proved in similar way (c.f. also \cite{CDPSYS} and \cite[Lemma 3.2]{DLL}), so we omit the details.

To this aim, for $R>0$, let $\eta\in C_0^\infty (\R^N)$ be a cut-off function such that $\eta=1$ in $B_{R/2}(0)$, $\eta=0$ in $\R^N\setminus B_R(0)$, $0\leq\eta\leq1$ and $|\nabla \eta|\leq 4/R $.

\smallskip

By density argument, we take $\varphi=\eta^p u$ as test function in \eqref{gen-eq} and get
\begin{equation*}
  \int_{\R^N} |\nabla u|^{p-2}\nabla u \cdot \nabla \varphi \mathrm{d}x - \mu \int_{\R^N} \frac{1}{|x|^p} u^{p-1} \varphi \mathrm{d}x = \int_{\R^N} V(x)|x|^{-s}u^{p-1} \varphi \mathrm{d}x.
\end{equation*}
By similar methods in deriving \eqref{050106} and \eqref{050107}, for any $\delta\in(0,1)$, one has, there exists a constant $C_\delta$ such that
\begin{equation*}
\begin{aligned}
  \int_{\R^N} |\nabla u|^{p-2}\nabla u \cdot \nabla \varphi \mathrm{d}x
  &=\int_{\R^N} \eta^p |\nabla u|^p \mathrm{d}x + p \int_{\R^N} \eta^{p-1} u |\nabla u|^{p-2}\nabla u \cdot \nabla \eta \mathrm{d}x \\
  &\geq (1-\delta)\int_{\R^N} |\nabla (\eta u)|^p \mathrm{d}x - C_\delta \int_{\R^N} |\nabla\eta|^p u^p \mathrm{d}x,
\end{aligned}
\end{equation*}
From Remark \ref{re-HDSI}, we deduce that
$$ \mu \int_{\R^N} \frac{1}{|x|^p} u^{p-1} \varphi \mathrm{d}x = \mu \int_{\R^N} \frac{1}{|x|^p} \eta^p u^p \mathrm{d}x \leq  \frac{\mu}{\bar{\mu}} \int_{\R^N} |\nabla (\eta u)|^p \mathrm{d}x.$$
Now, choosing $\delta=\delta(N,p,\mu)>0$ sufficiently small such that $1-\delta-\mu/\bar{\mu}>0$, we have
\begin{equation*}
\begin{aligned}
  \int_{\R^N} V(x)|x|^{-s} \eta^p u^p \mathrm{d}x
  &\geq (1-\delta-\mu/\bar{\mu})\int_{\R^N} |\nabla (\eta u)|^p \mathrm{d}x - C_\delta \int_{\R^N} |\nabla\eta|^p u^p \mathrm{d}x \\
  &\geq C_1 \|\nabla(\eta u)\|^p_{L^{p}(\R^N)} - C_\delta \cdot (4/R)^p \cdot \|u\|^p_{L^p(B_R(0)\setminus B_{R/2}(0))}\\
  &\geq C_1 \|\nabla(\eta u)\|^p_{L^{p}(\R^N)} - C_2 \|u\|^p_{L^{p^\star}(B_R(0)\setminus B_{R/2}(0))},
\end{aligned}
\end{equation*}
where $C_i=C_i(N,p,\mu)>0$, $i=1,2$. On the other hand, since $V(x)\in L^\frac{N}{p-s}(\R^N)$, then there exists a sufficiently small constant $\rho_2\in(0,1)$ such that
$$\|V\|_{L^\frac{N}{p-s} (B_{\rho_2}(0))}+ \|V\|_{L^\frac{N}{p-s} (\R^N\setminus B_{1/{\rho_2}}(0))} \leq \frac{C_1}{2C^p_H},$$
which together with H\"{o}lder's inequality and Remark \ref{re-HDSI} (with $\tau=Np/(N-p+s)$, $\gamma=-s/p$ and $p=p$) imply that
\begin{equation*}
\begin{aligned}
\int_{\R^N} V(x) |x|^{-s} \eta^p u^p \mathrm{d}x
&\leq  \|V\|_{L^\frac{N}{p-s} (\R^N \cap supp(\eta))} \cdot \||x|^{-s}\eta^p u^p\|_{L^\frac{N}{N-p+s} (\R^N)} \\
&\leq   C^p_H \|V\|_{L^\frac{N}{p-s} (\R^N \cap supp(\eta))} \cdot \|\nabla(\eta u)\|^p_{L^{p}(\R^N)}\\
&\leq \frac{C_1}{2} \|\nabla(\eta u)\|^p_{L^{p}(\R^N)}, \,\,\,\,\,\,\,\,\,\,\,\,\,\,\,\,\,\,\,\,\,\, \forall R \leq \rho_2 < 1.
\end{aligned}
\end{equation*}
As a consequence, we obtain that
$$ \|u\|^p_{L^{p^\star}(B_{R/2}(0))} \leq \|\eta u\|^p_{L^{p^\star}(\R^N)}\leq C_S^p \|\nabla(\eta u)\|^p_{L^{p}(\R^N)} \leq C \|u\|^p_{L^{p^\star}(B_R(0)\setminus B_{R/2}(0))},\,\,\,\,\,\,\, \forall \,\, R \leq \rho_2 < 1,$$
i.e.,
$$ \|u\|_{L^{p^\star}(B_{R/2}(0))} \leq C \|u\|_{L^{p^\star}(B_R(0)\setminus B_{R/2}(0))}, \,\,\,\,\,\,\, \forall \,\, R \leq \rho_2 < 1,$$
where $C=C(N,p,s,\mu)>0$.

\smallskip

Define $\Psi(R):=\| u \|_{L^{p^{\star}}(B_R(0))}$ for $0<R \leq \rho_2 < 1$, then it follows that
\begin{equation}\label{050306}
\Psi(R/2) \leq \theta \Psi(R), \,\,\,\ \forall\ R \leq \rho_2 < 1,
\end{equation}
where $\theta:=\left(\frac{C^{p^{\star}}}{C^{p^\star}+1}\right)^{\frac{1}{p^{\star}}}\in(0,1)$ depending only on $N$, $p$, $s$ and $\mu$. For any $0<|x|\leq \rho_2<1$, there is a $k\in \mathbb{N}^+$ such that
$$ 2^{-k}\rho_2 \leq |x| \leq 2^{-k+1}\rho_2,$$
which combined with the definition of $\Psi(R)$ and \eqref{050306} yields that
$$ \Psi(|x|) \leq \Psi(2^{-k+1}\rho_2)\leq \theta^{k-1} \Psi(\rho_2) \leq \theta^{\log_2\rho_2 - \log_2|x|} \theta^{-1} \Psi(\rho_2),$$
in which we have also used the fact that $\log_2 |x| \geq \log_2 (2^{-k}\rho_2)=-k+\log_2 \rho_2$ and $\theta\in(0,1)$.
Since
$$ \theta^{-\log_2|x|}=2^{\log_2\theta \cdot (-\log_2|x|)}= |x|^{-\log_2\theta},$$
and
$$ \tau_1:= -\log_2\theta>0,$$
we obtain
$$ \Psi(|x|)\leq C|x|^{\tau_1}, \,\,\,\,\,\,\,\, \forall \,\, |x|\leq\rho_2<1.$$
Thus, we have shown that there exists a $\tau_1=\tau_1(N,p,s,\mu)>0$ such that
$$\|u\|_{L^{p^\star}(B_R (0))} \leq C R^{\tau_1}, \,\,\,\quad \forall \,\, R \leq \rho_2,$$
where $C=C(N,p,s,\mu,\rho_2,\|u\|_{L^{p^\star}(\R^N)})>0$. This finishes our proof of Lemma \ref{lm:l-b-3}.
\end{proof}

\subsection{Proofs of Theorems \ref{th2.1.1} and \ref{th2.1++}}\label{sc3.2}

\begin{proof}[The proof of Theorem \ref{th2.1.1}]  Theorem \ref{th2.1.1} can be deduced directly from Lemmas \ref{lm:l-b-r}, \ref{lm:l-b-2} and \ref{lm:l-b-3}, and Corollary \ref{lm:l-b-1-cor}.
\end{proof}

Next, we will provide better preliminary decay estimates near the origin and the infinity for solutions to the generalized equation \eqref{gen-eq}, i.e., the proof of Theorem \ref{th2.1++}. Although the better preliminary decay estimates are not sharp, but they will play a key role in verifying the conditions on $u$ and $V(x)$ in Theorem \ref{th2.1}.

\begin{proof}[The proof of Theorem \ref{th2.1++}]
For any $|y|\leq \frac{\rho}{16}$ or $|y|\geq \frac{16}{\rho}$, let $r:=\min\left\{1,\frac{|y|}{4}\right\}$ and $R:=\frac{|y|}{4}$. In the case $s=0$, from the assumption \eqref{est-bdd-1+}, we know that, there exists some $l\in(0,p)$ such that $q=\frac{N}{p-l}\in (\frac{N}{p}, +\infty)$ and
$$\|r^{p-\frac{N}{q}} V(x)\|_{L^{q}(B_{2r}(y))}=r^l \|V(x)\|_{L^{\frac{N}{p-l}}(B_{2r}(y))}\leq \tilde{C}, \,\,\,\quad \forall \,\, |y|\leq \frac{\rho}{16} \,\, \mbox{or} \,\, |y|\geq \frac{16}{\rho},$$
where $\tilde{C}$ is independent of $y$. If $s=0$ and $|y|\geq4$, then $r=1$, Lemma \ref{lm:l-b-r} implies that
\begin{equation}\label{est-6.6}
  u(y) \leq \max_{B_1(y)} u(x) \leq \| u \|_{L^{\bar{p}}(B_{2}(y))},\,\,\,\quad \forall \,\, |y|\geq \frac{16}{\rho}, \quad \forall \,\, p^\star\leq \bar{p}<+\infty,
\end{equation}
where $C=C(N, p, s, \mu, q, \tilde{C})>0$ is independent of $y$; if $s=0$ and $|y|\leq4$, then $r=R$, it follows from Lemma \ref{lm:l-b-r} with $\bar{p}=p^\star$ that
\begin{equation}\label{est-6.6'}
  u(y) \leq \max_{B_R(y)} u(x) \leq \frac{C}{R^\frac{N-p}{p}} \| u \|_{L^{p^{\star}}(B_{2R}(y))},\,\,\,\quad \forall \,\, |y|\leq \frac{\rho}{16},
\end{equation}
where $C=C(N, p, s, \mu, q, \tilde{C})>0$ is independent of $y$; if $0<s<p$, from Corollary \ref{lm:l-b-1-cor}, we deduce that
\begin{equation}\label{est-6.6''}
  u(y) \leq \max_{B_R(y)} u(x) \leq \frac{C}{R^\frac{N-p}{p}} \| u \|_{L^{p^\star}(B_{2R}(y))},\,\,\,\quad \forall \,\, |y|\leq \frac{\rho}{16} \,\, \mbox{or} \,\, |y|\geq \frac{16}{\rho},
\end{equation}
where $C=C(N, p, s, \mu,\|V\|_{L^{\frac{N}{p-s}}}(\mathbb{R}^{N}))>0$ is independent of $y$.

\smallskip

On one hand, from Lemma \ref{lm:l-b-2}, we derive that
\begin{equation}\label{loc-est-4'}
  \|u\|_{L^{\bar{p}}(\R^N\setminus B_{2R}(0))} \leq \frac{C}{R^{\frac{N-p}{p}-\frac{N}{\bar{p}}}} \|u\|_{L^{p^\star}(\R^N\setminus B_{R}(0))}, \,\,\,\quad \forall \,\, R\geq \frac{1}{\rho},  \quad \forall \,\, p^\star\leq \bar{p}<+\infty,
\end{equation}
where $C=C(N,p,s,\mu)>0$. On the other hand, by Lemma \ref{lm:l-b-3}, we deduce that, there exists a positive constant $C=C(N,p,s,\mu,\rho,\|u\|_{L^{p^\star}(\R^N)})>0$ such that
\begin{equation}\label{loc-est6.6}
  \|u\|_{L^{p^\star}(B_{6R} (0))} \leq C R^{\tau_1}, \,\,\,\quad \forall \,\, 6R \leq \rho,
\end{equation}
and
\begin{equation}\label{loc-est6.7}
  \|u\|_{L^{p^\star}(\R^N\setminus B_{2R}(0))} \leq C R^{-\tau_2}, \,\,\,\quad \forall \,\, 2R\geq \frac{1}{\rho},
\end{equation}
where $\tau_i =\tau_i(N,p,s,\mu)>0$, $i=1,2$. Finally, recalling that $R=|y|/4$, by substituting \eqref{loc-est-4'} and \eqref{loc-est6.7} into \eqref{est-6.6} and taking $\bar{p}$ large enough such that $\frac{N}{\bar{p}}\leq\frac{\tau_{2}}{2}$ if $s=0$ and $|y|\geq4$, and by substituting \eqref{loc-est6.7} into \eqref{est-6.6''} if $0<s<p$, we get
$$ u(y) \leq C |y|^{-\frac{N-p}{p}-\frac{\tau_2}{2}}, \,\,\,\,\,\,\,\, \forall \ |y|\geq \frac{16}{\rho};$$
by substituting \eqref{loc-est6.6} into \eqref{est-6.6'} if $s=0$ and $|y|\leq4$, and by substituting \eqref{loc-est6.6} into \eqref{est-6.6''} if $0<s<p$, we get
$$ u(y) \leq C |y|^{-\frac{N-p}{p}+\tau_1}, \,\,\,\,\,\,\,\, \forall \ |y|\leq \frac{\rho}{16}.$$
This concludes our proof of Theorem \ref{th2.1++}.
\end{proof}

\subsection{The proof of Theorem \ref{th2.1}}\label{sc3.3}

Now, we are ready to prove the sharp asymptotic estimates for any positive weak solution $u$ to the generalized equation \eqref{gen-eq} and $|\nabla u|$ in Theorem \ref{th2.1}.

\begin{proof}[The proof of Theorem \ref{th2.1}]

The proof of the sharp asymptotic estimates in Theorem \ref{th2.1} will be divided into the following three steps.

\medskip

\noindent{\bf Step 1.} We will show that there exist some constants $c_0, C_0, R_0, R_1 > 0$ such that, for any positive $D^{1,p}(\R^{N})$-weak solution $u$ to \eqref{gen-eq} there hold
\begin{equation}\label{06191}
   c_0 |x|^{-\gamma_1} \leq u(x) \leq C_0 |x|^{-\gamma_1} \qquad \mbox{in}\,\,\,  |x| < R_0,
\end{equation}
and
\begin{equation}\label{06192}
 c_0 |x|^{-\gamma_2} \leq u(x) \leq C_0 |x|^{-\gamma_2} \qquad \mbox{in}\,\,\,  |x| > R_1,
\end{equation}
where $0<R_0<1<R_1$ are constants depending on $N,\mu,p,s,V$ and the solution $u$.

\medskip

We will show \eqref{06191} and \eqref{06192} by applying the following four key Lemmas \ref{X-estimate1}-\ref{X-estimate4}.
\begin{lem}[Theorem 1.3 in \cite{XCL}]\label{X-estimate1}
Let $\Omega$ be a bounded domain containing the origin and function $f(x)\in L^\frac{N}{p}(\Omega)$ satisfying $f(x)\leq C_{f,1}|x|^{-\beta_1}$ in $\Omega$ for some positive constants $C_{f,1}$ and $\beta_1<p$. If $u\in D^{1,p}(\Omega)$ is a weak sub-solution to
$$-\Delta_p u - \mu\frac{1}{|x|^p}u^{p-1}=f(x) u^{p-1}  \,\,\,\,\,\,\, \mbox{in}\,\, \Omega$$
with $1<p<N$ and $0\leq\mu< \bar{\mu}$, then there exists a positive constant $C=C(N,p,\mu,\beta_1,C_{f,1})>0$ such that
$$ u(x)\leq C M |x|^{-\gamma_1} \,\,\,\,\,\,\,\,\mbox{in}\,\,\, B_{R_0}(0)\subset \Omega,$$
where $M = \sup\limits_{\partial B_{R_0}(0)} u^+ $ and $R_0 > 0$ is a constant depending on $N, p, \mu, C_{f,1}, \beta_1$.
\end{lem}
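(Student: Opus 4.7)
The plan is to prove the sharp upper bound $u(x) \leq C M |x|^{-\gamma_1}$ by constructing an explicit radial super-solution and applying a weak comparison principle for the Hardy-modified $p$-Laplace operator. The role of the exponent $\gamma_1$ is algebraic: a direct calculation on $v(x) = |x|^{-\gamma}$ yields
\begin{equation*}
-\Delta_p(r^{-\gamma}) - \mu r^{-p}(r^{-\gamma})^{p-1} = g(\gamma)\, r^{-p-\gamma(p-1)}, \qquad g(\gamma) := \gamma^{p-1}\bigl[(N-p)-(p-1)\gamma\bigr]-\mu,
\end{equation*}
so $g(\gamma_1)=0$ makes $|x|^{-\gamma_1}$ an exact solution of the unperturbed Hardy equation, and a short computation gives
\begin{equation*}
g'(\gamma_1) = (p-1)\gamma_1^{p-2}\bigl[(N-p)-p\gamma_1\bigr] > 0,
\end{equation*}
strictly positive because $\gamma_1 < (N-p)/p$. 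This spectral-gap-type positivity is the key sign the argument exploits.

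The first step is to construct the single-power comparison function $w_\epsilon(x) := KM|x|^{-(\gamma_1+\epsilon)}$ for a small parameter $\epsilon > 0$. By the positivity of $g'(\gamma_1)$, $g(\gamma_1 + \epsilon) > 0$, and a direct computation yields $-\Delta_p w_\epsilon - \mu |x|^{-p} w_\epsilon^{p-1} = (KM)^{p-1} g(\gamma_1+\epsilon)\, |x|^{-p-(\gamma_1+\epsilon)(p-1)}$, while the source satisfies $f(x) w_\epsilon^{p-1} \leq C_{f,1} (KM)^{p-1}\, |x|^{-\beta_1 - (\gamma_1+\epsilon)(p-1)}$. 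Since $\beta_1 < p$, the super-solution condition reduces to $g(\gamma_1+\epsilon) \geq C_{f,1}\, R_0^{\,p-\beta_1}$, which is achievable on $B_{R_0}$ by choosing $R_0$ small (depending on $N, p, \mu, \beta_1, C_{f,1}$) and $\epsilon$ suitably. The boundary condition $w_\epsilon \geq M \geq u$ on $\partial B_{R_0}$ is secured by taking $K \geq R_0^{\gamma_1+\epsilon}$. Applying the weak comparison principle (Lemma \ref{th3.1}, adapted to the singular Hardy potential by a cut-off argument that removes a shrinking neighborhood of the origin and passes to the limit using $u, w_\epsilon \in D^{1,p}$ and Hardy's inequality) then yields $u(x) \leq KM\, |x|^{-(\gamma_1+\epsilon)}$ on $B_{R_0}$, giving the decay rate $\gamma_1+\epsilon$ rather than the sharp $\gamma_1$.

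The second step removes the $\epsilon$-loss by performing the conjugation $u(x) = |x|^{-\gamma_1} v(x)$ and iterating. The algebraic identity $g(\gamma_1) = 0$ is precisely what absorbs the singular Hardy potential into the conjugation, turning the equation for $v$ into a weighted $p$-Laplace inequality with no explicit Hardy term and a source decaying like $|x|^{-\beta_1 + \gamma_1 p}$; the preliminary bound $v(x) \leq C M |x|^{-\epsilon}$ coming from the first step then suffices to launch a weighted De Giorgi--Moser--Nash iteration for $v$, in which the Caffarelli--Kohn--Nirenberg inequality (Theorem \ref{HDSI}) plays the role of Sobolev embedding. The outcome is $\|v\|_{L^\infty(B_{R_0/2})} \leq C M$, equivalent to the sharp pointwise estimate $u(x) \leq C M |x|^{-\gamma_1}$. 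The main obstacle I anticipate is the precise choice of admissibility exponents in the Caffarelli--Kohn--Nirenberg inequality so that the iteration closes; a secondary subtlety is the singular case $1 < p < 2$, where the quasilinear nonlinearity $|z|^{p-2}z$ is only H\"older continuous and the conjugated equation for $v$ must be analyzed via a Picone-type identity such as the one supplied by Lemma \ref{lm.w-g-i} to produce the coercive estimate that drives the iteration.
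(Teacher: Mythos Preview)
The paper does not prove this lemma: it is quoted as Theorem 1.3 of Xiang \cite{XCL} and applied as a black box in Step 1 of the proof of Theorem \ref{th2.1}. The paper's only description of Xiang's method is that it combines ``the weak comparison principle, De Giorgi--Moser--Nash iteration technique and the Caccioppoli inequality,'' which is the same toolkit you invoke, so at the level of overall architecture your proposal is aligned with the cited source.

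Your Step 1 is correct and standard. Your Step 2, however, has a real gap as stated. The assertion that the substitution $u = |x|^{-\gamma_1}v$ ``absorbs the singular Hardy potential into the conjugation, turning the equation for $v$ into a weighted $p$-Laplace inequality with no explicit Hardy term'' is valid only for $p = 2$, by linearity. For $p \neq 2$ the operator $-\Delta_p(|x|^{-\gamma_1}v)$ does not split into a weighted $p$-Laplacian of $v$ plus lower-order pieces; the nonlinearity $|z|^{p-2}z$ couples the two factors inextricably, and the identity $g(\gamma_1) = 0$ produces no pointwise cancellation at the PDE level. What actually yields the sharp exponent is a weighted Moser iteration performed directly on the weak formulation for $u$: one tests with $\varphi = \eta^p u^{1+p(t-1)}|x|^{a}$ for an exponent $a$ adapted to $\gamma_1$, and the Hardy term is controlled by the strict inequality $\mu < \bar\mu$ (the spectral gap in Hardy's inequality) together with the weighted Caffarelli--Kohn--Nirenberg embedding of Theorem \ref{HDSI}, not by an algebraic cancellation coming from a change of unknown. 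Your proposal has the correct ingredients but misidentifies the mechanism through which $\gamma_1$ enters the iteration. A minor additional point: Lemma \ref{th3.1} is a \emph{strong} comparison principle on bounded domains, whereas Step 1 requires a \emph{weak} comparison principle for the Hardy-perturbed $p$-Laplace operator with singularity at the origin; this is a separate result established in \cite{XCL} and is not a direct consequence of Lemma \ref{th3.1}.
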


\begin{lem}[Theorem 1.4 in \cite{XCL}]\label{X-estimate2}
Let $\Omega$ be a bounded domain containing the origin and $0\leq f(x)\in L^\frac{N}{p}(\Omega)$. If $u\in D^{1,p}(\Omega)$ is a nonnegative super-solution to
$$-\Delta_p u -\mu\frac{1}{|x|^p}u^{p-1} =f(x) u^{p-1}  \,\,\,\,\,\,\, \mbox{in}\,\, \Omega$$
with $1<p<N$ and $0\leq\mu< \bar{\mu}$, then
$$ u(x)\geq C m |x|^{-\gamma_1},\,\,\,\,\,\,\ \forall \,\ x\in\Omega$$
where $m=\inf\limits_{\Omega} u$ and $C=\inf\limits_{\partial\Omega} |x|^{\gamma_1}$.
\end{lem}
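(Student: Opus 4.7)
The strategy is to compare the super-solution $u$ pointwise with the explicit radial solution $v(x):=Cm|x|^{-\gamma_1}$ of the purely Hardy-type equation $-\Delta_p w - \mu|x|^{-p}w^{p-1}=0$. A direct computation shows that for any $\gamma\in\mathbb{R}$ and $x\neq 0$,
\begin{equation*}
   -\Delta_p\bigl(|x|^{-\gamma}\bigr) \;=\; \gamma^{p-2}\bigl[(N-p)\gamma-(p-1)\gamma^2\bigr]\,|x|^{-\gamma(p-1)-p},
\end{equation*}
so that
\begin{equation*}
   -\Delta_p\bigl(|x|^{-\gamma}\bigr)-\mu|x|^{-p}\bigl(|x|^{-\gamma}\bigr)^{p-1}
   \;=\;-\bigl\{\gamma^{p-2}[(p-1)\gamma^2-(N-p)\gamma]+\mu\bigr\}\,|x|^{-\gamma(p-1)-p},
\end{equation*}
which vanishes precisely at $\gamma=\gamma_1$ and $\gamma=\gamma_2$. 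Hence $v$ is an exact weak solution of the homogeneous Hardy equation in $\mathbb{R}^N\setminus\{0\}$. Since $f,u\geq 0$, the hypothesis gives $-\Delta_p u-\mu|x|^{-p}u^{p-1}\geq f\,u^{p-1}\geq 0$ weakly, i.e.\ $u$ is a weak super-solution of the same equation.

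Next, I verify the boundary comparison $v\leq u$ on $\partial\Omega$. By the choice $C=\inf_{\partial\Omega}|x|^{\gamma_1}$, for $x\in\partial\Omega$ we have $|x|^{\gamma_1}\geq C$, hence $v(x)=Cm|x|^{-\gamma_1}\leq m\leq u(x)$, using $u\geq m$ throughout $\Omega$. With the boundary comparison and the super/sub-solution structure in hand, the aim is to invoke the weak comparison principle on $\Omega\setminus\{0\}$ to conclude $u\geq v$ everywhere.

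The main obstacle is that both $v$ and (possibly) $u$ are singular at the origin, so the weak comparison principle cannot be applied on $\Omega$ directly. I would proceed by truncation: work on $\Omega_\varepsilon:=\Omega\setminus\overline{B_\varepsilon(0)}$ and test the inequality $-\Delta_p u-\mu|x|^{-p}u^{p-1}\geq -\Delta_p v-\mu|x|^{-p}v^{p-1}=0$ with the non-negative test function $\varphi_\varepsilon:=\eta_\varepsilon\cdot(v-u)^{+}$, where $\eta_\varepsilon$ is a smooth cut-off vanishing in $B_\varepsilon(0)$ and equal to $1$ outside $B_{2\varepsilon}(0)$. Using the standard algebraic inequality $(|a|^{p-2}a-|b|^{p-2}b)\cdot(a-b)\geq c_p|a-b|^p$ (for $p\geq 2$; or the corresponding quantitative form for $1<p<2$, cf.\ Lemma~\ref{lm.w-g-i}), together with the Hardy inequality to absorb the term $\mu|x|^{-p}(u^{p-1}-v^{p-1})(v-u)^{+}$, yields
\begin{equation*}
   \int_{\Omega_{2\varepsilon}\cap\{v>u\}}\bigl(|\nabla v|^{p-2}\nabla v-|\nabla u|^{p-2}\nabla u\bigr)\cdot\nabla(v-u)^{+}\,dx
   \;\leq\; R_\varepsilon,
\end{equation*}
where $R_\varepsilon$ collects the cut-off error terms supported in $B_{2\varepsilon}(0)\setminus B_\varepsilon(0)$. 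The key estimate is then $R_\varepsilon\to 0$ as $\varepsilon\to 0^{+}$, which follows from the explicit control $|\nabla v|\leq C|x|^{-\gamma_1-1}$, $|v|\leq C|x|^{-\gamma_1}$ together with the integrability of $|\nabla u|^p$ on $\Omega$, Hölder's inequality, and the measure of $B_{2\varepsilon}\setminus B_\varepsilon$ going to zero sufficiently fast (here one uses $\gamma_1<(N-p)/p$, which is automatic for $0\le\mu<\bar\mu$).

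The hardest step, and where most of the technical work lies, is precisely this cut-off error analysis: one must balance the blow-up rate of $v$ and its gradient at the origin against the $D^{1,p}$-integrability of $u$, so that passing to the limit $\varepsilon\to 0^{+}$ forces $(v-u)^{+}\equiv 0$ a.e.\ in $\Omega$. An alternative, perhaps cleaner, route is to perform the substitution $u=|x|^{-\gamma_1}\phi$, which transforms the Hardy-type equation into a degenerate $p$-Laplace equation of the form $-\mathrm{div}\bigl(|x|^{-\gamma_1(p-1)}\,\mathcal{A}(x,\nabla\phi)\bigr)=|x|^{-\gamma_1(p-1)}\mathcal{B}(x,\phi)$ without singular Hardy coefficient, for which the classical weak Harnack inequality for super-solutions (as in \cite{JSLB,Trud}) yields a uniform lower bound $\phi\geq C'>0$ on compact subsets; unraveling the substitution gives exactly $u(x)\geq C'|x|^{-\gamma_1}$, and tracking constants through the boundary condition $u\geq m$ on $\partial\Omega$ recovers the sharp prefactor $Cm$ stated in the lemma.
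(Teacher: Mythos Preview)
The paper does not prove this lemma; it is quoted directly from Xiang \cite{XCL} and used as a black box. Your overall strategy---compare $u$ against the explicit Hardy solution $v(x)=Cm|x|^{-\gamma_1}$, check $v\le u$ on $\partial\Omega$, and invoke a weak comparison principle on $\Omega\setminus\{0\}$---is indeed the route taken in \cite{XCL}, and your computation of $-\Delta_p|x|^{-\gamma}$ and the boundary check are correct. Note also that because $\gamma_1<(N-p)/p$, one has $v\in D^{1,p}(\Omega)$, so the singularity at the origin is milder than you suggest and the cut-off error $R_\varepsilon\to0$ is not the delicate point.

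The genuine gap is in the absorption of the Hardy cross-term. Testing with $\varphi=(v-u)^+$ and using the monotonicity inequality $(|a|^{p-2}a-|b|^{p-2}b)\cdot(a-b)\ge c_p|a-b|^p$ leaves you with
\[
c_p\int_{\{v>u\}}|\nabla(v-u)|^p\;\le\;\mu\int_{\{v>u\}}|x|^{-p}\bigl(v^{p-1}-u^{p-1}\bigr)(v-u),
\]
and for $p\neq2$ the right-hand side is \emph{not} controlled by the Hardy inequality, which bounds $\int|x|^{-p}|(v-u)^+|^p$, not $\int|x|^{-p}(u+v)^{p-2}|(v-u)^+|^2$. (For $p=2$ your argument does close with constant $\mu/\bar\mu<1$.) The correct device is the Picone-type pair of test functions $\varphi_1=\eta^p u^{1-p}(v^p-u^p)_+$ in the $u$-inequality and $\varphi_2=\eta^p v^{1-p}(v^p-u^p)_+$ in the $v$-equation: the Hardy contributions then both equal $\mu\int\eta^p|x|^{-p}(v^p-u^p)_+$ and cancel \emph{exactly} upon subtraction, while the gradient terms are controlled below by Lemma~\ref{lm.w-g-i} (which is precisely this Picone identity, not a variant of the monotonicity inequality as you describe). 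This is the same mechanism used in \eqref{050401}--\eqref{050402} of the moving-planes argument in Section~\ref{sc4}, and it is essentially how \cite{XCL} proves the comparison principle underlying the lemma. Your alternative substitution $u=|x|^{-\gamma_1}\phi$ is also viable and appears in related literature, but it too requires nontrivial verification that the transformed degenerate operator admits a comparison principle.
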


\begin{lem}[Theorem 1.5 in \cite{XCL}]\label{X-estimate3}
Let $\Omega$ be an exterior domain in $\R^N$ such that $\Omega^c=\R^N\setminus \Omega$ is bounded and
function $f(x) \in L^\frac{N}{p}(\Omega)$
satisfying $f(x) \leq C_{f,2} |x|^{-\beta_2}$ in $\Omega$ for some positive constants $C_{f,2}$ and $\beta_2>p$. If $u\in D^{1,p}(\Omega)$ is a nonnegative sub-solution to
$$
-\Delta_p u -\mu\frac{1}{|x|^p}u^{p-1} =f(x) u^{p-1}  \,\,\,\,\,\,\, \mbox{in}\,\, \Omega
$$
with $1<p<N$ and $0\leq\mu< \bar{\mu}$, then there exists a positive constant $C=C(N,p,\mu,\beta_2,C_{f,2})>0$ such that
$$ u(x)\leq C M |x|^{-\gamma_2}\,\,\,\,\,\,\,\,\mbox{in}\,\,\, \R^N\setminus B_{R_1}(0),$$
where $M = \sup\limits_{\partial B_{R_1}(0)} u^+ $ and $R_1 > 1$ is a constant depending on $N, p, \mu, C_{f,2}, \beta_2$.
\end{lem}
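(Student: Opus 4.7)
The plan is to prove the bound by constructing a radial supersolution that dominates $u$ on $\partial B_{R_1}(0)$ as well as at infinity, and then invoking a weak comparison principle in the exterior domain. First I would record the elementary identity: for any $\gamma>0$ and $r=|x|$,
\begin{equation*}
-\Delta_p\bigl(r^{-\gamma}\bigr)-\frac{\mu}{r^p}\bigl(r^{-\gamma}\bigr)^{p-1}=-g(\gamma)\,r^{-(p-1)\gamma-p},\qquad g(\gamma):=\gamma^{p-1}\bigl[(p-1)\gamma-(N-p)\bigr]+\mu.
\end{equation*}
The polynomial $g$ coincides with the one defining $\gamma_1<\gamma_2$ in the statement, so $r^{-\gamma_2}$ is Hardy-$p$-harmonic in $\R^N\setminus\{0\}$ and is the natural leading profile for the barrier.

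Second, to absorb the zero-order perturbation $f(x)u^{p-1}$, I would work with the two-term ansatz
\begin{equation*}
\psi(x):=A\,r^{-\gamma_2}-B\,r^{-\gamma_2-\sigma},
\end{equation*}
with $A,B>0$, $0<\sigma<\beta_2-p$, and $B$ so small that $\psi>0$ in $\R^N\setminus B_{R_1}(0)$. Expanding $-\Delta_p\psi-\mu\psi^{p-1}/r^p$ in the small parameter $(B/A)\,r^{-\sigma}$ using the formula above, the leading term of order $r^{-(p-1)\gamma_2-p}$ vanishes because $g(\gamma_2)=0$, while the next-order term of order $r^{-(p-1)\gamma_2-p-\sigma}$ has coefficient
\begin{equation*}
(p-1)\,B\,A^{p-2}\,\gamma_2^{p-2}\bigl[p\gamma_2-(N-p)\bigr]\,\sigma+O(\sigma^2),
\end{equation*}
which is strictly positive since $\gamma_2>(N-p)/p$ (as noted in the Remark after Theorem \ref{th2.1}). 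On the same region $f(x)\psi^{p-1}\le C_{f,2}A^{p-1}r^{-\beta_2-(p-1)\gamma_2}$, and since $\beta_2-p-\sigma>0$, choosing $R_1$ large enough (depending on $N,p,\mu,\sigma,\beta_2,C_{f,2}$ and on $A/B$) ensures
\begin{equation*}
-\Delta_p\psi-\frac{\mu}{r^p}\psi^{p-1}\ge f(x)\psi^{p-1}\qquad\text{in }\R^N\setminus B_{R_1}(0),
\end{equation*}
i.e.\ $\psi$ is a genuine supersolution of the inequality $u$ satisfies.

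Third, I would fit the constants: take $A=2MR_1^{\gamma_2}$, which guarantees $\psi\ge M\ge u$ on $\partial B_{R_1}(0)$, and keep $B\ll A$ so that $\psi\le 2A\,r^{-\gamma_2}$ throughout the exterior. Since $u\in D^{1,p}(\R^N)\subset L^{p^\star}(\R^N)$, the exterior Moser iteration of Lemma \ref{lm:l-b-r} yields $u(x)\to 0$ as $|x|\to\infty$, and the same is evident for $\psi$. I would then apply the weak comparison principle on the annulus $B_R(0)\setminus B_{R_1}(0)$ with $(u-\psi)^+\in W_0^{1,p}$ as test function: the $p$-Laplacian contribution is nonnegative by the monotonicity of $\xi\mapsto|\xi|^{p-2}\xi$, and the Hardy contribution is controlled using $\mu<\bar\mu$ together with Hardy's inequality. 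Letting $R\to\infty$ forces $(u-\psi)^+\equiv 0$ in $\R^N\setminus B_{R_1}(0)$, which gives $u(x)\le\psi(x)\le 2A\,|x|^{-\gamma_2}=C\,M\,|x|^{-\gamma_2}$.

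The main obstacle I expect is the comparison step itself, because the Hardy term $\mu u^{p-1}/|x|^p$ has the same homogeneity as the $p$-Laplacian and so is not a lower-order perturbation in the classical sense; the comparison principles in Lemmas \ref{th3.1}--\ref{th3.2} do not apply off-the-shelf. Two routes are available: either invoke an $L^p$-Picone identity of Allegretto--Huang type combined with the strict subcriticality $\mu<\bar\mu$ to cleanly absorb the singular term, or exploit the decay $|x|^{-p}\to 0$ at infinity to treat the Hardy contribution as a genuine lower-order term on $\R^N\setminus B_{R_1}(0)$ after further enlarging $R_1$ so that the effective operator is uniformly coercive on $(u-\psi)^+$. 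Either path requires delicate bookkeeping near the critical exponent $\gamma_2$, and this is where most of the technical work lies.
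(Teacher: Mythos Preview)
The paper does not supply its own proof of this lemma: it is recorded verbatim as ``Theorem 1.5 in \cite{XCL}'' and then invoked as a black box in Step~1 of the proof of Theorem~\ref{th2.1}. There is therefore nothing in the present paper to compare your argument against directly.

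That said, your barrier-plus-comparison scheme is the natural route and is in the spirit of the argument in \cite{XCL}. Two remarks. First, the two-term Taylor expansion of $-\Delta_p\psi$ is workable because $\psi$ is radial and one can compute $-\Delta_p\psi=-r^{1-N}(r^{N-1}|\psi'|^{p-2}\psi')'$ explicitly, but it is a bit fiddly; an alternative that avoids the expansion is to take a single power $r^{-\gamma}$ with $\gamma$ slightly below $\gamma_2$, for which $-g(\gamma)>0$ since $g'(\gamma_2)=(p-1)\gamma_2^{p-2}[p\gamma_2-(N-p)]>0$, obtain the comparison, and then let $\gamma\uparrow\gamma_2$. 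Second, the obstacle you flag---that the Hardy term has the same homogeneity as $-\Delta_p$, so Lemmas~\ref{th3.1}--\ref{th3.2} do not apply off the shelf---is indeed the crux; the direct route is exactly the one you list first: test the difference of the two weak formulations against $(u-\psi)^+$ on $B_R\setminus B_{R_1}$, use monotonicity of $\xi\mapsto|\xi|^{p-2}\xi$ on the left, and absorb the zero-order Hardy contribution on the right via the strict subcriticality $\mu<\bar\mu$ together with Hardy's inequality (this is also how the Hardy term is repeatedly handled elsewhere in the paper, e.g.\ in the proof of Lemma~\ref{lm:l-b-3}). Your second route, treating $\mu|x|^{-p}$ as small for $|x|\ge R_1$, would \emph{not} by itself close the argument, because the barrier decays at the same rate and the resulting error term does not gain a power of $R_1^{-1}$.
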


\begin{lem}[Theorem 1.6 in \cite{XCL}]\label{X-estimate4}
Let $\Omega$ be an exterior domain in $\R^N$ such that $\Omega^c=\R^N\setminus \Omega$ is bounded and
nonnegative function $f(x)\in L^\frac{N}{p}(\Omega)$. If $u\in D^{1,p}(\Omega)$ is a nonnegative weak super-solution to
$$-\Delta_p u -\mu\frac{1}{|x|^p}u^{p-1} =f(x) u^{p-1}  \,\,\,\,\,\,\, \mbox{in}\,\, \Omega$$
with $1<p<N$ and $0\leq\mu< \bar{\mu}$, then
$$ u(x)\geq C m |x|^{-\gamma_2},\,\,\,\,\,\,\ \forall \,\ x\in\Omega$$
where $m=\inf\limits_{\partial\Omega} u$ and $C=\inf\limits_{\partial\Omega} |x|^{\gamma_2}$.
\end{lem}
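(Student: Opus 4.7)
The plan is to use the explicit function $W(x) := C\,m\,|x|^{-\gamma_2}$ as a lower barrier and to compare $u$ against $W$ by a weak comparison principle adapted to the Hardy-perturbed $p$-Laplace operator on the exterior domain $\Omega$. First I would verify the three prerequisites. By the defining relation $(p-1)\gamma_2^p - (N-p)\gamma_2^{p-1} + \mu = 0$ for $\gamma_2$, a direct calculation yields $-\Delta_p W - \mu W^{p-1}/|x|^p \equiv 0$ in $\R^N\setminus\{0\}$, so $W$ is a weak solution of the homogeneous Hardy--$p$-Laplace equation. Since $f\geq 0$ and $u\geq 0$, the hypothesis on $u$ reduces to the weak super-solution inequality $-\Delta_p u - \mu u^{p-1}/|x|^p \geq 0$ on $\Omega$. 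The choice $C = \inf_{\partial\Omega}|x|^{\gamma_2}$ immediately gives $W(x) \leq m \leq u(x)$ on $\partial\Omega$, while $u\in D^{1,p}(\Omega)$ and $\gamma_2>0$ ensure that both $u$ and $W$ decay at infinity.

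With these preparations I would run the comparison on truncated annular domains. Fix a large $R$, let $\eta_R \in C_c^\infty(B_{2R}(0))$ be a standard cut-off with $\eta_R \equiv 1$ on $B_R(0)$, and take the admissible test function $\varphi_R := (W-u)^+ \eta_R \in D^{1,p}_0(\Omega \cap B_{2R}(0))$, which vanishes on $\partial\Omega$ by the boundary comparison above and on $\partial B_{2R}(0)$ by construction. Substituting $\varphi_R$ into the weak super-solution inequality for $u$ and the weak equality for $W$ and subtracting, restricted to the contact set $\{W>u\}$, I would obtain
\[
  \int_{\{W>u\}} \bigl(|\nabla u|^{p-2}\nabla u - |\nabla W|^{p-2}\nabla W\bigr)\cdot (\nabla u-\nabla W)\,\eta_R \, \mathrm{d}x \le \mu \int_{\{W>u\}} \frac{(W^{p-1}-u^{p-1})(W-u)\eta_R}{|x|^p}\, \mathrm{d}x + \mathcal{E}_R,
\]
where $\mathcal{E}_R$ collects cut-off error terms involving $\nabla\eta_R$. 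The left-hand side is bounded below by the classical strong monotonicity inequalities for the $p$-Laplacian (quadratic for $p\geq 2$; weighted quadratic for $1<p<2$). To absorb the Hardy term on the right into the gradient term on the left, I would apply the sharp Hardy inequality $\bar\mu \int v^p/|x|^p \le \int |\nabla v|^p$ to $v = (W-u)^+\eta_R$, together with a pointwise algebraic inequality relating $(W^{p-1}-u^{p-1})(W-u)$ to an appropriate power of $(W-u)^+$ in the range of $p$ under consideration. Because $\mu<\bar\mu$, this produces the essential multiplicative slack $1-\mu/\bar\mu>0$ and collapses the estimate to $(1-\mu/\bar\mu)\int\bigl|\nabla\bigl((W-u)^+\eta_R\bigr)\bigr|^p \,\mathrm{d}x \le \mathcal{E}_R$. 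Letting $R\to\infty$ and using the decay of $u$ and $W$ at infinity to force $\mathcal{E}_R\to 0$, I conclude $(W-u)^+ \equiv 0$ in $\Omega$, i.e.\ $u(x) \geq C\,m\,|x|^{-\gamma_2}$ throughout $\Omega$.

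The main obstacle is the Hardy-absorption step: since the Hardy potential $\mu/|x|^p$ is scale-critical (same homogeneity as $|\nabla\,\cdot\,|^p$), no elementary monotonicity argument suffices, and one must explicitly exploit the strict subcriticality $\mu<\bar\mu$ through Hardy's inequality (or an equivalent Picone-type identity) to absorb it into the gradient term. The required pointwise algebraic inequality and the strong monotonicity formula for the $p$-Laplacian take different forms in the ranges $p\geq 2$ and $1<p<2$, so the two regimes must be handled separately; some additional care is also needed to justify the admissibility of $\varphi_R$ as a test function, for instance by a small regularization $u\mapsto u+\epsilon$ and passing to the limit $\epsilon\to 0$ to avoid the set $\{u=0\}$.
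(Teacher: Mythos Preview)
The paper does not prove this lemma; it is quoted verbatim from Xiang \cite{XCL} and used as a black box. So there is no ``paper's own proof'' to compare against, only the original source.

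Your barrier/comparison strategy is the right architecture, but the Hardy-absorption step as you describe it has a genuine gap when $p>2$. With the test function $\varphi_R=(W-u)^+\eta_R$, the Hardy term you must control is $\mu\int_{\{W>u\}} (W^{p-1}-u^{p-1})(W-u)\eta_R/|x|^p$, and there is no pointwise inequality bounding $(W^{p-1}-u^{p-1})(W-u)$ by a constant times $(W-u)^p$ in the direction you need: for $p>2$ one only has $(W^{p-1}-u^{p-1})(W-u)\le (p-1)W^{p-2}(W-u)^2$, which introduces the extra weight $W^{p-2}=(Cm)^{p-2}|x|^{-\gamma_2(p-2)}$ and destroys the match with the sharp Hardy inequality applied to $(W-u)^+$. (For $1<p\le 2$ subadditivity of $t\mapsto t^{p-1}$ does give $(W^{p-1}-u^{p-1})(W-u)\le (W-u)^p$, so your scheme can be pushed through there.) The standard remedy---and this is exactly the device used in \cite{XCL} and also in Section~\ref{sc4} of the present paper (see \eqref{050401} and Lemma~\ref{lm.w-g-i})---is to replace $(W-u)^+$ by the Picone-type pair $\varphi_1=u^{1-p}(W^p-u^p)_+$ tested against the super-solution inequality for $u$ and $\varphi_2=W^{1-p}(W^p-u^p)_+$ tested against the equation for $W$. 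Then the Hardy contributions become $\mu\int (W^p-u^p)_+/|x|^p$ in \emph{both} lines and cancel identically upon subtraction, so no absorption via Hardy's inequality is needed at all; the gradient side is then handled by the Picone/log-gradient inequality of Lemma~\ref{lm.w-g-i}. You already flag the Picone identity as an alternative in your last paragraph---that alternative is in fact the main route, not a variant.
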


By the better preliminary estimates in Theorem \ref{th2.1++} and assumptions on $V(x)$ in Theorem \ref{th2.1}, we have $V(x)|x|^{-s} \leq C_{V,1} |x|^{-\beta_1}$ for some $C_{V,1}>0$, $\beta_1<p$ and all $|x|$ small, and $V(x)|x|^{-s} \leq C_{V,2} |x|^{-\beta_2}$ for some $C_{V,2}>0$, $\beta_2>p$ and all $|x|$ large. Combining these with $V(x)\in L^\frac{N}{p-s}(\R^N)$ and using H\"{o}lder's inequality, we can deduce that $V(x)|x|^{-s}\in L^\frac{N}{p}(\R^N)$. Thus we get the desired sharp upper bound in \eqref{06191} from Lemma \ref{X-estimate1} and the sharp lower bound in \eqref{06191} from Lemma \ref{X-estimate2}, respectively. In the same way, from Lemmas \ref{X-estimate3} and \ref{X-estimate4}, we obtain \eqref{06192}. This proves the sharp asymptotic estimates \eqref{eq0806} and \eqref{eq0806++} for $u$ in Theorem \ref{th2.1}.

\medskip

Next, in Steps 2 and 3, we will prove the sharp asymptotic estimates \eqref{eq0806+++} and \eqref{eq0806+} for $|\nabla u|$ in Theorem \ref{th2.1} by using the arguments in \cite{DLL,EMSV,BS16,VJ16}.

\medskip

\noindent{\bf Step 2.}
We are going to show that, for some constant $C_0>0$,
\begin{equation}\label{06191+}
|\nabla u(x)| \leq C_0 |x|^{-\gamma_1-1} \qquad \mbox{for}\,\,\, |x| < R_0 < 1,
\end{equation}
and
\begin{equation}\label{06192+}
|\nabla u(x)| \leq C_0 |x|^{-\gamma_2-1} \qquad \mbox{for}\,\,\, |x| > R_1>1.
\end{equation}

\smallskip

First, we prove \eqref{06192+}. For any $R>0$ and $y\in \R^N$, we define
$$ u_R (y): =R^{\gamma_2}u(Ry).$$
From the generalized equation \eqref{gen-eq}, we deduce that
\begin{align*}
-\Delta_p u_R(y) = - R^{(\gamma_2+1)(p-1)+1} \Delta_p u(x)
                 =\mu \frac{1}{|y|^p} u^{p-1}_R(y) + R^p V(Ry)|Ry|^{-s}u^{p-1}_R(y) \,\,\,\,\,\,\,\mbox{in}\,\,\R^N,
\end{align*}
where $x=Ry$. From the fact that $V(x) \leq C_{V,2} |x|^{-\beta_2+s}$ with $\beta_2>p$ when $|x|\geq R_{1}$ with the same $R_{1}>1$ large enough as in \eqref{06192}, it follows that
\begin{equation}\label{eq2902}
\begin{aligned}
-\Delta_p u_R(y)
&=\mu \frac{1}{|y|^p} u^{p-1}_R(y) + R^p V(Ry)|Ry|^{-s} u^{p-1}_R(y) \\ & \leq \mu \frac{1}{|y|^p} u^{p-1}_R(y)+\frac{C_{V,2}}{R^{\beta_2-p} |y|^{\beta_2}} u^{p-1}_R(y) \, \,\,\,\,\,\,\,\mbox{for}\,\,R>R_1\,\,\,\mbox{and}\,\,|y|>1,
\end{aligned}
\end{equation}
which implies that
\begin{align}\label{eq2903}
-\Delta_p u_R(y)
\leq (C_{V,2}+\mu) u^{p-1}_R(y) \,\,\,\,\,\,\,\mbox{for}\,\,R>R_1\,\,\,\mbox{and}\,\, y\in\R^N\setminus \overline{B_1(0)}.
\end{align}

\smallskip

On the other hand, according to \eqref{eq0806++}, we have, for any $R>R_1$,
\begin{align}\label{eq2905}
u_R (y) =R^{\gamma_2} u(Ry)\leq C_0 R^{\gamma_2} |Ry|^{-\gamma_2}= C_0 |y|^{-\gamma_2} \leq C_0 \,\,\,\,\quad \mbox{in}\,\,\R^N\setminus \overline{B_1(0)}.
\end{align}
By \eqref{eq2903}, \eqref{eq2905} and the regularity estimates in DiBenedetto \cite{ED83} and Tolksdorf \cite{PT}, we have
\begin{align}\label{eq2906}
\|\nabla u_R\|_{L^\infty (B_{4}(0) \setminus B_{2}(0))}\leq C_1, \,\,\quad\, \forall \,\, R>R_1
\end{align}
for some constant $C_1>0$ depending on $C_{0}$. Finally, for any $x\in\R^N\setminus B_{3R}(0)$, by applying \eqref{eq2906} with $R=\frac{|x|}{3}$ and $y\in B_{4}(0) \setminus B_{2}(0)$, we obtain
\begin{align}\label{eq2907}
|\nabla_x u(x)|= R^{-\gamma_2-1} |\nabla_y u_R(y) | \leq 3^{\gamma_{2}+1}C_1 |x|^{-\gamma_2-1}\,\,\quad\,\,\,\mbox{for}\,\,|x|>3 R_1.
\end{align}
By replacing $R_{1}$ with $3R_{1}$ in \eqref{06192} and still denoting $3R_{1}$ by $R_{1}$, we have arrived at \eqref{06192+}.

\medskip

Letting $u_R (y):=R^{\gamma_1}u(Ry)$, by similar way in deriving \eqref{eq2903} and \eqref{eq2905}, we can derive
\begin{equation*}\label{eq2902+}
\begin{aligned}
-\Delta_p u_R(y) \leq (C_{V,1}+\mu) u^{p-1}_R(y) \,\quad\,\mbox{in}\,\, B_6(0)\setminus B_1(0), \,\,\,\,\,\,\,\forall\,R<R_0<1,
\end{aligned}
\end{equation*}
and
\begin{align*}
u_R (y) =R^{\gamma_1} u(Ry)\leq C_0 R^{\gamma_1} |Ry|^{-\gamma_1}= C_0 |y|^{-\gamma_1} \leq C_0, \, \,\quad\,\,\,\forall\,\,6R<R_0\,\,\,\,\,\mbox{and}\,\,\,\,\, 6>|y|>1.
\end{align*}
Then, similar to \eqref{eq2906} and \eqref{eq2907}, by the regularity estimates in DiBenedetto \cite{ED83} and Tolksdorf \cite{PT}, we can deduce that \eqref{06191+} holds.

\medskip

\noindent{\bf Step 3.}
We will show that, for some constant $c_0>0$,
\begin{equation}\label{a1}
  |\nabla u(x)| \geq \frac{c_0}{|x|^{\gamma_1+1}} \quad \,\,\mbox{for}\,\,|x|<R_0,
\end{equation}
and
\begin{equation}\label{a2}
   |\nabla u(x)| \geq \frac{c_0}{|x|^{\gamma_2+1}} \quad \,\,\mbox{for}\,\,|x|>R_1.
\end{equation}

We will only prove \eqref{a2} by contradiction argument. The lower bound in \eqref{a1} can be proved in entirely similar way, so we omit the details. Suppose that \eqref{a2} does not hold, we assume that there exist sequences of radii $R_n$ and points $x_n\in\R^N$ with $R_n\to\infty$ as $n\to\infty$ and $|x_n|=R_n$, such that
\begin{align}\label{eq2908}
|\nabla u(x_n)|\leq \frac{\theta_n}{R_n^{\gamma_2+1}}
\end{align}
with $\theta_n\to 0$ as $n\to\infty$. For $0<a<A$ fixed, we set
$$ u_{R_n} (x): =R_n^{\gamma_2}   u(R_n  x), \qquad \forall \,\, x\in\overline{B_A \setminus B_a}.$$
It follows from \eqref{06192} and \eqref{06192+} that, for any $n$ large enough such that $|R_n x|\geq R_{n}a>R_1$, we have
\begin{equation}\label{eq2911+}
\begin{aligned}
&\frac{c_0}{|x|^{\gamma_2}}   \leq    u_{R_n} (x)=R_n^{\gamma_2} u(R_n  x) \leq \frac{C_0}{|x|^{\gamma_2}},\\
&|\nabla u_{R_n} (x)|=R_n^{\gamma_2+1}|\nabla u(R_n  x)| \leq \frac{C_0}{|x|^{\gamma_2+1}},
\end{aligned}
\end{equation}
which imply that
\begin{equation}\label{eq2911}
\begin{aligned}
\frac{c_0}{A^{\gamma_2}}   \leq    u_{R_n} (x) \leq \frac{C_0}{a^{\gamma_2}}\,\quad\,\,\,\,\,\,&\mbox{in}\,\,\overline{B_A \setminus B_a},\\
|\nabla u_{R_n} (x)| \leq \frac{C_0}{a^{\gamma_2+1}}\,\quad\,\,\,\,\,\,&\mbox{in}\,\,\overline{B_A \setminus B_a}.
\end{aligned}
\end{equation}
\smallskip

On the other hand, similar to \eqref{eq2902}, we can obtain
\begin{align}\label{eq2913}
0\leq -\Delta_p u_{R_n} (x) - \mu \frac{1}{|x|^p} u^{p-1}_{R_n}(x)\leq\frac{C_{V,2}}{{R_n}^{\beta_2-p} |x|^{\beta_2}} u^{p-1}_{R_n}(x) \leq C u^{p-1}_{R_n}(x) \,\,\,\,\,\,\,\mbox{in}\,\,\overline{B_A \setminus B_a}
\end{align}
for any $n$ large enough such that $|R_n x|\geq R_{n}a>R_1$ and $a^{\beta_2} {R_n}^{\beta_2-p}\geq1$. Therefore, from \eqref{eq2911} and \eqref{eq2913}, by the regularity results in \cite{PT}, we get that, there exists some $0<\alpha<1$ such that
$$ \sup\limits_{n}\|u_{R_n}\|_{C^{1,\alpha} (K)}<+\infty$$
for any compact set $K\subset B_A \setminus B_a$. Without loss of generalities, we may redefine $a$ and $A$ and assume that the above $C^{1,\alpha}$ estimates uniformly hold (w.r.t. $n$) in $\overline{B_A \setminus B_a}$. Hence, up to subsequences, we have
\begin{align}\label{eq2914}
u_{R_n} (x) \to  u_{a,A}(x),\,\quad \mbox{as}\,\,n\to\infty \,\,\,\,\,\quad\,\, \mbox{in}\,\,C^{1,\alpha^\prime}(B_A \setminus B_a)
\end{align}
for $0<\alpha^\prime<\alpha$. From \eqref{eq2913}, we deduce that
\begin{align}\label{eq2915}
-\Delta_p u_{a,A}(x)-\mu \frac{1}{|x|^p} u^{p-1}_{a,A}(x)=0\,\,\,\,\,\,\,\mbox{in}\,\,B_A \setminus \overline{B_a}.
\end{align}
Now, for every $j\in \mathbb{N}^+$, letting $a_j=\frac{1}{j}$ and $A_j=j$, we constructs $u_{a_j,A_j}$ by reasoning as above. Then, as $j\to\infty$, a diagonal argument implies that there exists a limiting profile $u_\infty$ such that
$$ u_\infty \equiv u_{a_j,A_j} \,\,\,\,\,\,\,\mbox{in}\,\, B_{A_j} \setminus B_{a_j}, \quad \forall \,\,j\in \mathbb{N}^+.$$
In particular,
\begin{align}\label{eq2916}
-\Delta_p u_\infty (x)-\mu \frac{1}{|x|^p} u^{p-1}_\infty(x)=0\,\,\,\,\,\,\,\,\mbox{in}\,\,\R^N \setminus \{ 0 \}.
\end{align}
From \eqref{eq2911+}, it follows that
\begin{equation}\label{eq2911++}
\frac{c_0}{|x|^{\gamma_2}}   \leq   u_\infty (x) \leq \frac{C_0}{|x|^{\gamma_2}} \quad\,\mbox{and} \quad\,
|\nabla u_\infty (x)| \leq \frac{C_0}{|x|^{\gamma_2+1}},\,\,\mbox{for any}\,\,x\in \R^N \setminus \{ 0 \}.
\end{equation}
Thus it follows from \cite[Theorems 1.3 and 4.1]{EMSV}, \eqref{eq2916} and \eqref{eq2911++} that
$$ u_\infty (x)= \frac{\hat{C}}{|x|^{\gamma_2}},$$
where $\hat{C}:=\limsup\limits_{|x|\to0^+}|x|^{\gamma_2}u_\infty (x)$. Let $\{x_n\}$ be a sequence the same as in \eqref{eq2908} and set $y_n=\frac{x_n}{R_n}$. Then, by \eqref{eq2908}, it follows that
$$|\nabla u_{R_n}(y_n)|=R_n^{\gamma_2+1} |\nabla u(x_n)|  \leq \theta_n \to 0,\quad\,\,\,\,\mbox{as}\,\,n\to\infty.$$
Since $|y_n|=1$, up to subsequences, we have $y_n\to\overline{y} \in \partial B_1$. Consequently, by the uniform convergence of the gradient indicated by \eqref{eq2914}, one has
$$ \nabla u_\infty(\overline{y})=0,$$
which is absurd since the fundamental solution $u_{\infty}$ has no critical points. This establishes the sharp asymptotic estimates for $|\nabla u|$ and hence concludes our proof of Theorem \ref{th2.1}.
\end{proof}

\subsection{Proofs of the sharp asymptotic estimates in Theorem \ref{th2}}\label{sc3.4}

\begin{proof}[Proof of the sharp asymptotic estimates in Theorem \ref{th2}]
The regularity and sharp asymptotic estimates in Theorem \ref{th2} are immediate consequences of Theorems \ref{th2.1.1} and \ref{th2.1}. We only need to verify that $V_1(x)=|x|^{-2p}\ast u^{p}$ satisfies all the assumptions on $V$ and $u$ in Theorems \ref{th2.1.1}, \ref{th2.1++} and \ref{th2.1}.

\smallskip

To this end, we will first prove the following key estimates on $V_{1}$.
\begin{lem}\label{V-1-norm}
Let $u\in D^{1,p}(\R^N)$ be a nonnegative solution to \eqref{eq1.1} with $1<p<N/2$ and $0\leq\mu<\bar{\mu}$. Then $V_{1}\in L^{\frac{N}{p}}(\mathbb{R}^{N})$. Let $\rho>0$ be small enough such that $ \|V_{1}\|_{L^\frac{N}{p} (B_{\rho}(0))}\leq \epsilon_1$ and $\|V_{1}\|_{L^\frac{N}{p} (\R^N\setminus B_{1/{\rho}}(0))} \leq \epsilon_1$, where $\epsilon_1>0$ is given by Lemma \ref{lm:l-b-2}. Then, for any $l\in(0,p)$,
\begin{equation}\label{est-bdd-1-loc}
V_1(x)=\left(|x|^{-2p}\ast u^{p}\right) \in L^{\frac{N}{p-l}}_{loc}(\mathbb{R}^{N}\setminus \{0\}),
\end{equation}
and
\begin{equation}\label{est-bdd-1}
\|V_1\|_{L^{\frac{N}{p-l}} (B_{2R}(y))} \leq \tilde{C} R^{-l}, \,\,\,\,\,\quad \forall \,\, |y|\leq \frac{\rho}{16} \quad \mbox{or} \quad |y|\geq \frac{16}{\rho},
\end{equation}
where $R=|y|/4$ and the positive constant $\tilde{C}$ is independent of $y$.
\end{lem}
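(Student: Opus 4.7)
The claim $V_1\in L^{N/p}(\R^N)$ is precisely the Hardy--Littlewood--Sobolev bound already recorded as \eqref{conv}, and the existence of a $\rho\in(0,1)$ realising the two tail smallness conditions is then immediate from the absolute continuity of the integral. The bulk of the work is the quantitative statement \eqref{est-bdd-1}, and my plan is to prove the stronger pointwise estimate
\[
V_1(x)\le C R^{-p}, \qquad \forall\, x\in B_{2R}(y),
\]
valid whenever $|y|\le \rho/16$ or $|y|\ge 16/\rho$ with $R=|y|/4$. Once this is in hand, taking the $L^{N/(p-l)}$-norm on $B_{2R}(y)$ and using $|B_{2R}(y)|=CR^N$ yields $\|V_1\|_{L^{N/(p-l)}(B_{2R}(y))}\le CR^{p-l}\cdot CR^{-p}=\widetilde{C}R^{-l}$, which is precisely \eqref{est-bdd-1}. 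The local-integrability assertion \eqref{est-bdd-1-loc} then follows from the same splitting applied on a small ball around any fixed point in $\R^N\setminus\{0\}$, invoking Lemma \ref{lm:l-b-r} (which provides $u\in L^{r}_{loc}(\R^N\setminus\{0\})$ for every finite $r$ without any smallness hypothesis) in place of Theorem \ref{th2.1.1}$(ii)$.

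To obtain the pointwise bound, fix $y$ in one of the extreme regimes and set $A(y):=B_{8R}(0)\setminus B_{R}(0)$, which contains $B_{2R}(y)$ since $|x|\in[2R,6R]$ for $x\in B_{2R}(y)$. Write $V_1(x)=J_{loc}(x)+J_{far}(x)$, where $J_{loc}$ is integrated over $A(y)$ and $J_{far}$ over its complement, which decomposes into $\{|w|<R\}$ and $\{|w|>8R\}$. On the first piece elementary geometry gives $|x-w|\ge R$, and H\"older with the pair $(p^\star/p,\,p^\star/(p^\star-p))$ together with $|B_R|\le CR^N$ bounds the contribution by $R^{-2p}\cdot CR^{p}\cdot\|u\|_{L^{p^\star}}^{p} = CR^{-p}$. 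On the second piece $|x-w|\ge |w|/4$, and the same H\"older pair together with the exact computation $\int_{|w|>8R}|w|^{-2N}\mathrm{d}w=CR^{-N}$ (using the identity $2p\cdot p^\star/(p^\star-p)=2N$) again yields $CR^{-p}$.

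For $J_{loc}$ I apply H\"older with the pair $(\bar p/p,\,\bar p/(\bar p-p))$ on $A(y)$, choosing $\bar p>p^\star$ so large that $\alpha:=2p\bar p/(\bar p-p)<N$; this is possible exactly because $1<p<N/2$. Since $A(y)\subset B_{14R}(x)$, the weight factor is controlled by
\[
\bigl\|\,|x-\cdot|^{-2p}\bigr\|_{L^{\bar p/(\bar p-p)}(A(y))} \le CR^{(N-\alpha)(\bar p-p)/\bar p} = CR^{N(\bar p-p)/\bar p - 2p},
\]
while the higher-integrability factor $\|u\|_{L^{\bar p}(A(y))}$ is bounded by $CR^{-(N-p)/p+N/\bar p}$: in the regime $|y|\le\rho/16$, a single application of \eqref{loc-est-3+} with radius $R'=2R$ (which satisfies $R'\le\rho/8$) covers $A(y)\subset B_{8R}\setminus B_{R/2}$ and combines with $\|u\|_{L^{p^\star}(\R^N)}\le C$, and in the regime $|y|\ge 16/\rho$ the estimate \eqref{loc-est-4+} with radius $R'=R/2$ (which satisfies $R'\ge 1/\rho$) gives the same bound via $A(y)\subset\R^N\setminus B_R(0)$. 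A direct count of exponents collapses the product to $CR^{-p}$, completing the pointwise estimate.

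\textbf{Main obstacle.} I expect the main technical work to lie in the arithmetic of exponents controlling $J_{loc}$: $\bar p$ must be chosen strictly greater than $Np/(N-2p)$ so that the weight $|x-\cdot|^{-2p}$ is locally integrable on $A(y)$, and then the exponents produced by H\"older, by the higher-integrability bound from Theorem \ref{th2.1.1}$(ii)$, and by the local integral of the weight must cancel precisely to $-p$ uniformly in $x\in B_{2R}(y)$. A secondary point is ensuring that the single annulus $A(y)=B_{8R}(0)\setminus B_{R}(0)$ is simultaneously compatible with the inner annular estimate \eqref{loc-est-3+} and the outer complement-of-ball estimate \eqref{loc-est-4+}; it is precisely the numerical factors $16$ in the hypotheses $|y|\le\rho/16$ and $|y|\ge 16/\rho$ that provide the margin needed to place $R'=2R$ and $R'=R/2$ in the admissible ranges of those theorems.
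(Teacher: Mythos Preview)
Your proposal is correct and in fact obtains a stronger intermediate conclusion than the paper: you establish the pointwise bound $V_1(x)\le CR^{-p}$ on $B_{2R}(y)$ and then simply integrate, whereas the paper works directly at the level of the $L^{N/(p-l)}$-norm on the containing annulus $B_{6R}(0)\setminus B_{2R}(0)$. In particular, for the ``local'' piece the paper applies the Hardy--Littlewood--Sobolev inequality in the $L^{N/(p-l)}$-norm and then invokes Lemma~\ref{lm:l-b-2} with the $l$-dependent exponent $\bar p=Np/(N-p-l)$, whereas you apply H\"older pointwise at each $x$ with a single fixed $\bar p>Np/(N-2p)$ independent of $l$; your exponent arithmetic then collapses to $-p$ exactly as you describe. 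Your choice has the mild advantage that one value of $\epsilon_1$ (and hence one $\rho$) serves simultaneously for every $l\in(0,p)$, and as a by-product you obtain $V_1\in L^\infty_{loc}(\R^N\setminus\{0\})$, which is stronger than \eqref{est-bdd-1-loc}. The paper's route is slightly more streamlined for the norm statement itself and avoids having to verify that the singular weight $|x-\cdot|^{-2p}$ is locally integrable on the annulus (the step where you need $p<N/2$), but both decompositions into inner ball / annulus / outer complement and both invocations of the higher-integrability estimate \eqref{loc-est-3+}--\eqref{loc-est-4+} are essentially the same.
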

\begin{proof}
By Hardy-Littlewood-Sobolev inequality in Theorem \ref{HLSI}, one has
\begin{equation*}
\begin{aligned}
     \|V_1(x)\|_{L^{\frac{N}{p}}(\R^N)}
     = \left\| \int_{\R^N} \frac{u^p(y)}{|x-y|^{2p}} \mathrm{d}y \right\|_{L^{\frac{N}{p}}(\R^N)}
     \leq \bar{C} \|u^p\|_{L^{\frac{N}{N-p}}(\R^N)}= \bar{C} \|u\|^p_{L^{p^\star}(\R^N)}.
\end{aligned}
\end{equation*}

\smallskip

Now we prove \eqref{est-bdd-1}. First, we note that, for any $l\in(0,p)$, there exists $q\in (\frac{N}{p},\infty)$ such that $q=\frac{N}{p-l}$. For any $|y|\leq \frac{\rho}{16}$ or $|y|\geq\frac{16}{\rho}$, let $R:=|y|/4$. Hence, we get
\begin{equation}\label{060102}
\begin{aligned}
     &\quad \|V_1(x)\|_{L^{q}(B_{2R}(y))} \leq \|V_1(x)\|_{L^{\frac{N}{p-l}}(B_{6R}(0)\setminus B_{2R}(0))} \\
     &= \left\| \left\{\int_{B_{R/8}(0)} + \int_{B_{8R}(0)\setminus B_{R/8}(0)} +\int_{B^c_{8R}(0)} \right\} \frac{u^p(z)}{|x-z|^{2p}} \mathrm{d}z \right\|_{L^{\frac{N}{p-l}}(B_{6R}(0)\setminus B_{2R}(0))} \\
     &\leq \left\| \int_{B_{\frac{R}{8}}(0)} \frac{u^p(z)}{|x-z|^{2p}} \mathrm{d}z \right\|_{L^{\frac{N}{p-l}}(B_{6R}\setminus B_{2R})}
          + \left\| \int_{B^c_{8R}(0)} \frac{u^p(z)}{|x-z|^{2p}} \mathrm{d}z \right\|_{L^{\frac{N}{p-l}}(B_{6R}(0)\setminus B_{2R}(0))} \\
          &+ \left\| \int_{B_{8R}(0)\setminus B_{\frac{R}{8}}(0)} \frac{u^p(z)}{|x-z|^{2p}} \mathrm{d}z \right\|_{L^{\frac{N}{p-l}}(B_{6R}(0)\setminus B_{2R}(0))}
     =:I_1 + I_2 +I_3.
\end{aligned}
\end{equation}
Since $|x-z|\geq 2R-R/8 >R$ for $x\in B_{6R}(0)\setminus B_{2R}(0)$ and $z\in B_{R/8}(0)$, by the Minkowski inequality and H\"{o}lder's inequality, we have
\begin{equation}\label{060103}
\begin{aligned}
     I_1 &=\left\| \int_{B_{R/8}(0)} \frac{u^p(z)}{|x-z|^{2p}} \mathrm{d}z \right\|_{L^{\frac{N}{p-l}}(B_{6R}(0)\setminus B_{2R}(0))} \\
         &\leq  \int_{B_{R/8}(0)} \left( \int_{B_{6R}(0)\setminus B_{2R}(0)} \left( \frac{u^{p}(z)}{|x-z|^{2p}} \right)^{\frac{N}{p-l}} \mathrm{d}x \right)^{\frac{p-l}{N}} \mathrm{d}z \\
         &\leq \frac{C}{R^{2p}} |B_{6R}(0)\setminus B_{2R}(0)|^{\frac{p-l}{N}}  \int_{B_{R/8}(0)} u^p(z) \mathrm{d}z\\
         &\leq C R^{-2p+N\cdot \frac{p-l}{N}} |B_{R/8}(0)|^{\frac{p}{N}} \left( \int_{B_{R/8}(0)} u^{p^\star}(z) \mathrm{d}z \right)^{\frac{p}{p^\star}} \leq C R^{-l} \|u\|^p_{L^{p^\star}(\R^N)},
\end{aligned}
\end{equation}
where $C>0$ is independent of $y$. Due to $|x-z|>|z|-6R\geq|z|-\frac{3}{4}|z|=\frac{|z|}{4}$ for any $x\in B_{6R}(0)\setminus B_{2R}(0)$ and $z\in B^c_{8R}(0)$, we have
\begin{equation}\label{060104}
\begin{aligned}
     I_2= & \left\| \int_{B^c_{8R}(0)} \frac{u^p(z)}{|x-z|^{2p}} \mathrm{d}z \right\|_{L^{\frac{N}{p-l}}(B_{6R}(0)\setminus B_{2R}(0))}\\
         &\leq  \int_{B^c_{8R}(0)} \left( \int_{B_{6R}(0)\setminus B_{2R}(0)} \left( \frac{u^{p}(z)}{|x-z|^{2p}} \right)^{\frac{N}{p-l}} \mathrm{d}x \right)^{\frac{p-l}{N}} \mathrm{d}z \\
         &\leq C|B_{6R}(0)\setminus B_{2R}(0)|^{\frac{p-l}{N}}  \int_{B^c_{8R}(0)} \frac{u^p(z)}{|z|^{2p}}\mathrm{d}z\\
         &\leq C R^{p-l} \left( \int_{B^c_{8R}(0)} \frac{\mathrm{d}z}{|z|^{2N}} \right)^{\frac{p}{N}} \left( \int_{B^c_{8R}(0)} u^{p^\star}(z) \mathrm{d}z \right)^{\frac{p}{p^\star}} \leq C R^{-l}  \|u\|^p_{L^{p^\star}(\R^N)},
\end{aligned}
\end{equation}
where $C>0$ is independent of $y$.

\smallskip

Thanks to Hardy-Littlewood-Sobolev inequality in Theorem \ref{HLSI} and Lemma \ref{lm:l-b-2} with $\bar{p}=Np/(N-p-l)$ and $s=0$, recalling that $R=|y|/4$, we deduce that, for any $|y|\leq \rho/16$ or $|y|\geq 16/\rho$,
\begin{equation}\label{060105}
\begin{aligned}
     I_3 &= \left\| \int_{B_{8R}(0)\setminus B_{R/8}(0)} \frac{u^p(z)}{|x-z|^{2p}} \mathrm{d}z \right\|_{L^{\frac{N}{p-l}}(B_{6R}(0)\setminus B_{2R}(0))} \\
          &\leq \left\| \int_{B_{8R}(0)\setminus B_{R/8}(0)} \frac{u^p(z)}{|x-z|^{2p}} \mathrm{d}z \right\|_{L^{\frac{N}{p-l}}(\R^N)}\\
          &\leq C \left\| u^p \right\|_{L^{\frac{N}{N-p-l}}(B_{8R}(0)\setminus B_{R/8}(0))} = C \left\| u \right\|^p_{L^{\frac{Np}{N-p-l}}(B_{8R}(0)\setminus B_{R/8}(0))} \\
          &\mathop{\leq}_{\text{Lemma \,3.3}} C \left( R^{N \cdot \frac{N-p-l}{Np} - \frac{N-p}{p}} \right)^p \|u\|^p_{L^{p^\star}(B_{16R}(0)\setminus B_{R/16}(0))} \leq C R^{-l} \|u\|^p_{L^{p^\star}(\R^N)},
\end{aligned}
\end{equation}
where $C>0$ is independent of $y$. By substituting \eqref{060103}, \eqref{060104} and \eqref{060105} into \eqref{060102}, we derive \eqref{est-bdd-1}.

\smallskip

Next, we show \eqref{est-bdd-1-loc}. Since we have proved $V_{1}(x) \in L^{\frac{N}{p}}(\R^N)$, it follows from Lemma \ref{lm:l-b-r} that $u\in L^{r}_{loc}(\mathbb{R}^{N}\setminus\{0\})$ for any $0<r<+\infty$. Then, for any $y\in\mathbb{R}^{N}\setminus\{0\}$ and $R:=\frac{|y|}{4}$, from \eqref{060102}, \eqref{060103}, \eqref{060104} and the estimate $I_2\leq C \left\| u \right\|^p_{L^{\frac{Np}{N-p-l}}(B_{8R}(0)\setminus B_{R/8}(0))}$ indicated by \eqref{060105}, we get
\begin{equation*}
  \|V_1(x)\|_{L^{\frac{N}{p-l}}(B_{2R}(y))}\leq C R^{-l} \|u\|^p_{L^{p^\star}(\R^N)}+C \left\| u \right\|^p_{L^{\frac{Np}{N-p-l}}(B_{8R}(0)\setminus B_{R/8}(0))}<+\infty,
\end{equation*}
which combined with the Heine-Borel finite covering theorem yields that $V_{1}\in L^{\frac{N}{p-l}}_{loc}(\mathbb{R}^{N}\setminus \{0\})$, i.e., \eqref{est-bdd-1-loc} holds. This completes the proof of Lemma \ref{V-1-norm}.
\end{proof}

Now we continue the proof of Theorem \ref{th2}. It follows from Lemma \ref{V-1-norm} that $V_1(x)\in L^{\frac{N}{p}}(\R^N)$. Now, let $\rho>0$ be small enough such that $ \|V_1(x)\|_{L^\frac{N}{p} (B_{\rho}(0))}\leq \min\{\epsilon_1,\epsilon_2\}$ and $\|V_1(x)\|_{L^\frac{N}{p} (\R^N\setminus B_{1/{\rho}}(0))} \leq \min\{\epsilon_1,\epsilon_2\}$, where $\epsilon_1>0$ and $\epsilon_2>0$ are given by Lemmas \ref{lm:l-b-2} and \ref{lm:l-b-3} respectively.
\smallskip

From Lemma \ref{V-1-norm}, we have $V_1(x)\in L^{q}_{loc}(\mathbb{R}^{N}\setminus \{0\})$ for some $\frac{N}{p}<q<+\infty$, and $V_1(x)$ satisfies condition \eqref{est-bdd-1+} in Theorem \ref{th2.1++}. Then, it follows from Theorems \ref{th2.1.1} and \ref{th2.1++} that, $u\in C^{1,\alpha}(\R^N\setminus \{0\})\cap L^{\infty}_{loc}(\mathbb{R}^{N}\setminus \{0\})$ for some $0<\alpha<\min\{1,\frac{1}{p-1}\}$, and there exists a positive constant $C=C(N,p,\mu,\rho,u,\tilde{C})>0$ such that
\[u(y) \leq C |y|^{-\frac{N-p}{p}+\tau_1}, \,\,\,\,\quad\,\, \forall \,\, |y|\leq \frac{\rho}{16},\]
and
\[u(y) \leq C |y|^{-\frac{N-p}{p}-\frac{\tau_2}{2}}, \,\,\,\quad\,\,\, \forall \,\, |y|\geq \frac{16}{\rho},\]
where $\tau_1, \tau_2 > 0$ are given by Lemma \ref{lm:l-b-3} and $\tilde{C}$ be given in \eqref{est-bdd-1} in Lemma \ref{V-1-norm}.
\smallskip

On the other hand, we can observe that $N > N-p-p\tau > N-2p$ and $N > N-p+p\tau > N-2p$, where $\tau=\frac{1}{2}\min\{1,\tau_1,\tau_2\}$. Then, using (ii) of Lemma \ref{lm.6} with $\nu=2p$, $g(x)=u^p(x)$, $\beta_1=N-p-p\tau$ and $\beta_2=N-p+p\tau$, we have, for $|x|<\rho/16$,
\[V_1(x)=|x|^{-2p}\ast u^{p}\leq C_{V,1}|x|^{-\bar{\beta}_1} \quad \text{with} \quad \bar{\beta}_1:=\beta_1+2p-N=p-\tau p<p,\]
and for $|x|>16/\rho$,
\[V_1(x)=|x|^{-2p}\ast u^{p}\leq C_{V,2}|x|^{-\bar{\beta}_2} \quad \text{with} \quad \bar{\beta}_2:=\beta_2+2p-N=p+\tau p>p.\]

Thus Theorems \ref{th2.1.1} and \ref{th2.1} can be applied to equation \eqref{eq1.1}. So the positive solution $u$ to \eqref{eq1.1} satisfies the regularity result in Theorem \ref{th2.1.1} and the sharp asymptotic estimates \eqref{eq0806}-\eqref{eq0806+} in Theorem \ref{th2.1}. This finishes our proof of the regularity and sharp asymptotic estimates in Theorem \ref{th2}.
\end{proof}

\section{Radial symmetry and monotonicity of solutions to equation \eqref{eq1.1}}\label{sc4}

Since the sharp asymptotic estimates \eqref{eq0806}-\eqref{eq0806+} in Theorem \ref{th2.1} hold for equation \eqref{eq1.1}, we can apply the method of moving planes to derive the radial symmetry and monotonicity of weak solutions to \eqref{eq1.1}, and hence complete the proof of Theorem \ref{th2}. Before carrying out the moving planes technique, we need some standard notations.

\smallskip

Let $\nu$ be any direction in $\R^N$, i.e. $\nu\in \R^N$ and $|\nu|=1$. Due the rotation invariance of the problems, we choose $\nu=e_1$ and let
$$ \Sigma_\lambda: =\{ x\in\R^N \mid x_1 < \lambda \}, $$
$$ T_\lambda: =\{ x\in\R^N \mid x_1 = \lambda \},$$
$$ u_\lambda (x) := u(x_\lambda), \quad\,\,\,\,\forall \,\, x\in\R^N,$$
and define the reflection $R_\lambda$ w.r.t. the hyperplane $T_\lambda$ by
$$ x_\lambda: =R_\lambda(x)=(2\lambda-x_1,x^\prime)\in\R \times \R^{N-1}.$$
We denote the critical set of $u$ by $Z_u:=\{ x\in\R^N \mid |\nabla u (x)|=0 \}$ and define
\begin{equation}\label{Z}
  Z^{u}_\lambda:=\{ x\in\R^N \mid |\nabla u (x)|=|\nabla u_\lambda (x)|=0 \} \subseteq  Z_u.
\end{equation}

\medskip

First, for any $q,\sigma>0$, we will show the following basic property of the convolution $\bar{V}(x)=|x|^{-\sigma}\ast u^q(x)$.
\begin{lem}\label{lm.1}
For the convolution $\bar{V}(x)=|x|^{-\sigma}\ast u^q(x)$ with $q,\sigma>0$, one has
\begin{align}\label{eq0805}
\bar{V}(x)-\bar{V}(x_\lambda)&=\int_{\R^N} \frac{u^q(y)}{|x-y|^\sigma}\mathrm{d}y-\int_{\R^N} \frac{u^q(y)}{|x_\lambda-y|^\sigma}\mathrm{d}y \\
&=\int_{\Sigma_\lambda}\left(\frac{1}{|x-y|^\sigma}- \frac{1}{|x_\lambda-y|^\sigma}\right) \left[u^q(y)-(u_\lambda)^q(y)\right]\mathrm{d}y. \nonumber
\end{align}
\end{lem}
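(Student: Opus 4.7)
The plan is to derive the identity by splitting the integration domain $\mathbb{R}^N$ into the half-space $\Sigma_\lambda$ and its reflected complement $R_\lambda(\Sigma_\lambda)$, applying the change of variables $y\mapsto y_\lambda$ on the latter, and using the fact that the reflection $R_\lambda$ is an isometry of $\mathbb{R}^N$.

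More concretely, I would first write
\[
\bar{V}(x)=\int_{\Sigma_\lambda}\frac{u^q(y)}{|x-y|^\sigma}\mathrm{d}y+\int_{R_\lambda(\Sigma_\lambda)}\frac{u^q(y)}{|x-y|^\sigma}\mathrm{d}y,
\]
and perform the substitution $y=z_\lambda$ in the second integral. Since $R_\lambda$ has unit Jacobian and is an isometry, we have $|x-z_\lambda|=|x_\lambda-z|$ and $u^q(z_\lambda)=u_\lambda^q(z)$, hence
\[
\int_{R_\lambda(\Sigma_\lambda)}\frac{u^q(y)}{|x-y|^\sigma}\mathrm{d}y=\int_{\Sigma_\lambda}\frac{u_\lambda^q(z)}{|x_\lambda-z|^\sigma}\mathrm{d}z.
\]
Repeating the same splitting and change of variables for $\bar{V}(x_\lambda)$ (and using $(x_\lambda)_\lambda=x$) yields
\[
\bar{V}(x_\lambda)=\int_{\Sigma_\lambda}\frac{u^q(y)}{|x_\lambda-y|^\sigma}\mathrm{d}y+\int_{\Sigma_\lambda}\frac{u_\lambda^q(z)}{|x-z|^\sigma}\mathrm{d}z.
\]

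Subtracting the two expressions and regrouping the four integrals over $\Sigma_\lambda$ according to the factor $u^q(y)-u_\lambda^q(y)$ gives exactly the claimed identity. There is no real obstacle here beyond bookkeeping; the only point that needs to be checked is that all four integrals are absolutely convergent (so that Fubini/splitting is legitimate), which follows from $u\in D^{1,p}(\mathbb{R}^N)\subset L^{p^\star}(\mathbb{R}^N)$ together with the Hardy--Littlewood--Sobolev type bounds already established in \eqref{conv} and \eqref{060108-gen} for the relevant exponents $(q,\sigma)$ used in the paper.
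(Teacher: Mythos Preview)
Your proof is correct and follows essentially the same approach as the paper: split each integral over $\Sigma_\lambda$ and its reflection, change variables via $y\mapsto y_\lambda$ using $|x-y_\lambda|=|x_\lambda-y|$, and subtract. The only difference is cosmetic (you write $R_\lambda(\Sigma_\lambda)$ where the paper writes $\R^N\setminus\Sigma_\lambda$, differing only on the measure-zero hyperplane $T_\lambda$), and your remark on absolute convergence is a reasonable addition not made explicit in the paper.
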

\begin{proof}
Since $|x-y_\lambda|=|x_\lambda-y|$ and $|x-y|=|x_\lambda-y_\lambda|$, we have
\begin{align*}
\bar{V}(x)=\int_{\R^N} \frac{u^q(y)}{|x-y|^\sigma}\mathrm{d}y
&=\int_{\Sigma_\lambda} \frac{u^q(y)}{|x-y|^\sigma}\mathrm{d}y+\int_{\R^N \setminus \Sigma_\lambda} \frac{u^q(y)}{|x-y|^\sigma}\mathrm{d}y \nonumber\\
&=\int_{\Sigma_\lambda} \frac{u^q(y)}{|x-y|^\sigma}\mathrm{d}y+\int_{\Sigma_\lambda} \frac{\left(u_\lambda\right)^q(y)}{|x_\lambda-y|^\sigma}\mathrm{d}y,
\end{align*}
and
\begin{align*}
\bar{V}(x_\lambda)=\int_{\R^N} \frac{u^q(y)}{|x_\lambda-y|^\sigma}\mathrm{d}y
&=\int_{\Sigma_\lambda} \frac{u^q(y)}{|x_\lambda-y|^\sigma}\mathrm{d}y+\int_{\R^N \setminus \Sigma_\lambda} \frac{u^q(y)}{|x_\lambda-y|^\sigma}\mathrm{d}y \nonumber\\
&=\int_{\Sigma_\lambda} \frac{u^q(y)}{|x_\lambda-y|^\sigma}\mathrm{d}y+\int_{\Sigma_\lambda} \frac{\left(u_\lambda\right)^q(y)}{|x-y|^\sigma}\mathrm{d}y.
\end{align*}
Subtracting the above two formulae yields \eqref{eq0805}. This finishes the proof of Lemma \ref{lm.1}.
\end{proof}

\eqref{eq1.1} can be written as follows:
\begin{align}\label{eq-m-p-1}
\left\{ \begin{array}{ll} \displaystyle
-\Delta_p u - \frac{\mu}{|x|^p} u^{p-1}=V_1(x) u^{p-1}  \quad\,\,\,\,&\mbox{in}\,\, \R^N, \\ \\
u \in D^{1,p}(\R^N),\quad\,\,\,\,\,\,u\geq0 \qquad\,\,\,\,&  \mbox{in}\,\, \R^N,
\end{array}
\right.\hspace{1cm}
\end{align}
where $0\leq\mu< \bar{\mu}$, $1<p<\frac{N}{2}$ and $0\leq V_1(x)= \left(|x|^{-2p}\ast u^{p}\right)(x) \in L^\frac{N}{p}(\R^N)$.
\smallskip

It is easy to see that $u_\lambda$ solves
\begin{equation}\label{eq-m-p-2}
-\Delta_p u_\lambda - \frac{\mu}{|x_\lambda|^p} u_\lambda^{p-1}=V_1(x_\lambda) u_\lambda^{p-1}  \quad\,\,\,\, \mbox{in}\,\, \R^N.
\end{equation}
From Lemma \ref{lm.1} with $\sigma=2p$ and $q=p$, we obtain that
\begin{align}\label{eq-v1-1}
V_1(x)-V_1 (x_\lambda)
        =\int_{\Sigma_\lambda}\left(\frac{1}{|x-y|^{2p}}- \frac{1}{|x_\lambda-y|^{2p}}\right) \left[u^p(y)-(u_\lambda)^p(y)\right]\mathrm{d}y.
\end{align}

\medskip

\begin{proof}[Completion of the proof of Theorem \ref{th2}]
We will prove the radial symmetry and monotonicity result by the method of moving planes, and treating the singular elliptic case $1<p\leq2$ and the degenerate elliptic case $p\geq2$ separately. We divide our proof into three steps. In what follows, let
$$\Lambda^- := \{ \lambda<0 \mid  u\leq u_\mu \,\,\, \mbox{in} \,\ \Sigma_\mu, \,\  \forall \mu \leq \lambda  \}, \,\,\,\,\,
\Lambda^+ := \{ \lambda>0 \mid  u\leq u_\mu \,\,\, \mbox{in} \,\ \R^N\setminus\overline{\Sigma_\mu}, \,\ \forall \mu \geq \lambda  \}. $$

\medskip

{\bf Step 1.} We will prove that $\Lambda^- \neq \emptyset $ and $\Lambda^+ \neq \emptyset$.

We only prove $\Lambda^- \neq \emptyset$, namely, there exists $\lambda<0$ sufficiently negative such that $u\leq u_\mu$ in $\Sigma_\mu$ for every $\mu\leq\lambda$. Indeed, the proof for $\Lambda^+ \neq \emptyset $ is similar to that for $\Lambda^- \neq \emptyset$, so we omit it here.

\smallskip

Throughout the proof, we denote by $R_0$ and $R_1$ the radii given by \eqref{eq0806}--\eqref{eq0806+} in Theorem \ref{th2.1} and the asymptotic estimates in Theorem \ref{th2}. By \eqref{eq0806} and \eqref{eq0806++}, we can see that, for any $\lambda<0$ with $|\lambda|> R_1$, there exists $\tilde{R}_0:=\tilde{R}_0(R_{1})>0$ such that $\tilde{R}_0\leq R_0$, $B_{\tilde{R}_0}(0_{\lambda}) \subset \Sigma_{\lambda}$ and
\begin{equation}\label{050333}
\sup_{B_{\tilde{R}_0}(0_\lambda)} u(x) < \inf_{B_{\tilde{R}_0}(0_\lambda)} u_\lambda (x), \qquad \forall \,\, \lambda<-R_{1},
\end{equation}
which implies that $u<u_\lambda$ in $B_{\tilde{R}_0}(0_\lambda) \subset \Sigma_\lambda$ (with $\tilde{R}_0$ independent of $\lambda$) for every $\lambda<-R_{1}$. Consequently, in what follows, we define $\Sigma^\prime_\lambda:=\Sigma_\lambda \setminus B_{\tilde{R}_0}(0_\lambda)$ and $\widetilde{B}_r:=B_r(0)\cap \Sigma^\prime_\lambda$ for $r>0$.
\smallskip

For $\lambda<-R_{1}<0$ and $\tau > \max\{2,p\}$, we construct
$$ \varphi_{1,\lambda}:=\eta^\tau u^{1-p} (u^p-u_\lambda^p)_+ \chi_{\Sigma_\lambda}  \,\,\quad \mbox{and} \,\,\quad  \varphi_{2,\lambda}:=\eta^\tau u_\lambda^{1-p} (u^p-u_\lambda^p)_+ \chi_{\Sigma_\lambda},$$
where $\eta \in C_0^\infty (B_{2R}(0))$ is a cut-off function such that $0\leq\eta\leq 1$, $\eta=1$ in $B_R(0)$ and $|\nabla \eta|\leq 2/R$, and $f_{+}:=\frac{|f|+f}{2}\geq0$. Due to \eqref{050333}, one has $supp(\varphi_{i,\lambda})\subset \widetilde{B}_{2R}$ for $i=1,2$. By taking $\varphi_{1,\lambda}$ as the test function in \eqref{eq-m-p-1} and $\varphi_{2,\lambda}$ in \eqref{eq-m-p-2} respectively, and subtracting, we obtain
\begin{equation}\label{050401}
\begin{aligned}
&\quad\int_{\widetilde{B}_{2R}}\left( |\nabla u|^{p-2}\nabla u \cdot \nabla\varphi_{1,\lambda} - |\nabla u_\lambda|^{p-2}\nabla u_\lambda \cdot \nabla\varphi_{2,\lambda} \right) \mathrm{d}x \\
&= \mu \int_{\widetilde{B}_{2R}}\left(\frac{\eta^\tau}{|x|^p}-\frac{\eta^\tau}{|x_\lambda|^p}\right)(u^p-u^p_\lambda)_+ \mathrm{d}x+\int_{\widetilde{B}_{2R}}\left( V_1(x) - V_1(x_\lambda) \right)\eta^\tau (u^p-u^p_\lambda)_+ \mathrm{d}x.
\end{aligned}
\end{equation}
It follows from $|x|\geq |x_\lambda|$ in $\Sigma_\lambda$ that
$$ \mu \int_{\widetilde{B}_{2R}}\left(\frac{1}{|x|^p}-\frac{1}{|x_\lambda|^p} \right)\eta^\tau (u^p-u^p_\lambda)_+ \mathrm{d}x \leq 0,$$
which together with \eqref{050401} lead to
\begin{equation}\label{050402}
\begin{aligned}
I_1:=&\int_{\widetilde{B}_{2R}} \eta^\tau \left[ |\nabla u|^{p-2}\nabla u \cdot \nabla\left(u^{1-p} (u^p-u_\lambda^p)_+ \right) - |\nabla u_\lambda|^{p-2}\nabla u_\lambda \cdot \nabla\left(u_\lambda^{1-p} (u^p-u_\lambda^p)_+\right) \right] \mathrm{d}x \\
     &\leq -\tau \int_{\widetilde{B}_{2R}} \eta^{\tau-1} u^{1-p} (u^p-u_\lambda^p)_+  |\nabla u|^{p-2}\nabla u \cdot \nabla\eta \mathrm{d}x  \\
     & + \tau \int_{\widetilde{B}_{2R}} \eta^{\tau-1} u_\lambda^{1-p} (u^p-u_\lambda^p)_+ |\nabla u_\lambda|^{p-2}\nabla u_\lambda \cdot \nabla\eta \mathrm{d}x \\
     & + \int_{\widetilde{B}_{2R}}\left( V_1(x) - V_1(x_\lambda) \right)\eta^\tau (u^p-u^p_\lambda)_+ \mathrm{d}x =: I_2 + I_3 + I_4.
\end{aligned}
\end{equation}

First, we estimate $I_1$. Lemma \ref{lm.w-g-i} yields that
\begin{equation*}
\begin{aligned}
I_1:=&\int_{\widetilde{B}_{2R}\cap\{u \geq u_\lambda\}} \eta^\tau \left[ |\nabla u|^{p-2}\nabla u \cdot \nabla\left(u^{1-p} (u^p-u_\lambda^p) \right) - |\nabla u_\lambda|^{p-2}\nabla u_\lambda \cdot \nabla\left(u_\lambda^{1-p} (u^p-u_\lambda^p)\right) \right] \mathrm{d}x \\
     &\geq C_p \int_{\widetilde{B}_{2R}\cap\{u \geq u_\lambda\}} \eta^\tau u_\lambda^p (|\nabla \log u|+|\nabla \log u_\lambda|)^{p-2} |\nabla \log u- \nabla \log u_\lambda|^2 \mathrm{d}x,
\end{aligned}
\end{equation*}
which implies that
\begin{equation}\label{05040301}
I_1\geq C_p \int_{\widetilde{B}_{2R}\cap\{u \geq u_\lambda\}} \eta^\tau u_\lambda^p u^{2-p} (|\nabla u|+|\nabla u_\lambda|)^{p-2} |\nabla \log u- \nabla \log u_\lambda|^2 \mathrm{d}x, \,\,\,\,\, \text{if} \,\, p\geq 2,
\end{equation}
and
\begin{equation}\label{05040301+}
\begin{aligned}
I_1 \geq  C_p \int_{\widetilde{B}_{2R}\cap\{u \geq u_\lambda\}} \eta^\tau u_\lambda^2 (|\nabla u|+|\nabla u_\lambda|)^{p-2} |\nabla \log u- \nabla \log u_\lambda|^2 \mathrm{d}x, \,\,\,\,\, \text{if} \,\, 1<p\leq 2.
\end{aligned}
\end{equation}

\smallskip

Now, we are to show the following useful Lemma.
\begin{lem}\label{a}
Assume $\kappa>0$. Then, for any $\lambda\leq-\kappa$,
\begin{equation}\label{050403}
\frac{u_\lambda}{u}\geq \tilde{c} \,\,\quad\,\, \mbox{in} \,\,\Sigma_\lambda,
\end{equation}
where $\tilde{c}=\tilde{c}\left(c_0, C_0, R_{1}, \inf\limits_{B_{R_1}(0)} u(x), \sup\limits_{|x|>\kappa}u(x)\right)>0$ and $c_0, C_0$ are given by \eqref{eq0806} and \eqref{eq0806++}.
\end{lem}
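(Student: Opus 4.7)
The plan is to reduce \eqref{050403} to a finite case analysis based on the sharp two-sided asymptotics \eqref{eq0806}--\eqref{eq0806++} for $u$, combined with positivity of $u$ on compact sets away from the origin. First, I would record two structural facts about $\Sigma_\lambda$ when $\lambda\le-\kappa<0$. Since $x_1<\lambda\le-\kappa$, every $x\in\Sigma_\lambda$ satisfies $|x|\ge|x_1|>\kappa$, so in particular $u(x)\le\sup_{|y|>\kappa}u(y)$ uniformly. Moreover, the identity
\[
|x|^2-|x_\lambda|^2=x_1^2-(2\lambda-x_1)^2=4\lambda(x_1-\lambda)
\]
has both factors $\lambda<0$ and $x_1-\lambda<0$, so $|x_\lambda|<|x|$ throughout $\Sigma_\lambda$.

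With these in hand, I would split $\Sigma_\lambda$ into three regions according to the position of $|x|$ and $|x_\lambda|$ relative to $R_1$ and bound the ratio $u(x_\lambda)/u(x)$ on each. In the region $\{|x|\le R_1\}$, one also has $|x_\lambda|<R_1$, so $u(x_\lambda)\ge\inf_{B_{R_1}(0)}u$, and the ratio is bounded below by
\[
\frac{\inf_{B_{R_1}(0)}u}{\sup_{|y|>\kappa}u(y)}.
\]
In the region $\{|x|>R_1,\ |x_\lambda|\ge R_1\}$, \eqref{eq0806++} applied at both points together with $|x_\lambda|\le|x|$ yields
\[
\frac{u(x_\lambda)}{u(x)}\ge\frac{c_0\,|x_\lambda|^{-\gamma_2}}{C_0\,|x|^{-\gamma_2}}=\frac{c_0}{C_0}\Bigl(\frac{|x|}{|x_\lambda|}\Bigr)^{\gamma_2}\ge\frac{c_0}{C_0}.
\]
In the remaining region $\{|x|>R_1,\ |x_\lambda|<R_1\}$, the numerator still satisfies $u(x_\lambda)\ge\inf_{B_{R_1}(0)}u$, while \eqref{eq0806++} gives $u(x)\le C_0|x|^{-\gamma_2}\le C_0R_1^{-\gamma_2}$, producing the uniform lower bound $R_1^{\gamma_2}\inf_{B_{R_1}(0)}u/C_0$.

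Taking $\tilde c$ to be the minimum of these three constants, all of which depend only on the quantities listed in the statement, closes the proof. The one point that demands argument rather than arithmetic is the positivity $\inf_{B_{R_1}(0)}u>0$: near the possibly singular origin this comes from the lower bound $u\ge c_0|\cdot|^{-\gamma_1}$ in \eqref{eq0806}, while on the compact annulus $\{R_0\le|y|\le R_1\}$ it follows from the continuity of $u\in C^{1,\alpha}(\R^N\setminus\{0\})$ together with the strong maximum principle (Lemma \ref{hopf}). I do not anticipate any substantive obstacle; this lemma is essentially bookkeeping of the already-established sharp asymptotics and positivity.
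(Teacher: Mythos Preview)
Your argument is correct and follows essentially the same approach as the paper. The paper splits $\Sigma_\lambda$ into just two cases according to whether $x\in B_{R_1}(0_\lambda)$ or not (i.e., whether $|x_\lambda|<R_1$ or $|x_\lambda|\ge R_1$), which amounts to merging your Regions~1 and~3 and bounding the denominator there uniformly by $\sup_{|y|>\kappa}u(y)$; your three-region split is a harmless refinement of the same idea.
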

\begin{proof}
Indeed, if $x\in\Sigma_\lambda\setminus B_{R_1}(0_\lambda)$, then \eqref{eq0806} and \eqref{eq0806++} imply that (recall that $|x|\geq |x_\lambda|$ in $\Sigma_\lambda$)
$$ \frac{u_\lambda(x)}{u(x)} \geq  \frac{c_0}{|x_\lambda|^{\gamma_2}} \cdot \frac{|x|^{\gamma_2}}{C_0} \geq c_0/C_0.$$
Otherwise, if $x\in\Sigma_\lambda \cap B_{R_1}(0_\lambda)$, then
$$ \frac{u_\lambda(x)}{u(x)} \geq \frac{\inf\limits_{B_{R_1}(0_\lambda)} u_\lambda(x)}{\sup\limits_{|x|\geq\kappa}u(x)}\geq \frac{\inf\limits_{B_{R_1}(0)} u(x)}{\sup\limits_{|x|>\kappa}u(x)}\geq c_1>0,$$
where $\sup\limits_{|x|>\kappa}u(x)\leq \frac{C_{0}}{R_{1}^{\gamma_{2}}}+\sup\limits_{\kappa<|x|\leq R_{1}}u(x)$. Thus we have reached \eqref{050403} and hence finished the proof of Lemma \ref{a}.
\end{proof}

It follows from \eqref{05040301} and \eqref{050403} that
\begin{equation}\label{050404}
I_1 \geq C_p \tilde{c}^p \int_{\widetilde{B}_{2R}\cap\{u \geq u_\lambda\}} \eta^\tau u^2 (|\nabla u|+|\nabla u_\lambda|)^{p-2} |\nabla \log u- \nabla \log u_\lambda|^2 \mathrm{d}x, \,\,\,\,\, \text{if} \,\, p\geq2.
\end{equation}

\medskip

Next, we estimate $I_2$. By using \eqref{eq0806++} and \eqref{eq0806+}, we deduce that
\begin{equation}\label{050405}
\begin{aligned}
     I_2 &= -\tau \int_{\widetilde{B}_{2R}} \eta^{\tau-1} u^{1-p} (u^p-u_\lambda^p)_+  |\nabla u|^{p-2}\nabla u \cdot \nabla\eta \mathrm{d}x  \\
     &\leq \tau \int_{\widetilde{B}_{2R}\cap\{u \geq u_\lambda\}} \eta^{\tau-1} u^{1-p} (u^p-u_\lambda^p)  |\nabla u|^{p-1} |\nabla\eta| \mathrm{d}x \\
     &\leq \frac{2\tau}{R} \int_{\widetilde{B}_{2R}\setminus \widetilde{B}_R} u |\nabla u|^{p-1} \mathrm{d}x
      \leq  \frac{2\tau C^p_0}{R} \int_{\widetilde{B}_{2R}\setminus \widetilde{B}_R} \frac{1}{|x|^{(\gamma_2+1)(p-1)+\gamma_2}} \mathrm{d}x
      \leq \frac{C}{R^{\beta_1}},
\end{aligned}
\end{equation}
where $\beta_1:=p\gamma_2+p-N>0$ since $\gamma_2>\frac{N-p}{p}$.

\medskip

Now, we estimate $I_3$. By \eqref{eq0806++} and \eqref{050403}, we have
\begin{equation}\label{050406}
\begin{aligned}
     I_3 & = \tau \int_{\widetilde{B}_{2R}} \eta^{\tau-1} u_\lambda^{1-p} (u^p-u_\lambda^p)_+ |\nabla u_\lambda|^{p-2}\nabla u_\lambda \cdot \nabla\eta \mathrm{d}x \\
     &\leq \tau \int_{\widetilde{B}_{2R}} \eta^{\tau-1} u_\lambda^{1-p} (u^p-u_\lambda^p)_+ |\nabla u_\lambda|^{p-1} |\nabla\eta| \mathrm{d}x \\
     & \leq \frac{2\tau}{R} \int_{(\widetilde{B}_{2R}\setminus \widetilde{B}_R)\cap\{u \geq u_\lambda\}} u_\lambda \left(\left(\frac{u}{u_\lambda}\right)^p-1\right) |\nabla u_\lambda|^{p-1} \mathrm{d}x \\
     & 
     \leq \frac{2\tau}{\tilde{c}^p R} \int_{\widetilde{B}_{2R}\setminus \widetilde{B}_R} u |\nabla u_\lambda|^{p-1} \mathrm{d}x
         \leq \frac{2\tau}{\tilde{c}^p R} \left( \int_{\R^N} |\nabla u_\lambda|^p \mathrm{d}x \right)^\frac{p-1}{p} \left( \int_{\widetilde{B}_{2R}\setminus \widetilde{B}_R} u^p \mathrm{d}x \right)^\frac{1}{p}\\
     &\leq \frac{2\tau\cdot C_0}{\tilde{c}^p R} \|\nabla u\|^{p-1}_{L^p (\R^N)} \left( \int_{\widetilde{B}_{2R}\setminus \widetilde{B}_R} \frac{1}{|x|^{p\gamma_2}} \mathrm{d}x  \right)^\frac{1}{p}
     \leq  \frac{C}{R^\frac{\beta_1}{p}}.
\end{aligned}
\end{equation}

\medskip

Finally, we give the estimate of $I_4$, which is one of the key ingredients in our proof. Noting that $|x-y|\leq |x_\lambda-y|$ for every $x,y\in\Sigma_\lambda$ and $u(x)<u_\lambda(x)$ in $B_{\tilde{R}_0}(0_\lambda)$, it follows from Lemma \ref{lm.1}, the definition of $\Sigma_\lambda^\prime$ and the fact that $u^p$ is convex in $u$ that
\begin{equation}\label{050408}
\begin{aligned}
 V_1(x) - V_1(x_\lambda)
       &=\int_{\Sigma_\lambda}\left(\frac{1}{|x-y|^{2p}}- \frac{1}{|x_\lambda-y|^{2p}}\right) \left[u^p(y)-(u_\lambda)^p(y)\right] \mathrm{d}y \\
       &\leq \int_{\Sigma_\lambda}\frac{1}{|x-y|^{2p}}  \left[u^p(y)-(u_\lambda)^p(y)\right]_+ \mathrm{d}y \\
       &\leq p\int_{\Sigma_\lambda^\prime}\frac{u^{p-1}(y) \cdot (u-u_\lambda)_+ (y)}{|x-y|^{2p}} \mathrm{d}y, \qquad \ \forall \,\, x \in\Sigma_\lambda,
\end{aligned}
\end{equation}
which together with the definition of $I_4$ lead to
\begin{equation}\label{050409}
\begin{aligned}
   I_4 &=\int_{\widetilde{B}_{2R}}\left( V_1(x) - V_1(x_\lambda) \right)\eta^\tau (u^p-u^p_\lambda)_+ \mathrm{d}x \\
       &\leq p^{2}\int_{\Sigma_\lambda^\prime}\int_{\Sigma_\lambda^\prime}\frac{u^{p-1}(y) \cdot (u-u_\lambda)_+ (y) \cdot u^{p-1}(x) \cdot (u-u_\lambda)_+ (x)}{|x-y|^{2p}}  \mathrm{d}x\mathrm{d}y \\
       &\leq C \left\| u^{p-1}\cdot (u-u_\lambda)_+ \right\|^2_{L^\frac{N}{N-p}(\Sigma_\lambda^\prime)},
\end{aligned}
\end{equation}
where the last inequality is derived by using the Hardy-Littlewood-Sobolev inequality in Theorem \ref{HLSI}. Now, let us consider $f(t)=\log \left( a+t(b-a) \right)$, where $t\in(0,1)$ and $b\geq a>0$. The Newton-Leibniz formula gives
$$ \log b = \log a + (b-a)\int_0^1 \frac{1}{a+t(b-a)} dt.$$
Since for $t\in(0,1)$, $a\leq a+t(b-a) \leq b$, therefore we get
\begin{equation}\label{log-ineq}
b-a \leq b(\log b -\log a).
\end{equation}
Using \eqref{log-ineq}, H\"{o}lder's inequality and \eqref{eq0806++} in \eqref{050409}, we get
\begin{equation}\label{050410}
\begin{aligned}
   I_4
       &\leq C \left\| u^{p-1}\cdot (u-u_\lambda) \right\|^2_{L^\frac{N}{N-p}\left(\Sigma_\lambda^\prime \cap \{u\geq u_\lambda\}\right)}\\
       &\leq C \left\| u^{p-1} \cdot u \cdot (\log u- \log u_\lambda) \right\|^2_{L^\frac{N}{N-p}\left(\Sigma_\lambda^\prime \cap \{u\geq u_\lambda\}\right)} \\
       &= C \left\| u^\frac{p(N-2p)}{2(N-p)} \cdot u^\frac{Np}{2(N-p)} \cdot (\log u- \log u_\lambda) \right\|^2_{L^\frac{N}{N-p}\left(\Sigma_\lambda^\prime \cap \{u\geq u_\lambda\}\right)} \\
       &\leq C \left( \int_{ \Sigma_\lambda^\prime \cap \{u\geq u_\lambda\} } u^{p^\star} \mathrm{d}x \right)^\frac{N-2p}{N} \int_{ \Sigma_\lambda^\prime \cap \{u\geq u_\lambda\} } u^{p^\star} ( \log u - \log u_\lambda )^2 \mathrm{d}x \\
       &\leq C \left\| u \right\|_{L^{p^\star} (\R^N)}^\frac{p(N-2p)}{N-p} \cdot \int_{ \Sigma_\lambda^\prime \cap \{u\geq u_\lambda\} } \frac{1}{|x|^{\gamma_2 p^\star}} ( \log u - \log u_\lambda )^2 \mathrm{d}x \\
       &= C \left\| u \right\|_{L^{p^\star} (\R^N)}^\frac{p(N-2p)}{N-p} \cdot \int_{ \Sigma_\lambda^\prime \cap \{u\geq u_\lambda\} } \frac{1}{|x|^{\beta_2 - 2\theta+2}} ( \log u - \log u_\lambda )^2 \mathrm{d}x \\
       &\leq C \left\| u \right\|_{L^{p^\star} (\R^N)}^\frac{p(N-2p)}{N-p} \cdot \frac{1}{|\lambda|^{\beta_2}} \cdot \int_{ \Sigma_\lambda^\prime \cap \{u\geq u_\lambda\} }  \frac{1}{|x|^{-2\theta+2}} ( \log u - \log u_\lambda )^2 \mathrm{d}x,
\end{aligned}
\end{equation}
where $2\theta:=-\left[ (\gamma_2+1)(p-2) + 2 \gamma_2 \right]$ and $\beta_2:=\gamma_2 (p^\star-p)-p$. Here we have used the facts that $\gamma_{2}p^\star=\beta_2 - 2\theta+2$, and
\[\beta_2 =\gamma_2 (p^\star-p)-p > \frac{N-p}{p} \cdot \frac{p^2}{N-p}-p=0,\]
since $\gamma_2>(N-p)/p$.

\medskip

In order to estimate the right hand side of \eqref{050410}, we can apply the Caffarelli-Kohn-Nirenberg inequality in $(ii)$ of Theorem \ref{HDSI} with $\tau=q=2$, $a=1$, $\gamma=\theta-1$, $\alpha=\theta$ and
$$ \frac{1}{2}+\frac{\theta-1}{N} =\frac{N+2\theta-2}{2N} =\frac{N-p\gamma_2-p}{2N} <0,$$
and hence, we get (recall that $supp(( \log u - \log u_\lambda )_+\chi_{\Sigma_\lambda^\prime}) \subset\R^N\setminus\{0\}$)
\begin{equation}\label{050411}
\begin{aligned}
   I_4
       &\leq C \left\| u \right\|_{L^{p^\star} (\R^N)}^\frac{p(N-2p)}{N-p} \cdot \frac{1}{|\lambda|^{\beta_2}} \cdot \int_{ \Sigma_\lambda^\prime \cap \{u\geq u_\lambda\} }  |x|^{2\theta-2} ( \log u - \log u_\lambda )^2 \mathrm{d}x\\
       &\leq C \left\| u \right\|_{L^{p^\star} (\R^N)}^\frac{p(N-2p)}{N-p} \cdot \frac{1}{|\lambda|^{\beta_2}} \cdot \int_{ \Sigma_\lambda^\prime \cap \{u\geq u_\lambda\} }  |x|^{2\theta} |\nabla ( \log u - \log u_\lambda )|^2 \mathrm{d}x.
\end{aligned}
\end{equation}

\medskip

Next, in order to continue estimating the right hand side of \eqref{050411}, we will discuss the case $p\geq2$ and the case $1<p<2$ separately.

\smallskip

For $p\geq2$, from \eqref{eq0806++} and \eqref{eq0806+}, we get
\begin{equation}\label{050412}
\begin{aligned}
   I_4
       &\leq C \left\| u \right\|_{L^{p^\star} (\R^N)}^\frac{p(N-2p)}{N-p} \cdot \frac{1}{|\lambda|^{\beta_2}} \cdot \int_{ \Sigma_\lambda^\prime \cap \{u\geq u_\lambda\} }  |x|^{2\theta} |\nabla ( \log u - \log u_\lambda )|^2 \mathrm{d}x \\
       &\leq C \left\| u \right\|_{L^{p^\star} (\R^N)}^\frac{p(N-2p)}{N-p} \cdot \frac{1}{|\lambda|^{\beta_2}} \cdot \int_{ \Sigma_\lambda^\prime \cap \{u\geq u_\lambda\} }  \frac{1}{|x|^{(\gamma_2+1)(p-2) + 2 \gamma_2}} |\nabla ( \log u - \log u_\lambda )|^2 \mathrm{d}x \\
       &\leq C c_0^{-2} \left\| u \right\|_{L^{p^\star} (\R^N)}^\frac{p(N-2p)}{N-p} \cdot \frac{1}{|\lambda|^{\beta_2}}  \int_{ \Sigma_\lambda^\prime \cap \{u\geq u_\lambda\} }  \frac{1}{|x|^{(\gamma_2+1)(p-2)}} u^2 |\nabla ( \log u - \log u_\lambda )|^2 \mathrm{d}x \\
       &\leq C c_0^{-p} \left\| u \right\|_{L^{p^\star} (\R^N)}^\frac{p(N-2p)}{N-p} \cdot \frac{1}{|\lambda|^{\beta_2}}  \int_{ \Sigma_\lambda^\prime \cap \{u\geq u_\lambda\} }  u^2 |\nabla u|^{p-2} |\nabla ( \log u - \log u_\lambda )|^2 \mathrm{d}x \\
       &\leq \frac{C}{|\lambda|^{\beta_2}}  \int_{ \Sigma_\lambda^\prime \cap \{u\geq u_\lambda\} }  u^2 ( |\nabla u|+ |\nabla u_\lambda|)^{p-2} |\nabla ( \log u - \log u_\lambda )|^2 \mathrm{d}x.
\end{aligned}
\end{equation}

\smallskip

As to the case $1<p<2$, noting that $2\theta<0$ and $|x|\geq |x_\lambda|$ for $x\in\Sigma_\lambda$, we obtain $|x|^{2\theta} \leq |x_\lambda|^{2\theta}$ in $\Sigma_\lambda$, which together with \eqref{050411} yield
\begin{equation}\label{0504122}
\begin{aligned}
   I_4
       &\leq C \left\| u \right\|_{L^{p^\star} (\R^N)}^\frac{p(N-2p)}{N-p} \cdot \frac{1}{|\lambda|^{\beta_2}} \cdot \int_{ \Sigma_\lambda^\prime \cap \{u\geq u_\lambda\} }  |x|^{2\theta} |\nabla ( \log u - \log u_\lambda )|^2 \mathrm{d}x \\
       &\leq C \left\| u \right\|_{L^{p^\star} (\R^N)}^\frac{p(N-2p)}{N-p} \cdot \frac{1}{|\lambda|^{\beta_2}} \cdot \int_{ \Sigma_\lambda^\prime \cap \{u\geq u_\lambda\} }  |x_\lambda|^{2\theta} |\nabla ( \log u - \log u_\lambda )|^2 \mathrm{d}x.
\end{aligned}
\end{equation}
Based on the sharp decay estimates \eqref{eq0806++} and \eqref{eq0806+} for $|x|>R_1$ and the definition of $\Sigma^\prime_\lambda=\Sigma_\lambda \setminus B_{\tilde{R}_0}(0_\lambda)$, we can take $A_{R_1,\tilde{R}_0}:= \overline{B_{R_1}(0_\lambda) \setminus B_{\tilde{R}_0}(0_\lambda)} $ and $E_\lambda:=\Sigma_\lambda \setminus B_{R_1}(0_\lambda)$ and derive
\begin{equation}\label{050413}
\begin{aligned}
     &\quad \int_{ \Sigma_\lambda^\prime \cap \{u\geq u_\lambda\} }  |x_\lambda|^{2\theta} |\nabla ( \log u - \log u_\lambda )|^2 \mathrm{d}x \\
     &\leq \left\{ \int_{ E_\lambda \cap \{u\geq u_\lambda\}} +  \int_{ A_{R_1,\tilde{R}_0} \cap \{u\geq u_\lambda\}} \right\}   |x_\lambda|^{2\theta} |\nabla ( \log u - \log u_\lambda )|^2 \mathrm{d}x.
\end{aligned}
\end{equation}
Since the sharp decay estimates \eqref{eq0806++} and \eqref{eq0806+} hold on $E_\lambda$ (which imply $ |\nabla u| + |\nabla u_\lambda| \leq C_0 (|x|^{-(\gamma_2+1)} + |x_\lambda|^{-(\gamma_2+1)})$ and $|u_\lambda(x)| > c_0 |x_\lambda|^{-\gamma_2}$ in $E_\lambda$), and $|x| \geq |x_\lambda|$ in $\Sigma_\lambda$, we have
\begin{equation}\label{050414}
\begin{aligned}
    & \quad \int_{ E_\lambda \cap \{u\geq u_\lambda\}} |x_\lambda|^{2\theta} |\nabla ( \log u - \log u_\lambda )|^2 \mathrm{d}x \\
    & = \int_{ E_\lambda \cap \{u\geq u_\lambda\}} |x_\lambda|^{(\gamma_2+1)(2-p)} |x_\lambda|^{-2 \gamma_2} |\nabla ( \log u - \log u_\lambda )|^2 \mathrm{d}x \\
    & \leq  (2C_0)^{p-2} c_0^{-2} \int_{ E_\lambda \cap \{u\geq u_\lambda\}} u_\lambda^2 \frac{|\nabla ( \log u - \log u_\lambda )|^2}{(|\nabla u| + |\nabla u_\lambda|)^{2-p}} \mathrm{d}x.
\end{aligned}
\end{equation}
In $A_{R_1,\tilde{R}_0}$, it holds that $\tilde{R}_0 \leq |x_\lambda| \leq R_1$ and $|x|\geq |\lambda|>R_{1}$, and since $u\in C_{loc}^{1,\alpha}(\R^N\setminus \{0\})$ for some $0<\alpha<\min\{1,\frac{1}{p-1}\}$, we get that $|\nabla u|+|\nabla u_\lambda|\leq C$ for some $C>0$ on $A_{R_1,\tilde{R}_0}$. Let
$L:=\inf\limits_{A_{R_1,\tilde{R}_0}} u$. Then, by using \eqref{050403} and the fact that $2-p>0$, we get (recall that $2\theta<0$)
\begin{equation}\label{050415}
\begin{aligned}
     & \quad \int_{ A_{R_1,\tilde{R}_0} \cap \{u\geq u_\lambda\}} |x_\lambda|^{2\theta} |\nabla ( \log u - \log u_\lambda )|^2 \mathrm{d}x \\
     & \leq \tilde{R}_0^{2\theta} \int_{ A_{R_1,\tilde{R}_0} \cap \{u\geq u_\lambda\}} \frac{u^2}{L^2} \frac{u_\lambda^2}{u_\lambda^2}  \frac{C}{(|\nabla u| + |\nabla u_\lambda|)^{2-p}} |\nabla ( \log u - \log u_\lambda )|^2 \mathrm{d}x \\
     & \leq \frac{C \tilde{R}_0^{2\theta}}{ \tilde{c}^2 L^2}  \int_{ A_{R_1,\tilde{R}_0} \cap \{u\geq u_\lambda\}} u_\lambda^2  \frac{|\nabla (\log u - \log u_\lambda)|^2}{(|\nabla u| + |\nabla u_\lambda|)^{2-p}}  \mathrm{d}x.
\end{aligned}
\end{equation}
Hence, for $1<p<2$, we have
\begin{equation}\label{050415}
\begin{aligned}
   I_4 &\leq \frac{C}{|\lambda|^{\beta_2}} \int_{ \Sigma_\lambda^\prime \cap \{u\geq u_\lambda\} }  u_\lambda^2  \frac{|\nabla (\log u - \log u_\lambda)|^2}{(|\nabla u| + |\nabla u_\lambda|)^{2-p}}\mathrm{d}x.
\end{aligned}
\end{equation}
This concludes our estimates on $I_{4}$.

\medskip

By substituting the estimates \eqref{05040301+}, \eqref{050404}, \eqref{050405}, \eqref{050406}, \eqref{050412} and \eqref{050415} in \eqref{050402}, we have
\begin{equation}\label{050416}
\begin{aligned}
  &C_p \tilde{c}^p \int_{\widetilde{B}_{2R}\cap\{u \geq u_\lambda\}} \eta^\tau u^2 (|\nabla u|+|\nabla u_\lambda|)^{p-2} |\nabla \log u- \nabla \log u_\lambda|^2 \mathrm{d}x \\
  &\leq \frac{C\tau}{R^{\beta_1}} + \frac{C}{R^\frac{\beta_1}{p}} + \frac{C}{|\lambda|^{\beta_2}}  \int_{ \Sigma_\lambda^\prime \cap \{u\geq u_\lambda\} }  u^2 ( |\nabla u|+ |\nabla u_\lambda|)^{p-2} |\nabla ( \log u - \log u_\lambda )|^2 \mathrm{d}x
  , \,\,\, &\text{if} \,\, p\geq2,
\end{aligned}
\end{equation}
and
\begin{equation}\label{050417}
\begin{aligned}
   & C_p \int_{\widetilde{B}_{2R}\cap\{u \geq u_\lambda\}} \eta^\tau u_\lambda^2 (|\nabla u|+|\nabla u_\lambda|)^{p-2} |\nabla \log u- \nabla \log u_\lambda|^2 \mathrm{d}x \\
   & \leq \frac{C\tau}{R^{\beta_1}} + \frac{C}{R^\frac{\beta_1}{p}} + \frac{C}{|\lambda|^{\beta_2}}\int_{ \Sigma_\lambda^\prime \cap \{u\geq u_\lambda\} }  u_\lambda^2  \frac{|\nabla (\log u - \log u_\lambda)|^2}{(|\nabla u| + |\nabla u_\lambda|)^{2-p}} \mathrm{d}x
   , \,\,\,\,\,\, & \text{if} \,\, 1<p<2,
\end{aligned}
\end{equation}
where $R>0$ can be arbitrarily large and the positive constants $C_p$, $\tilde{c}$ and $C$ are independent of $R$ and $\lambda$. Take $|\lambda|$ sufficiently large such that $\frac{C}{|\lambda|^{\beta_2}} \leq \frac{1}{2}\min\{C_p \tilde{c}^p, C_p\}$. Then, as $R$ goes to $+\infty$, there hold (recall that $\beta_1, \beta_2>0$)
\begin{equation}\label{050418}
\begin{aligned}
  &\quad C_p \tilde{c}^p \int_{ \Sigma_\lambda^\prime \cap \{u\geq u_\lambda\} }  u^2 ( |\nabla u|+ |\nabla u_\lambda|)^{(p-2)} |\nabla ( \log u - \log u_\lambda )|^2 \mathrm{d}x \\
  &= \lim_{R\to\infty} C_p \tilde{c}^p \int_{\widetilde{B}_{R}\cap\{u \geq u_\lambda\}}  u^2 (|\nabla u|+|\nabla u_\lambda|)^{p-2} |\nabla \log u- \nabla \log u_\lambda|^2 \mathrm{d}x \\
  & \leq \frac{C_p \tilde{c}^p}{2}  \int_{ \Sigma_\lambda^\prime \cap \{u\geq u_\lambda\} }  u^2 ( |\nabla u|+ |\nabla u_\lambda|)^{p-2} |\nabla ( \log u - \log u_\lambda )|^2 \mathrm{d}x
  , \,\,\, & \text{if} \,\, p\geq2,
\end{aligned}
\end{equation}
and
\begin{equation}\label{050419}
\begin{aligned}
   &\quad C_p \int_{ \Sigma_\lambda^\prime \cap \{u\geq u_\lambda\} }  u_\lambda^2  \frac{|\nabla (\log u - \log u_\lambda)|^2}{(|\nabla u| + |\nabla u_\lambda|)^{2-p}} \mathrm{d}x \\
   &= \lim_{R\to\infty} C_p \int_{\widetilde{B}_{R}\cap\{u \geq u_\lambda\}}  u_\lambda^2 (|\nabla u|+|\nabla u_\lambda|)^{p-2} |\nabla \log u- \nabla \log u_\lambda|^2 \mathrm{d}x \\
   &\leq \frac{C_p}{2} \int_{ \Sigma_\lambda^\prime \cap \{u\geq u_\lambda\} }  u_\lambda^2  \frac{|\nabla (\log u - \log u_\lambda)|^2}{(|\nabla u| + |\nabla u_\lambda|)^{2-p}} \mathrm{d}x
   , \,\,\,\,\,\, & \text{if} \,\, 1<p<2.
\end{aligned}
\end{equation}
Consequently, in both cases $1<p<2$ and $p\geq2$, it follows from \eqref{050418} and \eqref{050419} that $|\nabla (\log u - \log u_\lambda)|=0$ in $\Sigma_\lambda^\prime \cap \{u\geq u_\lambda\}$. Since $u=u_\lambda$ on $T_\lambda$, then one has $\log u - \log u_\lambda=0$ in $\Sigma_\lambda^\prime \cap \{u\geq u_\lambda\}$. Thus we get $u \leq u_\lambda$ in $\Sigma_\lambda$ for $\lambda<-R_1<0$ with sufficiently large $|\lambda|$, i.e., $\Lambda^- \neq \emptyset$.

\medskip

In almost the same way, we can also prove $\Lambda^+ \neq \emptyset$. From the preceding discussion, letting $\lambda_0^-:=\sup \Lambda^-$ and $\lambda_0^+:=\inf \Lambda^+$, we have already derived $-\infty<\lambda_0^- \leq 0$ and $0\leq\lambda_0^+<+\infty$.

\medskip

{\bf Step 2.} We will prove that $\lambda_0^+=\lambda_0^-=0$.

\smallskip

Assume by contradiction that $\lambda_0^-<0$. Then, arguing as in Step 1, we will obtain a contradiction by proving that $ u\leq u_\lambda $ in $\Sigma_\lambda$ for all $\lambda_0^- \leq \lambda \leq \lambda_0^- + \varepsilon_0<0$ for some $\varepsilon_0>0$ small.

\medskip

Indeed, by continuity, we obtain
$$ u \leq u_{\lambda_0^-} \,\,\, \mbox{in} \,\ \Sigma_{\lambda_0^-},$$
which together with \eqref{eq-v1-1} implies that
$$V_1(x)\leq V_1(x_{\lambda_0^-}), \,\,\,\,\   \forall \,\, x\in \Sigma_{\lambda_0^-}.$$
Since $u, u_{\lambda_0^-} \in C_{loc}^{1,\alpha} (\Sigma_{\lambda_0^-}\setminus\{0_{\lambda_0^-}\})$ for some $0< \alpha < \min\{1,\frac{1}{p-1}\}$ and $|x|\geq |x_{\lambda_0^-}|$ for all $x\in \Sigma_{\lambda_0^-}$, by the strong comparison principle in Lemmas \ref{th3.1} and \ref{th3.2} and the following fact
$$ -\Delta_p u = \mu \frac{1}{|x|^p}u^{p-1}+V_1(x)u^{p-1} \leq \mu \frac{1}{|x_{\lambda_0^-}|^p} u_{\lambda_0^-}^{p-1} + V_1(x_{\lambda_0^-}) u_{\lambda_0^-}^{p-1} = -\Delta_p u_{\lambda_0^-} \,\,\quad \mbox{in} \,\, \Sigma_{\lambda_0^-},$$
we deduce that, either $u \equiv u_{\lambda_0^-}$ or $u < u_{\lambda_0^-}$ in any connected component $\mathcal{C}$ of $\Sigma_{\lambda_0^-}\setminus (Z^{u}_{\lambda_0^-}\cup\{0_{\lambda_0^-}\})$, where $Z^{u}_{\lambda_0^-}=\{ x\in\R^N \mid |\nabla u|=|\nabla u_{\lambda_0^-}|=0 \}$. Moreover, by the sharp upper and lower bounds estimates for solution $u$ near $0$ in \eqref{eq0806}, and choosing $\tilde{R}_0>0$ smaller if necessary (such that at least $\tilde{R}_0<|\lambda^{-}_{0}|$), similar to \eqref{050333}, we can derive that
\begin{equation}\label{05041911}
u<u_{\lambda_0^-}\,\,\,\,\,\mbox{in}\,\,\,B_{\tilde{R}_0} (0_{\lambda_0^-}).
\end{equation}
Thus, we infer that, either $u \equiv u_{\lambda_0^-}$ or $u < u_{\lambda_0^-}$ in any $\mathcal{C}$, where $\mathcal{C}$ is a connected component of $\Sigma_{\lambda_0^-}\setminus Z^{u}_{\lambda_0^-}$.

\smallskip

The sharp lower bounds of $|\nabla u|$ in \eqref{eq0806+++} and \eqref{eq0806+} indicate that the critical set $Z_u=\{ x\in\R^N \mid |\nabla u|=0 \}$ of the solution $u$ is a closed set belonging to $B_{R_1}(0)\setminus B_{R_0}(0)$, i.e,
$$ Z^{u}_{\lambda_0^-} \subset Z_u \subset B_{R_1}(0)\setminus B_{R_0}(0).$$
Meanwhile, it follows from Corollary \ref{re2333} that $|Z_{u}|=0$ (see Remark \ref{re2334}), and from Lemma \ref{lm.7} that $\Omega\setminus Z_u$ is connected for any smooth bounded domain $\Omega\subset\mathbb{R}^{N}$ with connected boundary such that $B_{R_1}(0)\setminus B_{R_0}(0)\subseteq\Omega$. As a consequence, one has that $\mathbb{R}^{N}\setminus Z_u$ is connected. Since $Z_{u}$ is closed and $|Z_{u}|=0$, we can deduce further that $\mathbb{R}^{N}\setminus Z^{u}_{\lambda_{0}^-}$ is connected.

\smallskip

Since $u<u_{\lambda_0^-}$ in $B_{\tilde{R}_0} (0_{\lambda_0^-})$ by \eqref{05041911}, we infer that there exists a least one connected component $\mathcal{C}$ of $\Sigma_{\lambda_0^-}\setminus Z^{u}_{\lambda_0^-}$ such that $u<u_{\lambda_0^-}$ in $\mathcal{C}$. If $\Sigma_{\lambda_0^-}\setminus Z^{u}_{\lambda_0^-}$ has only one connected component, it follows that $u<u_{\lambda_0^-}$ in $\Sigma_{\lambda_0^-}\setminus Z^{u}_{\lambda_0^-}$. If $\Sigma_{\lambda_0^-}\setminus Z^{u}_{\lambda_0^-}$ has at least two different connected components $\mathcal{C}_{1}$ and $\mathcal{C}_{2}$ such that $u=u_{\lambda_0^-}$ in $\mathcal{C}_{1}$ and $u<u_{\lambda_0^-}$ in $\mathcal{C}_{2}$. Then, by symmetry, $\mathcal{C}_{1}\cup R_{\lambda_{0}^-}(\mathcal{C}_{1})\subsetneq \mathbb{R}^{N}\setminus Z^{u}_{\lambda_0^-}$ contains at least one connected component of $\mathbb{R}^{N}\setminus Z^{u}_{\lambda_{0}^-}$, which is absurd. Thus we have proved that
$$ u < u_{\lambda_0^-} \,\,\,\ \mbox{in}  \,\,\ \Sigma_{\lambda_0^-}\setminus Z^{u}_{\lambda_{0}^-},$$
which implies that
\begin{equation}\label{K}
  u < u_{\lambda_0^-} \,\,\,\ \mbox{in}  \,\,\ \Sigma_{\lambda_0^-}\setminus Z_u.
\end{equation}

\smallskip

Recalling that $Z_u\subset B_{R_1}(0)\setminus B_{R_0}(0)$ and $|Z_u|=0$, for any $\sigma>0$ small, we let $Z_u^\sigma$ be an open set containing $Z_u$ such that $|Z_u^\sigma|<\sigma$. Then, for $\delta, \sigma >0$, $\bar{R}>R_1$ and $\lambda\in(\lambda_0^-,0)$, we define
$$ E_{\lambda,\bar{R}} := \Sigma_\lambda \setminus B_{\bar{R}}(0),\,\,\,\
   S_\delta^\lambda := \left((\Sigma_\lambda\setminus\Sigma_{\lambda_0^- -\delta}) \cap B_{\bar{R}}(0) \right) \cup (Z_u^\sigma \cap \Sigma_{\lambda_0^- -\delta}) $$
and
$$ K_\delta:= \overline{\left( B_{\bar{R}}(0) \cap \Sigma_{\lambda_0^- -\delta} \right)} \cap (Z_u^\sigma)^c,$$
where $\delta\leq\bar{\delta}$ with $\bar{\delta}:=\bar{\delta}(\sigma)$ small enough such that $K_\delta\neq\emptyset$.
It is clear that
$$\Sigma_\lambda=E_{\lambda,\bar{R}} \cup S_\delta^\lambda \cup K_\delta, \,\,\,\quad \lambda \in (\lambda_0^-,0).$$
For $\delta\in(0,\bar{\delta})$, \eqref{K} implies that $u<u_{\lambda_0^-}$ in $K_\delta$. Since $K_\delta$ is compact, by the uniform continuity of $u$ and $u_\lambda$, there exists a small $\varepsilon_0 \in (0,|\lambda_0^-|/2)$ such that
\begin{equation}\label{K1}
  u < u_\lambda \,\,\,\,\ \mbox{in} \,\,\ K_\delta
\end{equation}
for any $\lambda\in [\lambda_0^-,\lambda_0^-+\varepsilon_0]$. Moreover, by choosing $\tilde{R}_0$ in \eqref{05041911} smaller if necessary (such that at least $\tilde{R}_0<|\lambda_0^-+\varepsilon_0|$), similar to \eqref{050333} and \eqref{05041911}, we can derive that, for any $\lambda\in [\lambda_0^-,\lambda_0^-+\varepsilon_0]$,
\begin{equation}\label{eq100227}
u < u_\lambda \,\,\,\,\ \mbox{in} \,\,\ B_{\tilde{R}_0}(0_\lambda).
\end{equation}

Let $\Sigma^\prime_\lambda:=\Sigma_\lambda \setminus B_{\tilde{R}_0}(0_\lambda)$ and $\widetilde{B}_r:=B_r(0)\cap \Sigma^\prime_\lambda$ for $r>0$.
For any $\lambda\in [\lambda_0^-,\lambda_0^-+\varepsilon_0]$, as in Step 1, we can choose the following functions
$$ \varphi_{1,\lambda}:=\eta^\tau u^{1-p} (u^p-u_\lambda^p)_+ \chi_{\Sigma_\lambda}  \,\,\ \mbox{and} \,\,\  \varphi_{2,\lambda}:=\eta^\tau u_\lambda^{1-p} (u^p-u_\lambda^p)_+ \chi_{\Sigma_\lambda},$$
where $\eta\in C^\infty_0(B_{2R}(0))$ is a cut-off function with $0\leq \eta\leq 1$, $\eta\equiv1$ on $B_R(0)$ and $|\nabla \eta|\leq 2/R$ for any $R>\bar{R}$. Furthermore, for any $\lambda\in [\lambda_0^-,\lambda_0^-+\varepsilon_0]$, since $\lambda\leq\lambda_0^-+\varepsilon_0<\frac{\lambda_0^-}{2}$, so the lower bound estimate \eqref{050403} in Lemma \ref{a} with $\kappa=-\frac{\lambda_0^-}{2}$ holds. In the same way as Step 1, taking $\varphi_{1,\lambda}$ and $\varphi_{2,\lambda}$ as the test function in \eqref{eq-m-p-1} in \eqref{eq-m-p-2} respectively, and then similar to \eqref{050402}, \eqref{05040301+}, \eqref{050404}, \eqref{050405}, \eqref{050406} and \eqref{050410}, we get that, for any $\lambda\in [\lambda_0^-,\lambda_0^-+\varepsilon_0]$,
\begin{equation}\label{050420}
\begin{aligned}
  &\quad C_p \tilde{c}^p \int_{\widetilde{B}_{2R}\cap\{u \geq u_\lambda\}} \eta^\tau u^2 (|\nabla u|+|\nabla u_\lambda|)^{p-2} |\nabla \log u- \nabla \log u_\lambda|^2 \mathrm{d}x \\
  &\leq \frac{C\tau}{R^{\beta_1}} + \frac{C}{R^\frac{\beta_1}{p}} + C \left\| u \right\|_{L^{p^\star} (\R^N)}^\frac{p(N-2p)}{N-p} \cdot \int_{ \Sigma_\lambda^\prime \cap \{u\geq u_\lambda\} } u^{p^\star} ( \log u - \log u_\lambda )^2 \mathrm{d}x, \,\,\quad & \text{if} \,\, p\geq2,
\end{aligned}
\end{equation}
and
\begin{equation}\label{050421}
\begin{aligned}
   &\quad C_p \int_{\widetilde{B}_{2R}\cap\{u \geq u_\lambda\}} \eta^\tau u_\lambda^2 (|\nabla u|+|\nabla u_\lambda|)^{p-2} |\nabla \log u- \nabla \log u_\lambda|^2 \mathrm{d}x \\
   &\leq \frac{C\tau}{R^{\beta_1}} + \frac{C}{R^\frac{\beta_1}{p}} + C \left\| u \right\|_{L^{p^\star} (\R^N)}^\frac{p(N-2p)}{N-p} \cdot \int_{ \Sigma_\lambda^\prime \cap \{u\geq u_\lambda\} } u^{p^\star} ( \log u - \log u_\lambda )^2 \mathrm{d}x ,\,\,\quad & \text{if} \,\, 1<p\leq2,
\end{aligned}
\end{equation}
where $R>0$ can be arbitrarily large and the positive constants $C_p$, $\tilde{c}$ and $C$ are independent of $R$ and $\lambda$.

\smallskip

Arguing in similar way as the estimates of $I_4$ in Step 1 and replacing $|\lambda|$ with $\bar{R}$ therein, we can derive that, if $p\geq2$,
\begin{equation}\label{050422'}
\int_{E_{\lambda,\bar{R}} \cap \{u\geq u_\lambda\}}u^{p^\star} ( \log u - \log u_\lambda )^2 \mathrm{d}x\leq \frac{C}{\bar{R}^{\beta_2}}  \int_{ E_{\lambda,\bar{R}} \cap \{u\geq u_\lambda\} }  u^2 ( |\nabla u|+ |\nabla u_\lambda|)^{p-2} |\nabla ( \log u - \log u_\lambda )|^2 \mathrm{d}x,
\end{equation}
and if $1<p\leq2$,
\begin{equation}\label{050423'}
\int_{E_{\lambda,\bar{R}} \cap \{u\geq u_\lambda\}} u^{p^\star} ( \log u - \log u_\lambda )^2 \mathrm{d}x\leq \frac{C}{\bar{R}^{\beta_2}} \cdot \int_{ E_{\lambda,\bar{R}} \cap \{u\geq u_\lambda\} }  u_\lambda^2  \frac{|\nabla (\log u - \log u_\lambda)|^2}{(|\nabla u| + |\nabla u_\lambda|)^{2-p}} \mathrm{d}x,
\end{equation}
where $\beta_2>0$ and $C>0$ is independent of $R$ and $\lambda$. Then, due to $\Sigma_\lambda \cap \{u\geq u_\lambda\} = \Sigma^\prime_\lambda \cap \{u\geq u_\lambda\}$, from \eqref{050422'}, \eqref{050423'}, the definitions of $E_{\lambda,\bar{R}}$, $S_\delta^\lambda$ and $K_\delta$,  we get
\begin{equation}\label{050422}
\begin{aligned}
  & \int_{ \Sigma_\lambda^\prime \cap \{u\geq u_\lambda\} } u^{p^\star} ( \log u - \log u_\lambda )^2 \mathrm{d}x\\
  &= \left\{ \int_{E_{\lambda,\bar{R}} \cap \{u\geq u_\lambda\}}+ \int_{ S_\delta^\lambda \cap \{u\geq u_\lambda\}} \right\} u^{p^\star} ( \log u - \log u_\lambda )^2 \mathrm{d}x\\
  &\leq \frac{C}{\bar{R}^{\beta_2}}  \int_{ E_{\lambda,\bar{R}} \cap \{u\geq u_\lambda\} }  u^2 ( |\nabla u|+ |\nabla u_\lambda|)^{p-2} |\nabla ( \log u - \log u_\lambda )|^2 \mathrm{d}x\\
  &+  C \int_{ S_\delta^\lambda \cap \{u\geq u_\lambda\}} u^{p^\star} ( \log u - \log u_\lambda )^2 \mathrm{d}x, \,\,\, & \text{if} \,\, p\geq2,
\end{aligned}
\end{equation}
and
\begin{equation}\label{050423}
\begin{aligned}
    &\int_{ \Sigma_\lambda^\prime \cap \{u\geq u_\lambda\} } \frac{1}{|x|^{\beta^\star-2\alpha+2}} ( \log u - \log u_\lambda )^2 \mathrm{d}x \\
    &=\left \{ \int_{E_{\lambda,\bar{R}} \cap \{u\geq u_\lambda\}}+ \int_{ S_\delta^\lambda \cap \{u\geq u_\lambda\}} \right\} u^{p^\star} ( \log u - \log u_\lambda )^2 \mathrm{d}x\\
    &\leq \frac{C}{\bar{R}^{\beta_2}} \cdot \int_{ E_{\lambda,\bar{R}} \cap \{u\geq u_\lambda\} }  u_\lambda^2  \frac{|\nabla (\log u - \log u_\lambda)|^2}{(|\nabla u| + |\nabla u_\lambda|)^{2-p}} \mathrm{d}x \\
    &+  C \int_{ S_\delta^\lambda \cap \{u\geq u_\lambda\}} u^{p^\star} ( \log u - \log u_\lambda )^2 \mathrm{d}x,\,\,\, & \text{if} \,\, 1<p\leq2
\end{aligned}
\end{equation}
for any $\lambda\in [\lambda_0^-,\lambda_0^-+\varepsilon_0]$ and $\delta\in(0,\bar{\delta})$, where $C>0$ is independent of $R$ and $\lambda$.

\medskip

Next, from \eqref{050422} and \eqref{050423}, we can see that it is necessary for us to estimate
\begin{equation}\label{a0}
  \int_{ S_\delta^\lambda \cap \{u\geq u_\lambda\}} u^{p^\star} ( \log u - \log u_\lambda )^2 \mathrm{d}x.
\end{equation}
Since $u$ is bounded in $S_{\bar{\delta}}^{\lambda_0^- + \varepsilon_0}$, we have
$$ \int_{ S_\delta^\lambda \cap \{u\geq u_\lambda\}} u^{p^\star} ( \log u - \log u_\lambda )^2 \mathrm{d}x \leq C \int_{ S_\delta^\lambda \cap \{u\geq u_\lambda\}} ( \log u - \log u_\lambda )^2 \mathrm{d}x, $$
where $C:=\sup\limits_{S_{\bar{\delta}}^{\lambda_0^- + \varepsilon_0}} u^{p^\star} $.

Next, we will discuss the two different cases $1<p<2$ and $p\geq2$ separately.

\medskip

If $p\geq2$, in order to estimate the integral in $S_\delta^\lambda \cap \{u\geq u_\lambda\}$, we will use the weighted Poincar\'{e} type inequality in Lemma \ref{lm.3}. For $x\in S_\delta^\lambda$, let $v(x):=(\log u-\log u_{\lambda})_+$. By \eqref{K1} and the definition of $v(x)$, we obtain that $v(x)=0$ on $\partial\Sigma_{\lambda} \cup (\Sigma_{\lambda_0^{-}-\delta}\cap B_{\bar{R}})$. Thus we can define $v(x+te_{1}):=0$ for all $x\in \partial\Sigma_{\lambda}\cap B_{\bar{R}}$ and $t>0$. For all $x\in S_\delta^\lambda$, we can write
$$ v(x)=-\int_{0}^{+\infty}\frac{\partial v}{\partial x_1}(x+te_{1}) \mathrm{d}t=-\int_{0}^{\lambda-x_{1}}\frac{\partial v}{\partial x_1}(x+te_{1}) \mathrm{d}t,$$
which in conjunction with integrating on $x_{2},\cdots,x_{N}$ and finite covering theorem imply that, for some $C>0$,
$$ |v(x)|\leq C\int_{S_\delta^\lambda} \frac{|\nabla v(y)|}{|x-y|^{N-1}}\mathrm{d}y, \quad\quad \,\,\,\,\forall \,\, x\in S_\delta^\lambda.$$
Thus $v$ satisfies the condition \eqref{eq10.25.4} with $\Omega=S_\delta^\lambda\subset\R^N\setminus\{0\}$. From Lemma \ref{lm.4}, Corollary \ref{re2333} and Remark \ref{rem0}, one has that $\rho=|\nabla u|^{p-2}$ satisfies the condition \eqref{eq10.25.2}. Thus, by the weighted Poincar\'{e} type inequality in Lemma \ref{lm.3}, we obtain
\begin{align*}
\int_{S_\delta^\lambda} [(\log u - \log u_{\lambda})_+]^2 \mathrm{d}x \leq C (S_\delta^\lambda) \int_{S_\delta^\lambda} (|\nabla u|+|\nabla u_{\lambda}|)^{p-2} |\nabla(\log u - \log u_{\lambda})_+|^2 \mathrm{d}x,
\end{align*}
where $C(S_\delta^\lambda)\to 0$ as $|S_\delta^\lambda|\to 0$. It follows from $u>0$ on $\overline{S_{\bar{\delta}}^{\lambda_0^- + \varepsilon_0}}$ that
\begin{align*}
\int_{S_\delta^\lambda} [(\log u - \log u_{\lambda})_+]^2 \mathrm{d}x \leq \frac{C (S_\delta^\lambda)}{\inf\limits_{S_{\bar{\delta}}^{\lambda_0^- + \varepsilon_0}} u^2} \int_{S_\delta^\lambda} u^2 (|\nabla u|+|\nabla u_{\lambda}|)^{p-2} |\nabla(\log u - \log u_{\lambda})_+|^2 \mathrm{d}x.
\end{align*}
Hence, for $p\geq2$, one has
\begin{equation}\label{eq1002}
\begin{aligned}
&\quad \int_{S_\delta^\lambda \cap \{u\geq u_\lambda\}} u^{p^\star} ( \log u - \log u_\lambda )^2 \mathrm{d}x \\
&\leq C_1(S_\delta^\lambda)  \int_{S_\delta^\lambda \cap \{u\geq u_\lambda\}} u^2 (|\nabla u|+|\nabla u_{\lambda}|)^{p-2} |\nabla(\log u - \log u_{\lambda})|^2 \mathrm{d}x,\quad\,\, \forall \,\,\lambda\in [\lambda_0^-,\lambda_0^-+\varepsilon_0],
\end{aligned}
\end{equation}
where $C_1(S_\delta^\lambda)\to 0$ as $|S_\delta^\lambda|\to 0$.

\medskip

If $1<p<2$, by \eqref{eq100227} and $u\in C_{loc}^{1,\alpha}(\R^N\setminus \{0\})$ for some $0<\alpha<\min\{1,\frac{1}{p-1}\}$, we get
$$(|\nabla u|+|\nabla u_\lambda|)^{2-p}<C \,\,\ \mbox{in} \,\,\ S_\delta^\lambda \setminus B_{\tilde{R}_0}(0_\lambda) \ \left(\supset S_\delta^\lambda \cap \{u\geq u_\lambda\}\right),$$
then it follows from the Poincar\'{e} type inequality that
\begin{equation}\label{eq1002+}
\begin{aligned}
&\int_{S_\delta^\lambda \cap \{u\geq u_\lambda\}} u^{p^\star} ( \log u - \log u_\lambda )^2 \mathrm{d}x
  \leq C\int_{ S_\delta^\lambda \cap \{u\geq u_\lambda\}} ( \log u - \log u_\lambda )^2 \mathrm{d}x \\
&\leq C \cdot C(S_\delta^\lambda) \int_{ S_\delta^\lambda \cap \{u\geq u_\lambda\}} |\nabla(\log u - \log u_{\lambda})|^2 \mathrm{d}x \\
&\leq C\cdot C(S_\delta^\lambda) \int_{S_\delta^\lambda \cap \{u\geq u_\lambda\}} \frac{1}{\inf_{S_{\bar{\delta}}^{\lambda_0^- + \varepsilon_0}} u^2} \cdot \frac{u^2}{u_\lambda^2} \cdot u_\lambda^2 \cdot \frac{|\nabla(\log u - \log u_{\lambda})|^2}{(|\nabla u|+|\nabla u_{\lambda}|)^{2-p}} \mathrm{d}x \\
&\leq C_2(S_\delta^\lambda) \int_{S_\delta^\lambda \cap \{u\geq u_\lambda\}} u_\lambda^2  \frac{|\nabla(\log u - \log u_{\lambda})|^2}{(|\nabla u|+|\nabla u_{\lambda}|)^{2-p}} \mathrm{d}x, \qquad \forall \,\, \lambda\in [\lambda_0^-,\lambda_0^-+\varepsilon_0],
\end{aligned}
\end{equation}
where $C_2(S_\delta^\lambda)\to 0$ as $|S_\delta^\lambda|\to 0$. Thus we get the desired estimates for the integral in \eqref{a0}.

\medskip

Finally, by substituting the estimates \eqref{050422}, \eqref{050423}, \eqref{eq1002} and \eqref{eq1002+} into \eqref{050420} and \eqref{050421}, we get, for any $\lambda\in [\lambda_0^-,\lambda_0^-+\varepsilon_0]$,
\begin{equation*}
\begin{aligned}
  &\quad C_p \tilde{c}^p \int_{\widetilde{B}_{2R}\cap\{u \geq u_\lambda\}} \eta^\tau u^2 (|\nabla u|+|\nabla u_\lambda|)^{p-2} |\nabla \log u- \nabla \log u_\lambda|^2 \mathrm{d}x \\
  &\leq \frac{C\tau}{R^{\beta_1}} + \frac{C}{R^\frac{\beta_1}{p}} + C \left\| u \right\|_{L^{p^\star} (\R^N)}^\frac{p(N-2p)}{N-p} \cdot \int_{ \Sigma_\lambda^\prime \cap \{u\geq u_\lambda\} } u^{p^\star} ( \log u - \log u_\lambda )^2 \mathrm{d}x\\
  &\leq \frac{C\tau}{R^{\beta_1}} + \frac{C}{R^\frac{\beta_1}{p}} + \frac{C}{\bar{R}^{\beta_2}}  \int_{ E_{\lambda,\bar{R}} \cap \{u\geq u_\lambda\} }  u^2 ( |\nabla u|+ |\nabla u_\lambda|)^{p-2} |\nabla ( \log u - \log u_\lambda )|^2 \mathrm{d}x\\
  &+ C_1(S_\delta^\lambda)  \int_{S_\delta^\lambda \cap \{u\geq u_\lambda\}} u^2 (|\nabla u|+|\nabla u_{\lambda}|)^{p-2} |\nabla(\log u - \log u_{\lambda})|^2 \mathrm{d}x, \,\,\, & \text{if} \,\, p\geq2,
\end{aligned}
\end{equation*}
and
\begin{equation*}
\begin{aligned}
   &\quad C_p \int_{\widetilde{B}_{2R}\cap\{u \geq u_\lambda\}} \eta^\tau u_\lambda^2 (|\nabla u|+|\nabla u_\lambda|)^{p-2} |\nabla \log u- \nabla \log u_\lambda|^2 \mathrm{d}x \\
   &\leq \frac{C\tau}{R^{\beta_1}} + \frac{C}{R^\frac{\beta_1}{p}} + C \left\| u \right\|_{L^{p^\star} (\R^N)}^\frac{p(N-2p)}{N-p} \cdot \int_{ \Sigma_\lambda^\prime \cap \{u\geq u_\lambda\} } u^{p^\star} ( \log u - \log u_\lambda )^2 \mathrm{d}x  \\
   &\leq \frac{C\tau}{R^{\beta_1}} + \frac{C}{R^\frac{\beta_1}{p}} +\frac{C}{\bar{R}^{\beta_2}} \cdot \int_{ E_{\lambda,\bar{R}} \cap \{u\geq u_\lambda\} }  u_\lambda^2  \frac{|\nabla (\log u - \log u_\lambda)|^2}{(|\nabla u| + |\nabla u_\lambda|)^{2-p}} \mathrm{d}x \\
   &+ C_2(S_\delta^\lambda) \int_{S_\delta^\lambda \cap \{u\geq u_\lambda\}} u_\lambda^2  \frac{|\nabla(\log u - \log u_{\lambda})|^2}{(|\nabla u|+|\nabla u_{\lambda}|)^{2-p}} \mathrm{d}x,\,\,\ & \text{if} \,\, 1<p<2,
\end{aligned}
\end{equation*}
where $C_i(S_\delta^\lambda)\to 0 \,\ (i=1,2)$ if $|S_\delta^\lambda|\to 0$, and $\tilde{c}$, $C_p$ and $C$ are independent of $R$ and $\lambda$. As a consequence, by choosing $\bar{R}$ sufficiently large and $\varepsilon_0, \bar{\delta}$ small enough such that $$\frac{C}{\bar{R}^{\beta_2}}+C_1(S_\delta^\lambda)<\frac{C_p \tilde{c}^p}{2} \,\,\,\ \mbox{and} \,\,\,\ \frac{C}{\bar{R}^{\beta_2}}+C_2(S_\delta^\lambda)<\frac{C_p}{2}, \,\,\quad \forall \,\, \lambda\in [\lambda_0^-,\lambda_0^-+\varepsilon_0],\,\,\delta \in (0,\bar{\delta}),$$
we obtain
\begin{equation}\label{050428}
\begin{aligned}
  &\quad C_p \tilde{c}^p \int_{ \Sigma_\lambda^\prime \cap \{u\geq u_\lambda\} }  u^2 ( |\nabla u|+ |\nabla u_\lambda|)^{p-2} |\nabla ( \log u - \log u_\lambda )|^2 \mathrm{d}x \\
  &=\lim_{R\to\infty} C_p \tilde{c}^p \int_{\widetilde{B}_{R}\cap\{u \geq u_\lambda\}}  u^2 (|\nabla u|+|\nabla u_\lambda|)^{p-2} |\nabla \log u- \nabla \log u_\lambda|^2 \mathrm{d}x \\
  &\leq \frac{C_p \tilde{c}^p}{2} \int_{ \Sigma_\lambda^\prime \cap \{u\geq u_\lambda\} }  u^2 ( |\nabla u|+ |\nabla u_\lambda|)^{p-2} |\nabla ( \log u - \log u_\lambda )|^2 \mathrm{d}x, \,\,\, & \text{if} \,\, p\geq2,
\end{aligned}
\end{equation}
and
\begin{equation}\label{050429}
\begin{aligned}
   &\quad C_p \int_{ \Sigma_\lambda^\prime \cap \{u\geq u_\lambda\} }  u_\lambda^2  \frac{|\nabla (\log u - \log u_\lambda)|^2}{(|\nabla u| + |\nabla u_\lambda|)^{2-p}} \mathrm{d}x \\
   &=\lim_{R\to\infty} C_p \int_{\widetilde{B}_{R}\cap\{u \geq u_\lambda\}} u_\lambda^2 (|\nabla u|+|\nabla u_\lambda|)^{p-2} |\nabla \log u- \nabla \log u_\lambda|^2 \mathrm{d}x \\
   &\leq \frac{C_p}{2} \int_{ \Sigma_\lambda^\prime \cap \{u\geq u_\lambda\} }  u_\lambda^2  \frac{|\nabla (\log u - \log u_\lambda)|^2}{(|\nabla u| + |\nabla u_\lambda|)^{2-p}} \mathrm{d}x,\,\,\ & \text{if} \,\, 1<p<2
\end{aligned}
\end{equation}
for all $\lambda\in [\lambda_0^-,\lambda_0^-+\varepsilon_0]$. Therefore, $u\leq u_\lambda$ on $\Sigma_\lambda$ for any $\lambda\in [\lambda_0^-,\lambda_0^-+\varepsilon_0]$, which contradicts the definition of $\lambda_0^-$. This proves that $\lambda_0^-=0$, and a similar approach gives $\lambda_0^+=0$.

\medskip

{\bf Step 3.} Conclusion.

From Step 2, we derive the symmetry of $u$ with respect to the hyperplane $\{x\in\mathbb{R}^{N}\mid \,x_{1}=0\}$.

\smallskip

By the rotation invariance of the problem, we can choose any direction $\nu\in\partial B_{1}(0)$ as the $e_{1}$-direction and exploit the moving plane procedure along $\nu$, and hence deduce that $u$ is radially symmetric and radially decreasing about the origin $0$. This shows that $u(x)=u(|x|)$ and $u^\prime(r)\leq0$ for all $r>0$.

\medskip

Next, we will show that
\begin{equation}\label{final}
  u^\prime (r) < 0, \quad \,\,\,\forall \,\, r>0.
\end{equation}
We will prove \eqref{final} by contradiction arguments. We assume on the contrary that there exists $r_0>0$ such that $u^\prime (r_0) = 0$. Since Lemma \ref{re2333} and Remark \ref{re2334} implies that $|Z_u|=0$, one has $u'(r)<0$ for $r\in(0,r_{0})\cup(r_{0},+\infty)$, so $u(r)>u(r_0)$ in $[0,r_0)$. Applying the strong maximum principle and H\"{o}pf's Lemma in Lemma \ref{hopf} to the positive radial solution $W(x):=u(x)-u(r_0)=u(|x|)-u(r_{0})$ of
\begin{align*}
\left\{ \begin{array}{ll} \displaystyle
-\Delta_p W  = \mu\frac{1}{|x|^p}\cdot u^{p-1}(x)+ \int_{\R^N} \frac{u^p(|y|)}{|x-y|^{2p}}\mathrm{d}y \cdot u^{p-1}(x) > 0  \,\,\,\,&\mbox{in}\, \,B_{r_0}, \\
W>0   &\mbox{in}\,\,  B_{r_0},\\
W=0   &\mbox{on}\,\, \partial B_{r_0},
\end{array}
\right.\hspace{1cm}
\end{align*}
we obtain $W^\prime (r_0)=u'(r_{0})<0$, which is absurd. Hence \eqref{final} holds, i.e., $u'(r)<0$ for any $r>0$. This completes our proof of Theorem \ref{th2}.
\end{proof}

\section{Proof of Theorem \ref{gth2}}\label{sc5}
\begin{proof}[Proof of Theorem \ref{gth2}]
For any nonnegative $D^{1,p}(\mathbb{R}^{N})$-weak solution $u$ to \eqref{eq1.3}, we will first show that the regularity results and the sharp asymptotic estimates of $u$ and $|\nabla u|$ in Theorem \ref{gth2}. In fact, we only need to verify that $V_3(x)=\left(|x|^{-\sigma}\ast u^{p_{s,\sigma}}\right)(x)u^{p_{s,\sigma}-p}(x)$ satisfies all the assumptions on $V$ and $u$ in Theorems \ref{th2.1.1}, \ref{th2.1++} and \ref{th2.1}, where $p_{s,\sigma}-p=\frac{(2p-\sigma-s)p}{2(N-p)}$.
\medskip

To this end, we will prove the following key estimates on $V_3$.
\begin{lem} \label{V-3-norm}
Let $u\in D^{1,p}(\R^N)$ be a nonnegative solution to \eqref{eq1.3} with $1<p<N$, $0\leq s<p$, $0\leq\mu<\bar{\mu}$, $p_{s,\sigma}=\frac{(2N-\sigma-s)p}{2(N-p)}$, $s<\sigma$, $0<\sigma+s\leq 2p$, and $V_3(x) \in L^{\frac{N}{p-s}}(\R^N)$. Let $\rho>0$ be small enough such that $ \|V_3(x)\|_{L^\frac{N}{p-s} (B_{\rho}(0))}\leq \epsilon_1$ and $\|V_3(x)\|_{L^\frac{N}{p-s} (\R^N\setminus B_{1/{\rho}}(0))} \leq \epsilon_1$, where $\epsilon_1>0$ is given by Lemma \ref{lm:l-b-2}. Then, for any $l\in\left(0,\min\{p-s,(\sigma-s)/2\}\right)$,
\begin{equation}\label{est-bdd-3-loc}
V_3(x)=\left(|x|^{-\sigma}\ast u^{p_{s,\sigma}}\right)u^{p_{s,\sigma}-p} \in L^{\frac{N}{p-l-s}}_{loc}(\mathbb{R}^{N}\setminus \{0\}),
\end{equation}
and
\begin{equation}\label{est-bdd-3}
\|V_3(x)\|_{L^{\frac{N}{p-l-s}} (B_{2R}(y))} \leq \tilde{C} R^{-l}, \,\,\,\,\quad\,\, \forall \,\, |y|\leq \frac{\rho}{16} \,\,\, \mbox{or} \,\,\, |y|\geq \frac{16}{\rho},
\end{equation}
where $R=|y|/4$ and the positive constant $\tilde{C}$ is independent of $y$.
\end{lem}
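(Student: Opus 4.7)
The plan is to mirror the argument of Lemma \ref{V-1-norm} almost verbatim, with two adjustments: the Riesz kernel is now $|x|^{-\sigma}$ (not $|x|^{-2p}$), and there is an extra pointwise factor $u^{p_{s,\sigma}-p}(x)$ multiplying the convolution. Writing $V_3(x)=K(x)\,u^{p_{s,\sigma}-p}(x)$ with $K(x):=(|x|^{-\sigma}\ast u^{p_{s,\sigma}})(x)$, I would start by choosing exponents $a,b\in(1,\infty)$ with $\tfrac{1}{a}+\tfrac{1}{b}=\tfrac{p-l-s}{N}$, so that H\"older gives
\[
  \|V_3\|_{L^{N/(p-l-s)}(B_{2R}(y))}\le \|K\|_{L^a(B_{2R}(y))}\,\bigl\|u^{p_{s,\sigma}-p}\bigr\|_{L^b(B_{2R}(y))}.
\]
The condition $l<(\sigma-s)/2$ is precisely what allows one to take $a=2N/(\sigma-s-2l_1)$ for some $l_1\in(0,l)$ strictly inside the HLS admissible range, and $l<p-s$ is precisely what keeps $(p_{s,\sigma}-p)b$ finite and above $p^{\star}$.

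Next, following \eqref{060102}, I would split $K=K_1+K_2+K_3$ by restricting the $z$-integration to $B_{R/8}(0)$, to $B_{8R}(0)\setminus B_{R/8}(0)$, and to $\R^N\setminus B_{8R}(0)$, respectively, and then bound $\|K_i\|_{L^a(B_{2R}(y))}$ one at a time. For $K_1$ (respectively $K_3$), the distance bounds $|x-z|\ge R$ for $x\in B_{6R}\setminus B_{2R}$, $z\in B_{R/8}$ (respectively $|x-z|\ge |z|/4$ for $z$ far out) let Minkowski plus H\"older reduce everything to $\|u^{p_{s,\sigma}}\|_{L^{2N/(2N-\sigma-s)}}$, which in turn is controlled by $\|u\|_{L^{p^\star}(\R^N)}^{p_{s,\sigma}}$ via Sobolev, up to the desired $R^{-l}$ scaling. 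For the middle piece $K_2$, I would apply the Hardy--Littlewood--Sobolev inequality of Theorem \ref{HLSI} (with weights $\sigma$, exponents tuned to land in $L^a$) to obtain a bound in terms of $\|u\|_{L^{q_2}(B_{8R}\setminus B_{R/8})}^{p_{s,\sigma}}$ for a suitable $q_2>p^{\star}$, and then invoke Lemma \ref{lm:l-b-2} (valid because of the smallness hypothesis $\|V_3\|_{L^{N/(p-s)}(B_\rho)},\|V_3\|_{L^{N/(p-s)}(\R^N\setminus B_{1/\rho})}\le\epsilon_1$) to trade this localized $L^{q_2}$ norm for $R^{-l'}\|u\|_{L^{p^\star}(\R^N)}^{p_{s,\sigma}}$.

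For the second factor $\bigl\|u^{p_{s,\sigma}-p}\bigr\|_{L^b(B_{2R}(y))}=\|u\|_{L^{(p_{s,\sigma}-p)b}(B_{2R}(y))}^{p_{s,\sigma}-p}$, the key observation is that $(p_{s,\sigma}-p)b>p^\star$ (a direct consequence of $l>0$), so Lemma \ref{lm:l-b-2} applied with $\bar p=(p_{s,\sigma}-p)b$ again converts this into a constant multiple of $R^{-l''}\|u\|_{L^{p^\star}(\R^N)}^{p_{s,\sigma}-p}$ for an appropriate $l''\ge 0$. Multiplying the bounds for $K$ and for $u^{p_{s,\sigma}-p}$ produces $\|V_3\|_{L^{N/(p-l-s)}(B_{2R}(y))}\le \tilde C R^{-l}$, independent of $y$; verifying the arithmetic of $l,l',l''$ is exactly where the hypothesis $l\in(0,\min\{p-s,(\sigma-s)/2\})$ is used. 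Finally, \eqref{est-bdd-3-loc} follows from \eqref{est-bdd-3} together with the Heine--Borel finite covering theorem, exactly as in the last paragraph of the proof of Lemma \ref{V-1-norm}.

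The main obstacle I anticipate is bookkeeping: one must simultaneously satisfy the HLS scaling relation on $K_2$, the Sobolev gap needed for Lemma \ref{lm:l-b-2} on $u^{p_{s,\sigma}-p}$, and the H\"older splitting exponent identity $\tfrac{1}{a}+\tfrac{1}{b}=\tfrac{p-l-s}{N}$, while keeping the total weight of $R$ equal to $-l$ in every one of the three pieces $K_1,K_2,K_3$. The two constraints $l<p-s$ and $l<(\sigma-s)/2$ are sharp here; in particular, if either one failed, the HLS middle piece or the upper integrability of $u^{p_{s,\sigma}-p}$ would break down. No new ingredient beyond Theorem \ref{HLSI}, the Sobolev embedding, and Lemma \ref{lm:l-b-2} is required.
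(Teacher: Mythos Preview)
Your plan is correct and follows the same architecture as the paper's proof: H\"older-split $V_3=K\cdot u^{p_{s,\sigma}-p}$, decompose $K$ into the three annular pieces, handle the near and far pieces by Minkowski plus H\"older, and handle the middle piece by HLS together with Lemma~\ref{lm:l-b-2}. The only substantive difference is in the choice of H\"older exponents. You take $a=2N/(\sigma-s-2l_1)$ with $l_1<l$ strict, which forces $(p_{s,\sigma}-p)b>p^\star$ and therefore a second application of Lemma~\ref{lm:l-b-2} to the factor $u^{p_{s,\sigma}-p}$; the $R$-powers then split as $R^{-l_1}\cdot R^{-(l-l_1)}=R^{-l}$. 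The paper instead chooses the endpoint $l_1=l$, i.e.\ $a=2N/(\sigma-s-2l)$ and $b=2N/(2p-\sigma-s)$, which makes $(p_{s,\sigma}-p)b=p^\star$ \emph{exactly}. The second factor then contributes only the constant $\|u\|_{L^{p^\star}(\R^N)}^{p_{s,\sigma}-p}$ with no $R$-dependence, and the entire $R^{-l}$ comes from the $K$-estimate. This removes one invocation of Lemma~\ref{lm:l-b-2} and all of the ``$l',l''$'' bookkeeping you flag as the main obstacle. Two small corrections to your commentary: the inequality $(p_{s,\sigma}-p)b>p^\star$ is a consequence of $l_1<l$, not of $l>0$; and the role of $l<p-s$ is simply that the target exponent $N/(p-s-l)$ be finite, while $l<(\sigma-s)/2$ is what keeps $a=2N/(\sigma-s-2l)$ in $(1,\infty)$ so that HLS applies to the middle piece.
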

\begin{proof}
It follows from \eqref{060108-gen} that $V_3(x) \in L^{\frac{N}{p-s}}(\R^N)$. Next, we prove \eqref{est-bdd-3}. First, we note that, for any $l\in\left(0,\min\{p-s,(\sigma-s)/2\}\right)$, there exists $q\in(\frac{N}{p-s},\infty)$ such that $q=\frac{N}{p-s-l}$. For any $|y|\leq \frac{\rho}{16}$ or $|y|\geq \frac{16}{\rho}$, let $R=|y|/4$. Now, by H\"{o}lder's inequality, we have
\begin{equation}\label{060109}
\begin{aligned}
     &\quad \|V_3(x)\|_{L^{q}(B_{2R}(y))} \leq \|V_3(x)\|_{L^{\frac{N}{p-s-l}}(B_{6R}(0)\setminus B_{2R}(0))}
         =\|K(x) u^{p_{s,\sigma}-p}(x)\|_{L^{\frac{N}{p-s-l}}(B_{6R}(0)\setminus B_{2R}(0))}\\
     &\leq \left( \int_{B_{6R}(0)\setminus B_{2R}(0)} |K(x)|^{\frac{1}{{\frac{p-s-l}{N}-\frac{p_{s,\sigma}-p}{p^\star}}}} \mathrm{d}x \right)^{\frac{p-s-l}{N}-\frac{p_{s,\sigma}-p}{p^\star}} \left(\int_{B_{6R}(0)\setminus B_{2R}(0)} |u(x)|^{p^\star} \mathrm{d}x \right)^{\frac{p_{s,\sigma}-p}{p^\star}}\\
     &\leq \left( \int_{B_{6R}(0)\setminus B_{2R}(0)} |K(x)|^{\frac{2N}{\sigma-s-2l}} \mathrm{d}x \right)^{\frac{\sigma-s-2l}{2N}} \left\|u \right\|^{p_{s,\sigma}-p}_{L^{p^\star}(\R^N)},
\end{aligned}
\end{equation}
where $K(x)=\left(|x|^{-\sigma} \ast u^{p_{s,\sigma}}\right)(x)$ and $\frac{p_{s,\sigma}-p}{p^\star}=\frac{2p-\sigma-s}{2N}$. For $\|K(x)\|_{L^{\frac{2N}{\sigma-s-2l}}(B_{6R}\setminus B_{2R})}$, we get
\begin{equation}\label{0601092}
\begin{aligned}
      &\quad \|K(x)\|_{L^{\frac{2N}{\sigma-s-2l}}(B_{6R}(0)\setminus B_{2R}(0))}
        = \left\| \int_{\R^N} \frac{u^{p_{s,\sigma}}(z)}{|x-z|^{\sigma}} \mathrm{d}z \right\|_{L^{\frac{2N}{\sigma-s-2l}}(B_{6R}(0)\setminus B_{2R}(0))} \\
      &\leq \left\| \int_{B_{R/8}(0)} \frac{u^{p_{s,\sigma}}(z)}{|x-z|^{\sigma}}\mathrm{d}z \right\|_{L^{\frac{2N}{\sigma-s-2l}}(B_{6R}(0)\setminus B_{2R}(0))}
            + \left\| \int_{B^c_{8R}(0)} \frac{u^{p_{s,\sigma}}(z)}{|x-z|^{\sigma}} \mathrm{d}z \right\|_{L^{\frac{2N}{\sigma-s-2l}}(B_{6R}(0)\setminus B_{2R}(0))} \\
      &+ \left\| \int_{B_{8R}(0)\setminus B_{R/8}(0)} \frac{u^{p_{s,\sigma}}(z)}{|x-z|^{\sigma}} \mathrm{d}z \right\|_{L^{\frac{2N}{\sigma-s-2l}}(B_{6R}(0)\setminus B_{2R}(0))}\\
     &=:J_1 + J_2 +J_3.
\end{aligned}
\end{equation}
Since $|x-z|\geq 2R-R/8 >R$ for $x\in B_{6R}(0)\setminus B_{2R}(0)$ and $z\in B_{R/8}(0)$, by the Minkowski inequality and H\"{o}lder's inequality, we obtain that
\begin{equation}\label{0601093}
\begin{aligned}
      J_1 &= \left\| \int_{B_{R/8}(0)} \frac{u^{p_{s,\sigma}}(z)}{|x-z|^{\sigma}}\mathrm{d}z \right\|_{L^{\frac{2N}{\sigma-s-2l}}(B_{6R}(0)\setminus B_{2R}(0))} \\
      &\leq \int_{B_{R/8}(0)} \left( \int_{B_{6R}(0)\setminus B_{2R}(0)} \left( \frac{u^{p_{s,\sigma}}(z)}{|x-z|^{\sigma}} \right)^{\frac{2N}{\sigma-s-2l}} \mathrm{d}x \right)^{\frac{\sigma-s-2l}{2N}} \mathrm{d}z \\
      &\leq C R^{-\sigma}  |B_{6R}(0)\setminus B_{2R}(0)|^{\frac{\sigma-s-2l}{2N}}  \int_{B_{R/8}(0)} u^{p_{s,\sigma}}(z) \mathrm{d}z \\
      &\leq C R^{-\sigma+\frac{\sigma-s-2l}{2}} |B_{R/8}(0)|^{1-\frac{p_{s,\sigma}}{p^\star}}  \left(\int_{B_{R/8}(0)} u^{p^\star} \mathrm{d}z\right)^{\frac{p_{s,\sigma}}{p^\star}} \leq C R^{-l} \|u\|^{p_{s,\sigma}}_{L^{p^\star}(\R^N)},
\end{aligned}
\end{equation}
where $1- \frac{p_{s,\sigma}}{p^\star}=\frac{\sigma+s}{2N}$ and $C>0$ is independent of $y$. Due to $|x-z|>|z|-6R\geq|z|-\frac{3}{4}|z|=\frac{|z|}{4}$ for any $x\in B_{6R}(0)\setminus B_{2R}(0)$ and $z\in\left(B_{8R}(0)\right)^c$, we get
\begin{equation}\label{0601094}
\begin{aligned}
J_2 &= \left\| \int_{B^c_{8R}(0)} \frac{u^{p_{s,\sigma}}(z)}{|x-z|^{\sigma}} \mathrm{d}z \right\|_{L^{\frac{2N}{\sigma-s-2l}}(B_{6R}(0)\setminus B_{2R}(0))} \\
      &\leq \int_{B^c_{8R}(0)} \left( \int_{B_{6R}(0)\setminus B_{2R}(0)} \left( \frac{u^{p_{s,\sigma}}(z)}{|x-z|^{\sigma}} \right)^{\frac{2N}{\sigma-s-2l}} \mathrm{d}x \right)^{\frac{\sigma-s-2l}{2N}} \mathrm{d}z \\
      &\leq C |B_{6R}(0)\setminus B_{2R}(0)|^{\frac{\sigma-s-2l}{2N}} \int_{B^c_{8R}} \frac{u^{p_{s,\sigma}}(z)}{|z|^\sigma} \mathrm{d}z \\
      &\leq C R^{\frac{\sigma-s-2l}{2}}  \left( \int_{B^c_{8R}}|z|^{-\sigma \cdot \frac{p^\star}{p^\star-p_{s,\sigma}} } \mathrm{d}z \right)^{1-\frac{p_{s,\sigma}}{p^\star}}  \left(\int_{B^c_{8R}} u^{p^\star} \mathrm{d}z\right)^{\frac{p_{s,\sigma}}{p^\star}} \leq C R^{-l} \|u\|^{p_{s,\sigma}}_{L^{p^\star}(\R^N)},
\end{aligned}
\end{equation}
where $C>0$ independent of $y$.
\medskip

By Hardy-Littlewood-Sobolev inequality in Theorem \ref{HLSI} and Lemma \ref{lm:l-b-2} with $\bar{p}=\frac{(2N-\sigma-s) p^\star}{2N-\sigma-s-2l}$, recalling that $R=|y|/4$, we deduce that, for any $|y|\leq \rho/16$ or $|y|\geq 16/\rho$,
\begin{equation}\label{0601095}
\begin{aligned}
     J_3  & \leq \left\| \int_{B_{8R}(0)\setminus B_{R/8}(0)} \frac{u^{p_{s,\sigma}}(z)}{|x-z|^{\sigma}} \mathrm{d}z \right\|_{L^{\frac{2N}{\sigma-s-2l}}(\R^N)} \\
          &\leq C \left\| u^{p_{s,\sigma}} \right\|_{L^{\frac{2N}{2N-\sigma-s-2l}}(B_{8R}(0)\setminus B_{R/8}(0))} = C \left\| u \right\|^{p_{s,\sigma}}_{L^{\frac{2N}{2N-\sigma-s-2l} \cdot \frac{(2N-\sigma-s)p}{2(N-p)}}(B_{8R}(0)\setminus B_{R/8}(0))} \\
          &\mathop{\leq}_{\text{Lemma \,3.3}} C \left( R^{N \cdot \frac{2N-\sigma-s-2l}{2N-\sigma-s} \cdot \frac{N-p}{Np} - \frac{N-p}{p}} \right)^{\frac{(2N-\sigma-s)p}{2(N-p)}} \|u\|^{p_{s,\sigma}}_{L^{p^\star}(B_{16R}\setminus B_{R/16})} \leq C R^{-l} \|u\|^{p_{s,\sigma}}_{L^{p^\star}(\R^N)},
\end{aligned}
\end{equation}
where $C>0$ is independent of $y$. Then, by substituting \eqref{0601093}, \eqref{0601094} and \eqref{0601095} into \eqref{0601092}, we derive \eqref{est-bdd-3}.

\smallskip

Next, we show \eqref{est-bdd-3-loc}. Since $V_{3}(x) \in L^{\frac{N}{p-s}}(\R^N)$, it follows from Lemma \ref{lm:l-b-r} that $u\in L^{r}_{loc}(\mathbb{R}^{N}\setminus\{0\})$ for any $0<r<+\infty$. Then, for any $y\in\mathbb{R}^{N}\setminus\{0\}$ and $R=\frac{|y|}{4}$, from \eqref{0601092}, \eqref{0601093}, \eqref{0601094} and the estimate $J_3 \leq  C \left\| u \right\|^{p_{s,\sigma}}_{L^{\frac{2N}{2N-\sigma-s-2l} \cdot \frac{(2N-\sigma-s)p}{2(N-p)}}(B_{8R}(0)\setminus B_{R/8}(0))}$ indicated by \eqref{0601095}, we get
\begin{equation*}
  \|V_3\|_{L^{\frac{N}{p-s-l}}(B_{2R}(y))}\leq C R^{-l} \|u\|^{p_{s,\sigma}}_{L^{p^\star}(\R^N)}+C \left\| u \right\|^{p_{s,\sigma}}_{L^{\frac{(2N-\sigma-s) p^\star}{2N-\sigma-s-2l}}(B_{8R}(0)\setminus B_{R/8}(0))}<+\infty,
\end{equation*}
which combined with the Heine-Borel finite covering theorem yield that $V_{3}\in L^{\frac{N}{p-s-l}}_{loc}(\mathbb{R}^{N}\setminus \{0\})$, i.e., \eqref{est-bdd-3-loc} holds. This completes the proof of Lemma \ref{V-3-norm}.
\end{proof}

Now we continue the proof of Theorem \ref{gth2}.

\smallskip

From \eqref{060108-gen}, we have $0\leq V_3(x) \in L^{\frac{N}{p-s}}(\mathbb{R}^{N})$. Let $\rho>0$ be small enough such that $ \|V_3\|_{L^\frac{N}{p-s} (B_{\rho}(0))}\leq \min\{\epsilon_1,\epsilon_2\}$ and $\|V_3\|_{L^\frac{N}{p-s} (\R^N\setminus B_{1/{\rho}}(0))} \leq \min\{\epsilon_1,\epsilon_2\}$, where $\epsilon_1>0$ and $\epsilon_2>0$ are given by Lemmas \ref{lm:l-b-2} and \ref{lm:l-b-3} respectively. Then, from Theorems \ref{th2.1.1} and \ref{th2.1++} and Lemma \ref{V-3-norm}, we get that $u\in C^{1,\alpha}(\R^N\setminus \{0\})\cap L^{\infty}_{loc}(\mathbb{R}^{N}\setminus \{0\})$ for some $0<\alpha<\min\{1,\frac{1}{p-1}\}$, and there exists positive constant $C=C(N,p,s,\mu,\rho,u,\tilde{C})>0$  such that
$$ u(y) \leq C |y|^{-\frac{N-p}{p}+\tau_1} \leq C |y|^{-\frac{N-p}{p}+\tau}, \,\,\,\,\,\,\,\quad \forall \,\, |y|\leq \frac{\rho}{16},$$
and
$$ u(y) \leq C |y|^{-\frac{N-p}{p}-\frac{\tau_2}{2}} \leq C |y|^{-\frac{N-p}{p}-\tau}, \,\,\,\,\,\,\,\quad \forall \,\, |y|\geq \frac{16}{\rho},$$
where $\tau=\frac{1}{3}\min\left\{\frac{\sigma-s}{p_{s,\sigma}},\tau_1,\tau_2\right\}$, $\tau_1, \tau_2>0$ are given by Lemma \ref{lm:l-b-3} and $\tilde{C}$ is given by \eqref{est-bdd-3} in Lemma \ref{V-3-norm}.

\medskip

One can verify that $N>p_{s,\sigma}\left( \frac{N-p}{p}-\tau \right)>N-\sigma$ and $N>p_{s,\sigma}\left( \frac{N-p}{p}+\tau \right)>N-\sigma$. Then, using (ii) of Lemma \ref{lm.6} with $\nu=\sigma$, $g(x)=u^{p_{s,\sigma}}(x)$, $\beta_1=p_{s,\sigma}\left( \frac{N-p}{p}-\tau \right)$ and $\beta_2=p_{s,\sigma}\left( \frac{N-p}{p}+\tau \right)$, we have, for $|x|<\rho/16$,
\begin{equation}\label{V-3-in-1}
\begin{aligned}
V_3(x)=\left(|x|^{-\sigma}\ast u^{p_{s,\sigma}}\right)(x)u^{p_{s,\sigma}-p}(x)\leq C|x|^{-\beta_1-\sigma+N}|x|^{-(p_{s,\sigma}-p)\left(\frac{N-p}{p}-\tau \right)}=:C|x|^{-\tilde{\beta}_1},
\end{aligned}
\end{equation}
and for $|x|>16/\rho$,
\begin{equation}\label{V-3-ex-2}
\begin{aligned}
V_3(x)=\left(|x|^{-\sigma}\ast u^{p_{s,\sigma}}\right)(x)u^{p_{s,\sigma}-p}(x)\leq C|x|^{-\beta_2-\sigma+N}|x|^{-(p_{s,\sigma}-p)\left(\frac{N-p}{p}+\tau \right)}=:C|x|^{-\tilde{\beta}_2},
\end{aligned}
\end{equation}
where
\[\tilde{\beta}_1=\beta_1 +\sigma-N + (p_{s,\sigma}-p)\left(\frac{N-p}{p}-\tau \right) <\frac{\sigma-s}{2}-p_{s,\sigma}\tau+\frac{2p-\sigma-s}{2} <p-s,\]
and
\[\tilde{\beta}_2=\beta_2 +\sigma-N + (p_{s,\sigma}-p)\left(\frac{N-p}{p}+\tau \right) >\frac{\sigma-s}{2}+p_{s,\sigma}\tau+\frac{2p-\sigma-s}{2} >p-s.\]

\smallskip

Thus Theorems \ref{th2.1.1} and \ref{th2.1} can be applied to the general nonlocal quasi-linear equation \eqref{eq1.3}. Consequently, the positive solution $u$ to \eqref{eq1.3} satisfies the regularity result in Theorem \ref{th2.1.1} and the sharp asymptotic estimates \eqref{eq0806}-\eqref{eq0806+} in Theorem \ref{th2.1}.

\medskip

Finally, we will prove the results of radial symmetry and monotonicity in Theorem \ref{gth2}. In fact, in the proof of Theorem \ref{th2} in Section \ref{sc4}, noting that $u < u_\lambda$ in $B_{\tilde{R}_0}(0_\lambda)$ and hence $\Sigma_\lambda\cap \{u\geq u_\lambda\}=\Sigma_\lambda'\cap \{u\geq u_\lambda\}$ and $\Sigma_\lambda\cap B_{2R}\cap \{u\geq u_\lambda\}=\Sigma_\lambda^\prime\cap B_{2R}\cap \{u\geq u_\lambda\}=\widetilde{B}_{2R}\cap \{u\geq u_\lambda\}$, we can see clearly from \eqref{050410}, \eqref{050420} and \eqref{050421} that, the crucial inequality in the estimate of the key integral term $I_{4}$ is the following: for any $\lambda<-\kappa<0$,
\begin{equation}\label{eqmv-I4-1}
\begin{aligned}
   I_4 &=\int_{\widetilde{B}_{2R}}\left( V_1(x) - V_1(x_\lambda) \right)\eta^\tau (u^p-u^p_\lambda)_+ \mathrm{d}x \\
       &\leq C\int_{ \Sigma_\lambda^\prime \cap \{u\geq u_\lambda\} } u^{p^\star} ( \log u - \log u_\lambda )^2 \mathrm{d}x
       =C \int_{\Sigma_\lambda} u^{p^\star} \left[(\log u - \log u_\lambda)_+ \right]^2 \mathrm{d}x, \ \ \quad \ \forall \,\, R>0,
\end{aligned}
\end{equation}
where $C>0$ may depend on $\kappa>0$ but is independent of $R$ and $\lambda$. Moreover, in \eqref{050410}, we deduced further from \eqref{eqmv-I4-1} that, for $\lambda<-R_1$,
\begin{equation}\label{eqmv-I4-2}
\begin{aligned}
   I_4 \leq C \frac{1}{|\lambda|^{\beta_2}} \int_{ \Sigma_\lambda^\prime \cap \{u\geq u_\lambda\} }  \frac{1}{|x|^{-2\theta+2}} ( \log u - \log u_\lambda )^2 \mathrm{d}x,
\end{aligned}
\end{equation}
where $\beta_2=\gamma_2 (p^\star-p)-p>0$ and $2\theta=-\left[ (\gamma_2+1)(p-2)+2\gamma_2 \right]$, $R_1$ is the radius given by \eqref{eq0806++} and \eqref{eq0806+}, and $C>0$ is independent of $R$ and $\lambda$.

\medskip

Keeping these key ingredients in the proof of Theorem \ref{th2} in Section \ref{sc4} in mind, in order to complete the proof of Theorem \ref{gth2}, we define
\begin{equation}\label{eqmv3-Ic-1}
\begin{aligned}
   \hat{I}_4:&=\int_{\widetilde{B}_{2R}}\left( \bar{V}_3(x) - \bar{V}_3(x_\lambda) \right)\eta^{\tilde{\tau}}(u^p-u^p_\lambda)_+ \mathrm{d}x \\
     &=\int_{\Sigma_\lambda\cap B_{2R}} \left(\bar{V}_3(x)-\bar{V}_3(x_\lambda)\right) \eta^{\tilde{\tau}}(u^p-u^p_\lambda)_+ \mathrm{d}x,
\end{aligned}
\end{equation}
where $\bar{V}_3(x):=|x|^{-s}\left(|x|^{-\sigma}\ast u^{p_{s,\sigma}}\right)(x)u^{p_{s,\sigma}-p}(x)$, $\tilde{\tau} > \max\{2,p\}$ and $\eta \in C_0^\infty (B_{2R}(0))$ is a cut-off function such that $0\leq\eta\leq 1$, $\eta=1$ in $B_R(0)$ and $|\nabla \eta|\leq 2/R$. One can see clearly from \eqref{eqmv-I4-1} that the main key point for us is to prove, for any $\lambda<-\kappa<0$,
\begin{equation}\label{eqmv3-15-1}
\begin{aligned}
   \hat{I}_4&=\int_{\Sigma_\lambda\cap B_{2R}} \left(\bar{V}_3(x)-\bar{V}_3(x_\lambda)\right)\eta^{\tilde{\tau}}(u^p-u^p_\lambda)_+ \mathrm{d}x \\
   &\leq C \int_{\Sigma_\lambda} u^{p^\star} \left[(\log u - \log u_\lambda)_+ \right]^2 \mathrm{d}x + C \int_{\Sigma_\lambda} K(x) \frac{ u^{p_{s,\sigma}}}{|x|^s} \left[(\log u - \log u_\lambda)_+\right]^2 \mathrm{d}x, \,\,\,\,\, \forall \,\, R>0,
\end{aligned}
\end{equation}
where $K(x)=|x|^{-\sigma}*u^{p_{s,\sigma}}$, and $C>0$ may depend on $\kappa>0$ but is independent of $R$ and $\lambda$. In order to prove \eqref{eqmv3-15-1}, we need the following Lemma.
\begin{lem}\label{v3-int-esta}
Let $u\in D^{1,p}(\R^N)$ be a nonnegative solution to \eqref{eq1.3} with $1<p<N$, $0\leq s<p$, $0\leq\mu<\bar{\mu}$, $p_{s,\sigma}=\frac{(2N-\sigma-s)p}{2(N-p)}$, $s<\sigma$ and $0<\sigma+s\leq 2p$. Assume $\kappa>0$. Then, for any $R>0$,
\begin{equation}\label{eqmv-16}
\begin{aligned}
   &\quad \int_{\Sigma_\lambda\cap B_{2R}}  \left(\bar{V}_3(x)-\bar{V}_3(x_\lambda)\right) (u^p-u^p_\lambda)_+ \mathrm{d}x \\
   &\leq C \int_{\Sigma_\lambda} u^{p^\star} \left[(\log u - \log u_\lambda)_+ \right]^2 \mathrm{d}x \\
   &\  + C \int_{\Sigma_\lambda} K(x) \frac{ u^{p_{s,\sigma}}}{|x|^s} \left[(\log u - \log u_\lambda)_+\right]^2 \mathrm{d}x, \,\,\,\,\quad \forall \,\, \lambda<-\kappa<0,
\end{aligned}
\end{equation}
where $\bar{V}_3(x)=|x|^{-s} \left(|x|^{-\sigma}\ast u^{p_{s,\sigma}}\right)u^{p_{s,\sigma}-p}$ and $C>0$ is independent of $R$ and $\lambda$.
\end{lem}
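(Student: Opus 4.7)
The plan is to split $\bar V_3(x)-\bar V_3(x_\lambda)$ via the algebraic identity
\[
|x|^{-s}K(x)A-|x_\lambda|^{-s}K(x_\lambda)B=|x|^{-s}K(x)(A-B)+\bigl(|x|^{-s}K(x)-|x_\lambda|^{-s}K(x_\lambda)\bigr)B,
\]
where $A:=u^{p_{s,\sigma}-p}(x)$ and $B:=u_\lambda^{p_{s,\sigma}-p}(x)$. This produces two pieces $T_1$ and $T_2$ which, after multiplication by $(u^p-u_\lambda^p)_+$ and integration over $\Sigma_\lambda\cap B_{2R}$, will be shown to account for the two terms on the right of \eqref{eqmv-16}, respectively. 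Throughout the estimate I restrict to $\Sigma_\lambda\cap B_{2R}\cap\{u\geq u_\lambda\}$, outside of which the factor $(u^p-u_\lambda^p)_+$ vanishes, and use the crucial consequence of $\sigma+s\leq 2p$, namely $p_{s,\sigma}\geq p$, which gives $A\geq B\geq 0$ on the integration domain.

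The piece $T_1$ is handled directly. Applying the log inequality \eqref{log-ineq} separately to $u^{p_{s,\sigma}-p}$ and to $u^p$ yields
\[
(A-B)(u^p-u_\lambda^p)_+\leq (p_{s,\sigma}-p)\,p\,u^{p_{s,\sigma}}(x)\bigl[(\log u-\log u_\lambda)_+(x)\bigr]^2
\]
(with the convention that the left side vanishes when $p_{s,\sigma}=p$). Multiplying by $|x|^{-s}K(x)$ and integrating over $\Sigma_\lambda\cap B_{2R}$ delivers exactly the second term of \eqref{eqmv-16}.

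The harder piece is $T_2$. I would first exploit $|x|^{-s}\leq|x_\lambda|^{-s}$ on $\Sigma_\lambda$ to discard a nonpositive contribution, arriving at the pointwise majorization
\[
\bigl(|x|^{-s}K(x)-|x_\lambda|^{-s}K(x_\lambda)\bigr)_+\leq |x|^{-s}\bigl(K(x)-K(x_\lambda)\bigr)_+.
\]
Then Lemma \ref{lm.1} (applied with kernel $|\cdot|^{-\sigma}$ and density $u^{p_{s,\sigma}}$) gives $(K(x)-K(x_\lambda))_+\leq\int_{\Sigma_\lambda}|x-y|^{-\sigma}(u^{p_{s,\sigma}}(y)-u_\lambda^{p_{s,\sigma}}(y))_+\mathrm{d}y$, into which I insert the log inequality. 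Combined with $B\leq A=u^{p_{s,\sigma}-p}(x)$ and $(u^p-u_\lambda^p)_+\leq p\,u^p(\log u-\log u_\lambda)_+$, this yields
\[
T_2\leq C\int_{\Sigma_\lambda}\!\int_{\Sigma_\lambda}\frac{|x|^{-s}u^{p_{s,\sigma}}(x)u^{p_{s,\sigma}}(y)(\log u-\log u_\lambda)_+(x)(\log u-\log u_\lambda)_+(y)}{|x-y|^\sigma}\mathrm{d}x\mathrm{d}y.
\]

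At this point the double weighted Hardy--Littlewood--Sobolev inequality (Theorem \ref{HLSI+}) applied with $\alpha=s$, $\beta=0$ and symmetric exponents $r=t=\frac{2N}{2N-\sigma-s}$ gives $T_2\leq C\|u^{p_{s,\sigma}}(\log u-\log u_\lambda)_+\|_{L^r(\Sigma_\lambda)}^2$; the admissibility conditions reduce to $0\leq s<\sigma$, which is assumed. Since $p_{s,\sigma}\cdot r=p^\star$ by direct computation, a H\"older split inside the $L^r$ norm using exponents $r_1=\frac{2N}{N-\sigma-s}$ and $r_2=2$ (for which $(p_{s,\sigma}-p^\star/2)r_1=p^\star$) produces
\[
T_2\leq C\|u\|_{L^{p^\star}}^{(N-\sigma-s)p/(N-p)}\int_{\Sigma_\lambda}u^{p^\star}(\log u-\log u_\lambda)_+^2\mathrm{d}x,
\]
which is the first term of \eqref{eqmv-16} after absorbing the finite factor $\|u\|_{L^{p^\star}}^{(N-\sigma-s)p/(N-p)}$ into $C$. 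The main obstacle I anticipate is tracking the H\"older and double weighted HLS exponents so that the powers of $u$ and of the logarithm simultaneously match the target right-hand side, and verifying the full admissibility range of the double weighted HLS inequality; the bookkeeping works because the definition of $p_{s,\sigma}$ is precisely engineered so that $p_{s,\sigma}r=p^\star$.
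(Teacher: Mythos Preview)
Your argument is correct and takes a somewhat different, cleaner route than the paper. The paper first applies Lagrange's mean value theorem to write $(u^p-u_\lambda^p)_+\leq C\,u^{p-1}(u-u_\lambda)_+$, and only afterwards splits the remaining factor into $\hat I_1=\int K(x)|x|^{-s}\bigl(u^{p_{s,\sigma}-p}-u_\lambda^{p_{s,\sigma}-p}\bigr)u^{p-1}(u-u_\lambda)_+$ and $\hat I_2=\int\bigl(K(x)-K(x_\lambda)\bigr)u_\lambda^{p_{s,\sigma}-p}|x|^{-s}u^{p-1}(u-u_\lambda)_+$. For $\hat I_1$ the paper then uses Lagrange once more and is forced into a case split $p_{s,\sigma}\lessgtr 2$, invoking the ratio bound $u_\lambda/u\geq\tilde c$ of Lemma~\ref{a} when $p_{s,\sigma}<2$; for $\hat I_2$ the paper again uses Lemma~\ref{a} (to control $u^{p-1}/u_\lambda^{p-1}\leq\tilde c^{1-p}$) before reaching the same double integral and double weighted HLS step that you use. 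Your decomposition instead keeps $(u^p-u_\lambda^p)_+$ intact and applies the logarithmic inequality \eqref{log-ineq} directly to each monotone power, together with the simple observation $B\leq A$ (valid because $p_{s,\sigma}\geq p$). This eliminates both the case analysis and every appeal to Lemma~\ref{a}; as a bonus your constant $C$ is actually independent of $\kappa$, so the restriction $\lambda<-\kappa$ becomes cosmetic in your version. The endgame on $T_2$ --- Lemma~\ref{lm.1}, the double weighted HLS inequality with $\alpha=s$, $\beta=0$, $r=t=\tfrac{2N}{2N-\sigma-s}$, and the H\"older split using $p_{s,\sigma}r=p^\star$ --- is identical to the paper's treatment of $\hat I_2$ in \eqref{eqmv-25}.
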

\begin{proof}
From the fact that $|x|>|x_\lambda|$ in $\Sigma_\lambda$ and Lagrange's mean value Theorem, we have
\begin{equation}\label{eqmv-21}
\begin{aligned}
       &\quad \int_{\Sigma_\lambda\cap B_{2R}}\left[ \bar{V}_3(x) - \bar{V}_3(x_\lambda) \right] (u^p-u^p_\lambda)_+ \mathrm{d}x \\
       &\leq C \int_{\Sigma_\lambda\cap B_{2R}}\left[K(x) \frac{u^{p_{s,\sigma}-p}}{|x|^s} - K(x_\lambda) \frac{u_\lambda^{p_{s,\sigma}-p}}{|x_\lambda|^s} \right] u^{p-1} (u - u_\lambda)_+ \mathrm{d}x \\
       &\leq C \int_{\Sigma_\lambda\cap B_{2R}}\left[K(x) u^{p_{s,\sigma}-p} - K(x_\lambda) u_\lambda^{p_{s,\sigma}-p} \right]  \frac{1}{|x|^s} u^{p-1} (u - u_\lambda)_+ \mathrm{d}x \\
       &\leq C \int_{\Sigma_\lambda\cap B_{2R}} K(x) \left[ u^{p_{s,\sigma}-p} - u_\lambda^{p_{s,\sigma}-p} \right]  \frac{1}{|x|^s} u^{p-1} (u - u_\lambda)_+ \mathrm{d}x \\
              &+ C \int_{\Sigma_\lambda\cap B_{2R}}\left[K(x)- K(x_\lambda) \right]  u_\lambda^{p_{s,\sigma}-p} \frac{1}{|x|^s} u^{p-1} (u - u_\lambda)_+ \mathrm{d}x =: \hat{I}_1+ \hat{I}_2, \,\quad \forall \,\, \lambda<0,
\end{aligned}
\end{equation}
where $K(x)=|x|^{-\sigma}\ast u^{p_{s,\sigma}}$ and $C>0$ is independent of $R$ and $\lambda$. It follows from the sharp asymptotic estimates \eqref{eq0806}--\eqref{eq0806+} and regularity result in Theorem \ref{gth2} that the estimate \eqref{050403} in Lemma \ref{a} holds. By \eqref{050403}, \eqref{log-ineq} and Lagrange's mean value Theorem, we have, for $1<p_{s,\sigma}<2$,
\begin{equation}\label{eqmv-22}
\begin{aligned}
   \hat{I}_1 &\leq C \int_{\Sigma_\lambda} K(x) \left[ \frac{u^{p_{s,\sigma}-1}}{u^{p-1}} - \frac{u_\lambda^{p_{s,\sigma}-1}}{u_\lambda^{p-1}} \right]  \frac{1}{|x|^s} u^{p-1} (u - u_\lambda)_+ \mathrm{d}x \\
        &\leq C \int_{\Sigma_\lambda} K(x) \left[ u^{p_{s,\sigma}-1} - u_\lambda^{p_{s,\sigma}-1} \right]  \frac{1}{|x|^s}  (u - u_\lambda)_+ \mathrm{d}x \\
        &\leq C \int_{\Sigma_\lambda} K(x) u_\lambda^{p_{s,\sigma}-2} \frac{1}{|x|^s} \left[(u - u_\lambda)_+\right]^2 \mathrm{d}x\\
        &\leq C \int_{\Sigma_\lambda} K(x) \frac{u^{2-p_{s,\sigma}}}{u_\lambda^{2-p_{s,\sigma}}} \frac{u^{p_{s,\sigma}-2}}{|x|^s} u^2 \left[(\log u - \log u_\lambda)_+\right]^2 \mathrm{d}x \\
        &\leq C \tilde{c}^{p_{s,\sigma}-2} \int_{\Sigma_\lambda} K(x) \frac{ u^{p_{s,\sigma}}}{|x|^s} \left[(\log u - \log u_\lambda)_+\right]^2 \mathrm{d}x, \,\,\quad \forall \,\, \lambda<0,
\end{aligned}
\end{equation}
and for $p_{s,\sigma} \geq 2$,
\begin{equation}\label{eqmv-23}
\begin{aligned}
   \hat{I}_1
        &\leq C \int_{\Sigma_\lambda} K(x) \left[ u^{p_{s,\sigma}-1} - u_\lambda^{p_{s,\sigma}-1} \right]  \frac{1}{|x|^s}  (u - u_\lambda)_+ \mathrm{d}x \\
        &\leq C \int_{\Sigma_\lambda} K(x) u^{p_{s,\sigma}-2} \frac{1}{|x|^s} \left[(u - u_\lambda)_+\right]^2 \mathrm{d}x \\
        &\leq C \int_{\Sigma_\lambda} K(x) \frac{ u^{p_{s,\sigma}}}{|x|^s} \left[(\log u - \log u_\lambda)_+\right]^2 \mathrm{d}x, \,\,\quad \forall \,\, \lambda<0,
\end{aligned}
\end{equation}
where $C$ is independent of $R$ and $\lambda$.
\medskip

Now, we estimate $\hat{I}_2$. From Lemma \ref{lm.1} and the fact that $u^{p_{s,\sigma}}$ convex in $u$, one has that
\begin{equation*}
\begin{aligned}
   K(x)- K(x_\lambda)&= \int_{\R^N}\frac{u^{p_{s,\sigma}}(y)}{|x-y|^\sigma}\mathrm{d}y -  \int_{\R^N}\frac{u^{p_{s,\sigma}}(y)}{|x_\lambda-y|^\sigma}\mathrm{d}y\\
   &= \int_{\Sigma_\lambda} \left(\frac{1}{|x-y|^\sigma} -  \frac{1}{|x_\lambda-y|^\sigma} \right) \left[ u^{p_{s,\sigma}}(y) - u_\lambda^{p_{s,\sigma}}(y) \right]\mathrm{d}y\\
   &\leq p_{s,\sigma} \int_{\Sigma_\lambda} \frac{u^{p_{s,\sigma}-1}(y)}{|x-y|^\sigma}  \left(u-u_\lambda \right)_+(y) \mathrm{d}y,
\end{aligned}
\end{equation*}
where in the last inequality we used the fact that $|x-y|\leq|x_\lambda-y|$ for any $x,y\in\Sigma_\lambda$. Then, recalling $p_{s,\sigma}\geq p >1$ and \eqref{050403} in Lemma \ref{a}, we obtain
\begin{equation*}
\begin{aligned}
   \hat{I}_2 &\leq C \int_{\Sigma_\lambda\cap B_{2R}}\left[K(x)- K(x_\lambda) \right]_+  u_\lambda^{p_{s,\sigma}-1} \frac{1}{|x|^s} \frac{u^{p-1}}{u_\lambda^{p-1}} (u - u_\lambda)_+ \mathrm{d}x \\
        &\leq C \tilde{c}^{1-p} \int_{\Sigma_\lambda}\left[K(x)- K(x_\lambda) \right]_+  \frac{u_\lambda^{p_{s,\sigma}-1}}{|x|^s} (u - u_\lambda)_+ \mathrm{d}x \\
        &\leq  C \tilde{c}^{1-p} \int_{\Sigma_\lambda} \int_{\Sigma_\lambda} \frac{u^{ p_{s,\sigma}-1}(x)(u - u_\lambda)_+(x) \cdot u^{ p_{s,\sigma}-1}(y)(u - u_\lambda)_+(y)}{|x|^s |x-y|^\sigma} \mathrm{d}x\mathrm{d}y, \,\,\quad \forall \,\, \lambda<0.
\end{aligned}
\end{equation*}
From the double weighted Hardy-Littlewood-Sobolev inequality in Theorem \ref{HLSI+} with $\alpha=s$, $\beta=0$ and $r=t=2N/(2N-\sigma-s)$, H\"{o}lder' inequality and \eqref{log-ineq}, we obtain
\begin{equation}\label{eqmv-25}
\begin{aligned}
   \hat{I}_2 &\leq C \left\| u^{ p_{s,\sigma}-1} \cdot (u - u_\lambda)_+ \right\|^2_{ L^{\frac{2N}{2N-\sigma-s}} (\Sigma_\lambda) }\\
   &\leq C \left\| u^{ p_{s,\sigma}} \cdot (\log u - \log u_\lambda)_+ \right\|^2_{ L^{\frac{2N}{2N-\sigma-s}} (\Sigma_\lambda) }\\
   &\leq C \left\| u^{\frac{N-\sigma-s}{2(N-p)}p } \cdot u^{\frac{p^\star}{2}} \cdot (\log u - \log u_\lambda)_+ \right\|^2_{ L^{\frac{2N}{2N-\sigma-s}} (\Sigma_\lambda) }\\
   &\leq C \left( \int_{\Sigma_\lambda} u^{p^\star} \mathrm{d}x \right)^{\frac{N-\sigma-s}{N}}\int_{\Sigma_\lambda} u^{p^\star} \left[ (\log u - \log u_\lambda)_+ \right]^2 \mathrm{d}x, \,\,\quad \forall \,\, \lambda<0,
\end{aligned}
\end{equation}
where $C$ is independent of $R$ and $\lambda$. From \eqref{eqmv-22}, \eqref{eqmv-23} and \eqref{eqmv-25}, we can deduce that \eqref{eqmv-16} holds and conclude the proof of Lemma \ref{v3-int-esta}.
\end{proof}

Now we continue the proof of Theorem \ref{gth2}. From \eqref{eqmv-16} in Lemma \ref{v3-int-esta}, we get the key estimate \eqref{eqmv3-15-1}. Next, similar to \eqref{eq10.26.30.03+}, by (ii) of Lemma \ref{lm.6}, we have, for any $\tilde{R}>1$,
\begin{equation}\label{V-3-e-k-1}
\begin{aligned}
&K(x)=|x|^{-\sigma}*u^{p_{s,\sigma}}  \leq C |x|^{-\frac{\sigma-s}{2}} \,\,\,\quad\,\,\,\mbox{in}\,\, \R^N\setminus B_{2\tilde{R}}(0),\\
&K(x)=|x|^{-\sigma}\ast u^{p_{s,\sigma}} \leq C_1 \max\{|x|^{-\frac{\sigma-s}{2}},1\} \,\,\,\,\quad\,\,\mbox{in}\,\, B_{\tilde{R}}(0),
\end{aligned}
\end{equation}
where $C$ is independent of $\tilde{R}$ and $C_1=C_1(\tilde{R})>0$. Based \eqref{eqmv-I4-2}, \eqref{eqmv3-15-1} and \eqref{V-3-e-k-1}, we can deduce further that
\begin{equation}\label{eqmv3-15-2}
\begin{aligned}
   \hat{I}_4 &\leq C \frac{1}{|\lambda|^{\beta_2}} \int_{ \Sigma_\lambda^\prime \cap \{u\geq u_\lambda\} }  \frac{1}{|x|^{-2\theta+2}} ( \log u - \log u_\lambda )^2 \mathrm{d}x \\
    &+ C \frac{1}{|\lambda|^{\hat{\beta}_2}} \int_{ \Sigma_\lambda^\prime \cap \{u\geq u_\lambda\} }  \frac{1}{|x|^{-2\theta+2}} ( \log u - \log u_\lambda )^2 \mathrm{d}x, \,\,\,\,\ \forall \ \lambda<-R_1,
\end{aligned}
\end{equation}
where $\beta_2>0$ and $\theta$ are given by \eqref{eqmv-I4-2}, $\hat{\beta}_2:= \frac{\sigma-s}{2} + s + \gamma_2 p_{s,\sigma} +2\theta-2=\frac{\sigma+s}{2}+\gamma_2 (p_{s,\sigma}-p)-p>0$ due to $\gamma_{2}>\frac{N-p}{p}$, and $C>0$ is independent of $R$ and $\lambda$. In the proof of Theorem \ref{gth2}, the key estimates \eqref{eqmv3-15-1} and \eqref{eqmv3-15-2} will lead to similar inequalities as \eqref{050410}, \eqref{050420} and \eqref{050421}, and hence play the same crucial role as \eqref{eqmv-I4-1} and \eqref{eqmv-I4-2} in the proof of Theorem \ref{th2}.

\medskip

With these key estimates in hands, the rest of the proof of Theorem \ref{gth2} is entirely similar to that of Theorem \ref{th2} in Section \ref{sc4}, so we omit the details. We only need to note that, the assumption ``$\sigma+s\geq2$ or $\sigma+s<2$ and $\left(\frac{N-1}{p_{s,\sigma}}\right)^{p-1}\left[(p-1)\frac{N-1}{p_{s,\sigma}}-(N-p)\right]+\mu<0$" (i.e., ``$p_{s,\sigma}\gamma_{1}+1<N$") is necessary for the estimate \eqref{eq10.26.30.02+}, which guarantee that $u$ fulfills the ``condition ($I_{\alpha}$)" in Lemma \ref{lm.4} and hence Corollary \ref{re2333} holds for the generalized (weighted) doubly $D^{1,p}$-critical nonlocal quasi-linear equation \eqref{eq1.3}. Thus, we can derive $|Z_{u}|=0$ (see Remark \ref{re2334}), and can apply the weighted Poincar\'{e} type inequality in Lemma \ref{lm.3} with the weight $\rho=|\nabla u|^{p-2}$ when $p\geq2$ (see Remark \ref{rem0}). This completes the proof of Theorem \ref{gth2}.
\end{proof}

\begin{rem}
By the regularity and sharp asymptotic estimates in Remark \ref{loc-u-rem}, we can show the radial symmetry and strictly radial monotonicity for solutions to the weighted doubly $D^{1,p}$-critical quasi-linear equation \eqref{eq1.2} with local nonlinearity. Note that problem \eqref{eq1.2} can be written as
\begin{align*}
\left\{ \begin{array}{ll} \displaystyle
-\Delta_p u - \frac{\mu}{|x|^p} u^{p-1}=\bar{V}_2(x)u^{p-1}  \quad\,\,\,\,&\mbox{in}\,\, \R^N, \\ \\
u \in D^{1,p}(\R^N),\quad\,\,\,\,\,\,u\geq0 \qquad\,\,\,\,&  \mbox{in}\,\, \R^N,
\end{array}
\right.\hspace{1cm}
\end{align*}
where $0\leq\mu<\bar{\mu}$, $1<p<N$, $0\leq s <p$, $\bar{V}_2(x):=|x|^{-s} u^{p_s-p}$ and $p_s=\frac{(N-s)p}{N-p}$. In Remark \ref{loc-u-rem}, we have already proved that the solution $u\in C^{1,\alpha} (\R^N\setminus \{0\}) \cap L_{loc}^\infty(\R^N\setminus \{0\})$ for some $0<\alpha<\min\{1,\frac{1}{p-1}\}$ and $u$ satisfies the sharp asymptotic estimates \eqref{eq0806}-\eqref{eq0806+}.

Keeping those key ingredients in the proof of Theorem \ref{th2} in Section \ref{sc4} in mind, we define
\begin{equation}\label{eqmv-Ic-1}
\begin{aligned}
   I_4':&=\int_{\widetilde{B}_{2R}}\left( \bar{V}_2(x) - \bar{V}_2(x_\lambda) \right)\eta^\tau (u^p-u^p_\lambda)_+ \mathrm{d}x \\
     &=\int_{\Sigma_\lambda\cap B_{2R}} \left(\bar{V}_2(x)-\bar{V}_2(x_\lambda)\right) \eta^\tau (u^p-u^p_\lambda)_+ \mathrm{d}x.
\end{aligned}
\end{equation}
One can see clearly from \eqref{eqmv-I4-1} that the main key point for us is to prove, for any $\lambda<0$,
\begin{equation}\label{eqmv-15-1}
\begin{aligned}
   I_4'&=\int_{\Sigma_\lambda\cap B_{2R}} \left(\bar{V}_2(x)-\bar{V}_2(x_\lambda)\right)\eta^{\tau}(u^p-u^p_\lambda)_+ \mathrm{d}x \\
   &\leq C \int_{\Sigma_\lambda} \frac{1}{|x|^s} u^{p_s} \left[(\log u - \log u_\lambda)_+ \right]^2 \mathrm{d}x, \,\,\,\,\quad \forall \,\, R>0.
\end{aligned}
\end{equation}
Indeed, using the facts that $u\geq u_\lambda$ in $supp ((u^p-u^p_\lambda)_+)$ and $|x|>|x_\lambda|$ in $\Sigma_\lambda$, we have (recalling that $p_s>p>1$)
\begin{equation}\label{eqmv-15-I}
\begin{aligned}
   I_4'&=\int_{\Sigma_\lambda \cap B_{2R}} \left[\frac{u^{p_s-1}}{u^{p-1}|x|^s} - \frac{u_\lambda^{p_s-1}}{u_\lambda^{p-1}|x_\lambda|^s} \right] (u^p-u^p_\lambda)_+ \mathrm{d}x \\
     &\leq \int_{\Sigma_\lambda\cap B_{2R}} \frac{1}{u^{p-1}|x|^s} (u^{p_s-1} - u_\lambda^{p_s-1}) (u^p-u^p_\lambda)_+ \mathrm{d}x.
\end{aligned}
\end{equation}
Next, applying twice Lagrange's mean value theorem in \eqref{eqmv-15-I}, we derive, for $1<p_s\leq2$ (recalling \eqref{050403}),
\begin{equation*}
\begin{aligned}
   I_4' &\leq C\int_{\Sigma_\lambda} \frac{1}{|x|^s} u_\lambda^{p_s-2} \left[(u-u_\lambda)_+ \right]^2 \mathrm{d}x \\
     &= C \int_{\Sigma_\lambda} \frac{1}{|x|^s} \left(\frac{u}{u_\lambda}\right)^{2-p_s} u^{p_s-2} \left[(u-u_\lambda)_+ \right]^2 \mathrm{d}x \\
     &\leq C \tilde{c}^{p_s-2} \int_{\Sigma_\lambda} \frac{1}{|x|^s} u^{p_s-2} \left[(u-u_\lambda)_+ \right]^2 \mathrm{d}x,
\end{aligned}
\end{equation*}
and for $p_s>2$,
\begin{equation*}
   I_4'\leq C\int_{\Sigma_\lambda} \frac{1}{|x|^s} u^{p_s-2} \left[(u-u_\lambda)_+ \right]^2 \mathrm{d}x,
\end{equation*}
which together with the inequality \eqref{log-ineq} give the proof of the key estimate \eqref{eqmv-15-1}.

\medskip

Based on \eqref{eqmv-15-1}, we can deduce further that
\begin{equation}\label{eqmv-15-2}
I_4'\leq C \frac{1}{|\lambda|^{\tilde{\beta}_2}} \int_{ \Sigma_\lambda^\prime \cap \{u\geq u_\lambda\} }  \frac{1}{|x|^{-2\theta+2}} ( \log u - \log u_\lambda )^2 \mathrm{d}x, \,\,\,\quad\, \forall \,\, \lambda<-R_1,
\end{equation}
where $\tilde{\beta}_2 = s + \gamma_2 p_s +2\theta-2 =s+\gamma_2\left(1+\frac{p-s}{N-p}\right)p - \gamma_2p - p > 0$ due to $\gamma_{2}>\frac{N-p}{p}$, and $C>0$ is independent of $R$ and $\lambda$. The key estimates \eqref{eqmv-15-1} and \eqref{eqmv-15-2} will lead to similar inequalities as \eqref{050410}, \eqref{050420} and \eqref{050421}, and hence play the same crucial role as \eqref{eqmv-I4-1} and \eqref{eqmv-I4-2} in the proof of Theorem \ref{th2}.

\medskip

With these key estimates in hands, the rest of the proof of radial symmetry of positive solutions to \eqref{eq1.2} is entirely similar to that of Theorem \ref{th2} in Section \ref{sc4}, so we omit the details. In particular, we extend the radial symmetry result in \cite{OSV} from $s=0$ to general weighted cases $0\leq s<p$.
\end{rem}

\end{document}